\newcommand{\be}{\begin{equation}}
\newcommand{\ee}{\end{equation}}
\renewcommand{\P}{\mathcal{P}}
\DeclareMathOperator{\tr}{Tr}
\DeclareMathOperator{\len}{len}
\newcommand{\Thm}[1]{\hyperref[thm:#1]{Theorem~\ref*{thm:#1}}}
\newcommand{\Lem}[1]{\hyperref[lem:#1]{Lemma~\ref*{lem:#1}}}
\newcommand{\Cor}[1]{\hyperref[cor:#1]{Corollary~\ref*{cor:#1}}}
\newcommand{\Def}[1]{\hyperref[def:#1]{Definition~\ref*{def:#1}}}
\newcommand{\Obs}[1]{\hyperref[obs:#1]{Observation~\ref*{obs:#1}}}
\newcommand{\Prop}[1]{\hyperref[prop:#1]{Proposition~\ref*{prop:#1}}}
\newcommand{\Rem}[1]{\hyperref[rem:#1]{Remark~\ref*{rem:#1}}}
\newcommand{\Ex}[1]{\hyperref[ex:#1]{Example~\ref*{ex:#1}}}
\newcommand{\Sec}[1]{\hyperref[sec:#1]{Section~\ref*{sec:#1}}}
\newcommand{\Fig}[1]{\hyperref[fig:#1]{Figure~\ref*{fig:#1}}}
\newcommand{\Tab}[1]{\hyperref[tab:#1]{Table~\ref*{tab:#1}}}
\newcommand{\EqRef}[1]{\hyperref[eq:#1]{(\ref*{eq:#1})}}
\newcommand{\Eq}[1]{Equation~\hyperref[eq:#1]{(\ref*{eq:#1})}}
\newtheorem{theorem}{Theorem}[section]
\newtheorem*{theorem*}{Theorem}
\newtheorem{lemma}[theorem]{Lemma}
\newtheorem{cor}[theorem]{Corollary}
\theoremstyle{definition}
\newtheorem{definition}[theorem]{Definition}
\newtheorem{remark}[theorem]{Remark}
\newtheorem{example}[theorem]{Example}
\newtheorem{conjecture}{Conjecture}
\newtheorem{question}{Question}
\newcommand{\qeds}{\qed\vspace{.2cm}}
\newcommand{\oddto}{\xrightarrow{\mathrm{odd}}}
\newcommand{\woto}{\xrightarrow{\mathrm{w.o.}}}
\newcommand{\F}{\ensuremath{\mathcal{F}}}
\DeclareMathOperator{\Hom}{Hom}
\title{Oddomorphisms and homomorphism indistinguishability over graphs of bounded degree}
\author{David E.~Roberson} 
\affil{Department of Applied Mathematics and Computer Science, Technical University of Denmark
\\
QMATH, Department of Mathematical Sciences, University of Copenhagen}
\begin{document}

\maketitle

\begin{abstract}
We introduce (weak) oddomorphisms of graphs which are homomorphisms with additional constraints based on parity. These maps turn out to have interesting properties (e.g., they preserve planarity), particularly in relation to homomorphism indistinguishability. 


Graphs $G$ and $H$ are \emph{homomorphism indistinguishable} over a family $\F$ if $\hom(F,G) = \hom(F,H)$ for all $F \in \F$, where $\hom(F,G)$ is the number of homomorphisms from $F$ to $G$. A classical result of Lov\'{a}sz says that isomorphism is equivalent to homomorphism indistinguishability over the class of all graphs. In recent years it has been shown that many homomorphism indistinguishability relations have natural algebraic and/or logical formulations. Currently, much research in this area is focused on finding such reformulations. We aim to broaden the scope of current research on homomorphism indistinguishability by introducing new concepts/constructions and proposing several conjectures/questions. In particular, we conjecture that every family closed under disjoint unions and minors gives rise to a distinct homomorphism indistinguishability relation.

We also show that if $\F$ is a family of graphs closed under disjoint unions, restrictions to connected components, and weak oddomorphisms, then $\F$ satisfies a certain maximality or closure property: homomorphism indistinguishability over $\F$ of $G$ and $H$ does not imply $\hom(F,G) = \hom(F,H)$ for any $F \notin \F$. This allows us to answer a question raised over ten years ago, showing that homomorphism indistinguishability over graphs of bounded degree is not equivalent to isomorphism.

\end{abstract}

\section{Introduction}

In 1967 Lov\'{a}sz~\cite{lovasz} showed that graphs $G$ and $H$ are isomorphic, denoted $G \cong H$, if and only if they admit the same number of homomorphisms from any graph. In other words, any graph $G$ is determined (up to isomorphism) by the numbers $\hom(F,G)$ of homomorphisms from $F$ to $G$, if $F$ is allowed to vary over all graphs. A seemingly natural question to ask is whether the homomorphism counts from all graphs are necessary. In other words, is there some proper sub-family $\F$ of graphs such that $\hom(F,G) = \hom(F,H)$ for all $F \in \F$ implies that $G$ and $H$ are isomorphic?\footnote{It is well known that for given graphs $G$ and $H$ it suffices to count homomorphisms from graphs with at most $\min\{|V(G)|,|V(H)|\}$ vertices, but we are interested in families that do not depend on $G$ and $H$.} We say that such a family \emph{distinguishes} all graphs. As we will see, there are known families with this property~\cite{dvorak}, but much is still uncharted and it remains an interesting, and often difficult, question whether a given family distinguishes all graphs. We will answer this question (always in the negative) for several specific families of graphs. Moreover, the concepts and techniques that we introduce are clearly more widely applicable to such problems.

If a family $\F$ does not distinguish all graphs, then there are non-isomorphic graphs $G$ and $H$ such that $\hom(F,G) = \hom(F,H)$ for all $F \in \F$. We say such graphs $G$ and $H$ are \emph{homomorphism indistinguishable over $\F$} and write $G \cong_\F H$. Thus Lov\'{a}sz' result can be rephrased as stating that homomorphism indistinguishability over the family of all graphs is isomorphism. More generally, homomorphism indistinguishability over any given family is clearly an equivalence relation on graphs, and we can now begin to consider the landscape of these relations. Such relations have only really begun to be investigated in the past few years, with the exception of~\cite{dvorak} from 2010. So far, the main line of investigation has been in showing that $\cong_\F$ has a ``nice" characterization for some specific families of graphs. Examples include showing that homomorphism indistinguishability over trees is equivalent to \emph{fractional isomorphism}~\cite{DGR}, defined as the existence of a doubly stochastic matrix $D$ such that $A_GD = DA_H$ where $A_G$ is the adjacency matrix of $G$. This is further known to have several other equivalent formulations~\cite{tinhofer}, including in terms of logic~\cite{immerman1990}. The result for trees has been extended to graphs of bounded treewidth, showing that homomorphism indistinguishability over this class can be expressed in terms of the Sherali-Adams hierarchy of relaxations of the natural integer linear program for isomorphism~\cite{DGR}, and in terms of equivalence over first-order logic with counting using a bounded number of variables~\cite{dvorak}. Homomorphism indistinguishability over graphs of bounded tree\emph{depth} has also been characterize as equivalence over first-order logic with counting and bounded quantifier rank~\cite{grohetreedepth}. A somewhat different flavor of characterization was proven for homomorphism indistinguishability over planar graphs~\cite{qplanar}: this is equivalent to \emph{quantum isomorphism}~\cite{qiso1}, which can be defined as the existence of a \emph{quantum permutation matrix} $U$ satisfying $A_GU = UA_H$. This is similar to fractional isomorphism but a quantum permutation matrix has entries which are projections in some $C^*$-algebra and whose rows and columns sum to identity. As more and more characterizations of homomorphism indistinguishability relations are found, we begin to see more and more of a beautiful theory emerging. With this work however, we hope to broaden the scope of homomorphism indistinguishability research beyond the question of characterizations, and to focus more on the landscape of homomorphism indistinguishability relations as a whole. We do this through the introduction of useful new concepts and techniques, as well as by proposing some interesting conjectures and questions.

We remark that we do not mean to discount the work being done on characterizing homomorphism indistinguishability relations. Each such characterization is another step towards a full understanding of homomorphism indistinguishability. We particularly highlight the recent work of Grohe, Rattan, and Seppelt~\cite{homtensors}. They provide a very general framework for producing algebraic characterizations of homomorphism indistinguishability relations. They are able to reproduce several known results in a unified manner, as well as to prove new ones. But the real focus of their work is on understanding the relationship between a graph family and the corresponding homomorphism indistinguishability relation. Thus their work will clearly be important not only for coming up with specific characterizations, but for better understanding the larger picture of homomorphism indistinguishability.



\subsection{Contributions}\label{sec:contributions}

Since $\hom(F_1 \cup F_2,G) = \hom(F_1,G)\hom(F_2,G)$ for all graphs $F_1$, $F_2$, and $G$, homomorphism indistinguishability over a family $\F$ is the same as homomorphism indistinguishability over the family of all disjoint unions of graphs in $\F$. Thus we may as well restrict our concern to families that are already closed under taking disjoint unions (we will call such families \emph{union-closed}). As can be seen from the examples given above, the vast majority of known characterizations of homomorphism indistinguishability relations correspond to minor-closed families of graphs. This may just be a coincidence, but it at least makes one wonder if all minor- and union-closed families give rise to ``nice" homomorphism indistinguishability relations\footnote{Note though that minor-closed families have the property that the relation $\cong_\F$ is preserved under taking graph complements, i.e., $G \cong_\F H$ if and only if $\overline{G} \cong_\F \overline{H}$~\cite[Remark~7.13]{qplanar}. So homomorphism indistinguishability over minor-closed classes at least have one ``nice" property.}. Though this is a vague and not purely mathematical question, it leads us to a more concrete one which we state in the form of a conjecture: 

\begin{conjecture}\label{conj:distinct}
Let $\F_1$ and $\F_2$ be two distinct minor- and union-closed families of graphs. Then the relations $\cong_{\F_1}$ and $\cong_{\F_2}$ are distinct.
\end{conjecture}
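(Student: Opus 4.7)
The plan is to derive the conjecture from the closure property theorem advertised in the abstract: if $\F$ is closed under disjoint unions, restrictions to connected components, and weak oddomorphisms, then $G \cong_\F H$ does not imply $\hom(F',G) = \hom(F',H)$ for any $F' \notin \F$. If every minor- and union-closed family fits the hypotheses of this theorem, then Conjecture~\ref{conj:distinct} follows almost immediately.

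First, swapping $\F_1$ and $\F_2$ if necessary, pick a graph $F \in \F_1 \setminus \F_2$. If the closure theorem applies to $\F_2$, it yields graphs $G, H$ with $G \cong_{\F_2} H$ but $\hom(F,G) \neq \hom(F,H)$. Since $F \in \F_1$, this also gives $G \not\cong_{\F_1} H$, so the two relations are distinct.

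Second, I would verify that an arbitrary minor- and union-closed family $\F$ meets the three hypotheses. Union-closure is given. Closure under restrictions to connected components follows from minor-closure, since each connected component of $F \in \F$ is a subgraph (and hence a minor) of $F$. The remaining hypothesis, closure under weak oddomorphisms, is the heart of the argument.

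The main obstacle is proving that minor-closed families are closed under weak oddomorphisms. The abstract's remark that weak oddomorphisms preserve planarity is suggestive: I would try to strengthen this to the statement that a weak oddomorphism $F \to F'$ forces $F'$ to be a minor of $F$. The natural strategy is to use the parity conditions in the definition of oddomorphism to exhibit branch sets in $F$ indexed by $V(F')$: the fibers of the map should furnish the branch sets, with the parity conditions guaranteeing connectivity of each branch set and the existence of at least one branch-connecting edge for every edge of $F'$. If this ``minor witness'' can be constructed, then any graph admitting a weak oddomorphism from $F$ is a minor of $F$, and every minor-closed family is automatically closed under weak oddomorphisms; combined with the closure theorem the conjecture follows. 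If instead the direct minor-witness fails, a fallback is to isolate a weaker sufficient condition that still implies the closure theorem and is enjoyed by every minor- and union-closed family; the remaining paragraphs of this paper (especially those developing the properties of oddomorphisms beyond planarity preservation) are the natural place to mine for such a condition.
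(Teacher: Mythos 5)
The statement you were asked to prove is, in fact, an open conjecture; the paper does not prove it, and the contribution of the paper is to provide reductions, partial evidence, and a concrete research direction toward it. Your proposal correctly reconstructs the paper's reduction strategy, but the heart of your argument is a missing lemma that the paper explicitly leaves open, and your sketch of that lemma contains a concrete flaw.

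Your reduction is exactly right and matches the paper: Theorem~\ref{thm:homclosed1} states that a family closed under disjoint unions, restriction to connected components, and weak oddomorphisms is homomorphism distinguishing closed, and closure under restriction to connected components does indeed follow from minor-closure. So if one could show that every minor-closed family is closed under weak oddomorphisms --- equivalently, that $F \woto G$ implies $G$ is a minor of $F$ --- then Conjecture~\ref{conj:hdclosed} would follow, hence Conjecture~\ref{conj:distinct} (using Theorem~\ref{thm:conjectures}). The paper states this reduction explicitly in Section~\ref{sec:contributions} and again in Section~\ref{sec:discussion}, where the crucial missing lemma is isolated as Question~\ref{q:oddo2minor}: ``Does the existence of a weak oddomorphism from $F$ to $G$ imply that $F$ contains $G$ as a minor for any connected graph $G$?'' This is posed as an open problem; the paper only settles it for paths, cycles, stars, $K_4$, and (via the quantum-isomorphism/planarity machinery) the case where $G$ is non-planar. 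An upcoming result with Richter and Thomassen is mentioned for $K_5$, but the general question remains open. So your ``remaining hypothesis'' is not a step to be verified --- it is the unresolved content of the conjecture.

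Your proposed proof of that lemma also fails as written. You suggest that the fibers $\psi^{-1}(v)$ should serve as branch sets of a minor model, with the parity conditions guaranteeing connectivity of each fiber. But the parity conditions do nothing of the sort: by Lemma~\ref{lem:covers}, any odd covering map is an oddomorphism, and the fibers of a covering map are \emph{independent sets} --- the exact opposite of connected branch sets (this is precisely why the Archdeacon--Richter result about odd covers was nontrivial). A minor model of $G$ inside $F$ would have to be assembled from a much more delicate choice of branch sets than the fibers themselves. Moreover, the paper exhibits in Figure~\ref{fig:no5subd} a graph with an oddomorphism to $K_5$ that contains no subdivision of $K_5$, which rules out the natural topological strengthening and suggests that any minor witness, if it exists, must be found by a genuinely different (e.g., inductive) argument such as the merging operation sketched at the end of Section~\ref{sec:discussion}.
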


There are some natural union-closed families that give rise to the same relations. For instance, Dvo\v{r}\'{a}k showed that homomorphism counts from 2-degenerate graphs (graphs whose subgraphs all have minimum degree at most 2) determine a graph up to isomorphism~\cite{dvorak}. Thus all graphs and 2-degenerate graphs correspond to the same homomorphism indistinguishability relation. Notably, 2-degenerate graphs are not minor-closed.

For any families $\F_1$ and $\F_2$, it is immediate that if $\F_1 \supseteq \F_2$, then $G \cong_{F_1} H \Rightarrow G \cong_{\F_2} H$ for all graphs $G$ and $H$, i.e., that $\cong_{\F_1}$ is at least as fine an equivalence relation as $\cong_{\F_2}$ (we will denote this by $\cong_{\F_1} \ \Rightarrow \ \cong_{\F_2}$ for brevity). We may hope that for minor- and union-closed classes this implication can be made an if and only if:

\begin{conjecture}\label{conj:notimply}
If $\F_1$ and $\F_2$ are minor- and union-closed families of graphs, then $\cong_{\F_1} \ \Rightarrow \ \cong_{\F_2}$ if and only if $\F_1 \supseteq \F_2$.
\end{conjecture}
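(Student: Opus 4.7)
The forward direction is pointed out just before the conjecture: $\F_1 \supseteq \F_2$ gives $\cong_{\F_1}\Rightarrow\cong_{\F_2}$ in one line. The interesting direction assumes $\F_1\not\supseteq\F_2$, fixes some $F\in\F_2\setminus\F_1$, and asks for graphs $G,H$ with $G\cong_{\F_1}H$ but $\hom(F,G)\neq\hom(F,H)$. My plan is to deduce this from the maximality theorem announced in the abstract: any family $\F$ closed under disjoint unions, restrictions to connected components, and weak oddomorphisms has the property that $\cong_{\F}$ never implies equal $\hom(F',\cdot)$ counts for any $F'\notin\F$. If $\F_1$ itself satisfies these three closure conditions, then taking $F'=F$ produces exactly the separating pair required, so the whole problem reduces to verifying the three closures for every minor- and union-closed family.

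Union-closure is an assumption. Closure under restrictions to connected components is automatic from the minor-closed assumption, since a connected component is obtained by deleting the vertices of the other components and hence is an induced subgraph, and therefore a minor. The heart of the argument, and the main obstacle, is closure of $\F_1$ under weak oddomorphisms. Since $\F_1$ is minor-closed, this reduces to the purely structural claim: whenever there is a weak oddomorphism $F\to F'$, the codomain $F'$ is a minor of the domain $F$. That claim would generalize the paper's observation that weak oddomorphisms preserve planarity (a particular minor-closed property) to every minor-closed property simultaneously.

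I would attempt the structural claim by producing, from a given weak oddomorphism $\varphi:F\to F'$, an explicit minor model of $F'$ inside $F$: the branch sets would be (subgraphs induced on) the fibres $\varphi^{-1}(v)$, and the parity and neighbourhood conditions baked into the definition of a weak oddomorphism should force both that each fibre spans a connected subgraph of $F$ and that every edge of $F'$ lifts to some edge between the appropriate fibres. If this branch-set construction works, contracting the fibres realises $F'$ as a minor of $F$ and the reduction is complete. If instead weak oddomorphisms fail to force this minor relationship in general --- which would not be entirely shocking, since they were introduced with a non-minor-closed setting (bounded degree) in mind --- the natural fallback is to strengthen the notion of oddomorphism to a morphism under which every minor-closed family is automatically closed while still retaining enough to drive the maximality argument; failing that, one would have to construct the pair $G,H$ directly for each excluded $F$ using counting-logic or Sherali--Adams techniques in the spirit of \cite{dvorak,DGR}. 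The cleanest route, and the one I would attempt first, is the branch-set construction.
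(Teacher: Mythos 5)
The statement you are asked to prove is Conjecture~\ref{conj:notimply}, which the paper does not prove; Section~\ref{sec:conjectures} only establishes that Conjectures (1)--(4) are mutually equivalent (Theorems~\ref{thm:absconjectures} and~\ref{thm:conjectures}), and all four remain open. Your reduction is nonetheless exactly the one the paper itself highlights in Section~\ref{sec:discussion}: to prove the conjectures it suffices to verify the hypotheses of Theorem~\ref{thm:homclosed1} for every minor- and union-closed family, and since union-closure is given and restriction to connected components follows from minor-closure, everything hinges on the structural claim ``weak oddomorphism from $F$ to $G$ implies $G$ is a minor of $F$.'' This is precisely Question~\ref{q:oddo2minor}, which the paper poses as an open question, verified only for paths, cycles, stars, $K_4$, and (in unpublished work with Richter and Thomassen) $K_5$. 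So you have faithfully reproduced the paper's reduction, but you have not closed the gap, because the structural claim is the open content.

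Moreover, the specific argument you sketch for the structural claim cannot work as stated. You propose to take the fibres $\varphi^{-1}(v)$ as branch sets of a minor model of $G$ inside $F$, asserting that the parity conditions force each fibre to span a connected subgraph. They do not: nothing in Definition~\ref{def:oddomorphism} constrains edges \emph{within} a fibre at all, and fibres are very often independent sets. A concrete example: the map $C_9 \to C_3$ sending $i \mapsto i \bmod 3$ is an oddomorphism (it is the odd $3$-cover, cf.~Lemma~\ref{lem:covers}), yet each fibre $\{i, i+3, i+6\}$ is an independent set in $C_9$. So the fibres cannot serve as branch sets. The paper's own discussion (around Figure~\ref{fig:no5subd}) addresses the closely related ``fibre-aligned subdivision'' strategy and shows it is already false for $K_5$: there is a graph with an oddomorphism to $K_5$ that contains no $K_5$-subdivision at all. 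Any genuine proof of Question~\ref{q:oddo2minor} will have to build the minor model from scratch rather than reading it off the fibres, and this remains the principal open problem of the paper.
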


Note that since one direction is trivial, the above conjecture is equivalent to the statement that if $\F_1$ and $\F_2$ are minor- and union-closed families such that $\F_1 \not\supseteq \F_2$, then $\cong_{\F_1} \ \not\Rightarrow \ \cong_{\F_2}$. From this formulation it is immediate that Conjecture~\ref{conj:notimply} implies Conjecture~\ref{conj:distinct}. The other implication is not as apparent, but we will prove it in Section~\ref{sec:conjectures}.

In our view, proving either Conjecture~\ref{conj:distinct} or~\ref{conj:notimply} does not appear to be straightforward. Given some arbitrary minor- and union-closed families, how does one go about determining how their corresponding homomorphism indistinguishability relations compare? However, we will also show that these conjectures are equivalent to the following, which we believe is more approachable.

\begin{conjecture}\label{conj:graphs}
For any connected graph $G$, there exist graphs $H$ and $H'$ such that \begin{enumerate}
    \item $\hom(G,H) \ne \hom(G,H')$, and
    \item $\hom(F,H) \ne \hom(F,H')$ implies that $F$ contains $G$ as a minor.
\end{enumerate}
\end{conjecture}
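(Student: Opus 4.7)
The plan is to derive Conjecture~\ref{conj:graphs} from the main closure theorem of this paper, which states that if $\F$ is closed under disjoint unions, restrictions to connected components, and weak oddomorphisms, then for every $F \notin \F$ there are graphs $H, H'$ with $H \cong_\F H'$ but $\hom(F, H) \neq \hom(F, H')$. For a fixed connected graph $G$, let $\F_G$ be the family of graphs that do not contain $G$ as a minor. The aim is to verify that $\F_G$ satisfies the hypotheses of that theorem, so that applying the theorem with $F = G$ (which lies outside $\F_G$ since $G$ is a minor of itself) immediately yields the required $H$ and $H'$: condition~(1) is exactly $\hom(G,H) \neq \hom(G,H')$, and condition~(2) is just the contrapositive of $H \cong_{\F_G} H'$.

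The first two closure conditions are easy and use only the connectedness of $G$. A connected graph is a minor of a disjoint union $F_1 \cup F_2$ if and only if it is a minor of some $F_i$, which shows both that $\F_G$ is closed under disjoint unions and that every connected component of a $G$-minor-free graph is itself $G$-minor-free.

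The main step, and the principal obstacle, is to show that $\F_G$ is closed under weak oddomorphisms. The abstract advertises this in the special case $G \in \{K_5, K_{3,3}\}$ (packaged as preservation of planarity), so the task is to extend that argument to an arbitrary connected excluded minor. The natural template is to transport the presence of a $G$-minor across a weak oddomorphism in whichever direction the closure demands: given a $G$-model on one side, use the parity constraints in the definition of a weak oddomorphism to lift or project its branch sets and connecting edges to a $G$-model on the other side. If the paper's planarity argument proceeds topologically (for instance by pulling back a plane embedding), the extension would require recasting it in purely combinatorial, minor-theoretic terms; such a reformulation should then apply mutatis mutandis to any $G$. A plausible subtlety is that the transport argument might only work cleanly when $G$ is $3$-connected (so that $G$-models have a rigid enough local structure to interact well with the parity conditions); in that case one would first handle $3$-connected $G$ and then bootstrap to arbitrary connected $G$ by combining this with the closures under disjoint unions and components already verified above, together with standard decomposition results for minor-closed classes.

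If this closure step can be completed, the conjecture follows at once, and via the equivalences among Conjectures~\ref{conj:distinct},~\ref{conj:notimply}, and~\ref{conj:graphs} promised earlier in the excerpt, one simultaneously obtains a full separation of homomorphism indistinguishability relations over distinct minor- and union-closed families of graphs.
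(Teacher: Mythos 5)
The statement you are trying to prove is one of the paper's four equivalent open \emph{conjectures}; the paper proves their equivalence (Theorem~\ref{thm:conjectures}) but does not prove any of them, and it explicitly says in the discussion that their truth hinges on the unresolved Question~\ref{q:oddo2minor}. Your reduction is exactly the right one: by Theorem~\ref{thm:homclosed1} together with Lemma~\ref{lem:minoroddo}, the conjecture for a fixed connected $G$ follows once $\F_G$ is shown closed under weak oddomorphisms, and then taking $F = G$ in the definition of h.d.-closed yields the desired pair $(H,H')$. The two easy closure conditions you verify (disjoint unions, restriction to components) are fine, and the remaining condition is precisely Question~\ref{q:oddo2minor}: does a weak oddomorphism from $F$ to $G$ force $F$ to contain $G$ as a minor?

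The genuine gap is therefore the ``main step'' that you flag but never carry out, and your sketch of how one might carry it out could not be completed as described. The paper settles Question~\ref{q:oddo2minor} only for paths, cycles, stars, $K_4$, and (in announced forthcoming joint work with Richter and Thomassen) $K_5$; the general case is explicitly open. Moreover, your model for the general argument --- that the planarity-preservation result is a topological lifting argument which could be recast combinatorially and then ported mutatis mutandis to arbitrary $G$ --- misreads how that result is obtained. Lemma~\ref{lem:oddo2nonplanar} (and hence Theorem~\ref{thm:planarhdclosed}) is proved \emph{indirectly} via Lemma~\ref{lem:q2nonplanar}, invoking the theorem that quantum isomorphism coincides with homomorphism indistinguishability over planar graphs; that theorem is specific to planarity, relies on heavy quantum-group machinery, and has no known analogue for an arbitrary excluded minor $G$. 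The $3$-connectivity bootstrap you gesture at is likewise unestablished. What you have is the correct known reduction to an open problem, together with an unsubstantiated guess as to how that problem might be settled.
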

Note that the second condition above is equivalent to saying that $H \cong_{\F_G} H'$ where $\F_G$ is the family of graphs not containing $G$ as a minor. Thus the above conjecture can be rephrased as saying that $H \cong_{\F_G} H'$ does not imply  $\hom(G,H) = \hom(G,H')$.

We will also show that the above conjectures hold if and only if every minor- and union-closed family $\F$ satisfies a kind of \emph{maximality} or \emph{closure} with respect to their corresponding homomorphism indistinguishability relation. By this we mean that for any graph $F \notin \F$, there exist graphs $G$ and $H$ such that $G \cong_\F H$ but $\hom(F,G) \ne \hom(F,H)$. In other words, adding any graph to $\F$ would change the relation. We say such families are \emph{homomorphism distinguishing closed}, or simply \emph{h.d.-closed}. Thus we conjecture the following:

\begin{conjecture}\label{conj:hdclosed}
Every minor- and union-closed class of graphs is homomorphism distinguishing closed.
\end{conjecture}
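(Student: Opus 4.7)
The plan is to deduce Conjecture~\ref{conj:hdclosed} from the paper's main closure theorem (the one announced in the abstract), which says that any family closed under disjoint unions, under restrictions to connected components, and under weak oddomorphisms is automatically h.d.-closed. Given a minor- and union-closed family $\F$, union-closure is hypothesis, and closure under restriction to connected components is automatic: every connected component of a graph in $\F$ is an induced subgraph, hence a minor, and therefore again in $\F$. The whole conjecture thus reduces to a single statement: every minor-closed family is closed under weak oddomorphisms, i.e., if $F \in \F$ and $\varphi\colon F \to F'$ is a weak oddomorphism, then $F' \in \F$.

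The cleanest way to establish this would be a purely graph-theoretic lemma asserting that whenever $\varphi\colon F \to F'$ is a weak oddomorphism, $F'$ is a minor of $F$; minor-closure of $\F$ would then immediately yield $F' \in \F$. A slightly weaker sufficient statement is that every minor of $F'$ is also a minor of $F$. To attack the stronger form, I would attempt to construct a minor model of $F'$ inside $F$, assigning to each $v \in V(F')$ a connected subgraph $X_v$ of $F$ sitting inside the fiber $\varphi^{-1}(v)$, with the $X_v$ pairwise disjoint and with at least one $F$-edge between $X_u$ and $X_v$ for each edge $uv \in E(F')$. A generic homomorphism fiber need not be connected, so the parity constraints built into the definition of a weak oddomorphism must do the work of carving out canonical connected pieces (or canonical odd-degree substructures) inside each fiber, together with edges to neighboring pieces. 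The known fact that oddomorphisms preserve planarity is strongly suggestive, since planarity is itself a minor-closed property; the lemma above would upgrade that one example to arbitrary minor-closed families.

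The main obstacle I expect is precisely this minor-model construction. Weak oddomorphisms are noticeably more permissive than the maps that directly witness a minor, so it is plausible that $F'$ is simply \emph{not} always a minor of $F$ and that a subtler argument is required. In that case I would fall back on the equivalent Conjecture~\ref{conj:graphs} via the equivalences established in \Sec{conjectures}, and try to prove, for a fixed connected graph $G$, that the family $\F_G$ of $G$-minor-free graphs is closed under weak oddomorphisms: concretely, that $G \not\preceq F$ together with the existence of a weak oddomorphism $F \to F'$ forces $G \not\preceq F'$. The contrapositive---``if $G$ is a minor of $F'$, then $G$ is already a minor of $F$''---depends only on a single fixed graph $G$ and may admit a case analysis driven by the structure of a minor model of $G$ in $F'$ pulled back through $\varphi$. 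Combined with the equivalence of all four conjectures, even partial progress of this form would yield substantial information about Conjecture~\ref{conj:hdclosed} itself.
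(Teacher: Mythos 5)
The statement you have been asked to prove is a \emph{conjecture}, and the paper does not prove it---nor does your proposal. What you have produced is a correct reduction, not a proof.

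Your reduction is right, and it matches the paper's own discussion. Applying \Thm{homclosed1}, a minor- and union-closed family $\F$ is h.d.-closed as soon as it is closed under weak oddomorphisms (union-closure is given, and minor-closure does give closure under restriction to connected components, as you say). And, as you note, closure under weak oddomorphisms of every minor-closed family is in turn equivalent to the single clean lemma ``if $F$ admits a weak oddomorphism to $G$ then $F$ contains $G$ as a minor.'' But this lemma is precisely \hyperref[q:oddo2minor]{Question~\ref*{q:oddo2minor}} of the paper, posed explicitly as open in \Sec{discussion}. The paper records the same chain of implications you describe and then stops there, stating that the answer is known only for paths, cycles, stars, and $K_4$ (with $K_5$ handled in unpublished work with Richter and Thomassen), and that no general approach is currently visible. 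Your proposal arrives at the same wall; acknowledging the obstacle does not remove it.

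Two further cautions. First, your remark that ``every minor of $F'$ is a minor of $F$'' is a ``slightly weaker'' sufficient statement is not accurate: taking $G = F'$ shows it is equivalent to ``$F'$ is a minor of $F$.'' Second, the subdivision-based construction you sketch as the ``cleanest'' route is explicitly shown to fail in the paper (Figure~\ref{fig:no5subd} gives a graph with an oddomorphism to $K_5$ but no $K_5$-subdivision); this does not refute the minor statement, but it does rule out the canonical-subdivision approach. The fallback you mention---proving the fixed-$G$ version via Conjecture~\ref{conj:graphs}---is the same open problem restated, and does not make the task easier in any way the paper identifies. In short, there is a genuine gap: the proposal correctly reduces the conjecture to Question~\ref*{q:oddo2minor} but does not answer that question, so Conjecture~\ref{conj:hdclosed} remains unproved.
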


Note that for any family $\F$, we can take its \emph{homomorphism distinguishing closure}:
\[\mathrm{cl}(\mathcal{F}) := \{F : H \cong_{\mathcal{F}} H' \Rightarrow \hom(F,H) = \hom(F,H')\}.\]
It is clear from the definition that $\mathrm{cl}(\F)$ is h.d.-closed, $\mathrm{cl}(\mathrm{cl}(\F)) = \mathrm{cl}(\F)$, and the relations $\cong_\F$ and $\cong_{\mathrm{cl}(\F)}$ are the same. Moreover, any two distinct h.d.-closed families must give rise to different homomorphism indistinguishability relations. Therefore, since we are only interested in the relations $\cong_\F$, we are only interested in homomorphism distinguishing closed families, as these are in one-to-one correspondence. Also note that we can prove that $\F$ does not distinguish all graphs by showing that it is h.d.~closed and does not contain all graphs.

We remark that though we have made the above conjectures for minor- and union-closed families, our main goal is to understand what types of families are homomorphism distinguishing closed, or to identify some nice family of families of graphs that all give rise to distinct (and preferably nice) homomorphism indistinguishability relations. So ``minor-closed" may need to be replaced with some other property in order for the conjecture to hold. Fortunately, we prove that the conjectures are equivalent even when ``minor-closed" is replaced with some other property as long as it satisfies certain relatively mild conditions.

In Section~\ref{sec:hdclosed} we prove that several families are h.d.-closed and therefore their corresponding homomorphism indistinguishability relations are not isomorphism. We prove this for the family of graphs of any bounded maximum degree, graphs of any bounded from above circumference (length of longest cycle), graphs with upper bounded sized chordless cycles, forests, graphs of treewidth at most two, graphs with no odd holes, graphs with bounded sized induced stars, and others.

Notably, our proof that homomorphism indistinguishability over graphs of bounded maximum degree is not isomorphism answers a question first raised over 10 years ago by Dvo\v{r}\'{a}k~\cite{dvorak}. They observed that if \emph{contractors}\footnote{A contractor is a notion from graph limits. The full definition is outside the scope of this work, but essentially a contractor for a graph $G$ is a linear combination of graphs such that gluing it between two vertices in a graph $F$ acts the same as contracting those two vertices with regards to counting homomorphisms from $F$ to $G$.} of bounded degree exist, then graphs of bounded degree distinguish all graphs. Therefore our result implies that contractors of bounded degree do not exist. The question of whether homomorphism indistinguishability over graphs of bounded degree is isomorphism was also asked independently in more recent years~\cite{DGR}. Moreover, Grohe, Rattan, and Seppelt very recently addressed this question in their work~\cite{homtensors}. Though they were not able to resolve the question fully, they showed that homomorphism indistinguishability over \emph{trees} of bounded degree yields a strict hierarchy of increasingly finer equivalence relations. Moreover, they are able to give an algebraic characterization of these homomorphism indistinguishability relations, which we do not attempt for graphs of bounded degree. In fact we do not provide a characterization of any homomorphism indistinguishability relation in this work.

In section~\ref{sec:uncountability} we prove that there are uncountably many homomorphism indistinguishability relations by showing that every family of complete graphs gives rise to a distinct such relation. In Section~\ref{sec:loops} we consider allowing loops which in certain cases allows us to strengthen our results. In Section~\ref{sec:discussion} we present some further questions and directions raised by our work. In particular, we note that to prove Conjecture~\ref{conj:distinct} it suffices to show that a graph $F$ having an oddomorphism to $G$ implies that $F$ contains $G$ as a minor.

\subsection{Techniques}\label{sec:techniques}

All of our techniques are elementary, only making use of linear algebra over $\mathbb{Z}_2$ and combinatorial arguments. Our main tool is a construction which takes a connected graph $G$ and produces two graphs $G_0$ and $G_1$. We show that the set of homomorphisms from any graph $F$ to $G_i$ can be partitioned in such a way that the homomorphisms in each part are in one-to-one correspondence with solutions to a system of linear equations over $\mathbb{Z}_2$. This partition is common to $G_0$ and $G_1$ and the coefficient matrices of the linear systems for each part are the same for $G_0$ and $G_1$. However, the systems for $G_0$ are always homogeneous whereas those for $G_1$ are not. Thus $\hom(F,G_0) \ge \hom(F,G_1)$ for any graph $G$ with equality if and only if every system for $G_1$ has a solution. By applying a certain type of duality to these linear systems we can obtain an algebraic certificate for when a given system for $G_1$ does not have a solution. Interpreting this combinatorially, we obtain a graph theoretic characterization of graphs $F$ satisfying $\hom(F,G_0) > \hom(F,G_1)$. This characterization is in terms of a certain type of homomorphism from $F$ to $G$ that we term an \emph{oddomorphism}. Specifically, $\hom(F,G_0) > \hom(F,G_1)$ if and only if $F$ has a \emph{weak} oddomorphism to $G$, i.e., a homomorphism which is an oddomorphism when restricted to some subgraph of $F$. In Theorem~\ref{thm:homclosed1} we prove that if $\F$ is a family that is closed under restriction to connected components, disjoint unions, and weak oddomorphisms, then $\F$ is homomorphism distinguishing closed. Using this, we are able to show that many families are homomorphism distinguishing closed by proving necessary conditions for a graph $F$ to have a weak oddomorphism to particular graph $G$. This is how we obtain many of our results. For example, it is immediate from the definition that if $F$ has a weak oddomorphism to $G$, then the maximum degree of $F$ is at least that of $G$, and this implies that the family of graphs of some bounded degree is homomorphism distinguishing closed. In Theorem~\ref{thm:chordlesscycles} we prove that if $F$ has a weak oddomorphism to a cycle of length $k$, then $F$ must contain a chordless cycle of length at least (and of the same parity as) $k$, but this is much more difficult. Additionally, in Section~\ref{sec:quantum} we use the theory of quantum isomorphisms to indirectly show that weak oddomorphisms preserve planarity, i.e., if $F$ is planar and has a weak oddomorphism to $G$ then $G$ is planar.

\subsection{Preliminaries}

Outside of Section~\ref{sec:loops} all of our graphs are finite and simple, i.e., no multiple edges nor loops. We use $V(G)$ and $E(G)$ to denote the vertex and edge sets of a graph $G$. We write $u \sim_G v$ to denote that vertices $u$ and $v$ are adjacent in $G$, and we will often drop the subscript if it is clear from context. A \emph{homomorphism} from $F$ to $G$ is an adjacency-preserving function $\varphi$ from $V(F)$ to $V(G)$, i.e., $u \sim_F v \Rightarrow \varphi(u) \sim_G \varphi(v)$. We write $F \to G$ to denote that there exists a homomorphism from $F$ to $G$. We use $G \cup H$ to denote the \emph{disjoint union} of the graphs $G$ and $H$, which is the graph consisting of a copy of $G$ and a disjoint copy of $H$. We will also use $nG$ to refer to the disjoint union of $n$ copies of $G$. We denote by $K_n$ the complete graph on $n$ vertices. We say that a sequence of vertices $v_0, \ldots, v_\ell$ is a walk (of length $\ell$) in a graph $G$ if $v_{i-1} \sim_G v_i$ for all $i = 1, \ldots, \ell$.

Note that we are constantly referring to homomorphism indistinguishability over ``families of graphs $\F$". To be precise we should really consider sets of isomorphism classes of graphs. However, we prefer to use the less verbose phrasing at the cost of being imprecise. In any case it should not cause any confusion at any point.

\section{Conjectures}\label{sec:conjectures}

As mentioned above, our main conjecture is Conjecture~\ref{conj:distinct}, which states that distinct minor- and union-closed families of graphs give rise to distinct homomorphism indistinguishability relations. Here we will show that this is equivalent to the three other conjectures presented in Section~\ref{sec:contributions}. In fact, we will prove something more general: if the property of being minor-closed in the conjectures is replaced by some other property $\P$ of graph families, then Conjectures (1)--(4) are equivalent under mild assumptions on the property $\P$. This will imply that the conjectures are equivalent for minor-closed families. First we need the following lemmas. Note that we say that a family $\F$ is \emph{closed under restriction to connected components} if $G \in \F$ implies that every connected component of $G$ is a member of $\F$.

\begin{lemma}\label{lem:takeunion}
Let $\F$ be a family of graphs closed under restriction to connected components. Then
\[H_1 \cong_{\F} H_1' \ \& \ H_2 \cong_{\F} H'_2 \ \ \Rightarrow \ \ \left(H_1 \cup H_2\right) \cong_{\F} \left(H'_1 \cup H'_2\right).\]
\end{lemma}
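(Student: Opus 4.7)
The plan is to reduce the statement to the case where the test graph $F$ is connected, and then use two standard combinatorial identities about homomorphism counts. The first identity is that $\hom$ is multiplicative over the connected components of the source: if $F$ has connected components $C_1,\dots,C_k$, then $\hom(F,G) = \prod_i \hom(C_i,G)$ for every $G$. The second identity is that when $F$ is connected, any homomorphism from $F$ into a disjoint union $G_1 \cup G_2$ must have image lying entirely within one of the two components, so $\hom(F, G_1 \cup G_2) = \hom(F,G_1) + \hom(F,G_2)$.

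Given these, I would proceed as follows. Fix an arbitrary $F \in \F$, and let $C_1,\dots,C_k$ be its connected components. By the assumption that $\F$ is closed under restriction to connected components, each $C_i$ lies in $\F$. Applying the two identities above,
\[
\hom(F, H_1 \cup H_2) \;=\; \prod_{i=1}^k \bigl(\hom(C_i, H_1) + \hom(C_i, H_2)\bigr),
\]
and similarly with $H_1', H_2'$ in place of $H_1, H_2$. Since $C_i \in \F$ and $H_j \cong_\F H_j'$ for $j=1,2$, each individual count $\hom(C_i, H_j)$ equals $\hom(C_i, H_j')$, so the two products agree. As $F \in \F$ was arbitrary, this gives $H_1 \cup H_2 \cong_\F H_1' \cup H_2'$.

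There is no substantial obstacle here; the only place where the hypothesis that $\F$ is closed under restriction to connected components is used is in ensuring that each $C_i$ belongs to $\F$, so that the indistinguishability relation $\cong_\F$ applies to each factor in the product expansion. Without that closure condition, the expansion would involve counts $\hom(C_i, H_j)$ where $C_i$ may not lie in $\F$, and nothing would force those counts to be equal on the primed and unprimed sides.
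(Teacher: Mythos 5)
Your proof is correct and uses essentially the same approach as the paper: the same two identities (multiplicativity of $\hom$ over source components, additivity over target components when the source is connected) and the same use of the closure hypothesis to put each $C_i$ in $\F$. The only cosmetic difference is that the paper first reduces to connected $F$ and leaves the multiplicativity step implicit, whereas you write out the full product expansion explicitly.
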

\proof
Since $\F$ is closed under restriction to connected components, any graph in $\F$ is a disjoint union of connected graphs in $\F$. It therefore suffices to prove that
\[\hom(F,H_1 \cup H_2) = \hom(F,H'_1 \cup H'_2)\]
for all \emph{connected} graphs $F \in \F$. So let $F \in \F$ be connected. Note that connectedness of $F$ implies that $\hom(F,G_1 \cup G_2) = \hom(F,G_1)+\hom(F,G_2)$ for any graphs $G_1$ and $G_2$. Thus we have that
\[\hom(F,H_1 \cup H_2) = \hom(F,H_1) + \hom(F,H_2) = \hom(F,H'_1) + \hom(F,H'_1) = \hom(F,H'_1 \cup H'_2).\]\qeds

\begin{lemma}\label{lem:findconnected}
If $\F$ is a family of graphs closed under restriction to connected components, then $\mathrm{cl}(\F)$ is closed under restriction to connected components.
\end{lemma}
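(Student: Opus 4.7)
The plan is to take $F \in \mathrm{cl}(\F)$, group its connected components by isomorphism type as $F = \bigcup_{i=1}^m c_i G_i$ with $G_1, \ldots, G_m$ pairwise non-isomorphic connected graphs and multiplicities $c_i \geq 1$, and show each $G_i$ lies in $\mathrm{cl}(\F)$. So fix $H \cong_{\F} H'$ and set $a_i := \hom(G_i, H)$, $b_i := \hom(G_i, H')$; the task becomes showing $a_i = b_i$ for every $i$.

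First I would produce one algebraic identity per ``test'' graph $K$. Since $K \cong_{\F} K$ trivially, \Lem{takeunion} gives $H \cup K \cong_{\F} H' \cup K$, and because $F \in \mathrm{cl}(\F)$ this yields $\hom(F, H \cup K) = \hom(F, H' \cup K)$. Using multiplicativity of $\hom$ over disjoint unions together with the connectedness identity $\hom(G_i, H \cup K) = \hom(G_i, H) + \hom(G_i, K)$, this rewrites as
\[\prod_{i=1}^m (a_i + d_i)^{c_i} = \prod_{i=1}^m (b_i + d_i)^{c_i}, \qquad d_i := \hom(G_i, K),\]
valid for \emph{every} graph $K$.

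Next I would upgrade this to the polynomial identity $P(x_1, \ldots, x_m) := \prod_i (a_i + x_i)^{c_i} - \prod_i (b_i + x_i)^{c_i} \equiv 0$ in $\mathbb{Z}[x_1, \ldots, x_m]$. Here I invoke the classical consequence of Lov\'asz's theorem (provable via the triangular M\"obius relation between homomorphism and injective-homomorphism counts) that $\hom(G_1, \cdot), \ldots, \hom(G_m, \cdot)$ are $\mathbb{Q}$-linearly independent for distinct connected $G_i$. Thus one can pick graphs $K^{(1)}, \ldots, K^{(m)}$ whose hom-count vectors $v_j := (\hom(G_i, K^{(j)}))_i$ form a basis of $\mathbb{Q}^m$. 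Since $d_i$ is additive under disjoint unions (again by connectedness), $\bigcup_j n_j K^{(j)}$ realizes the vector $\sum_j n_j v_j$ for each $(n_j) \in \mathbb{Z}_{\geq 0}^m$, so the polynomial $(n_1, \ldots, n_m) \mapsto P(\sum_j n_j v_j)$ vanishes on $\mathbb{Z}_{\geq 0}^m$ and is therefore identically zero; composing with the inverse of the basis change gives $P \equiv 0$. Comparing the coefficient of $x_i^{c_i - 1} \prod_{j \neq i} x_j^{c_j}$ in $P$ then reads $c_i a_i = c_i b_i$, so $a_i = b_i$.

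The main obstacle is this second step, specifically the use of Lov\'asz-type linear independence to pass from a family of point-wise identities to a genuine polynomial identity. A tempting shortcut is to restrict to $K = nH \cup mH'$, which via iterated \Lem{takeunion} gives $\prod_i (n a_i + m b_i)^{c_i} = (n+m)^{\sum_i c_i} \prod_i a_i^{c_i}$ in $\mathbb{Q}[n, m]$, and unique factorization immediately forces $a_i = b_i$ whenever all $a_i > 0$. But when some $a_i = 0$ both sides of this particular identity collapse to zero and yield no information about those components, which is precisely the degeneracy the richer family of test graphs in Step~2 is designed to handle.
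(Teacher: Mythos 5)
Your proof is correct, but it takes a genuinely different route from the paper's. The paper only uses the test graphs $xH \cup (k-x)H'$ (for $0 \le x \le k$), which via \Lem{takeunion} are all $\cong_\F$-equivalent, giving a \emph{univariate} polynomial in $x$ of degree at most $c$ (the number of components) that agrees with the positive constant $\prod_i k b_i$ at $k+1$ points; taking $k=c$ forces it to be constant, yet one factor must vanish if $a_1 \ne b_1$. To avoid the degeneracy when some $\hom(G_i, H) = 0$ --- exactly the breakdown you flag for your ``tempting shortcut'' --- the paper simply unions a large clique onto both $H$ and $H'$ so all hom-counts become positive, rather than abandoning that family of test graphs. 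Your approach instead quantifies over \emph{all} test graphs $K$, uses the (classical but non-elementary) linear independence of $\hom(G_i,\cdot)$ for distinct connected $G_i$ to realize a full-rank lattice of hom-count vectors, and thereby upgrades pointwise vanishing to a genuine multivariate polynomial identity whose coefficients can be read off directly. The payoff of your route is that the positivity issue evaporates and no case split is needed; the cost is a reliance on Lov\'asz-style linear independence where the paper is self-contained, needing only \Lem{takeunion} and the existence of one graph $K$ with $\hom(G_i,K) > 0$ for all $i$.
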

\proof
Suppose that $G \in \mathrm{cl}(\F)$ and let $G_1, \ldots, G_c$ be the connected components of $G$. Suppose that $G_1 \notin \mathrm{cl}(\F)$, i.e., that there exist graphs $H$ and $H'$ such that $H \cong_\F H'$ but $\hom(G_1,H) \ne \hom(G_1,H')$. We first consider the case where $\hom(G_i,H)$ and $\hom(G_i,H')$ are both strictly positive for all $i = 1, \ldots c$. For convenience let us set $a_i = \hom(G_i,H)$ and $b_i = \hom(G_i,H')$ for each $i = 1, \ldots, c$. By assumption we have that $a_1 \ne b_1$.

Since $H \cong_\F H'$ by assumption, and we trivially have $H \cong_\F H$ and $H' \cong_\F H'$, using Lemma~\ref{lem:takeunion} we can obtain that $xH \cup (k-x)H' \cong_\F kH'$ for any $k,x \in \mathbb{N}$ such that $x \le k$. Therefore, $\hom(G,xH \cup (k-x)H') = \hom(G,kH')$ and thus
\begin{equation}\label{eq:lincomb}
\prod_{i=1}^c \left(xa_i + (k-x)b_i\right) = \prod_{i=1}^c kb_i > 0
\end{equation}
for any $k,x \in \mathbb{N}$ such that $x \le k$. Notably, the righthand side of the equation above does not depend on $x$. For $k \in \mathbb{N}$, let $p_k(x)$ denote the expression on the lefthand side of Equation~\eqref{eq:lincomb}, viewed as a polynomial in $x$. Also let $r_k$ be the constant (for fixed $k$) on the righthand side. Remember that $r_k >0$. It is clear that the polynomial $p_k(x)$ is of degree at most $c$ for any $k$. However, our argument so far shows that $p_k(x) = r_k$ for all of the $k+1$ values $x = 0, \ldots, k$. Thus for $k = c$, the polynomial $p_c(x)$ is equal to $r_c$ at $c+1$ distinct values. Since $p_c(x)$ has degree at most $c$, this is only possible if $p_c(x) = r_c$ for all $x \in \mathbb{R}$. Now since $a_1 \ne b_1$, the factor $(xa_1 + (c-x)b_1) = (a_1-b_1)x +b_1c$ of $p_c(x)$ is equal to 0 for some $x^* \in \mathbb{R}$. But then we obtain $0 < r_c = p_c(x^*) = 0$, a contradiction. Thus we have shown that if $H \cong_\F H'$ and $\hom(G_i,H)$ and $\hom(G_i,H')$ are strictly positive for all $i$, then $\hom(G_1,H) = \hom(G_1,H')$.


Now let $H$ and $H'$ be arbitrary graphs such that $H \cong_\F H'$. As before let $a_i = \hom(G_i,H)$ and $b_i = \hom(G_i,H')$ for all $i$. Further, let $K$ be any graph such that $\hom(G_i,K) > 0$ for all $i$, e.g., we can take $K$ to be some very large complete graph. Let $d_i = \hom(G_i,K)$. By Lemma~\ref{lem:takeunion} we have that $H \cup K \cong_\F H' \cup K$. Moreover, $\hom(G_i,H \cup K) = \hom(G_i,H) + \hom(G_i,K) > 0$ and $\hom(G_i,H' \cup K) = \hom(G_i,H') + \hom(G_i,K) > 0$ for all $i$. Thus we are now in our previous case and so we obtain that
\[\hom(G_1,H) + \hom(G_1,K) = \hom(G_1,H \cup K) = \hom(G_1,H' \cup K) = \hom(G_1,H') + \hom(G_1,K).\]
Therefore $\hom(G_1,H) = \hom(G_1,H')$ as desired. As the choice of $1$ was arbitrary we have that for all $i = 1, \ldots, c$, if $H \cong_\F H'$ then $\hom(G_i,H) = \hom(G_i,H')$, i.e., if $G \in \mathrm{cl}(\F)$ then every connected component of $G$ is in $\mathrm{cl}(\F)$.\qeds

We remark that one can ask whether the converse of Lemma~\ref{lem:takeunion} holds for h.d.-closed families, i.e., if $\F$ is h.d.-closed such that $H_1 \cong_{\F} H_1'$ and $H_2 \cong_{\F} H'_2$ implies that $(H_1 \cup H_2) \cong_{\F} (H'_1 \cup H'_2)$, does this imply that $\F$ is closed under restrictions to connected components.

We can now show that Conjectures (1)--(4) are equivalent even when ``minor-closed" is replaced with a property $\P$ of graph families that satisfies certain conditions. For a given property $\P$ of graph families, we will refer to a family with the property as a $\P$-family.

\begin{theorem}\label{thm:absconjectures}
Let $\P$ be a property of families of graphs such that (a) the disjoint union closure of $\F_1 \cup \F_2$ is a $\P$-family whenever $\F_1$ and $\F_2$ are $\P$-families, (b) any $\P$-family is closed under restriction to connected components, (c) for any connected graph $G$ there is a unique maximal $\P$-family $\F_G$ not containing $G$, and (d) $\F_G \cup \{G\}$ is a $\P$-family. Then the following statements are equivalent:
\begin{enumerate}
    \item If $\F_1$ and $\F_2$ are distinct union-closed $\P$-families, then the relations $\cong_{\F_1}$ and $\cong_{\F_2}$ are distinct.
    \item If $\F_1$ and $\F_2$ are union-closed $\P$-families, then $\cong_{\F_1} \ \Rightarrow \ \cong_{\F_2}$ if and only if $\F_1 \supseteq \F_2$.
    \item For any connected graph $G$, there exist graphs $H$ and $H'$ such that $\hom(G,H) \ne \hom(G,H')$, and any $\P$-family that distinguishes $H$ from $H'$ must contain $G$.
    \item Every union-closed $\P$-family is homomorphism distinguishing closed.
\end{enumerate}
\end{theorem}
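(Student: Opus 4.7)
The plan is to establish all four equivalences through the cycle $(1) \Leftrightarrow (2)$, $(1) \Leftrightarrow (3)$, and $(3) \Leftrightarrow (4)$; the direction $(2) \Rightarrow (1)$ is immediate because $(2)$ is formally a strengthening. A central object used throughout is the unique maximal $\P$-family $\F_G$ associated by condition (c) to a connected graph $G$. I will first show that $\F_G$ is itself union-closed: its disjoint union closure is a $\P$-family by condition (a) (applied to $\F_1 = \F_2 = \F_G$), and since $G$ is connected it cannot appear in any disjoint union of graphs that individually avoid it, so the maximality of $\F_G$ forces the closure to equal $\F_G$. The same ``connectedness preserves avoidance'' observation will drive every maximality argument below.

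The easier implications proceed as follows. For $(1) \Rightarrow (2)$, given union-closed $\P$-families $\F_1, \F_2$ with $\cong_{\F_1} \ \Rightarrow \ \cong_{\F_2}$, I form the disjoint union closure $\F_3$ of $\F_1 \cup \F_2$, which is a union-closed $\P$-family by (a); the hypothesis forces $\cong_{\F_1} \ = \ \cong_{\F_3}$, and $(1)$ then yields $\F_1 = \F_3 \supseteq \F_2$. For $(3) \Rightarrow (1)$, given two distinct such families I choose a graph in one but not the other, refine it to a connected such graph $G$ using (b) together with union-closedness, and apply $(3)$ to produce $H, H'$ distinguished by exactly one of the two families. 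For $(3) \Rightarrow (4)$, if a union-closed $\P$-family $\F$ were not h.d.-closed, Lemma~\ref{lem:findconnected} yields a connected $G \in \mathrm{cl}(\F) \setminus \F$, and $(3)$ applied to $G$ produces $H \cong_\F H'$ with $\hom(G,H) \ne \hom(G,H')$, directly contradicting $G \in \mathrm{cl}(\F)$.

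The real heart of the proof lies in closing the cycle via $(4) \Rightarrow (3)$ and $(1) \Rightarrow (3)$, both of which exploit $\F_G$. For $(4) \Rightarrow (3)$, assumption $(4)$ applied to the union-closed $\P$-family $\F_G$ gives $\mathrm{cl}(\F_G) = \F_G$, so $G \notin \mathrm{cl}(\F_G)$ furnishes the required $H, H'$; then for any $\P$-family $\mathcal{G}$ with $G \notin \mathcal{G}$, the disjoint union closure of $\mathcal{G} \cup \F_G$ is a $\P$-family by (a) that still avoids $G$ (by connectedness), so maximality forces $\mathcal{G} \subseteq \F_G$ and hence $\mathcal{G}$ cannot distinguish $H$ from $H'$. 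For $(1) \Rightarrow (3)$ I invoke condition (d) to form the union-closed $\P$-family $\F_G^+$, the disjoint union closure of $\F_G \cup \{G\}$, which strictly contains $\F_G$; assumption $(1)$ supplies $H, H'$ with $H \cong_{\F_G} H'$ but $H \not\cong_{\F_G^+} H'$, and writing the offending $F \in \F_G^+$ as $F = kG \cup F'$ with $F' \in \F_G$ and using $\hom(kG \cup F', \cdot) = \hom(G,\cdot)^k \hom(F',\cdot)$ together with $\hom(F',H) = \hom(F',H')$ isolates $\hom(G,H) \ne \hom(G,H')$; the same maximality argument as in the previous implication then shows that any $\P$-family distinguishing $H, H'$ must contain $G$. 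The main obstacle is less any single step than the discipline of always staying inside the class of $\P$-families: each of conditions (a), (b), (c), (d) is required in at least one place, and connectedness of $G$ is repeatedly invoked to prevent $G$ from slipping into a family where it is not supposed to live.
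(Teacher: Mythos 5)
Your proof is correct and hits all the key ideas, but organizes the implications differently from the paper. The paper establishes a single cycle $(1) \Rightarrow (2) \Rightarrow (3) \Rightarrow (4) \Rightarrow (1)$, whereas you prove three separate equivalences $(1) \Leftrightarrow (2)$, $(1) \Leftrightarrow (3)$, $(3) \Leftrightarrow (4)$ (six implications where four would do, so two are logically redundant). Your $(1) \Rightarrow (3)$ plays roughly the role of the paper's $(2) \Rightarrow (3)$: both pass through the disjoint union closure of $\F_G \cup \{G\}$, but the paper derives $\hom(G,H) \ne \hom(G,H')$ by a short contradiction (if it held, $H \cong_\F H'$), whereas you do an explicit factorization $F = kG \cup F'$ and divide out $\hom(F',\cdot)$; both work, the paper's being slightly slicker. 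Your $(4) \Rightarrow (3)$ is a nice direct implication not in the paper, which instead treats $(4) \Rightarrow (1)$ as trivial. One genuine contribution of your write-up is the explicit observation that $\F_G$ itself is union-closed (via condition (a), maximality, and the fact that the connected graph $G$ cannot arise as a disjoint union of $G$-avoiding graphs). The paper silently applies $(2)$ with $\F_1 = \F_G$ and therefore implicitly needs this fact; you state and prove it, which makes the argument more self-contained.
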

\proof
We will show that $(\ref{conj:distinct}) \Rightarrow (\ref{conj:notimply}) \Rightarrow (\ref{conj:graphs}) \Rightarrow (\ref{conj:hdclosed}) \Rightarrow (\ref{conj:distinct})$.\\

\noindent$\mathbf{(\ref{conj:distinct}) \Rightarrow (\ref{conj:notimply})}$\textbf{:} First note that that to prove (2) it is necessary and sufficient to show that $\cong_{\F_1} \ \not\Rightarrow \ \cong_{\F_2}$ for union-closed $\P$-families $\F_1$ and $\F_2$ such that $\F_1 \not\supseteq \F_2$. So suppose that $\F_1$ and $\F_2$ are two such classes. Note that this implies that $\F_1 \cup \F_2$ strictly contains $\F_1$. Let $\F$ be the disjoint union closure of $\F_1 \cup \F_2$, which is union-closed by construction and a $\P$-family by (a). It then follows by our assumption of (1) that $\cong_\F$ is a distinct relation from $\cong_{\F_1}$, since $\F_1$ is strictly contained in $\F_1 \cup \F_2$. Now assume that $\cong_{\F_1} \ \Rightarrow \ \cong_{\F_2}$ for contradiction. Then we would have $\cong_{\F_1} \ \Rightarrow \ \cong_{\F_1 \cup \F_2}$, and thus $\cong_{\F_1} \ \Rightarrow \ \cong_{\F}$ since closing under disjoint unions does not change homomorphism indistinguishability. However, we also have that $\F_1 \subseteq \F$ by definition and thus $\cong_{\F} \ \Rightarrow \ \cong_{\F_1}$. Therefore $\cong_{\F_1}$ and $\cong_{\F}$ are the same relation, a contradiction.\\

\noindent$\mathbf{(\ref{conj:notimply}) \Rightarrow (\ref{conj:graphs})}$\textbf{:} Let $G$ be a connected graph and let $\F_G$ be the unique maximal $\P$-family not containing $G$ which exists by (c). By (d) we also have that $\F_G \cup \{G\}$ is a $\P$-family.

Now let $\F$ be the closure of $\F_G \cup \{G\}$ under disjoint unions. Then $\F$ is disjoint union closed by construction and a $\P$-family by (a), where we take $\F_1 = \F_2 = \F_G \cup \{G\}$. Moreover, we have that $\F_G \not\supseteq \F$ as the former does not contain $G$. Thus by our assumption of (2), we have that $\cong_{\F_G} \ \not\Rightarrow \ \cong_{\F}$. In other words, there exist graphs $H$ and $H'$ such that $H \cong_{\F_G} H'$ but $H \not\cong_{\F} H'$. Now suppose that $\F'$ is any $\P$-family that does not contain $G$. Since $\F_G$ is the unique maximal $\P$-family not containing $G$, we have that $\F_G \supseteq \F'$ and thus $\cong_{\F_G} \Rightarrow \cong_{\F'}$. Therefore $H \cong_{\F'} H'$ by the above, and so we have shown that any $\P$-family that distinguishes $H$ and $H'$ must contain $G$. Lastly, if $\hom(G,H) = \hom(G,H')$, then we would have that $H \cong_{\F_G \cup \{G\}} H'$, but this would imply that $H \cong_\F H'$, a contradiction. Therefore we have that $\hom(G,H) \ne \hom(G,H')$ as desired.\\


\noindent$\mathbf{(\ref{conj:graphs}) \Rightarrow (\ref{conj:hdclosed})}$\textbf{:} Suppose that $\F$ is a union-closed $\P$-family that is not homomorphism distinguishing closed. Then there exists a graph $G' \notin \F$ such that $G' \in \mathrm{cl}(\F)$. If every connected component of $G'$ were contained in $\F$, then $G'$ would also be contained in $\F$ since it is union-closed. Thus there is a connected component $G$ of $G'$ such that $G \notin \F$, but by Lemma~\ref{lem:findconnected} we have that $G \in \mathrm{cl}(\F)$. Since $G$ is connected, we can apply our assumption of (3) to the graph $G$. We obtain that there exist graphs $H$ and $H'$ such that $\hom(G,H) \ne \hom(G,H')$, and any $\P$-family that distinguishes $H$ and $H'$ must contain $G$. Since $G \in \mathrm{cl}(\F)$, the former condition implies that $H \not\cong_\F H'$, i.e., $\F$ distinguishes $H$ and $H'$. But then we must have $G \in \F$, a contradiction.\\


\noindent$\mathbf{(\ref{conj:hdclosed}) \Rightarrow (\ref{conj:distinct})}$\textbf{:} This is trivial.\qeds

We now prove that Conjectures (1)--(4) as stated in Section~\ref{sec:contributions} are equivalent. Note that statement (3) in the above theorem is phrased slightly differently than Conjecture~\ref{conj:graphs}, but the latter is clearly equivalent to the former in the case where the property $\P$ is minor-closedness.

\begin{theorem}\label{thm:conjectures}
The following statements are equivalent:
\begin{enumerate}
    \item If $\F_1$ and $\F_2$ are distinct minor- and union-closed families of graphs, then the relations $\cong_{\F_1}$ and $\cong_{\F_2}$ are distinct.
    \item If $\F_1$ and $\F_2$ are minor- and union-closed families of graphs, then $\cong_{\F_1} \ \Rightarrow \ \cong_{\F_2}$ if and only if $\F_1 \supseteq \F_2$.
    \item For any connected graph $G$, there exist graphs $H$ and $H'$ such that $\hom(G,H) \ne \hom(G,H')$, and $\hom(F,H) \ne \hom(F,H')$ implies that $F$ contains $G$ as a minor.
    \item Every minor- and union-closed class of graphs is homomorphism distinguishing closed.
\end{enumerate}
In other words, Conjectures (1)--(4) are equivalent.
\end{theorem}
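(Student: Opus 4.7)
The plan is to derive Theorem~\ref{thm:conjectures} as an immediate specialization of Theorem~\ref{thm:absconjectures}. I let $\P$ be the property ``$\F$ is minor-closed'', verify the four hypotheses (a)--(d) of Theorem~\ref{thm:absconjectures} for this $\P$, and then check that statement (3) of the abstract theorem coincides with Conjecture~\ref{conj:graphs} when specialized to minor-closedness.

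For (a), a minor of a disjoint union is a disjoint union of minors of the components, so closing $\F_1 \cup \F_2$ under disjoint unions preserves minor-closedness whenever both $\F_1$ and $\F_2$ are minor-closed. For (b), every connected component of a graph is a subgraph and hence a minor. For (c), I take $\F_G := \{H : G \text{ is not a minor of } H\}$, which is minor-closed by transitivity of the minor relation; any other minor-closed family $\F'$ avoiding $G$ is automatically contained in $\F_G$, since any $H \in \F'$ containing $G$ as a minor would force $G \in \F'$. For (d), I need to verify that $\F_G \cup \{G\}$ is still minor-closed, and it suffices to show that every proper minor $H$ of $G$ lies in $\F_G$. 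This follows because either $|V(H)| < |V(G)|$, or $|V(H)| = |V(G)|$ and $|E(H)| < |E(G)|$; in either case $G$ cannot be a minor of $H$, since any minor has no more vertices than its host graph, and if it has the same number of vertices then no contraction has occurred and the minor is a subgraph with no more edges.

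It remains to check that statement (3) of Theorem~\ref{thm:absconjectures}, specialized to minor-closedness, is equivalent to Conjecture~\ref{conj:graphs}. For the forward direction: given $H,H'$ satisfying statement (3) of the abstract theorem and any $F$ with $\hom(F,H) \ne \hom(F,H')$, the family of all minors of $F$ is a minor-closed family distinguishing $H$ from $H'$, so by (3) it must contain $G$, meaning $G$ is a minor of $F$. For the reverse direction, if $\F$ is a minor-closed family and some $F \in \F$ satisfies $\hom(F,H) \ne \hom(F,H')$, then $G$ is a minor of $F$ by Conjecture~\ref{conj:graphs}, and hence $G \in \F$ by minor-closedness. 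Once (a)--(d) and this equivalence are in hand, Theorem~\ref{thm:conjectures} follows directly from Theorem~\ref{thm:absconjectures}. I do not foresee any significant obstacle here; the only step requiring more than a routine check is (d), which hinges on the elementary observation that no proper minor of $G$ can itself contain $G$ as a minor.
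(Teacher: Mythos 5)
Your proof is correct and follows exactly the route the paper takes: the paper's own proof of Theorem~\ref{thm:conjectures} simply asserts that conditions (a)--(d) of Theorem~\ref{thm:absconjectures} hold for minor-closed families and leaves the verification as straightforward, and you have supplied those verifications (plus the check that statement (3) of the abstract theorem specializes to Conjecture~\ref{conj:graphs}) accurately. The only mildly nontrivial point, that $\F_G \cup \{G\}$ is minor-closed because no proper minor of $G$ can contain $G$ as a minor, is handled correctly via the lexicographic decrease in $(|V|,|E|)$.
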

\proof
We only need to show that conditions (a)--(d) from Theorem~\ref{thm:absconjectures} hold for minor-closed families, and this is straightforward.\qeds

\section{Our Construction}\label{sec:construction}

Here we present our construction of graphs $G_0$ and $G_1$ from a connected graph $G$ mentioned in Section~\ref{sec:techniques}. The construction is in fact almost the same as the construction used in~\cite{qiso1}\footnote{Which itself is very similar to the FGLSS reduction from the theory of hardness of approximation~\cite{FGLSS}.} to produce pairs of non-isomorphic yet quantum isomorphic graphs. In fact the only difference is our choice of edges. This construction takes a linear system $Mx=b$ and produces a graph $G(M,b)$. One can then show that $Mx=b$ has a solution if and only if $G(M,b)$ and $G(M,0)$ are isomorphic, and in~\cite{qiso1} they prove a quantum analog of this. An important special case of this construction is when $M$ is the incidence matrix of a connected graph $G$. It was shown by Arkhipov~\cite{arkhipov} that in this case the system $Mx=b$ for any odd weight $b$ never has a solution but has a \emph{quantum solution} if and only if the graph $G$ is not planar. It is this case, where $M$ is the incidence matrix of a graph, that we will focus on here. This allows us to describe the graphs $G(M,b)$ more directly in terms of the graph $G$, leaving the underlying connection to the linear system $Mx=b$ merely implicit rather than explicit. In fact, all of the analysis we do in this section can be done for general linear systems over $\mathbb{Z}_2$. However in that case the results are not quite as clean and we only need the special case for our results.

In the following definition and henceforth, we will use $E(v)$ to denote the set of edges incident to the vertex $v$. We will also use $\delta_{v,U}$ to denote $|\{v\} \cap U|$.
\begin{restatable}{definition}{construction}\label{def:construction}
Let $G$ be a graph and $U \subseteq V(G)$. Define $G_U$ as the graph with vertex set $\{(v,S) : v \in V(G), \ S \subseteq E(v), \ |S| \equiv \delta_{v,U} \ \mathrm{mod} \ 2\}$, where $(v,S)$ is adjacent to $(u,T)$ if $uv \in E(G)$ and $uv \notin S \triangle T$.
\end{restatable}

For a vertex $(v,S) \in G_U$, we will often refer to $v$ as its \emph{head} and $S$ as its \emph{tail}. We can think of the condition $uv \notin S \triangle T$ as the sets $S$ and $T$ ``agreeing" or being ``being consistent" on the edge $uv$. The difference between this construction and the construction used in ~\cite{qiso1} is that there they put edges between ``inconsistent" vertices. Specifically, they made vertices $(v,S)$ and $(u,T)$ adjacent if $u \sim v$ and $uv \in S \triangle T$, or if $u = v$ and $S \ne T$. The graph presented in the definition above is almost the complement of the graph of~\cite{qiso1}, except that we do not put edges between $(v,S)$ and $(u,T)$ when $u$ and $v$ are nonadjacent in $G$.

The first thing we do is show that the isomorphism class of $G_U$ only depends on the parity of $|U|$. This is essentially known from combining results of~\cite{arkhipov} and~\cite{qiso1}, but we give a direct proof.

\begin{lemma}\label{lem:onlytwo}
Let $G$ be a connected graph. If $U,U' \subseteq V(G)$ are such that $|U| \equiv |U'|\mod 2$, then $G_U \cong G_{U'}$.
\end{lemma}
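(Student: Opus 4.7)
The plan is to reduce to the case where $U' = U \triangle \{u,v\}$ for a single edge $uv \in E(G)$, and then chain such single-edge flips together using a path in $G$ between each pair of vertices in $U \triangle U'$. Because $|U| \equiv |U'| \pmod 2$ forces $|U \triangle U'|$ to be even, I can pair up its vertices, and connectedness of $G$ supplies a path between each pair.

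For the single-edge case, I fix $e = uv \in E(G)$ and define $\varphi : V(G_U) \to V(G_{U \triangle \{u,v\}})$ by $\varphi(w, S) = (w, S)$ if $w \notin \{u, v\}$ and $\varphi(w, S) = (w, S \triangle \{e\})$ if $w \in \{u, v\}$. A short parity check confirms the image is a valid vertex of $G_{U \triangle \{u,v\}}$: only the heads $u$ and $v$ have their required parity toggled, and it is precisely at those heads that the tail is shifted by $e$, changing $|S|$ by one mod $2$. The same formula defines the inverse map, so $\varphi$ is a bijection.

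To verify that $\varphi$ preserves adjacency, I must check that for $(w_1,S_1)$ and $(w_2,S_2)$ with $w_1 w_2 \in E(G)$, the membership of $w_1 w_2$ in $S_1 \triangle S_2$ coincides with its membership in $\varphi(S_1) \triangle \varphi(S_2)$. I split on how many of $w_1, w_2$ lie in $\{u,v\}$. If neither does, the tails are unchanged. If exactly one does, then $\varphi(S_1) \triangle \varphi(S_2) = S_1 \triangle S_2 \triangle \{e\}$, but $w_1 w_2 \ne e$ since $w_1 w_2$ has an endpoint outside $\{u,v\}$, so membership is preserved. If both do, then $\{w_1, w_2\} = \{u,v\}$, both tails are toggled on $e$, and the two copies of $e$ cancel, again giving $\varphi(S_1) \triangle \varphi(S_2) = S_1 \triangle S_2$. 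In every case, the adjacency condition transfers, so $\varphi$ is an isomorphism $G_U \to G_{U \triangle \{u,v\}}$.

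To chain these single-edge isomorphisms, I pair up the vertices of $U \triangle U'$ as $\{x_1, x_2\}, \ldots, \{x_{2k-1}, x_{2k}\}$ and pick a path $x_{2i-1} = y_0^{(i)}, y_1^{(i)}, \ldots, y_{\ell_i}^{(i)} = x_{2i}$ in $G$ for each pair. A telescoping computation yields $\{y_0^{(i)}, y_1^{(i)}\} \triangle \cdots \triangle \{y_{\ell_i-1}^{(i)}, y_{\ell_i}^{(i)}\} = \{x_{2i-1}, x_{2i}\}$ because every internal vertex appears exactly twice. Concatenating these path-sequences across the $k$ pairs exhibits $U'$ as obtained from $U$ by a sequence of single-edge symmetric differences along edges of $G$, and composing the corresponding single-edge isomorphisms yields $G_U \cong G_{U'}$. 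I do not foresee a serious obstacle here; the only delicate step is the case analysis above, which amounts to careful parity bookkeeping.
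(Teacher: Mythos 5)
Your proof is correct and takes essentially the same approach as the paper: prove the single-edge flip $G_U \cong G_{U \triangle \{u,v\}}$ for $uv \in E(G)$ via the same tail-toggling map $\varphi$, then chain these flips along paths in $G$ (using the evenness of $|U \triangle U'|$ to pair up its vertices). Your version is slightly more explicit about the telescoping of the symmetric difference along each path, but the argument is the same.
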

\begin{proof}
Let $uv \in E(G)$. We will first show that $G_{U} \cong G_{U'}$ for $U' = U \triangle \{u,v\}$. Define $\varphi \colon V(G_U) \to V(G_{U'})$ as
\begin{equation*}
\varphi(w,S) = \begin{cases}
(w,S \triangle \{uv\}) & \text{if } w = u \text{ or } v \\
(w,S) & \text{otherwise}
\end{cases}
\end{equation*}
It is clear that this is a bijection. Suppose that $(x,S)$ and $(y,T)$ are adjacent in $G_{U}$, i.e., that $xy \in E(G)$ and $xy \notin S \triangle T$. Let $S'$ and $T'$ be such that $\varphi(x,S) = (x,S')$ and $\varphi(y,T) = (y,T')$. If $xy = uv$ (as an edge, not ordered pair), then $S' \triangle T' = (S \triangle \{uv\}) \triangle (T \triangle \{uv\}) = S \triangle T \not\ni uv = xy$, and thus $\varphi(x,S)$ and $\varphi(y,T)$ are adjacent in $G_{U'}$. If $xy \ne uv$, then $S'$ (respectively $T'$) contains $xy$ if and only if $S$ (respectively $T$) does. Thus $xy \notin S' \triangle T'$ and so $\varphi(x,S)$ is adjacent to $\varphi(y,T)$ in $G_{U'}$. So we have shown that $\varphi$ preserves adjacency. The proof that it preserves non-adjacency is similar, and so we have that $\varphi$ is an isomorphism from $G_U$ to $G_{U'}$.

We have shown that $G_U \cong G_{U \triangle \{u,v\}}$ for any edge $uv \in E(G)$. Since $G$ is connected, by applying this isomorphism iteratively, we have that $G_U \cong G_{U \triangle \{u,v\}}$ for any two vertices $u,v \in V(G)$. Finally, iteratively applying this we obtain that $G_U \cong G_{U'}$ for any $U,U'$ such that $|U \triangle U'|$ is even, i.e., any $U,U'$ with $|U| \equiv |U'| \mod 2$.
\end{proof}

The above lemma shows that there are, up to isomorphism, at most two graphs of the form $G_U$ for a connected graph $G$. We will therefore use $G_0$ to denote $G_\varnothing$ and $G_1$ to denote $G_{\{u\}}$ for some vertex $u$ which we may specify if needed. We will see later that these two graphs are indeed non-isomorphic.

\begin{example}\label{ex:cycles}
Let us consider the graphs $G_0$ and $G_1$ for $G = C_k$, the cycle of length $k$. Let us take $\{0, \ldots, k-1\}$ to be the vertex set such that $i \sim i+1$ (arguments taken modulo $k$). In $G_0$, for each $i \in V(G)$ we will have two vertices $(i, \varnothing)$ and $(i,\{\{i-1,i\},\{i,i+1\}\})$. It is easy to see that the former type of vertices form a $k$-cycle as do the latter, and there are no edges between the different types. Thus $G_0$ is just two disjoint $k$-cycles, i.e., $G_0 \cong 2C_k$. Now consider $G_{\{0\}}$. For each $i \in V(G) \setminus \{0\}$ we have the same two vertices as in $G_0$, but for $i=0$ we have $(0,\{\{0,1\}\})$ and $(0,\{0,k-1\}\})$. The former vertex is adjacent to $(1,\{\{0,1\},\{1,2\}\})$ and $(k-1,\varnothing)$, whereas the latter is adjacent to $(1,\varnothing)$ and $(k-1,\{\{0,k-1\},\{k-2,k-1\}\})$. It is not too difficult to see that this then forms a cycle of length $2k$, thus $G_1 \cong C_{2k}$.
\end{example}

\subsection{Counting homomorphisms to $G_i$}

We now turn to counting homomorphisms from some graph $F$ to the graphs $G_0$ and $G_1$. The key to our work is to partition the set of such homomorphisms in a particular way. To do this we use the fact that the map $\rho$ which takes $(v,S)$ to $v$ is a homomorphism from $G_i$ to $G$ for both $i=0$ and $i=1$. We sometimes refer to the map $\rho$ as the \emph{projection} to $G$. We will denote the \emph{set} of all homomorphisms from a graph $F$ to a graph $G$ as $\Hom(F,G)$. Since $\rho$ is a homomorphism from $G_i$ to $G$, if $\varphi$ is any homomorphism from a graph $F$ to $G_i$, then the composition $\rho \circ \varphi$ is a homomorphism from $F$ to $G$. We can thus partition the homomorphisms $\varphi \in \Hom(F,G_i)$ according to this composition. Formally, we partition $\Hom(F,G_i)$ into the sets $\Hom_\psi(F,G_i)$ for $\psi \in \Hom(F,G)$ defined as
\[\Hom_\psi(F,G_i) = \{\varphi \in \Hom(F,G_i) : \rho \circ \varphi = \psi\}.\]
The sets $\Hom_\psi(F,G_i)$ for $\psi \in \Hom(F,G)$ clearly partition $\Hom(F,G_i)$. Moreover, it allows us to partition $\Hom(F,G_0)$ and $\Hom(F,G_1)$ in the same way and compare the sizes of the parts $\Hom_\psi(F,G_0)$ and $\Hom_\psi(F,G_1)$ for a given $\psi \in \Hom(F,G)$. This will be crucial in our arguments later on. As with $\Hom$ and $\hom$, we will use $\hom_\psi(F,G_i)$ to denote $|\Hom_\psi(F,G_i)|$.

We now will show that for a given $\psi \in \Hom(F,G)$ the sets $\Hom_\psi(F,G_i)$ are in bijection with the solutions of a certain linear system.

\begin{lemma}\label{lem:homs2sols}
Let $G$ be a connected graph, let $U \subseteq V(G)$, and let $F$ be any graph. For a given $\psi \in \Hom(F,G)$ we define variables $x_e^a$ for all $a \in V(F)$ and $e \in E(\psi(a))$. Then the elements of $\Hom_\psi(F,G_U)$ are in bijection with solutions to the following equations over $\mathbb{Z}_2$:
\begin{align}
    \sum_{e \in E(\psi(a))} x_e^a &= \delta_{\psi(a),U} \text{ for all } a \in V(F);\label{eq:parity}\\
    x_e^a + x_e^b &= 0 \text{ for all } ab \in E(F), \text{ where } e = \psi(a)\psi(b) \in E(G).\label{eq:adjacency}
\end{align}
\end{lemma}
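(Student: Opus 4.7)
The proof amounts to unpacking the definitions and writing down the natural correspondence. Any $\varphi \in \Hom_\psi(F, G_U)$ must send each $a \in V(F)$ to a vertex of the form $(\psi(a), S_a)$ with $S_a \subseteq E(\psi(a))$ and $|S_a| \equiv \delta_{\psi(a), U} \bmod 2$, simply because the condition $\rho \circ \varphi = \psi$ forces the head coordinate and the definition of $V(G_U)$ constrains the tail. Conversely, any choice of sets $\{S_a\}_{a \in V(F)}$ with $S_a \subseteq E(\psi(a))$ and $|S_a| \equiv \delta_{\psi(a), U} \bmod 2$ determines a well-defined map $\varphi: V(F) \to V(G_U)$ with $\rho \circ \varphi = \psi$. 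So the data of an element of $\Hom_\psi(F, G_U)$ is exactly a choice of such tails $S_a$, subject to the additional constraint that $\varphi$ preserves every edge of $F$.

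The plan is to rewrite this data in terms of the variables $x_e^a$. Given $\varphi$, I set $x_e^a := 1$ if $e \in S_a$ and $0$ otherwise; conversely, given values $\{x_e^a\}$, I define $S_a := \{e \in E(\psi(a)) : x_e^a = 1\}$. Under this correspondence the parity condition $|S_a| \equiv \delta_{\psi(a), U} \bmod 2$ translates directly into equation~\eqref{eq:parity}, since $|S_a| \bmod 2 = \sum_{e \in E(\psi(a))} x_e^a$ in $\mathbb{Z}_2$.

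It remains to translate the edge-preservation condition. For $ab \in E(F)$, since $\psi$ is a homomorphism the edge $e := \psi(a)\psi(b)$ lies in $E(G)$ (and in both $E(\psi(a))$ and $E(\psi(b))$), so by the definition of the edges of $G_U$ we have $\varphi(a) \sim \varphi(b)$ in $G_U$ if and only if $e \notin S_a \triangle S_b$, which is precisely the condition that the indicator of $e$ in $S_a$ and in $S_b$ agree, i.e.\ $x_e^a = x_e^b$, i.e.\ equation~\eqref{eq:adjacency}. So $\varphi$ is an edge-preserving map if and only if the variables $\{x_e^a\}$ satisfy all of the equations~\eqref{eq:adjacency}.

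Putting the two halves together gives a bijection between $\Hom_\psi(F, G_U)$ and the solutions over $\mathbb{Z}_2$ of the system consisting of~\eqref{eq:parity} and~\eqref{eq:adjacency}. There is no real obstacle here; the only thing to be careful about is that one checks both that every homomorphism yields a solution and that every solution yields a valid homomorphism (as opposed to, say, a map that fails edge preservation or lands outside $V(G_U)$), but both directions are immediate from the above identifications.
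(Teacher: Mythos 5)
Your proof is correct and follows the same route as the paper's: identify a homomorphism in $\Hom_\psi(F,G_U)$ with its choice of tails $S_a$, encode these via the indicator variables $x_e^a$, and observe that the parity condition on $|S_a|$ and the edge-preservation condition translate exactly into \eqref{eq:parity} and \eqref{eq:adjacency}, respectively. The only cosmetic difference is that the paper spells out injectivity and surjectivity of the map $x \mapsto \varphi_x$ a bit more explicitly, while you package the same content as "the data of an element of $\Hom_\psi(F,G_U)$ is exactly a choice of tails subject to these constraints."
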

\proof
The idea is that for any potential $\varphi \in \Hom_\psi(F,G_U)$, the heads of the vertices $\varphi(a)$ are determined (they are equal to $\psi(a)$), and thus it is only left to choose the tails. Then $x^a_e$ is a variable that indicates whether $e \in E(\psi(a))$ is contained in the tail of $\varphi(a)$. Equation~\eqref{eq:parity} ensures that the tails have the correct parity to be vertices of $G_U$ and Equation~\eqref{eq:adjacency} ensures that $\varphi$ preserves adjacency.

Suppose that $x = (x^a_e)$ is a solution to \Eq{main}. We define a homomorphism $\varphi_x \in \Hom_\psi(F,G_U)$ as
\[\varphi_x(a) = (\psi(a),S_x(a)) \text{ where } S_x(a) := \{e \in E(\psi(a)) : x^a_e = 1\}.\]
We must first prove that $\varphi_x$ is actually an element of $\Hom_\psi(F,G_U)$. Note that $\varphi_x(a)$ is indeed a vertex of $G_U$ since Equation~\eqref{eq:parity} ensures that
\[|\{e \in E(\psi(a)) : x^a_e = 1\}| \equiv \delta_{\psi(a),U} \ \mathrm{mod} \ 2.\]
Next we show that $\varphi_x$ preserves adjacency. Suppose that $ab \in E(F)$. Since $\psi$ is a homomorphism from $F$ to $G$, we have that $\psi(a) \sim \psi(b)$ and these are the heads of $\varphi(a)$ and $\varphi(b)$. Now let $e = \psi(a)\psi(b)$. We must show that $e \notin S_x(a) \triangle S_x(b)$. By Equation~\eqref{eq:adjacency}, we have that $x_e^a = x_e^b$ and therefore either the edge $e$ is either contained in both $S_x(a)$ and $S_x(b)$ or neither. It follows that $(\psi(a),S_x(a)) \sim_{G_U} (\psi(b),S_x(b))$. Thus $\varphi$ is a homomorphism from $F$ to $G_U$ and $\varphi \in \Hom_\psi(F,G_U)$ by construction. It is easy to see that this construction is injective, as if $x \ne y$, then there is some $a \in V(F)$ and $e \in E_G(\psi(a))$ such that $x_e^a \ne y_e^a$ and therefore these two solutions will give rise to different $S_x(a)$ and thus different $\varphi$.

For surjectivity, suppose that $\varphi \in \Hom_\psi(F,G_U)$. For each $a \in V(F)$, let $S_a$ be such that $\varphi(a) = (\psi(a),S_a)$. Recall that this means that $S_a \subseteq E_G(\psi(a))$. Define $x_e^a = 1$ if $e \in S_a$ and $x_e^a = 0$ otherwise. This defines all $x_e^a$ and this satisfies all equations of~\eqref{eq:parity} by construction. Consider now the Equation~\eqref{eq:adjacency} for a particular $ab \in E(F)$. This requires that $x_{\psi(a)\psi(b)}^a = x_{\psi(a)\psi(b)}^b$. But this is ensured by the fact that $\varphi$ is a homomorphism from $\Hom_\psi(F,G_U)$ and thus $\psi(a)\psi(b)$ is either contained or not contained in both of $S_a$ and $S_b$. Thus the $x$ we have constructed is indeed a solution to the equations in~\eqref{eq:parity} and~\eqref{eq:adjacency}, and it is clear that this construction was the inverse of the construction of $\varphi_x$ from $x$ presented above.\qeds

Note that only the righthand side of the system of equations in~\eqref{eq:parity} and ~\eqref{eq:adjacency} depends on the subset $U \subseteq V(G)$, and the lefthand side depends only on $G$, $F$, and $\psi$. The following definition describes the coefficient matrices of the systems of equations in~\eqref{eq:parity} and ~\eqref{eq:adjacency}.

\begin{definition}\label{def:coeffmats}
Given a connected graph $G$, a graph $F$, and a homomorphism $\psi \in \Hom(F,G)$, let $R$ be the set of ordered pairs $(a, e)$ such that $a \in V(F)$ and $e \in E(\psi(a))$. Define the matrices $A^\psi \in \mathbb{Z}_2^{V(F) \times R}$ and $B^\psi \in \mathbb{Z}_2^{E(F) \times R}$ as follows\footnote{These matrices do depend on $G$ and $F$ as well, but we leave them out of the notation since it will always be clear from context which $G$ and $F$ are under consideration.}:
\begin{align}
    A^\psi_{b, (a,e)} &= \begin{cases} 1 & \text{if } b = a \\ 0 & \text{o.w.}\end{cases}\\
    B^\psi_{bc,(a,e)} &= \begin{cases}1 & \text{if } a \in \{b,c\} \ \& \ e = \psi(b)\psi(c)\\ 0 &\text{o.w.}\end{cases}
\end{align}
Additionally, for $U \subseteq V(G)$, we let $\chi_{\psi^{-1}(U)}$ denote the characteristic vector of the set $\psi^{-1}(U)$.
\end{definition}
Using the above notation, the system of equations given Lemma~\ref{lem:homs2sols} can be written as
\begin{equation}\label{eq:main}
    \begin{pmatrix}A^\psi \\ B^\psi\end{pmatrix}x = \begin{pmatrix} \chi_{\psi^{-1}(U)} \\ 0\end{pmatrix}
\end{equation}
Note that if $U = \varnothing$, then $\chi_{\psi^{-1}(U)} = 0$ and thus this system always has a solution, namely $x = 0$. The next corollary uses the fact that the number of solutions to a system of linear equations is either zero, or is equal to the number solutions to the homogenous version of the system, i.e. the same system but with the righthand side changed to the zero vector.

\begin{theorem}\label{thm:main}
Let $G$ be a connected graph, $U \subseteq V(G)$, and let $\psi \in \Hom(F,G)$ for some graph $F$. Then $\hom_\psi(F,G_0) > 0$ and 
\[\hom_\psi(F,G_U) = \begin{cases} \hom_\psi(F,G_0) & \text{if Equation~\eqref{eq:main} has a solution}\\0 & \text{if Equation~\eqref{eq:main} has no solution} \end{cases}\]
It follows that $\hom(F,G_U) \le \hom(F,G_0)$ with equality if and only if Equation~\eqref{eq:main} has a solution for all $\psi \in \Hom(F,G)$.
\end{theorem}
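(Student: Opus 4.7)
The plan is to leverage Lemma~\ref{lem:homs2sols} to convert the counting problem into a question about solutions of a linear system over $\mathbb{Z}_2$, and then apply a completely standard fact from linear algebra: the solution set of $Mx = b$ is either empty or is a coset of $\ker(M)$, so when solutions exist, their number is exactly $|\ker(M)|$, which is the number of solutions to the homogeneous system $Mx = 0$.

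First I would observe that by Lemma~\ref{lem:homs2sols}, $\hom_\psi(F,G_U)$ equals the number of $\mathbb{Z}_2$-solutions of Equation~\eqref{eq:main}, where the coefficient matrix $\smx{A^\psi \\ B^\psi}$ depends only on $G$, $F$, and $\psi$, while the righthand side $\smx{\chi_{\psi^{-1}(U)} \\ 0}$ is the \emph{only} thing that depends on $U$. In the case $U = \varnothing$ we have $\chi_{\psi^{-1}(\varnothing)} = 0$, so the system is homogeneous and admits at least the trivial solution $x = 0$; hence $\hom_\psi(F,G_0) \ge 1 > 0$, and in fact $\hom_\psi(F,G_0)$ equals $|\ker \smx{A^\psi \\ B^\psi}|$, a positive power of $2$.

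For general $U$, either the affine system \eqref{eq:main} is inconsistent, in which case $\hom_\psi(F,G_U) = 0$, or it is consistent. In the consistent case its solution set is a coset of the kernel of $\smx{A^\psi \\ B^\psi}$, and so has the same cardinality as the kernel itself, namely $\hom_\psi(F,G_0)$. This gives the case analysis in the displayed equation of the theorem.

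Finally, summing over $\psi \in \Hom(F,G)$ and using that $\{\Hom_\psi(F,G_U) : \psi \in \Hom(F,G)\}$ partitions $\Hom(F,G_U)$, we obtain
\[\hom(F,G_U) = \sum_{\psi \in \Hom(F,G)} \hom_\psi(F,G_U) \le \sum_{\psi \in \Hom(F,G)} \hom_\psi(F,G_0) = \hom(F,G_0),\]
with each term on the left bounded by the corresponding term on the right, and equality iff every $\psi$ contributes equally, i.e.\ iff \eqref{eq:main} is solvable for every $\psi \in \Hom(F,G)$. There is no real obstacle here: the whole argument is just the translation from homomorphisms to linear algebra via Lemma~\ref{lem:homs2sols} plus the elementary dichotomy for consistency of affine linear systems. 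The only subtle point to articulate cleanly is that the \emph{coefficient matrix} is independent of $U$, which is what allows one to compare solution counts across different choices of $U$ via a common kernel.
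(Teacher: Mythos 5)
Your proof is correct and follows essentially the same route as the paper: invoke Lemma~\ref{lem:homs2sols} to translate $\hom_\psi(F,G_U)$ into a count of solutions to Equation~\eqref{eq:main}, apply the standard coset dichotomy for affine systems over $\mathbb{Z}_2$ (with the key observation that the coefficient matrix is independent of $U$), and then sum over the partition of $\Hom(F,G_U)$ by $\psi$. No meaningful differences.
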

\proof
By Lemma~\ref{lem:homs2sols}, the set $\Hom_\psi(F,G(M,b))$ is in bijection with the solutions to Equation~\eqref{eq:main}. As noted above, the number of such solutions is either 0 or equal to the number of solutions to the homogeneous version
\begin{equation*}
    \begin{pmatrix}A^\psi \\ B^\psi\end{pmatrix}x = \begin{pmatrix} 0 \\ 0\end{pmatrix}
\end{equation*}
which always has a strictly positive number of solutions and whose solutions are in bijection with $\Hom_\psi(F,G_0)$. Thus we have the claims in the second sentence of the theorem.
Now since the sets $\Hom_\psi(F,G_U)$ partition $\Hom(F,G_U)$ and similarly for $G_0$, the theorem follows.\qeds

As a corollary we obtain the following which in particular implies the converse to Lemma~\ref{lem:onlytwo}, i.e., that $G_0 \not\cong G_1$.
\begin{cor}\label{cor:0iso}
Let $G$ be a connected graph and $U \subseteq V(G)$. Then the following are equivalent.
\begin{enumerate}
    \item $|U|$ is even;
    \item $G_0 \cong G_U$;
    \item $\hom(G,G_0) = \hom(G,G_U)$;
    \item $\hom_{\mathrm{id}}(G,G_0) = \hom_{\mathrm{id}}(G,G_U)$.
\end{enumerate}
\end{cor}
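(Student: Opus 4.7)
The plan is to prove the chain of implications $(1) \Rightarrow (2) \Rightarrow (3) \Rightarrow (4) \Rightarrow (1)$. The first implication is exactly Lemma~\ref{lem:onlytwo}, and $(2) \Rightarrow (3)$ is immediate since isomorphic graphs have identical homomorphism counts from any fixed graph. For $(3) \Rightarrow (4)$, I would invoke Theorem~\ref{thm:main}: partitioning $\Hom(G,G_0)$ and $\Hom(G,G_U)$ according to the projection $\rho$, we have $\hom_\psi(G,G_U) \le \hom_\psi(G,G_0)$ for every $\psi \in \Hom(G,G)$. If the totals agree then every term must agree, and in particular the $\psi = \mathrm{id}$ term does.

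The main work is in $(4) \Rightarrow (1)$, which amounts to understanding the linear system from Lemma~\ref{lem:homs2sols} in the special case $F = G$, $\psi = \mathrm{id}$. In this setting the variables are $x^a_e$ for $a \in V(G)$ and $e \in E(a)$, and the adjacency equations $x^a_e + x^b_e = 0$ for each $e = ab$ simply say that the two variables associated to the two endpoints of each edge agree. So I would collapse them to a single variable $y_e$ per edge, and observe that \eqref{eq:parity} then becomes the incidence equation
\[
M y = \chi_U,
\]
where $M \in \mathbb{Z}_2^{V(G) \times E(G)}$ is the incidence matrix of $G$ over $\mathbb{Z}_2$. Since every column of $M$ has exactly two $1$s, summing all rows gives $0 = |U| \bmod 2$, which is a solvability obstruction: if $|U|$ is odd the system has no solution.

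To conclude, I would apply Theorem~\ref{thm:main} to this system. Since $\hom_{\mathrm{id}}(G, G_0) > 0$ always, while $\hom_{\mathrm{id}}(G, G_U) = 0$ whenever the system has no solution, odd $|U|$ forces strict inequality $\hom_{\mathrm{id}}(G, G_U) < \hom_{\mathrm{id}}(G, G_0)$. Contrapositively, condition~(4) implies $|U|$ is even, closing the cycle.

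The whole argument is essentially linear algebra; the only subtlety is the step $(3) \Rightarrow (4)$, which is not a consequence of general principles but relies critically on the term-by-term inequality $\hom_\psi(G,G_U) \le \hom_\psi(G,G_0)$ supplied by Theorem~\ref{thm:main}. Without this refinement one could imagine total counts agreeing by cancellation across different $\psi$, so it is worth flagging that this is where the structure of the construction is really used.
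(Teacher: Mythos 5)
Your proof is correct and takes essentially the same approach as the paper: the same chain $(1)\Rightarrow(2)\Rightarrow(3)\Rightarrow(4)\Rightarrow(1)$ with the same three easy implications, and for $(4)\Rightarrow(1)$ your incidence-matrix row-summing argument is just a linear-algebraic restatement of the paper's handshake-lemma argument (the paper collapses the variables by defining a subgraph $H$ with edge set $\{e : x^a_e=1\}$ and notes $U$ must be its set of odd-degree vertices, hence $|U|$ is even).
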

\proof
The implication $(1) \rightarrow (2)$ follows from Lemma~\ref{lem:onlytwo}. Moreover, $(2) \rightarrow (3)$ is immediate and $(3) \rightarrow (4)$ follows from Theorem~\ref{thm:main}. Thus we only need to prove $(4) \rightarrow (1)$.

Assume that $\hom_{\mathrm{id}}(G,G_0) = \hom_{\mathrm{id}}(G,G_U)$. By Theorem~\ref{thm:main}, this implies that the system in Equation~\eqref{eq:main} has a solution for $\psi = \mathrm{id}$. Let $x$ be such a solution. Note that since we are considering $F = G$, the vertices of $F$ are the vertices of $G$. Let us first show that $x^a_e$ does not depend on $a$. Consider $x^a_e$ and $x^b_e$. Note that this means that $e \in E(\psi(a)) = E(a)$ and $e \in E(\psi(b)) = E(b)$, and therefore $e = ab$ and so $a$ and $b$ are adjacent in $F = G$. Thus by Equation~\eqref{eq:adjacency} we must have $x^a_e + x^b_e = 0$, i.e., $x^a_e = x^b_e$. Therefore, we can unambiguously define a graph $H$ with vertex set $V(G)$ and edges $e \in E(G)$ such that $x^a_e = 1$ for any $a \in V(F)=V(G)$ such that $e \in E(a)$. Since $x$ satisfies Equation~\eqref{eq:main}, it follows that every vertex contained in $U$ has odd degree in $H$ and every other vertex has even degree. Therefore $|U|$ must be even.\qeds

We will now use Theorem~\ref{thm:main} to obtain a \emph{combinatorial certificate} for when a graph $F$ has a different number of homomorphisms to $G_0$ and $G_1$. By this we mean some combinatorial condition on the graph $F$ that holds if and only if $\hom(F,G_0) \ne \hom(F,G_1)$. To do this we will make use the following lemma, known as the Fredholm Alternative, that provides an algebraic certificate for when a system of equations has no solutions.

\begin{lemma}[Fredholm Alternative]
Let $M \in \mathbb{Z}_2^{m \times n}$ and $b \in \mathbb{Z}_2^m$. Then the system $Mx = b$ has no solution if and only if the system
\begin{equation}\label{eq:fredholm}
    \begin{pmatrix}M^T \\ b^T\end{pmatrix}y = \begin{pmatrix}0 \\ 1\end{pmatrix}
\end{equation}
has a solution.
\end{lemma}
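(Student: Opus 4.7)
My plan is to prove the Fredholm alternative by the standard finite-dimensional argument, which works uniformly over any field and so applies to $\mathbb{Z}_2$.

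The easy direction is a direct computation. Given $y$ solving Equation~\eqref{eq:fredholm}, I would observe that for every $x \in \mathbb{Z}_2^n$, $y^T(Mx) = (M^T y)^T x = 0$ while $y^T b = 1$, so $Mx \ne b$. Hence $Mx = b$ has no solution.

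For the converse, my strategy is to invoke the orthogonality identity $\mathrm{im}(M) = (\ker M^T)^\perp$, taken with respect to the bilinear form $(u,v) \mapsto u^T v$ on $\mathbb{Z}_2^m$. Granting this, if $Mx = b$ has no solution then $b \notin \mathrm{im}(M) = (\ker M^T)^\perp$, so some $y \in \ker M^T$ satisfies $y^T b \ne 0$. Over $\mathbb{Z}_2$ the latter forces $y^T b = 1$, and this $y$ is precisely a solution to Equation~\eqref{eq:fredholm}.

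The one step that needs justification is the identity $\mathrm{im}(M) = (\ker M^T)^\perp$. My plan is the usual dimension count: the inclusion $\mathrm{im}(M) \subseteq (\ker M^T)^\perp$ is immediate from $(Mx)^T y = x^T(M^T y) = 0$ whenever $M^T y = 0$, and equality then follows via rank--nullity, using that the bilinear form on $\mathbb{Z}_2^m$ is nondegenerate (its Gram matrix in the standard basis is the identity). This is the only place where the field structure of $\mathbb{Z}_2$ enters the argument, and it is textbook material; no step is any more subtle over $\mathbb{Z}_2$ than over $\mathbb{R}$. As an entirely equivalent alternative, I could row-reduce the augmented matrix $[M \mid b]$: non-solvability of $Mx = b$ produces a row of the form $(0\ \cdots\ 0 \mid 1)$, and the corresponding $\mathbb{Z}_2$-linear combination of the rows of $[M \mid b]$ yields the desired $y$ directly, bypassing any mention of orthogonal complements.
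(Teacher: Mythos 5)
Your proof is correct. The paper itself gives no proof of this lemma: immediately after the statement it merely remarks that the lemma is the usual Fredholm Alternative (``$Mx=b$ has no solution iff $M^Ty=0$ has a solution $y$ with $b^Ty\ne 0$''), observes that over $\mathbb{Z}_2$ the condition $b^Ty\ne 0$ is the same as $b^Ty=1$, and leaves it at that. You supply the missing argument in full, and both routes you sketch are sound. The easy direction is the same direct computation everyone does. For the converse, your orthogonality argument $\im(M)=(\ker M^T)^\perp$ is valid over $\mathbb{Z}_2$: the only thing you need is that the standard dot-product form is nondegenerate (its Gram matrix is $I$), and the identity $\dim W+\dim W^\perp=m$ for nondegenerate forms does not require the form to be anisotropic, so the existence of self-orthogonal vectors over $\mathbb{Z}_2$ is harmless here. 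Your row-reduction alternative is, if anything, even more self-contained, since it produces the certificate $y$ explicitly as a combination of rows of $[M\mid b]$ and avoids any mention of duality. In short: the paper treats the lemma as folklore, you prove it, and your proof is fine.
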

Note that the above result, which holds for any vector space, is usually stated as $Mx=b$ has no solution if and only if $M^Ty = 0$ has a solution $y$ such that $b^Ty \ne 0$, but over $\mathbb{Z}_2$ the latter is equivalent to $b^Ty = 1$ and thus we obtain the formulation given above.

In order to concisely state our combinatorial condition for when $\hom(F,G_0) \ne \hom(F,G_1)$, we first define the following:

\begin{definition}\label{def:oddomorphism}
Let $F$ and $G$ be graphs and $\psi$ a homomorphism from $F$ to $G$. We say that a vertex $a$ of $F$ is \emph{odd/even with respect to $\psi$} if $|N_F(a) \cap \psi^{-1}(v)|$ is odd/even for all $v \sim_G \psi(a)$. We define an \emph{oddomorphism} from $F$ to $G$ to be a homomorphism $\psi \in \Hom(F,G)$ such that
\begin{enumerate}
    \item each vertex of $F$ is either odd or even with respect to $\psi$;
    \item $\psi^{-1}(v)$ contains an odd number of odd vertices for all $v \in V(G)$.
\end{enumerate}
A homomorphism $\psi \in \Hom(F,G)$ is a \emph{weak oddomorphism} if there is a (not necessarily induced) subgraph $F'$ of $F$ such that $\psi|_{V(F')}$ is an oddomorphism from $F'$ to $G$. We will also sometimes use ``$\psi$-odd/even" instead of ``odd/even with respect to $\psi$".
\end{definition}

Note that, assuming Condition (1) above, Condition (2) is equivalent to there being an odd number of edges between $\psi^{-1}(u)$ and $\psi^{-1}(v)$ for all $uv \in E(G)$.

\begin{remark}\label{rem:oddo}
One way to think of an oddomorphism $\psi$ from $F$ to $G$ is in terms of the partition of $V(F)$ into the fibres $\psi^{-1}(v)$ for $v \in V(G)$. Since $\psi$ is a homomorphism, there can only be edges between fibres $\psi^{-1}(u)$ and $\psi^{-1}(v)$ such that $u \sim_G v$. Thus if $a \in V(F)$ is such that $\psi(a) = u \in V(G)$, then $a$ can only have neighbors in the fibres $\psi^{-1}(v)$ where $v \sim_G u$. The property of $a$ being odd with respect to $\psi$ then says that $a$ has an odd (and therefore nonzero) number of neighbors in every fibre that it can possibly have neighbors in. Similarly, $a$ is even with respect to $\psi$ if it has an even (thus possibly zero) number of neighbors in every fibre it can possibly have neighbors in. If $a$ has an even number of neighbors in some fibres and an odd number in some other fibres, then $a$ would be neither odd nor even with respect to $\psi$, and thus Condition (1) from Definition~\ref{def:oddomorphism} is nontrivial. Note that Condition (2) ensures that each fibre contains at least one odd vertex.
\end{remark}
\begin{remark}\label{rem:oddoissue}
Note that if $\psi$ is a homomorphism from $F$ to $G$ and $F'$ is a subgraph of $F$, then $\psi|_{V(F')}$ is a homomorphism from $F'$ to $G$. However, even if $V(F') = V(F)$ in which case $\psi|_{V(F')} = \psi$, the set of odd/even vertices with respect to $\psi$ will depend on whether we are considering $\psi$ as a homomorphism from $F$ or $F'$. This issue will not arise often, but when it does we will specify that a vertex is odd/even with respect to $\psi|_F$ (or $\psi|_{F'}$) to be clear. One particular place where this can come up is if $\psi$ is a weak oddomorphism from $F$ to $G$ and $F'$ is a subgraph of $F$ such that $\psi|_{V(F')}$ is an oddomorphism from $F'$ to $G$. In this case a vertex $a \in V(F')$ that is odd/even w.r.t.~$\psi|_{F'}$ may have the opposite parity w.r.t.~$\psi|_F$, and $a$ may be neither even nor odd w.r.t.~$\psi|_F$ if $F'$ is a proper subgraph of $F$.
\end{remark}

Before proving our main result, we need the following lemma:

\begin{lemma}\label{lem:oddoconnected}
Let $G$ be a connected graph and suppose that $\psi$ is a weak oddomorphism from a graph $F$ to $G$. Then there is a connected subgraph $\hat{F}$ of $F$ such that $\psi|_{V(\hat{F})}$ is an oddomorphism from $\hat{F}$ to $G$.
\end{lemma}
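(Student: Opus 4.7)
My plan is to show that some connected component of a subgraph $F'$ witnessing that $\psi$ is a weak oddomorphism already serves as the desired $\hat{F}$. Fix such an $F'$ and let $C_1, \ldots, C_k$ be its connected components. For each $v \in V(G)$ and $i \in \{1,\ldots,k\}$, let $\alpha_i(v)$ denote the number of $a \in V(C_i) \cap \psi^{-1}(v)$ that are odd with respect to $\psi|_{V(F')}$. Because $C_i$ is a connected component of $F'$, we have $N_{F'}(a) = N_{C_i}(a)$ for every $a \in V(C_i)$, so a vertex of $C_i$ is odd (resp.\ even) with respect to $\psi|_{V(F')}$ if and only if it is odd (resp.\ even) with respect to $\psi|_{V(C_i)}$. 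In particular, condition~(1) of Definition~\ref{def:oddomorphism} for $\psi|_{V(F')}$ transfers automatically to $\psi|_{V(C_i)}$.

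The main step is a parity invariance. For each edge $uv \in E(G)$, I would double-count the number of edges of $C_i$ between $V(C_i) \cap \psi^{-1}(u)$ and $V(C_i) \cap \psi^{-1}(v)$. Viewed from the $u$-side, each $\psi|_{V(C_i)}$-odd vertex contributes an odd number (since $v \sim_G u$) and each $\psi|_{V(C_i)}$-even vertex contributes an even number, so the total is $\equiv \alpha_i(u) \pmod 2$; swapping the roles of $u$ and $v$ shows it is also $\equiv \alpha_i(v) \pmod 2$. Hence $\alpha_i(u) \equiv \alpha_i(v) \pmod 2$ whenever $uv \in E(G)$, and connectedness of $G$ propagates this to show that $\alpha_i(v) \bmod 2$ equals a single constant $c_i \in \{0,1\}$ for all $v \in V(G)$. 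If $\psi(V(C_i))$ were a proper subset of $V(G)$, then picking any $v$ outside the image gives $\alpha_i(v) = 0$, which pins $c_i = 0$.

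To finish, condition~(2) of Definition~\ref{def:oddomorphism} for $\psi|_{V(F')}$ gives $\sum_i \alpha_i(v) \equiv 1 \pmod 2$ for every $v$, and hence $\sum_i c_i \equiv 1 \pmod 2$. Some $c_{i^*}$ must therefore equal $1$, and by the previous paragraph this forces $\psi(V(C_{i^*})) = V(G)$ and $\alpha_{i^*}(v)$ odd for every $v \in V(G)$; combined with condition~(1) above, this makes $\psi|_{V(C_{i^*})}$ an oddomorphism from the connected graph $C_{i^*}$ to $G$, so we may set $\hat{F} := C_{i^*}$. The only nontrivial step is the double-counting identity that makes $\alpha_i(\cdot)$ parity-invariant along edges of $G$; once that is in hand, connectedness of $G$ and the oddomorphism hypothesis reduce the rest to bookkeeping.
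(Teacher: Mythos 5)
Your proof is correct and takes essentially the same approach as the paper: both rest on the same double-counting argument showing that, within a single connected component of $F'$, the parity of the number of odd vertices in $\psi^{-1}(u)$ equals that in $\psi^{-1}(v)$ whenever $uv \in E(G)$, and both then propagate this by connectedness of $G$. The only cosmetic difference is that the paper first picks a component with an odd count in one fiber and then propagates, whereas you establish the per-component parity constants $c_i$ for all components simultaneously and then argue some $c_{i^*}$ must be $1$; you are also slightly more careful to note explicitly that odd/even status is unchanged when restricting from $F'$ to one of its connected components.
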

\proof
Let $F'$ be the subgraph of $F$ such that $\psi|_{V(F')}$ is an oddomorphism from $F'$ to $G$. Pick any vertex $v \in V(G)$ and let $\hat{F}$ be a connected component of $F'$ that intersects $\psi^{-1}(v)$ in an odd number of $\psi|_{F'}$-odd vertices. Note that this must exist since $\psi^{-1}(v)$ contains an odd number of such vertices. Consider now a vertex $u \in V(G)$ such that $u \sim v$. Since $\psi^{-1}(v) \cap V(\hat{F})$ contains an odd number of odd vertices, there must be an odd number of edges of $\hat{F}$ between $\psi^{-1}(u)$ and $\psi^{-1}(v)$. By the reverse of this argument, $\hat{F}$ must intersect $\psi^{-1}(u)$ in an odd number of odd vertices. Since $G$ is connected we can continue this argument to show that $\hat{F}$ intersects $\psi^{-1}(w)$ in an odd number of odd vertices for all $w \in V(G)$. Thus $\psi|_{V(\hat{F})}$ is an oddomorphism from $\hat{F}$ (which is connected) to $G$.\qeds

\begin{theorem}\label{thm:maindualG}
Let $G$ be a connected graph. Then $\hom(F,G_0) \ge \hom(F,G_1)$ with strict inequality if and only if there exists a weak oddomorphism from $F$ to $G$. Moreover, if such a weak oddomorphism $\psi$ does exist, then there is a \emph{connected} subgraph $F'$ of $F$ such that $\psi|_{V(F')}$ is an oddomorphism from $F'$ to $G$.
\end{theorem}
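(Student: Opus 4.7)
The inequality $\hom(F,G_0) \ge \hom(F,G_1)$ and the characterization of equality via Equation~\eqref{eq:main} come directly from \Thm{main} (taking $U = \{u\}$ for some $u \in V(G)$, so that $G_U = G_1$). Thus strict inequality holds if and only if there exists $\psi \in \Hom(F,G)$ for which the system~\eqref{eq:main} admits no solution over $\mathbb{Z}_2$. My plan is to apply the Fredholm Alternative to this system and translate the resulting dual condition into the combinatorial language of weak oddomorphisms.

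Unpacking the Fredholm dual using the matrices from \Def{coeffmats}, a solution consists of vectors $y \in \mathbb{Z}_2^{V(F)}$ and $z \in \mathbb{Z}_2^{E(F)}$ satisfying, for each pair $(a,e) \in R$ with $e = \psi(a)v$,
\[y_a \equiv \sum_{b \in N_F(a) \cap \psi^{-1}(v)} z_{ab} \pmod 2, \qquad \sum_{a \in \psi^{-1}(u)} y_a = 1.\]
Interpreting $z$ as the edge indicator of a subgraph $F' \subseteq F$, the first family of equations forces the count $|N_{F'}(a) \cap \psi^{-1}(v)|$ to have the same parity $y_a$ for every $v \sim_G \psi(a)$. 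This is exactly Condition~(1) of \Def{oddomorphism} for $\psi|_{V(F')}$, with the $\psi|_{F'}$-odd vertices being precisely those $a$ with $y_a = 1$, while the last equation asserts that $\psi^{-1}(u)$ contains an odd number of such odd vertices.

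To upgrade this single-fibre condition to Condition~(2) of an oddomorphism (oddness in every fibre) I would use a short double-count: for any edge $uw \in E(G)$, the number of $F'$-edges between $\psi^{-1}(u)$ and $\psi^{-1}(w)$ is congruent modulo $2$ to the number of $\psi|_{F'}$-odd vertices in each of the two fibres. Hence the two fibre-odd-counts have matching parity, and since $G$ is connected, oddness propagates from $\psi^{-1}(u)$ to every fibre $\psi^{-1}(v)$. Therefore $\psi|_{V(F')}$ is an oddomorphism from $F'$ to $G$, making $\psi$ a weak oddomorphism from $F$ to $G$. Conversely, given a weak oddomorphism $\psi$ witnessed by a subgraph $F'$, setting $z$ to the indicator of $E(F')$ and $y_a = 1$ iff $a$ is $\psi|_{F'}$-odd produces a dual solution precisely because Conditions~(1) and~(2) of \Def{oddomorphism} encode the two families of dual equations; by \Thm{main} this yields $\hom(F,G_0) > \hom(F,G_1)$. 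The ``moreover" clause is then immediate from \Lem{oddoconnected}.

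The main obstacle should be purely bookkeeping: carefully transposing $A^\psi$ and $B^\psi$ and reading off what each dual equation says about vertex and edge subsets of $F$ without index confusion. Once that translation is clean, the match with the oddomorphism definition and the propagation-via-double-counting step are both short and self-contained.
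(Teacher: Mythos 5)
Your proposal is correct and mirrors the paper's own proof: both reduce via \Thm{main} to non-solvability of the system~\eqref{eq:main}, apply the Fredholm Alternative, read off the dual solution $(y,z)$ as an odd-vertex indicator and a subgraph edge-indicator, propagate the fibre-oddness by edge parity counting using connectedness of $G$, and obtain the connected witness from \Lem{oddoconnected}. The bookkeeping you flag as the potential obstacle is exactly what the paper carries out explicitly, and your translation of the transposed system matches theirs.
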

\proof
Note that we can assume that $V(F') = V(F)$ since adding isolated vertices to $F'$ does not affect whether it has an oddomorphism to $G$ (later we will replace $F'$ with one of its connected components via Lemma~\ref{lem:oddoconnected}). Pick $\hat{u} \in V(G)$ and take $G_1 = G_{\{\hat{u}\}}$. Suppose that $\hom(F,G_0) \ne \hom(F,G_1)$. By Theorem~\ref{thm:main} this is equivalent to the existence of $\psi \in \Hom(F,G)$ such that the system 
\[\begin{pmatrix}A^\psi \\ B^\psi\end{pmatrix} x = \begin{pmatrix} \chi_{\psi^{-1}(\hat{u})} \\ 0\end{pmatrix}\]
over $\mathbb{Z}_2$ does not have a solution. By the Fredholm Alternative this is further equivalent to there being a solution to
\begin{equation}\label{eq:cert}
\begin{pmatrix}(A^\psi)^T & (B^\psi)^T \\ \chi_{\psi^{-1}(\hat{u})}^T & 0\end{pmatrix}\begin{pmatrix}y \\ z\end{pmatrix} = \begin{pmatrix}0 \\ 1\end{pmatrix},
\end{equation}
where $y$ is indexed by $a \in V(F)$ and $z$ is indexed by $e \in E(F)$. 

Suppose that we have a solution $(y,z)^T$ to Equation~\eqref{eq:cert}. This must satisfy $(A^\psi)^T y + (B^\psi)^Tz = 0$ which is equivalent to $(A^\psi)^T y = (B^\psi)^Tz$. Let $O = \{a \in V(F) : y_a = 1\}$ and let $F'$ be the graph on vertex set $V(F)$ with edge set $E(F') = \{e \in E(F) : z_e = 1\}$. We will show that the elements of $O$ are odd vertices w.r.t.~$\psi|_{F'}$, and the rest are even. For $a \in V(F)$ and $e \in E_G(\psi(a))$ the $(a,e)$-entry of $(A^{\psi})^Ty$ is equal to $y_a$. The $(a,e)$-entry of $(B^{\psi})^Tz$ is equal to
\begin{equation}\label{eq:idunno}
    \sum_{f \in E(a), \ \psi(f) = e} z_f,
\end{equation}
where we are using $\psi(f)$ to denote the edge $\psi(a)\psi(b)$ where $a$ and $b$ are the ends of $f$. Letting $v = \psi(a)$, since $e \in E_G(\psi(a))$ we have that $e = uv$ for some $u \sim v$ in $G$. Then the expression in~\eqref{eq:idunno} is equal to the number of edges $f \in E(F')$ such that $f = ab$ where $\psi(b) = u$. In other words it is equal to the number of neighbors of $a$ in $F'$ that are contained in $\psi^{-1}(u)$, i.e., $|N_{F'}(a) \cap \psi^{-1}(u)|$. Thus the condition $(A^\psi)^T y = (B^\psi)^Tz$ says that for all $a \in V(F')$ and $u \in N_G(\psi(a))$ we have that $|N_{F'}(a) \cap \psi^{-1}(u)|$ is odd if and only if $y_a = 1$, i.e., $a \in O$. Therefore every vertex of $F'$ is either even or odd with respect to $\psi|_{F'}$.

The last equation of \Eq{cert} says that $\psi^{-1}(\hat{u})$ must contain an odd number of odd vertices of $F'$. Let $v \sim_G \hat{u}$, and let us consider the parity of the number of edges going between $\psi^{-1}(\hat{u})$ and $\psi^{-1}(v)$ in $F'$. This parity is equal to the parity of the number of odd vertices in $\psi^{-1}(\hat{u})$, which is odd, and also to the parity of the number of odd vertices in $\psi^{-1}(v)$. Thus $\psi^{-1}(v)$ must also contain an odd number of odd vertices, and since $G$ is connected this holds for all $v \in V(G)$. So we have constructed the subgraph $F'$ of $F$ such that $\psi|_{V(F')}$ is an oddomorphism to $G$. Therefore $\psi$ is a weak oddomorphism from $F$ to $G$ as desired. The converse direction is a straightforward reversal of this argument so we omit it.

Lastly, in the case where $F$ does have a weak oddomorphism $\psi$ to $G$, by Lemma~\ref{lem:oddoconnected} we can always find a \emph{connected} subgraph $F'$ of $F$ such that $\psi_{V(F')}$ is an oddomorphism from $F'$ to $G$.\qeds

Note that Corollary~\ref{cor:0iso} says that $\hom(G,G_0) \ne \hom(G,G_1)$ and therefore Theorem~\ref{thm:maindualG} shows that $G$ must admit a weak oddomorphism to itself. Moreover, the following slightly stronger statement can be proven easily (and thus we omit the proof).

\begin{lemma}\label{lem:idoddo}
If $G$ is a connected graph, then the identity map on $V(G)$ is an oddomorphism from $G$ to itself. Therefore $\hom(G,G_0) \ne \hom(G,G_1)$.
\end{lemma}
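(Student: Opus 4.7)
The plan is to verify directly that the identity map $\psi = \mathrm{id}$ on $V(G)$ satisfies the two conditions of Definition~\ref{def:oddomorphism}, and then invoke Theorem~\ref{thm:maindualG} for the second statement. The key observation, which trivializes both conditions, is that the fibres of the identity are singletons: $\psi^{-1}(v) = \{v\}$ for every $v \in V(G)$.

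First, to check Condition (1), fix any $a \in V(G)$ and any $v \sim_G \psi(a) = a$. Then $N_G(a) \cap \psi^{-1}(v) = N_G(a) \cap \{v\} = \{v\}$, since by hypothesis $v$ is a neighbour of $a$. Hence $|N_G(a) \cap \psi^{-1}(v)| = 1$ for every $v \sim_G \psi(a)$, so every vertex of $G$ is odd with respect to $\mathrm{id}$. For Condition (2), each fibre $\psi^{-1}(v) = \{v\}$ consists of the single vertex $v$, which we have just shown to be odd, so $\psi^{-1}(v)$ contains exactly one (and hence an odd number of) odd vertices. This verifies that $\mathrm{id}$ is an oddomorphism from $G$ to $G$.

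For the second assertion, note that an oddomorphism is a weak oddomorphism, taking the subgraph in Definition~\ref{def:oddomorphism} to be all of $G$. Therefore $G$ admits a weak oddomorphism to itself, and Theorem~\ref{thm:maindualG} immediately yields the strict inequality $\hom(G,G_0) > \hom(G,G_1)$; in particular $\hom(G,G_0) \neq \hom(G,G_1)$.

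There is no real obstacle here: once one unpacks the definitions, the fact that fibres are singletons forces every neighbourhood-intersection to have size exactly $1$, which is the ``most odd'' possible outcome. The only thing to be mildly careful about is the distinction between odd vertices with respect to $\psi|_F$ versus $\psi|_{F'}$ flagged in Remark~\ref{rem:oddoissue}, but since we take $F' = F = G$ this distinction does not arise.
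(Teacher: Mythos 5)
Your proof is correct and is essentially the argument the paper has in mind: the paper explicitly omits the proof of this lemma, remarking that it ``can be proven easily,'' and your observation that the fibres of the identity are singletons (so every vertex is odd and each fibre trivially contains an odd number of odd vertices) is exactly that easy verification. The deduction of $\hom(G,G_0) \ne \hom(G,G_1)$ via Theorem~\ref{thm:maindualG} is also the intended route for the ``Therefore'' in the statement.
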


We remark that the above lemma can be used to give an alternative proof that $G_0 \not\cong G_1$.

Before moving on, we note that in Section~\ref{sec:construction2} we will give a construction of graphs $\tilde{G}_0$ and $\tilde{G}_1$ that is very similar to $G_0$ and $G_1$, and we will perform a similar analysis on when a graph $F$ satisfies $\hom(F,\tilde{G}_0) \ne \hom(F,\tilde{G}_1)$. Since we have already done this in detail for $G_0$ and $G_1$ here, we will be much briefer in Section~\ref{sec:construction2}. Thus some readers might find it easier to skip ahead and read Section~\ref{sec:construction2} and then come back.

\subsection{Examples of Oddomorphisms}

Here we will see that certain subdivisions and covers of a graph $G$ always have oddomorphisms to $G$.

Recall that a graph $F$ is a subdivision of $G$ if $F$ is obtained from $G$ by replacing each of its edges with a path of length at least one, such that these paths are internally disjoint from each other. An \emph{odd subdivision} is a subdivision in which each of these paths have odd length. Note that this means that $G$ is an odd subdivision of itself. We will refer to the vertices of a subdivision of $G$ that correspond to the original vertices of $G$ as the \emph{main vertices} of the subdivision. We will show that any odd subdivision $F$ of a graph $G$ has an oddomorphism to $G$. The idea is simple: the paths between two main vertices of $F$ are ``folded up" onto the edge between the two corresponding vertices of $G$.

\begin{lemma}\label{lem:subdivisions}
Let $F$ be an odd subdivision of $G$. Then $F$ admits an oddomorphism to $G$.
\end{lemma}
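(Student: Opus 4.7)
The plan is to construct the oddomorphism $\psi$ explicitly by ``folding'' each subdivision path back and forth onto the edge it subdivides. For each edge $uv \in E(G)$, let $P_{uv}$ be the corresponding path in $F$, with internally disjoint interiors; write its vertex sequence as $w_0 = u, w_1, \ldots, w_{2k+1} = v$, where the length $2k+1$ is odd (possibly $k = 0$, in which case the ``path'' is just the edge $uv$). Define $\psi(w_i) = u$ when $i$ is even and $\psi(w_i) = v$ when $i$ is odd; on a main vertex this forces $\psi$ to be the identity, so these local definitions agree at the main vertices and give a well-defined map $\psi \colon V(F) \to V(G)$.

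The first routine step is to check that $\psi$ is a homomorphism: consecutive vertices $w_i, w_{i+1}$ on any $P_{uv}$ are sent to the distinct endpoints of $uv$, which are adjacent in $G$, and these are the only edges of $F$. The next step is to classify each vertex by parity with respect to $\psi$. For an internal vertex $w_i$ of $P_{uv}$ ($1 \le i \le 2k$), both of its $F$-neighbors $w_{i-1}$ and $w_{i+1}$ are mapped by $\psi$ to the same endpoint (the one opposite $\psi(w_i)$), so $|N_F(w_i) \cap \psi^{-1}(x)|$ equals $2$ or $0$ for every $x \sim_G \psi(w_i)$; hence every internal vertex is $\psi$-even. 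For a main vertex $u$, its neighbors in $F$ are exactly the second vertices $w_1^{(uv)}$ of each path $P_{uv}$ incident to $u$ in $G$, and each such $w_1^{(uv)}$ is mapped to $v$. Thus for every $v \sim_G u$ we have $|N_F(u) \cap \psi^{-1}(v)| = 1$, so every main vertex is $\psi$-odd. This verifies Condition~(1) of \Def{oddomorphism}.

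Finally, for Condition~(2), observe that the internal vertices on $P_{uv}$ take values in $\{u,v\}$ and all such internal vertices are $\psi$-even, while the unique $\psi$-odd vertex in any fibre $\psi^{-1}(v)$ is the main vertex $v$ itself. Hence $\psi^{-1}(v)$ contains exactly one $\psi$-odd vertex for every $v \in V(G)$, which is odd. Combining these verifications shows that $\psi$ is an oddomorphism from $F$ to $G$.

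There is essentially no obstacle here: the only place one needs to be careful is the well-definedness at the main vertices (and the sanity of the construction when some subdivision path has length $1$, i.e.\ the edge itself was not subdivided, which the parity scheme handles automatically since $\psi(w_0) = u, \psi(w_1) = v$). Everything else is a straightforward parity count on the folded paths.
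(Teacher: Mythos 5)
Your construction and verification match the paper's proof essentially line for line: fold each odd-length subdivision path back and forth onto its edge via parity of the index, observe that internal path vertices are even, main vertices are odd, and each fibre has exactly one odd vertex. The only cosmetic difference is that you fix one orientation per path and note consistency only at main vertices, whereas the paper explicitly checks that the two parameterizations $v^{ij}_k$ and $v^{ji}_{\ell-k}$ agree because $\ell$ is odd; both are valid ways to handle well-definedness.
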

\proof
Let $\{1, \ldots, n\}$ be the vertex set of $G$ and let $V(F) \supseteq [n]$ such that vertex $i$ of $F$ is the main vertex of $F$ that corresponds to vertex $i$ of $G$. For each edge $ij$ of $G$, let $P_{ij}$ be the path from $i$ to $j$ in $F$ that replaced the edge $ij$ of $G$ and let $v^{ij}_{k}$ denote the vertex at distance $k$ from $i$ along this path (so $v^{ij}_0 = i$). Note that order matters in this notation and so $v^{ij}_k = v^{ji}_{\ell-k}$ where $\ell$ is the length of the path $P_{ij}$. Define a function $\psi: V(F) \to V(G)$ such that $\psi(i) = i$ for all $i \in [n]$ and
\[\psi(v^{ij}_k) = \begin{cases} i & \text{if } k \equiv 0 \ \mathrm{mod} \ 2 \\ j & \text{if } k \equiv 1 \ \mathrm{mod} \ 2
\end{cases}\]
for all edges $ij$ of $G$. Note that this is well defined since if $v^{ij}_k = v^{ji}_{k'}$ then $k' = \ell - k$ where $\ell$ is the length of the path $P_{ij}$ which is odd. Thus $k$ and $k'$ have opposite parity and thus the above definition of $\psi$ maps $v^{ij}_k$ and $v^{ji}_{k'}$ to the same vertex of $G$.

It is easy to see that $\psi$ is an oddomorphism. For $i \in [n]$ we have that $N_F(i) \cap \psi^{-1}(j) = \{v^{ij}_1\}$ for all $j \sim_G i$ and thus all vertices $i \in [n]$ in $F$ are odd with respect to $\psi$. On the other hand for any internal vertex $v^{ij}_k$ of $P_ij$ that is mapped to $i$ by $\psi$ we have that $N_F(v^{ij}_k) \cap \psi^{-1}(j) = \{v^{ij}_{k-1},v^{ij}_{k+1}\}$ and $N_F(v^{ij}_k) \cap \psi^{-1}(j') = \varnothing$ for all other $j' \in V(G)$, thus each of these vertices is even w.r.t.~$\psi$. Therefore every vertex of $F$ is either odd or even and each fibre of $\psi$ contains precisely one odd vertex. Thus $\psi$ is an oddomorphism as desired.\qeds

We immediately obtain that any cycle of length at least and of the same parity as $k$ has an oddomorphism to $C_k$:

\begin{cor}\label{cor:cycleoddos}
Let $k', k \ge 3$. Then $C_{k'}$ has an oddomorphism to $C_k$ if and only if $k' \ge k$ and $k' \equiv k \ \mathrm{mod} \ 2$. The same holds for weak oddomorphisms.
\end{cor}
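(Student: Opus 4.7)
The plan is to prove both implications, dealing with oddomorphisms and weak oddomorphisms simultaneously by reducing the weak case to the ordinary case.

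For the ``if'' direction (assuming $k' \ge k$ and $k' \equiv k \pmod 2$), I would appeal directly to Lemma~\ref{lem:subdivisions}. Since $k'-k$ is a nonnegative even integer, I can realize $C_{k'}$ as an odd subdivision of $C_k$: replace one edge of $C_k$ by a path of length $k'-k+1$ (odd because $k'-k$ is even) and leave the other $k-1$ edges as length-$1$ paths. Lemma~\ref{lem:subdivisions} then produces an oddomorphism $C_{k'} \to C_k$, and this is automatically a weak oddomorphism as well.

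For the ``only if'' direction, suppose $\psi$ is a weak oddomorphism from $C_{k'}$ to $C_k$. By Lemma~\ref{lem:oddoconnected}, there is a connected subgraph $F'$ of $C_{k'}$ such that $\psi|_{V(F')}$ is an oddomorphism from $F'$ to $C_k$. A connected subgraph of a cycle is either an isolated vertex, a path, or the whole cycle. I would rule out the first two as follows. If $F'$ is an isolated vertex $a$, then $(\psi|_{F'})^{-1}(w)$ is empty for every $w \ne \psi(a)$, contradicting Condition~(2) of an oddomorphism (since $C_k$ has $k \ge 3 > 1$ vertices). If $F'$ is a path, let $a$ be an endpoint, with $\psi(a)=v$ and its unique $F'$-neighbor mapped to some $u \sim_{C_k} v$; then $|N_{F'}(a)\cap \psi^{-1}(u)|=1$ while $|N_{F'}(a)\cap \psi^{-1}(u')|=0$ for the other $C_k$-neighbor $u'$ of $v$. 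These have opposite parities, so $a$ is neither odd nor even with respect to $\psi|_{F'}$, violating Condition~(1). Hence $F' = C_{k'}$, and $\psi$ itself is an oddomorphism.

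It remains to show that any oddomorphism $\psi\colon C_{k'} \to C_k$ forces $k' \ge k$ and $k' \equiv k \pmod 2$. The key observation is that a vertex $a \in V(C_{k'})$ with $\psi(a)=v$ has exactly two neighbors mapping into $\{v-1,v+1\}$; hence $a$ is $\psi$-odd iff its two neighbors map to different elements (contributing one edge to each side) and $\psi$-even iff they map to the same element (contributing zero or two edges to each side). For each $v \in V(C_k)$ let $n_v$ denote the number of edges of $C_{k'}$ between $\psi^{-1}(v)$ and $\psi^{-1}(v+1)$. By the observation above, the parity of $n_v$ equals the parity of the number of $\psi$-odd vertices in $\psi^{-1}(v)$, which by the oddomorphism condition is odd. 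So each $n_v \ge 1$, and summing over $v$ gives
\[
k' \;=\; \sum_{v \in V(C_k)} n_v \;\ge\; k \quad \text{and}\quad k' \equiv k \pmod 2.
\]
The main (minor) obstacle will be the path-subgraph case in the reduction step; I expect the rest to be routine edge-counting modulo $2$.
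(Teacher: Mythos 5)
Your proof is correct, and it takes a genuinely different route from the paper's for the ``only if'' direction. The paper splits into three cases: (i) $k' < k$, disposed of by surjectivity; (ii) $k'$ odd and $k$ even, disposed of by bipartiteness; and (iii) $k'$ even and $k$ odd, the non-trivial case, which the paper handles indirectly by invoking Theorem~\ref{thm:maindualG}, the explicit identification $(C_k)_0 \cong 2C_k$ and $(C_k)_1 \cong C_{2k}$ from Example~\ref{ex:cycles}, and a categorical-product argument with $K_2$ showing $\hom(F, G_0) = \hom(F, G_1)$ for all bipartite $F$. Your approach instead reduces via Lemma~\ref{lem:oddoconnected} to an oddomorphism on a connected subgraph $F'$ of $C_{k'}$, correctly rules out the isolated-vertex and path possibilities to force $F' = C_{k'}$, and then settles both $k' \ge k$ and $k' \equiv k \pmod 2$ at once by a direct fibre-by-fibre edge count: each $n_v$ is odd (hence $\ge 1$), and $k' = \sum_v n_v$. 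This is cleaner and more elementary than the paper's Case (iii), and it avoids the heavy machinery of Theorem~\ref{thm:maindualG} and categorical products. What you lose is the by-product recorded in Remark~\ref{rem:bipartite}: the paper's Case (iii) argument establishes, with no extra work, that \emph{no} connected bipartite graph has a weak oddomorphism to an odd cycle, a fact the paper reuses later (e.g.\ to show bipartite graphs never map via weak oddomorphism to non-bipartite graphs, and that $K_2 \times G_0 \cong K_2 \times G_1$ for non-bipartite $G$). Your cycle-specific edge count does not obviously generalize in that direction. For the corollary as stated, though, your proof is complete and arguably the more direct one.
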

\proof
Suppose that $k' \ge k$ and $k' \equiv k \ \mathrm{mod} \ 2$. Then $C_{k'}$ has an oddomorphism to $C_k$ by Lemma~\ref{lem:subdivisions} and the fact that $C_{k'}$ can be obtained from $C_k$ by replacing one of its edges with a path of length $k'-k+1$ and leaving all other edges as paths of length one. Since $C_{k'}$ has an oddomorphism to $C_k$ in this case, it also has a weak oddomorphism to $C_k$.

For the converse, we have three cases:

{\textbf{Case 1: $k' < k$.}} \ In this case $C_{k'}$ cannot have a weak oddomorphism to $C_k$ since weak oddomorphisms are always surjective.

{\textbf{Case 2: $k'$ odd, $k$ even.}} $C_{k'}$ cannot have a weak oddomorphism to $C_k$ because weak oddomorphisms are homomorphisms and odd cycles cannot have homomorphisms to even cycles (or to any bipartite graph).

{\textbf{Case 3: $k'$ even, $k$ odd.}} In this case we make use of Theorem~\ref{thm:maindualG}. Let $G = C_k$. From Example~\ref{ex:cycles} we see that $G_0 = 2C_k$ and $G_1 = C_{2k}$. We will show that $C_{k'}$ must have the same number of homomorphisms to $G_0$ and $G_1$ which will prove that $C_{k'}$ does not have a weak oddomorphism to $C_k$.

We make use of the \emph{categorical product of graphs}. Given graphs $H$ and $H'$ their categorical product, denoted $H \times H'$ has vertex set $V(H) \times V(H')$ with $(u,u') \sim (v,v')$ if $u \sim_H v$ and $u' \sim_{H'} v'$. It is well known that $\hom(F,H \times H') = \hom(F,H)\hom(F,H')$ for any graph $F$. Furthermore, it is known that $K_2 \times H \cong 2H$ for any bipartite graph $H$, and $K_2 \times C_k \cong C_{2k}$ for odd $k$. Therefore, we have that $K_2 \times G_0 \cong 2C_{2k} \cong K_2 \times G_1$ and thus
\[\hom(F,K_2)\hom(F,G_0) = \hom(F,K_2\times G_0) = \hom(F,K_2\times G_1) = \hom(F,K_2)\hom(F,G_1).\]
For bipartite $F$, the term $\hom(F,K_2)$ is nonzero and thus we obtain that $\hom(F,G_0) = \hom(F,G_1)$ for all bipartite $F$. Since $k'$ is even in this case we have that $\hom(C_{k'},G_0) = \hom(C_{k'},G_1)$ and therefore $C_{k'}$ does not have a weak oddomorphism to $G = C_k$.

Since in the above cases $C_{k'}$ does not have a weak oddomorphism to $C_k$, it also does not have an oddomorphism to $C_k$.\qeds

\begin{remark}\label{rem:bipartite}
Note that in Case 3 of the above proof we actually showed that if $k$ is odd, then $F$ does not have a weak oddomorphism to $C_k$ whenever $F$ is \emph{any} connected bipartite graph. Later, we will combine this with other results to show that no bipartite graph can have a weak oddomorphism to a non-bipartite graph.
\end{remark}

In light of Lemma~\ref{lem:subdivisions} one may wonder whether every graph admitting a weak oddomorphism to a graph $G$ must contain a subdivision of $G$. However we will see a counterexample to this in Section~\ref{sec:discussion}.

If $F$ and $G$ are graphs, then $\psi: V(F) \to V(G)$ is a \emph{covering map}, and $F$ is a cover of $G$, if $\psi$ is a surjection which is \emph{locally bijective}, i.e., the restriction of $\psi$ to $N_F(a)$ is a bijection to $N_G(\psi(a))$ for all $a \in V(F)$. Note that this implies that $\psi$ is a homomorphism. Equivalently, $\psi$ is a covering map if each fibre of $\psi$ is an independent set and two fibres $\psi^{-1}(u)$ and $\psi^{-1}(v)$ have no edges between them if $u \not\sim v$ and have a perfect matching between them if $u \sim v$. It is easy to see from this definition that if $G$ is connected then all of the fibres of $\psi$ have the same size. If all fibres of $\psi$ have size $k$ then $F$ is said to be a $k$-cover of $G$. We will say that $\psi$ is an \emph{odd covering map}, and $F$ is an \emph{odd cover}, if every fibre of $\psi$ has odd cardinality. From these definitions the following lemma is immediate:

\begin{lemma}\label{lem:covers}
If $\psi$ is an odd covering map from $F$ to $G$, then $\psi$ is an oddomorphism from $F$ to $G$.
\end{lemma}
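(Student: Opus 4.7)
The plan is to verify the two defining conditions of an oddomorphism directly from the hypothesis that $\psi$ is a locally bijective homomorphism with odd fibres. The whole argument is an unpacking of definitions, so I expect no real obstacle — the only subtlety is making sure that the ``local bijectivity'' condition is used to hit both conditions at once.

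First I would address Condition (1) of Definition~\ref{def:oddomorphism}. Fix any $a \in V(F)$ and any $v \in V(G)$ with $v \sim_G \psi(a)$. Because $\psi$ is a covering map, its restriction to $N_F(a)$ is a bijection onto $N_G(\psi(a))$, so there is exactly one neighbor of $a$ in the fibre $\psi^{-1}(v)$, i.e.\ $|N_F(a) \cap \psi^{-1}(v)| = 1$. In particular this cardinality is odd for every such $v$, which means that every vertex of $F$ is odd with respect to $\psi$. Hence Condition (1) holds (with no ``even'' vertices appearing at all).

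Next I would verify Condition (2). Since every vertex of $F$ is odd with respect to $\psi$ by the previous paragraph, the number of odd vertices in $\psi^{-1}(v)$ equals $|\psi^{-1}(v)|$ for each $v \in V(G)$. By hypothesis $\psi$ is an \emph{odd} covering map, so every fibre has odd cardinality, and therefore each fibre contains an odd number of odd vertices. This is exactly Condition (2), and together with the preceding paragraph it establishes that $\psi$ is an oddomorphism from $F$ to $G$, completing the proof.
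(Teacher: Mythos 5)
Your proof is correct and follows the same approach as the paper: local bijectivity gives $|N_F(a)\cap\psi^{-1}(v)|=1$ so every vertex is odd, and odd fibre sizes then give Condition~(2). Nothing to add.
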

\proof
For each $a \in V(F)$ we have that $|N_F(a) \cap \psi^{-1}(v)| = 1$ for all $v \sim_G \psi(a)$. Thus every vertex of $F$ is odd with respect to $\psi$, and since every fibre of $\psi$ contains an odd number of vertices we have that $\psi$ is an oddomorphism.\qeds

The above lemmas provide many examples of oddomorphisms. Moreover, in Section~\ref{sec:oddomorphisms} we will see that oddomorphisms can be composed, and thus we can essentially compose the above constructions to obtain even more examples of oddomorphisms.

\section{Homomorphism counts from bounded degree graphs}\label{sec:degree}

The lemma below, where $\Delta(G)$ denotes the maximum degree of $G$, follows directly from the definition of oddomorphism. 

\begin{lemma}\label{lem:degree}
Let $\psi$ be an oddomorphism from $F$ to $G$ and let $a \in V(F)$ be an odd vertex. Then $\deg_F(a) \ge \deg_G(\psi(a))$, and thus $\Delta(F) \ge \Delta(G)$ whenever $F$ admits a weak oddomorphism to $G$.
\end{lemma}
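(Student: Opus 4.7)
The plan is to unfold the definition of oddomorphism directly. Fix an odd vertex $a \in V(F)$ with respect to $\psi$. By Condition (1) of Definition~\ref{def:oddomorphism}, for every $v \sim_G \psi(a)$ the set $N_F(a) \cap \psi^{-1}(v)$ has odd cardinality, hence is nonempty. The sets $\psi^{-1}(v)$ for distinct $v$ are disjoint, so summing over the $\deg_G(\psi(a))$ neighbors $v$ of $\psi(a)$ in $G$ yields
\[
\deg_F(a) \;\ge\; \sum_{v \sim_G \psi(a)} \lvert N_F(a) \cap \psi^{-1}(v) \rvert \;\ge\; \deg_G(\psi(a)),
\]
which is the first claim.

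For the second claim, suppose $\psi$ is a weak oddomorphism from $F$ to $G$, and let $F'$ be a subgraph of $F$ with $\psi|_{V(F')}$ an oddomorphism from $F'$ to $G$. Pick $v^* \in V(G)$ with $\deg_G(v^*) = \Delta(G)$. By Condition (2) of Definition~\ref{def:oddomorphism}, $\psi^{-1}(v^*) \cap V(F')$ contains (an odd number and hence) at least one vertex $a$ that is odd with respect to $\psi|_{F'}$. Applying the first part of the lemma to the oddomorphism $\psi|_{V(F')} \colon F' \to G$ and the vertex $a$, we obtain $\deg_{F'}(a) \ge \deg_G(v^*) = \Delta(G)$. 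Since $F'$ is a subgraph of $F$, $\deg_F(a) \ge \deg_{F'}(a)$, so $\Delta(F) \ge \deg_F(a) \ge \Delta(G)$.

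There is no real obstacle here: the only subtlety (flagged by Remark~\ref{rem:oddoissue}) is to be careful that ``odd with respect to $\psi$'' is meant relative to $F'$ when invoking the first claim inside $F'$, and that the inequality $\deg_F(a) \ge \deg_{F'}(a)$ is precisely what lets us pass from $F'$ back to $F$. Both steps are immediate from the definitions, so the proof is essentially a single unwinding.
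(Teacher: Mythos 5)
Your proof is correct and takes essentially the same route as the paper: for the first claim you unwind the definition to see that an odd vertex has a nonempty set of neighbors in each relevant fibre, and for the second you pass to the subgraph $F'$ carrying the oddomorphism and use surjectivity (Condition (2)) to find an odd vertex over a maximum-degree vertex of $G$. Your version is slightly more explicit about selecting $v^*$ and invoking $\deg_F(a)\ge\deg_{F'}(a)$, but the argument is the same.
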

\proof
Let $v = \psi(a)$ and let $v_1, \ldots, v_d$ be the neighbors of $v$ in $G$. Since $a$ is an odd vertex in $\psi^{-1}(v)$, we have that $a$ has an odd, and thus strictly positive, number of neighbors in each of $\psi^{-1}(v_1), \ldots, \psi^{-1}(v_d)$ which are disjoint. Thus $\deg_F(a) \ge \deg_G(v)$.

If a graph $F$ has a weak oddomorphism to $G$ then there is some subgraph $F'$ of $F$ that has an oddomorphism to $G$. Since oddomorphisms are surjective by Condition (2) of Definition~\ref{def:oddomorphism}, the above implies that $\Delta(F') \ge \Delta(G)$ and thus $\Delta(F) \ge \Delta(G)$.\qeds

As an immediate consequence of Lemma~\ref{lem:degree} and Theorem~\ref{thm:maindualG}, we are able to show that homomorphism counts from bounded degree graphs do not determine a graph up to isomorphism.

\begin{cor}\label{cor:degree}
For all $d \in \mathbb{N}$ there exist graphs $H$ and $H'$ such that $\hom(F,H) = \hom(F,H')$ for any graph $F$ of maximum degree less than $d$, but $H \not\cong H'$.
\end{cor}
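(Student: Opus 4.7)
The plan is to apply the construction from Section~\ref{sec:construction} to a connected graph $G$ with maximum degree exactly $d$, and show that the resulting pair $G_0, G_1$ witnesses the corollary. The simplest choice is $G = K_{d+1}$, which is connected with $\Delta(G) = d$ (for $d \geq 1$; the case $d = 0$ is trivial since the hypothesis on $F$ is vacuous and any two non-isomorphic graphs work).

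First I would set $H := G_0$ and $H' := G_1$. By \Cor{0iso}, since $|\{u\}| = 1$ is odd, we have $H \not\cong H'$, so the non-isomorphism requirement is met. Next, let $F$ be any graph with $\Delta(F) < d$. I want to conclude $\hom(F, H) = \hom(F, H')$. By the contrapositive of \Thm{maindualG}, it suffices to show that $F$ does not admit a weak oddomorphism to $G$. But \Lem{degree} states that the existence of a weak oddomorphism from $F$ to $G$ forces $\Delta(F) \geq \Delta(G) = d$, contradicting $\Delta(F) < d$. Hence no such weak oddomorphism exists, and by \Thm{maindualG} we get $\hom(F, G_0) = \hom(F, G_1)$ as required.

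There is essentially no obstacle here: the corollary is a direct application of the two cited results, and the only design choice is picking a connected $G$ with $\Delta(G) = d$, for which $K_{d+1}$ is the obvious candidate. The proof is a one-line combination of \Lem{degree} (lower bound on $\Delta(F)$ via weak oddomorphism) and \Thm{maindualG} (weak oddomorphism as the combinatorial certificate for $\hom(F,G_0) \neq \hom(F,G_1)$), together with \Cor{0iso} to ensure $G_0 \not\cong G_1$.
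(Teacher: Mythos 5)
Your proof is correct and takes essentially the same route as the paper: both rely on the chain Theorem~\ref{thm:maindualG} plus Lemma~\ref{lem:degree} to rule out any $F$ with $\Delta(F) < d$, together with the non-isomorphism of $G_0$ and $G_1$. The only cosmetic difference is that you specialize $G$ to $K_{d+1}$, whereas the paper takes an arbitrary connected $G$ with $\Delta(G) \geq d$.
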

\proof
Let $G$ be any connected graph of maximum degree at least $d$ and let $v \in V(G)$ be a vertex of maximum degree. Suppose that $\hom(F,G_0) \ne \hom(F,G_1)$ for some graph $F$. We will show that $F$ must have maximum degree at least $d$. By Theorem~\ref{thm:maindualG} we have that $F$ must have a weak oddomorphism to $G$ and thus $\Delta(F) \ge \Delta(G) \ge d$ by Lemma~\ref{lem:degree}.\qeds 

The simplest example of a graph with maximum degree $d$ is the star graph $K_{1,d}$. By considering this choice of $G$ and examining the structure of $G_0$ and $G_1$, we can strengthen the above result to show that homomorphism counts from graphs of bounded degree do not even distinguish connected graphs from those with isolated vertices. For this we will use a simplified description of the graphs $G_0$ and $G_1$. For $d \in \mathbb{N}$, define $G^d_i$ for $i = 0,1$ to be a graph with vertex set $[d] \cup \{S \subseteq [d] : |S| \equiv i \ \mathrm{mod} \ 2\}$ such that $j \in [d]$ is adjacent to $S$ if $j \notin S$. Then we have the following:

\begin{lemma}\label{lem:Gdiso}
For $G \cong K_{1,d}$, we have $G^d_i \cong G_i$ for $i = 0,1$.
\end{lemma}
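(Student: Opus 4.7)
The plan is to explicitly write down the vertex sets and adjacencies of $G_0$ and $G_1$ when $G = K_{1,d}$, identify a natural bijection to $V(G^d_0)$ and $V(G^d_1)$, and verify that this bijection preserves adjacencies and non-adjacencies.

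Let $c$ denote the center of $K_{1,d}$ and let $1,\ldots,d$ be the leaves, so $E(c) = \{e_1,\ldots,e_d\}$ with $e_j = cj$ and $E(j) = \{e_j\}$. By Lemma~\ref{lem:onlytwo} I may take $G_1 = G_{\{c\}}$. Then in $G_i$ the vertices split into two kinds: the ``center'' vertices $(c,S)$ with $S \subseteq E(c)$ and $|S|\equiv i \pmod 2$, and the ``leaf'' vertices $(j,T)$ with $T \subseteq \{e_j\}$ and $|T|\equiv 0\pmod 2$ (since in both cases $j\notin U$); so $T$ must be $\varnothing$. Identifying $S\subseteq E(c)$ with the subset of $[d]$ it corresponds to under $e_j\leftrightarrow j$, I will define $\varphi_i\colon V(G_i)\to V(G^d_i)$ by $\varphi_i(c,S) = S$ and $\varphi_i(j,\varnothing) = j$. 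This is evidently a bijection onto $\{S\subseteq[d] : |S|\equiv i\pmod 2\} \cup [d] = V(G^d_i)$.

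For adjacency I would check three types of pairs. Two leaf vertices $(j,\varnothing),(j',\varnothing)$ are never adjacent in $G_i$ since $jj'\notin E(K_{1,d})$, matching the fact that $j,j'\in[d]$ are non-adjacent in $G^d_i$. Two center vertices $(c,S),(c,S')$ are non-adjacent because $cc\notin E(K_{1,d})$, matching non-adjacency of subsets in $G^d_i$. Finally, a leaf vertex $(j,\varnothing)$ and a center vertex $(c,S)$ are adjacent in $G_i$ iff $cj\in E(G)$ (always true) and $cj\notin \varnothing\triangle S = S$, i.e.\ iff $e_j\notin S$, iff $j\notin S$ under the identification $e_j\leftrightarrow j$; this matches exactly the adjacency rule in $G^d_i$.

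Since both bijections $\varphi_0$ and $\varphi_1$ preserve adjacency and non-adjacency in all three cases, they are graph isomorphisms, proving the lemma. The only subtlety is the choice of $U$ for $G_1$: choosing $U=\{c\}$ rather than $U=\{j\}$ for a leaf makes the ``leaf side'' trivial (all leaf tails forced to $\varnothing$) and keeps the bijection symmetric with the $i=0$ case; Lemma~\ref{lem:onlytwo} guarantees this does not affect the isomorphism type of $G_1$. I do not expect any real obstacle here—everything reduces to unpacking Definition~\ref{def:construction} in the very special case where one endpoint of every edge is the same vertex.
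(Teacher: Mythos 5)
Your proof is correct and takes essentially the same route as the paper: label the center and leaves of $K_{1,d}$, take $U$ to be the singleton containing the center for $G_1$, observe that leaf tails are forced to $\varnothing$, map $(c,S)\mapsto S$ and $(j,\varnothing)\mapsto j$, and check the three adjacency types. The paper states this more tersely (leaving the adjacency check as ``straightforward''), but the construction and bijection are identical.
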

\proof
Let $G$ have vertex set $\{0,1,\ldots, d\}$ and be isomorphic to $K_{1,d}$ such that vertex $0$ has degree $d$. Take $G_1 = G_{\{0\}}$. The isomorphism is given by mapping vertex $j \in [d]$ of $G_i^d$ to vertex $(j,\varnothing)$ of $G_i$ and mapping vertex $S \subseteq [d]$ of $G_i^d$ to vertex $(0,S)$ of $G_i$. It is straightforward to check that this is indeed an isomorphism for $i = 0,1$.\qeds

Next we consider the connectedness of the graphs $G_0^d$ and $G_1^d$.

\begin{lemma}\label{lem:Gdconnected}
For odd $d \in \mathbb{N}$, the graph $G_0^d$ is connected (with more than one vertex) and the graph $G_1^d$ contains an isolated vertex. For even $d \ge 4$ the graph $G_1^d$ is connected (with more than one vertex) and $G_0^d$ contains an isolated vertex.
\end{lemma}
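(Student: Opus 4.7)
The plan is to unpack the definition so that $G^d_i$ becomes a bipartite graph with ``leaf'' side $[d]$ and ``subset'' side $\{S \subseteq [d] : |S| \equiv i \pmod{2}\}$, with $j \sim S$ precisely when $j \notin S$; in particular there are no edges inside either side. With this picture in hand, the isolated-vertex claims follow from a single observation: a leaf $j$ is never isolated once $d \ge 2$ (the empty set together with any singleton $\{k\}$, $k \ne j$, always provides a witness of each parity), while a subset $S$ is isolated if and only if $S = [d]$, and $[d]$ is a vertex of $G^d_i$ precisely when $d \equiv i \pmod{2}$. Thus $[d]$ supplies an isolated vertex of $G^d_1$ when $d$ is odd and of $G^d_0$ when $d$ is even, as required.

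For the connectedness halves I would exhibit a single ``hub'' vertex on the subset side and then check that every other vertex has a path of length at most two to it. When $d$ is odd, take $\varnothing$ as the hub of $G^d_0$: it is adjacent to every leaf, so all leaves lie in one component, and any other even-sized subset $S$ satisfies $|S| \le d-1$ (since $|S|$ is even and $d$ is odd), so there exists $j \in [d] \setminus S$, giving $S \sim j$ and joining $S$ to the leaf component. When $d$ is even with $d \ge 4$, I would use the singletons in $G^d_1$ as hubs: for any two leaves $j,k$ pick $\ell \in [d] \setminus \{j,k\}$, which exists because $d \ge 3$ (and since $d$ is even this in fact forces $d \ge 4$); then $\{\ell\}$ is adjacent to both $j$ and $k$, so the leaves form one component. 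Every odd-sized subset $S$ has $|S| \le d-1$ (since $d$ is even while $|S|$ is odd), hence is adjacent to some leaf and joins that component. In either case the hub together with any leaf witnesses that the connected graph has at least two vertices.

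I do not expect any real obstacle here; the whole argument is a direct verification of adjacencies once the bipartite structure is separated out. The one point worth flagging is why the hypothesis $d \ge 4$ (rather than $d \ge 2$) is needed in the even case: for $d=2$ the graph $G^2_1$ has vertex set $\{1,2,\{1\},\{2\}\}$ with only the edges $1 \sim \{2\}$ and $2 \sim \{1\}$, so it decomposes into two disjoint edges, and the ``singleton hub'' step that joins two leaves via a third index is exactly what fails.
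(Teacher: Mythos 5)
Your proof is correct and takes essentially the same approach as the paper: identify $[d]$ as the unique candidate isolated vertex (present when $d \equiv i \pmod 2$), and establish connectedness of the other graph by showing every subset vertex $S \neq [d]$ reaches a leaf, with $\varnothing$ (odd $d$) or singletons (even $d \ge 4$) serving to join the leaves together. The closing remark on why $d=2$ fails is a nice addition but does not change the substance.
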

\proof
If $d$ is odd, then $G^d_1$ contains the vertex $[d]$ which is not adjacent to any other vertices and therefore is an isolated vertex. Now we will show that $G^d_0$ is connected for $d$ odd. First note that $\varnothing$ is always a vertex of $G_0^d$ and it is adjacent to all vertices of the form $j \in [d]$. Now consider an arbitrary subset $S \subseteq [d]$ which is a vertex of $G_0^d$. This means that $|S|$ is even and thus $S \ne [d]$. Therefore there exists $j \in [d]$ such that $j \notin S$ and therefore $S$ and $j$ are adjacent in $G_0^d$. This shows that every vertex of $G_0^d$ has a path to $\varnothing$ and therefore $G_0^d$ is connected. Note that $\varnothing$ and $1$ are two distinct vertices of $G_0^d$ in this case and thus $G_0^d$ has more than one vertex.

If $d$ is even, then $G_0^d$ contains the vertex $[d]$ which is isolated. We will show that $G_1^d$ is connected for even $d \ge 4$. Suppose that $d \ge 4$. Suppose that $S \subseteq [d]$ is a vertex of $G_1^d$, i.e., $|S|$ is odd. Since $d$ is even we have that $S \ne [d]$ and thus there exists $j \in [d]$ with $j \notin S$ and therefore $S \sim j$. So every vertex of $G_1^d$ which is a subset of $[d]$ is adjacent to at least one vertex of $G_1^d$ which is an element of $[d]$. Thus to show $G_1^d$ is connected it suffices to show that any two vertices $i,j \in [d]$ of $G_1^d$ are connected by some path. Since $d \ge 4$, for any such choice of $i$ and $j$  there exists an element $\ell \in [d]$ such that $\ell \notin \{i,j\}$, and therefore the vertex $\{\ell\}$ is adjacent to both $i$ and $j$ and thus we have proven connectedness. Lastly, since $[d] \subseteq V(G_1^d)$ and $d \ge 4$, we have that $G_1^d$ contains more than one vertex.\qeds

\begin{remark}
 Note that $G^2_0$ does contain an isolated vertex (it is isomorphic to the disjoint union of $K_{1,2}$ and $K_1$), but the graph $G_1^2$ is not connected (it is isomorphic to the disjoint union of two $K_2$'s). Furthermore, the graphs $G_0^0$ and $G_1^0$ are the graphs with one and zero vertices respectively.
\end{remark}
 
Now we are able to prove the following which shows that homomorphism counts from graphs of bounded degree cannot even distinguish between connected graphs and graphs with isolated vertices:

\begin{theorem}\label{thm:degree}
For all $d \in \mathbb{N}$ there exist graphs $H$ and $H'$ such that $\hom(F,H) = \hom(F,H')$ for any graph $F$ of maximum degree less than $d$, but $H$ is connected and $H'$ is not connected and has an isolated vertex (so it is not just a single vertex) and moreover $\hom(K_{1,d},H) \ne \hom(K_{1,d},H')$.
\end{theorem}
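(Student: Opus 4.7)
The plan is to realize $H$ and $H'$ as the two graphs $G_0, G_1$ produced by \Def{construction} applied to $G = K_{1,d}$, for all but two small values of $d$. By \Lem{Gdiso} these are isomorphic to the concrete graphs $G_0^d, G_1^d$ whose connectivity is described by \Lem{Gdconnected}: when $d$ is odd the graph $G_0^d$ is connected on more than one vertex while $G_1^d$ contains an isolated vertex, and when $d$ is even and at least $4$ the roles are reversed. I would therefore set $H$ to be whichever of $G_0, G_1$ is connected and $H'$ to be the other; since $H'$ contains an isolated vertex and has more than one vertex, it is automatically not connected.

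To verify that $\hom(F, H) = \hom(F, H')$ whenever $\Delta(F) < d$, I would invoke the contrapositive of \Thm{maindualG}: if $\hom(F, G_0) \ne \hom(F, G_1)$ then $F$ admits a weak oddomorphism to $G = K_{1,d}$, and \Lem{degree} then forces $\Delta(F) \ge \Delta(K_{1,d}) = d$. Hence $\Delta(F) < d$ makes the two homomorphism counts agree. For the separation $\hom(K_{1,d}, H) \ne \hom(K_{1,d}, H')$, \Lem{idoddo} applied to the connected graph $G = K_{1,d}$ says exactly $\hom(G, G_0) \ne \hom(G, G_1)$.

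The remaining work is to handle $d \in \{0, 2\}$, which are excluded from \Lem{Gdconnected}. For $d = 0$ the condition $\Delta(F) < 0$ is vacuous, and any pair with $|V(H)| \ne |V(H')|$ distinguishes $K_{1,0} = K_1$; the trivial choice $H = K_1$, $H' = 2 K_1$ works. For $d = 2$ the construction yields $G_0^2$ and $G_1^2$, neither of which is connected, so I would instead exhibit a direct example: any connected $H$ and disconnected $H'$ with an isolated vertex, sharing the same $|V|$ and $|E|$ (which forces equality of $\hom(F, \cdot)$ for every disjoint union $F$ of $K_1$'s and $K_2$'s) but with different values of $\sum_v \deg(v)^2 = \hom(K_{1,2}, \cdot)$; for instance $H = C_6$ and $H' = K_4 \cup 2 K_1$. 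The only real subtlety is this $d = 2$ case, since the general construction machinery does not directly supply a connected-plus-isolated-vertex pair there; for every other $d$ the theorem is essentially a bookkeeping combination of results already proved.
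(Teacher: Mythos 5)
Your proof is correct and takes essentially the same approach as the paper: build $H, H'$ from $G_0^d, G_1^d$ for $G = K_{1,d}$, invoke Lemmas~\ref{lem:Gdiso}, \ref{lem:Gdconnected}, \ref{lem:idoddo} (together with Theorem~\ref{thm:maindualG} and Lemma~\ref{lem:degree} for the homomorphism counts from low-degree $F$), and fall back to a direct example for the small $d$ not covered by Lemma~\ref{lem:Gdconnected}. The only cosmetic differences are that the paper also treats $d=1$ by hand (with $K_2$ vs.~$\overline{K_2}$), even though, as you observe, the odd case of Lemma~\ref{lem:Gdconnected} already covers it, and its $d=2$ example is $P_4$ vs.~$K_3 \cup K_1$ rather than your $C_6$ vs.~$K_4 \cup 2K_1$ --- both rest on the same observation that matching $|V|, |E|$ handles all $F$ of maximum degree at most one while $\sum_v \deg(v)^2$ differs.
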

\proof
For $d \ge 3$ this is an immediate consequence of Lemmas~\ref{lem:Gdiso}, \ref{lem:Gdconnected}, and~\ref{lem:idoddo}, where $H$ and $H'$ are $G^d_0$ and $G^d_1$ respectively. For each $d = 1,2$ we give a direct proof.

We first prove the theorem for $d = 2$. Note that if $F$ has maximum degree less than 2, then it is the disjoint union of $K_2$'s and $K_1$'s. Thus two graphs $H$ and $H'$ will satisfy $\hom(F,H) = \hom(F,H')$ for all $F$ of maximum degree less than 2 if and only if $H$ and $H'$ have the same number of vertices and edges. Take $H$ to be the path on four vertices and $H'$ to be the disjoint union of $K_3$ and $K_1$. Then $H$ is connected and both $H$ and $H'$ have four vertices and three edges. However, $\hom(K_{1,2},H) = 10$ and $\hom(K_{1,2},H') = 12$ since these numbers are just the sums of the squares of the degrees.

For $d = 1$, we can take $H$ to be any connected graph and $H'$ to be any disconnected graph that has an isolated vertex and is on the same number of vertices as $H$ but different number of edges, e.g. $H = K_2$ and $H' = \overline{K_2}$.\qeds

Let $\mathcal{T}$ denote the class of all trees, let $\mathcal{T}_d$ denote the class of all trees of degree at most $d$, and let $\mathcal{G}_d$ denote the class of all graphs of degree at most $d$. In~\cite{homtensors} they prove that for every $d \ge 1$ that there are graphs $H$ and $H'$ that are homomorphism indistinguishable over the class of trees of degree at most $d$ but are not homomorphism indistinguishable over the class of trees of degree $d+1$, i.e., that the relation $\cong_{\mathcal{T}_d}$ is strictly weaker than $\cong_{\mathcal{T}_{d+1}}$. Theorem~\ref{thm:degree} above strengthens this to say that even $H \cong_{\mathcal{G}_d} H'$ does not imply $H \cong_{\mathcal{T}_{d+1}} H'$. We also remark that the graphs $H$ and $H'$ in our proof are significantly more explicit (and simple) than the graphs arising in the proof in~\cite{homtensors}\footnote{In fact the graphs in~\cite{homtensors} are not fully explicit but could be made explicit without much effort.}. It is interesting that even the simplest graph of degree at least $d$, the star $K_{1,d}$, can be used to distinguish some graphs that have the same number of homomorphisms from any graph of degree less than $d$. A natural question to ask is whether there is \emph{any} graph $\hat{F} \notin \mathcal{G}_d$ such that $H \cong_{\mathcal{G}_d} H'$ implies that $\hom(\hat{F},H) = \hom(\hat{F},H')$. Our next theorem answers this question in the negative.

\begin{theorem}\label{thm:closed}
Let $d \in \mathbb{N}$ and let $\hat{F}$ be any graph of maximum degree greater than $d$. Then there exist graphs $H$ and $H'$ such that $\hom(F,H) = \hom(F,H')$ for all graphs $F$ of degree at most $d$ but $\hom(\hat{F},H) \ne \hom(\hat{F},H')$.
\end{theorem}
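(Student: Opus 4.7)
The plan is to reapply the construction of Definition~\ref{def:construction} to a carefully chosen connected component of $\hat{F}$, then disjoint-union with a complete graph on many vertices as a ``padding'' to handle the case that $\hat{F}$ itself is disconnected. Since $\Delta(\hat{F}) > d$ there is at least one connected component $C_1$ of $\hat{F}$ with $\Delta(C_1) > d$; set $G := C_1$ and form $G_0$ and $G_1$. Pick $N$ large enough that every connected component of $\hat{F}$ admits a homomorphism to $K_N$ (for instance $N = |V(\hat{F})|$ works), and define $H := G_0 \cup K_N$ and $H' := G_1 \cup K_N$.

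To check that $\hom(F, H) = \hom(F, H')$ for every $F$ with $\Delta(F) \leq d$, multiplicativity of $\hom$ over disjoint unions in the first argument reduces the claim to connected $F$. For such an $F$, the condition $\Delta(F) \leq d < \Delta(G)$ together with Lemma~\ref{lem:degree} forbids a weak oddomorphism from $F$ to $G$, so Theorem~\ref{thm:maindualG} gives $\hom(F, G_0) = \hom(F, G_1)$. Since $F$ is connected, $\hom(F, H) = \hom(F, G_0) + \hom(F, K_N) = \hom(F, G_1) + \hom(F, K_N) = \hom(F, H')$.

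To check $\hom(\hat{F}, H) \neq \hom(\hat{F}, H')$, write $\hat{F} = C_1 \cup \ldots \cup C_j \cup R_1 \cup \ldots \cup R_k$ where the $C_\ell$ are the connected components of maximum degree strictly greater than $d$ (so $j \geq 1$ by the assumption on $\hat{F}$) and the $R_i$ are the remaining components. Multiplicativity of $\hom$ over components of the domain together with additivity on each connected component gives
\begin{equation*}
\hom(\hat{F}, H) = \prod_{\ell=1}^{j} \bigl(\hom(C_\ell, G_0) + \hom(C_\ell, K_N)\bigr) \prod_{i=1}^{k} \hom(R_i, H),
\end{equation*}
and the analogous formula for $\hom(\hat{F}, H')$ but with $G_1$ in place of $G_0$. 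The $R_i$-factors on the two sides coincide by the previous paragraph and are strictly positive by the choice of $N$. Among the $C_\ell$-factors, Theorem~\ref{thm:main} gives $\hom(C_\ell, G_0) \geq \hom(C_\ell, G_1)$ for every $\ell$, and Lemma~\ref{lem:idoddo} upgrades this to a strict inequality for $\ell = 1$; moreover every such factor is strictly positive because each $C_\ell$ maps to $K_N$. Hence the product of $C_\ell$-factors is strictly larger on the $H$-side, and the full product is too.

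The main obstacle is precisely the padding: without the $K_N$ summand one could have $\hom(\hat{F}, G_0) = \hom(\hat{F}, G_1) = 0$ whenever some component of $\hat{F}$ fails to admit a homomorphism to $G$, which can easily happen and would leave $\hom(\hat{F}, \cdot)$ insensitive to the distinction between $G_0$ and $G_1$. Padding by disjoint union with $K_N$ turns every factor in the product decomposition into a positive number, so the single strict inequality produced by Lemma~\ref{lem:idoddo} on the $C_1 = G$ factor propagates to the full homomorphism count; meanwhile the degree bound on the weak-oddomorphism side (Lemma~\ref{lem:degree}) is exactly what guarantees that the padding does not spoil the equality $\hom(F, H) = \hom(F, H')$ for $F$ of maximum degree at most $d$.
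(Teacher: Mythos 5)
Your proof is correct and uses essentially the same strategy as the paper: pick a connected component $G$ of $\hat{F}$ with maximum degree exceeding $d$, set $H = G_0 \cup K_N$ and $H' = G_1 \cup K_N$ with $K_N$ large enough that every component of $\hat{F}$ maps to it, use Lemma~\ref{lem:degree} with Theorem~\ref{thm:maindualG} to kill the difference for low-degree test graphs, and use Theorem~\ref{thm:main} together with Lemma~\ref{lem:idoddo} to produce the strict inequality on $\hat{F}$. The only cosmetic departure is that you split the components of $\hat{F}$ into high- and low-degree classes before taking the product, whereas the paper handles all components uniformly via the one-sided inequality from Theorem~\ref{thm:main}; your choice of $N = |V(\hat{F})|$ instead of the paper's $r = \max_i \chi(F_i)$ is likewise interchangeable.
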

\proof
Let $F_1, \ldots, F_k$ be the connected components of $\hat{F}$ and suppose without loss of generality that $F_1$ has maximum degree greater than $d$ and set $G = F_1$. Take $H$ to be the disjoint union of $G_0$ and a clique $K_r$ of sufficient size such that $\hom(F_i,K_r) > 0$ for $i = 1, \ldots, k$, e.g., taking $r = \max\{\chi(F_i) : i \in [k]\}$ suffices. Similarly, take $H'$ to be the disjoint union of $G_1$ and $K_r$. This ensures that $\hom(F_i,H)$ and $\hom(F_i,H')$ are greater than zero for all $i$.

Now suppose that $F$ is a connected graph of degree at most $d$. By Lemma~\ref{lem:degree} we have that $\hom(F,G_0) = \hom(F,G_1)$. Moreover, since $F$ is connected, we have that
\[\hom(F,H) = \hom(F,G_0) + \hom(F,K_r) = \hom(F,G_1) + \hom(F,K_r) = \hom(F,H').\]
Since the equality holds for any connected graph of degree at most $d$, it further holds for any graph of degree at most $d$.

Now we must show that $\hom(\hat{F},H) \ne \hom(\hat{F},H')$. We will show that $\hom(F_i,H) \ge \hom(F_i,H') > 0$ for all $i = 1,\ldots, k$ with strict inequality for $i = 1$, thus proving the inequality. First, note that since $H'$ contains $K_r$ and $\hom(F_i,K_r) > 0$ by construction, we have that $\hom(F_i,H') > 0$ for all $i \in [k]$. Now, using Theorem~\ref{thm:main}, we have that
\[\hom(F_i,H) = \hom(F_i,G_0) + \hom(F_i,K_r) \ge \hom(F_i,G_1) + \hom(F_i,K_r) = \hom(F_i,H')\]
for all $i = 1, \ldots, k$. Furthermore, since $\hom(F_1,G_0) \ne \hom(F_1,G_1)$ by Lemma~\ref{lem:idoddo}, we have that $\hom(F_1,G_0) > \hom(F_1,G_1)$ and thus $\hom(F_1,H) > \hom(F_1,H')$. Finally, combining all of this, we have that
\[\hom(\hat{F},H) = \prod_{i=1}^k \hom(F_i,H) > \prod_{i=1}^k \hom(F_i,H') = \hom(\hat{F},H').\]\qeds



The above result says that the class $\mathcal{G}_d$ of graphs of degree at most $d$ is homomorphism distinguishing closed. The key property of $\mathcal{G}_d$ used in the proof above is that it is closed under weak oddomorphisms, i.e., if $F \in \mathcal{G}_d$ and $F$ has a weak oddomorphism to $G$, then $G \in \mathcal{G}_d$. In Section~\ref{sec:hdclosed} we will see that any family with this property that is also closed under disjoint unions and restrictions to connected components is homomorphism distinguishing closed.

We end this section by giving a complete characterization of the graphs admitting a weak oddomorphism to $K_{1,d}$:

\begin{lemma}
A graph $F$ has a weak oddomorphism to $K_{1,d}$ if and only if $F$ is bipartite with maximum degree at least $d$.
\end{lemma}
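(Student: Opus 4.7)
I will handle the two directions separately.

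\emph{Forward direction (weak oddomorphism $\Rightarrow$ bipartite with $\Delta(F)\ge d$).} Suppose $\psi$ is a weak oddomorphism from $F$ to $K_{1,d}$. By definition $\psi \in \Hom(F,K_{1,d})$, and $K_{1,d}$ is bipartite, so $F$ cannot contain an odd cycle (odd cycles have no homomorphism to any bipartite graph). Hence $F$ is bipartite. Lemma~\ref{lem:degree} immediately gives $\Delta(F)\ge \Delta(K_{1,d})=d$.

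\emph{Reverse direction ($F$ bipartite with $\Delta(F)\ge d$ $\Rightarrow$ weak oddomorphism to $K_{1,d}$).} Label the center of $K_{1,d}$ as $0$ and its leaves $1,\dots,d$. Pick a vertex $a^\ast\in V(F)$ of degree at least $d$ and choose $d$ distinct neighbours $b_1,\dots,b_d$ of $a^\ast$. Choose any bipartition $V(F)=A\cup B$ of $F$ with $a^\ast\in A$ (doing this one connected component at a time, since $F$ is bipartite). Define $\psi\colon V(F)\to V(K_{1,d})$ by setting $\psi(v)=0$ for all $v\in A$, $\psi(b_i)=i$ for $i=1,\dots,d$, and $\psi(b)=1$ for every other $b\in B$. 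Every edge of $F$ has one end in $A$ and one end in $B$, so $\psi$ is a homomorphism from $F$ to $K_{1,d}$.

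Now take $F'$ to be the subgraph of $F$ with vertex set $\{a^\ast,b_1,\dots,b_d\}$ and edge set $\{a^\ast b_i : i=1,\dots,d\}$; this is an isomorphic copy of $K_{1,d}$ sitting inside $F$. I claim $\psi|_{V(F')}$ is an oddomorphism from $F'$ to $K_{1,d}$. Indeed, with the fibres computed relative to $F'$: $\psi^{-1}(0)=\{a^\ast\}$ and $\psi^{-1}(i)=\{b_i\}$ for each $i\ge 1$. Then $|N_{F'}(a^\ast)\cap \psi^{-1}(i)|=|\{b_i\}|=1$ for every neighbour $i$ of $0$, so $a^\ast$ is odd w.r.t.\ $\psi|_{F'}$; similarly each $b_i$ has $|N_{F'}(b_i)\cap \psi^{-1}(0)|=1$, so each $b_i$ is odd. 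Each fibre therefore contains exactly one odd vertex, satisfying both conditions of Definition~\ref{def:oddomorphism}. Hence $\psi$ is a weak oddomorphism from $F$ to $K_{1,d}$.

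\emph{Main obstacle.} Both directions are essentially routine given the tools already developed. The only point requiring a little care is that the weak oddomorphism is a homomorphism defined on all of $F$, while the oddomorphism property only needs to hold on the subgraph $F'$; this is why we are free to send the ``extra'' $B$-vertices anywhere in $\{1,\dots,d\}$ without disturbing the parity computation on the star $F'$.
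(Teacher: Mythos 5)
Your proof is correct and follows essentially the same approach as the paper: the forward direction combines bipartiteness of $K_{1,d}$ with Lemma~\ref{lem:degree}, and the reverse direction builds the homomorphism by sending one side of a bipartition to the center and identifying the star $F'$ on which the restriction is an oddomorphism with every vertex odd. The only cosmetic difference is that you describe the extension to the rest of $F$ via a single global bipartition rather than treating the component of $a^\ast$ and the remaining components separately, which is a slightly cleaner presentation of the same idea.
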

\proof
If $F$ is not bipartite then $F$ does not admit a homomorphism to $K_{1,d}$ and thus does not admit a weak oddomorphism either. If $F$ has maximum degree less than $d$, then it does not admit a weak oddomorphism to $K_{1,d}$ by Lemma~\ref{lem:degree}. This proves one direction of the lemma.

Now suppose that $F$ is bipartite and has maximum degree at least $d$. Let $a \in V(F)$ be a vertex of degree at least $d$ and let $a_1, \ldots, a_d$ be $d$ distinct neighbors of $a$. Let $F'$ be the subgraph of $F$ induced by $\{a, a_1, \ldots, a_d\}$ (note there are no edges among the $a_i$ as $F$ is bipartite), and let $F''$ be the connected component of $F$ containing $a$. We will define a homomorphism $\psi$ from $F$ to $K_{1,d}$, whose vertex set we take to be $\{0, 1, \ldots, d\}$ with $0$ being the vertex of degree $d$. First, we set $\psi(a) = 0$ and $\psi(a_i) = i$ for $i = 1, \ldots, d$. We extend this to the rest of $F''$ by mapping any vertex in the same part of the bipartition as $a$ to $0$ and mapping all remaining vertices to $1$. We then extend this to a homomorphism from $F$ by mapping every other connected component of $F$ to the edge 01 according to some 2-coloring. It is easy to see that this is a homomorphism from $F$ to $K_{1,d}$ and moreover that the restriction to $F'$ is an oddomorphism where every vertex is odd.\qeds

\section{Further properties of oddomorphisms}\label{sec:oddomorphisms}

In this section we will prove some properties of oddomorphisms and weak oddomorphisms, such as the fact that they are composable and induce a partial order on the class of all (isomorphism classes of) graphs.

\begin{lemma}\label{lem:oddcomp}
Suppose that $\psi_1$ is an oddomorphism from $F$ to $G$ and that $\psi_2$ is an oddomorphism from $G$ to $H$. Then the composition $\psi_2 \circ \psi_1$ is an oddomorphism from $F$ to $H$.
\end{lemma}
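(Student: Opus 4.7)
The plan is to verify the two conditions in Definition~\ref{def:oddomorphism} for $\psi_2 \circ \psi_1$ directly, leveraging the key identity
\[(\psi_2 \circ \psi_1)^{-1}(w) \;=\; \bigsqcup_{v \in \psi_2^{-1}(w)} \psi_1^{-1}(v)\]
so that for any $a \in V(F)$ with $\psi_1(a) = v$ and any $w' \sim_H \psi_2(v)$,
\[\bigl|N_F(a) \cap (\psi_2 \circ \psi_1)^{-1}(w')\bigr| \;=\; \sum_{\substack{v' \in \psi_2^{-1}(w') \\ v' \sim_G v}} \bigl|N_F(a) \cap \psi_1^{-1}(v')\bigr|.\]
(The restriction $v' \sim_G v$ is automatic since $\psi_1$ is a homomorphism.) This formula reduces everything to parity bookkeeping.

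First I would check Condition (1), splitting on whether $a$ is $\psi_1$-even or $\psi_1$-odd. If $a$ is $\psi_1$-even, every summand above is even, so $|N_F(a) \cap (\psi_2\circ\psi_1)^{-1}(w')|$ is even for every $w' \sim_H \psi_2\psi_1(a)$, making $a$ even with respect to $\psi_2 \circ \psi_1$. If $a$ is $\psi_1$-odd, every summand is odd, so the parity of the sum equals the parity of $|N_G(v) \cap \psi_2^{-1}(w')|$. Since $\psi_2$ is an oddomorphism, $v$ is either $\psi_2$-odd or $\psi_2$-even, so this parity is constant in $w'$; hence $a$ is odd/even with respect to $\psi_2 \circ \psi_1$ according as $v = \psi_1(a)$ is $\psi_2$-odd or $\psi_2$-even. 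This yields the clean characterisation
\[a \text{ is odd w.r.t.\ } \psi_2 \circ \psi_1 \iff a \text{ is $\psi_1$-odd \emph{and} } \psi_1(a) \text{ is $\psi_2$-odd,}\]
which is the crux of the proof.

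Next I would use this characterisation to verify Condition (2). Fix $w \in V(H)$. The $(\psi_2 \circ \psi_1)$-odd vertices in $(\psi_2 \circ \psi_1)^{-1}(w)$ are exactly those $a \in \psi_1^{-1}(v)$ with $a$ being $\psi_1$-odd, as $v$ ranges over the $\psi_2$-odd vertices in $\psi_2^{-1}(w)$. Therefore
\[\#\{a : a \text{ is $(\psi_2\circ\psi_1)$-odd}, (\psi_2\circ\psi_1)(a) = w\} \;=\; \sum_{\substack{v \in \psi_2^{-1}(w) \\ v\ \psi_2\text{-odd}}} \#\{a \in \psi_1^{-1}(v) : a\ \psi_1\text{-odd}\}.\]
Condition (2) for $\psi_1$ makes every inner count odd, and Condition (2) for $\psi_2$ makes the number of terms in the outer sum odd; an odd number of odd numbers is odd, so Condition (2) holds for $\psi_2 \circ \psi_1$.

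I do not expect any real obstacles. The only subtlety is that the dichotomy in Condition (1) could potentially break under composition, but the even case collapses trivially and the odd case defers exactly to the corresponding dichotomy for $\psi_2$ at $\psi_1(a)$. Once the ``odd composed with odd'' characterisation is isolated, Condition (2) follows by a one-line parity count.
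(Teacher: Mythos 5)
Your proposal is correct and follows essentially the same approach as the paper's proof: decompose the fibre $(\psi_2\circ\psi_1)^{-1}(w)$ as a disjoint union of $\psi_1$-fibres, reduce the neighbourhood count to a sum over $N_G(\psi_1(a)) \cap \psi_2^{-1}(w')$, and conclude that $a$ is $(\psi_2\circ\psi_1)$-odd iff $a$ is $\psi_1$-odd and $\psi_1(a)$ is $\psi_2$-odd, from which Condition (2) follows by an ``odd sum of odd numbers'' count. The only cosmetic difference is that you split into even/odd cases where the paper phrases the same computation compactly as ``the parity is $ij$.''
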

\proof
First note that since $\psi_1$ and $\psi_2$ are homomorphisms, their composition $\psi = \psi_2 \circ \psi_1$ is a homomorphism.

Now let $V_1^0$ and $V_1^1$ be respectively the sets of even and odd vertices with respect to $\psi_1$ and similarly define $V_2^0$ and $V_2^1$. We will first show that every vertex of $F$ is either odd or even with respect to $\psi$ and that its set of odd vertices is $\{a \in V(F): a \in V_1^1 \ \& \ \psi_1(a) \in V_2^1\}$. Let $a \in V(F)$ be such that $a \in V_1^i$ and $u = \psi_1(a) \in V_2^j$ for some $i,j \in \{0,1\}$. Let $x = \psi(a) \in V(H)$ and suppose that $y \sim_H x$. We will show that $|N_F(a) \cap \psi^{-1}(y)| \equiv ij \ \mathrm{mod} \ 2$ which proves that $a$ is either even or odd and is the latter if and only if $i = j = 1$. We have that $\psi^{-1}(y) = \psi_1^{-1}(\psi_2^{-1}(y))$ and thus
\[\psi^{-1}(y) = \bigcup_{v \in \psi_2^{-1}(y)} \psi_1^{-1}(v).\]
Therefore
\[N_F(a) \cap \psi^{-1}(y) = \bigcup_{v \in \psi_2^{-1}(y)} \left(N_F(a) \cap \psi_1^{-1}(v)\right).\]
Since all these sets are disjoint we have that
\[|N_F(a) \cap \psi^{-1}(y)| = \sum_{v \in \psi_2^{-1}(y)} |N_F(a) \cap \psi_1^{-1}(v)|.\]
Since $\psi_1$ is a homomorphism, the set $N_F(a) \cap \psi_1^{-1}(v)$ is empty unless $v \sim_G \psi_1(a))$, i.e., $v \in N_G(\psi_1(a))$. Therefore,
\[|N_F(a) \cap \psi^{-1}(y)| = \sum_{v \in N_G(\psi_1(a)) \cap \psi_2^{-1}(y)} |N_F(a) \cap \psi_1^{-1}(v)|.\]
Now since $\psi_1$ is an oddomorphism, the set $N_F(a) \cap \psi_1^{-1}(v)$ has parity $i$ for all $v \in N_G(\psi_1(a))$. Moreover, the set being summed over,  $N_G(\psi_1(a)) \cap \psi_2^{-1}(y)$, has parity $j$ since we assumed that $\psi_1(a) \in V_2^j$. Thus the parity of $|N_F(a) \cap \psi^{-1}(y)|$ is $ij$ for all $y \sim_H \psi(a)$. As this does not depend on $y$, we have shown that $a$ is odd if and only if $ij = 1$ (i.e., $i = j = 1$) and is otherwise even. Moreover this shows that the set of odd vertices with respect to $\psi$ is $\{a \in V(F): a \in V_1^1 \ \& \ \psi_1(a) \in V_2^1\}$.

Now we must show that $\psi^{-1}(x)$ contains an odd number of odd vertices for all $x \in V(H)$. We have that 
\[\psi^{-1}(x) = \bigcup_{v \in \psi_2^{-1}(x)} \psi_1^{-1}(v).\]
Note that the union above is in fact a disjoint union. Also, if $v \in V^2_0$, then our above arguments show that $\psi_1^{-1}(v)$ contains no $\psi$-odd vertices. Therefore, we only need to count the number of $\psi$-odd vertices in 
\[\bigcup_{v \in V_1^2 \cap \psi_2^{-1}(x)} \psi_1^{-1}(v).\]
Again by the above, a vertex $a$ in this set will be $\psi$-odd if and only if it is $\psi_1$-odd. Since $\psi_1$ is an oddomorphism, each $\psi_1^{-1}(v)$ will contribute an odd number of such vertices. Moreover, the set $V_1^2 \cap \psi_2^{-1}(x)$ we are taking the union over is odd since $\psi_2$ is an oddomorphism. Therefore, the above union contains an odd number of $\psi$-odd vertices and we are done.\qeds


To state the following lemma, we will need to introduce some notation. Given a homomorphism $\psi$ from $F$ to $G$ and a subgraph $G'$ of $G$, we denote by $\psi^{-1}(G')$ the subgraph $F'$ of $F$ with vertex set $\psi^{-1}(V(G'))$ and edge set $\{ab \in E(F) : a,b \in V(F') \ \& \ \psi(a)\psi(b) \in E(G')\}$.

\begin{lemma}\label{lem:suboddo}
Let $\psi$ be an oddomorphism from $F$ to $G$. Suppose that $G'$ is a subgraph of $G$ and let $F' = \psi^{-1}(G')$. Then $\psi|_{V(F')}$ is an oddomorphism from $F'$ to $G'$ such that the parity of $v \in V(F')$ w.r.t.~$\psi|_{V(F')}$ is the same as its parity w.r.t.~$\psi$. It follows that if $\psi$ is a weak oddomorphism from $F$ to $G$ and $F' = \psi^{-1}(G')$, then $\psi|_{V(F')}$ is a weak oddomorphism from $F'$ to $G'$.
\end{lemma}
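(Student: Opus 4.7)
The plan is to verify the oddomorphism conditions directly for $\psi|_{V(F')}$, using the fact that restricting to $F' = \psi^{-1}(G')$ does not change the fibres over vertices of $G'$ and does not discard any of the edges that contribute to the parity calculation.

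First, I would observe that $\psi|_{V(F')}$ is a homomorphism from $F'$ to $G'$ by the very definition of $F'$: if $ab \in E(F')$ then $\psi(a)\psi(b) \in E(G')$. Next, the key technical point is the identity
\[
N_{F'}(a)\cap (\psi|_{V(F')})^{-1}(u) \;=\; N_F(a)\cap \psi^{-1}(u)
\]
for every $a \in V(F')$ and every $u \in N_{G'}(\psi(a))$. This rests on two observations: (i) since $u \in V(G')$, any $b \in \psi^{-1}(u)$ lies in $V(F') = \psi^{-1}(V(G'))$, so $(\psi|_{V(F')})^{-1}(u) = \psi^{-1}(u)$; and (ii) if $b \in N_F(a)$ with $\psi(b) = u$, then $\psi(a)\psi(b) = \psi(a)u \in E(G')$ by our assumption $u \in N_{G'}(\psi(a))$, so $ab$ survives in $E(F')$ and $b \in N_{F'}(a)$.

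From this identity, condition (1) of an oddomorphism transfers cleanly. For $a \in V(F')$, the parities $|N_F(a)\cap \psi^{-1}(u)|$ are constant (all odd or all even) as $u$ ranges over $N_G(\psi(a))$, hence in particular as $u$ ranges over the subset $N_{G'}(\psi(a))$. So $a$ is odd (resp.\ even) w.r.t.\ $\psi|_{V(F')}$ iff it is odd (resp.\ even) w.r.t.\ $\psi$, which is the parity-preservation claim. For condition (2), fix $v \in V(G')$; by the identity above, the set of odd vertices w.r.t.\ $\psi|_{V(F')}$ inside $(\psi|_{V(F')})^{-1}(v) = \psi^{-1}(v)$ coincides with the set of $\psi$-odd vertices in $\psi^{-1}(v)$, which has odd cardinality since $\psi$ is an oddomorphism.

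For the weak version, suppose $\psi$ is a weak oddomorphism from $F$ to $G$ and let $\tilde F$ be a subgraph of $F$ such that $\psi|_{V(\tilde F)}$ is an oddomorphism from $\tilde F$ to $G$. Apply the first part of the lemma to $\tilde F$ in place of $F$ to obtain that $\psi|_{V(\tilde F')}$ is an oddomorphism from $\tilde F' := (\psi|_{V(\tilde F)})^{-1}(G')$ to $G'$. A straightforward unpacking shows $V(\tilde F')\subseteq V(F')$ and $E(\tilde F')\subseteq E(F')$, so $\tilde F'$ is a subgraph of $F'$ witnessing that $\psi|_{V(F')}$ is a weak oddomorphism from $F'$ to $G'$. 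The only real subtlety (as flagged in Remark~\ref{rem:oddoissue}) is to keep careful track of the two different ambient graphs when speaking about parity; the identity above is precisely what controls this, and I do not anticipate any further obstacle.
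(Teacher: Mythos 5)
Your proof is correct and follows essentially the same route as the paper's: the central observation in both is the identity $N_{F'}(a)\cap (\psi|_{V(F')})^{-1}(u) = N_F(a)\cap \psi^{-1}(u)$ for $a\in V(F')$ and $u\in N_{G'}(\psi(a))$, from which both conditions of the oddomorphism definition and the parity-preservation claim follow at once, and the weak case is handled identically by applying the first part to the witnessing subgraph $\tilde F$.
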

\proof
Let $\varphi = \psi|_{V(F')}$. We must first show that every vertex of $F'$ is either even or odd with respect to $\varphi$.

It is clear from the definition of $F'$ and $\varphi$ that $N_F(a) \cap \psi^{-1}(v) = N_{F'}(a) \cap \varphi^{-1}(v)$ for all $a \in V(F')$ and $v \in N_{G'}(\varphi(a))$. It follows that every vertex of $F'$ is even or odd with respect to $\varphi$, and moreover its odd vertices are precisely the odd vertices of $\psi$. From the latter it follows that $\varphi^{-1}(v)$ contains an odd number of odd vertices for all $v \in V(G')$. Therefore, $\varphi = \psi|_{V(F')}$ is an oddomorphism from $F'$ to $G'$.

Lastly, suppose that $\psi$ is a weak oddomorphism from $F$ to $G$ and let $F' = \psi^{-1}(G')$. Then by definition there is a subgraph $\hat{F}$ of $F$ such that $\hat{\psi} := \psi|_{V(\hat{F})}$ is an oddomorphism from $\hat{F}$ to $G$. By the above it then follows that if $\hat{F}' := \hat{\psi}^{-1}(G')$ (which is a subgraph of both $\hat{F}$ and $F'$), then $\hat{\psi}|_{V(\hat{F}')}$ is an oddomorphism from $\hat{F}'$ to $G'$. Since $\hat{\psi}|_{V(\hat{F}')} = \psi|_{V(\hat{F}')} = \left(\psi|_{V(F')}\right)|_{V(\hat{F}')}$, it follows that $\psi|_{V(F')}$ is a weak oddomorphism from $F'$ to $G'$ as desired.\qeds

We can now combine the above two lemmas to prove that weak oddomorphisms can be composed.

\begin{lemma}\label{lem:woddcomp}
Suppose that $\psi_1$ is a weak oddomorphism from $F$ to $G$ and that $\psi_2$ is a weak oddomorphism from $G$ to $H$. Then the composition $\psi_2 \circ \psi_1$ is a weak oddomorphism from $F$ to $H$.
\end{lemma}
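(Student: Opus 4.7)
The plan is to reduce the weak case to the strong case by using Lemma~\ref{lem:suboddo} to ``cut down'' the domain of $\psi_1$ so that its image lies inside a subgraph of $G$ on which $\psi_2$ restricts to a genuine oddomorphism, and then apply Lemma~\ref{lem:oddcomp} to compose the two resulting oddomorphisms.

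More concretely, first I would unpack the hypotheses. By definition of weak oddomorphism, there exist subgraphs $F' \subseteq F$ and $G' \subseteq G$ such that $\psi_1|_{V(F')}$ is an (honest) oddomorphism from $F'$ to $G$, and $\psi_2|_{V(G')}$ is an (honest) oddomorphism from $G'$ to $H$. I would then set
\[
F'' := \bigl(\psi_1|_{V(F')}\bigr)^{-1}(G'),
\]
a subgraph of $F'$ (and hence of $F$). Applying Lemma~\ref{lem:suboddo} to the oddomorphism $\psi_1|_{V(F')} : F' \to G$ and the subgraph $G' \subseteq G$, I get that $\psi_1|_{V(F'')}$ is an oddomorphism from $F''$ to $G'$.

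Now both $\psi_1|_{V(F'')} : F'' \to G'$ and $\psi_2|_{V(G')} : G' \to H$ are honest oddomorphisms, so Lemma~\ref{lem:oddcomp} gives that their composition is an oddomorphism from $F''$ to $H$. Since this composition coincides with the restriction $(\psi_2 \circ \psi_1)|_{V(F'')}$ of $\psi_2 \circ \psi_1$ to $V(F'')$, we conclude that $\psi_2 \circ \psi_1$ is a weak oddomorphism from $F$ to $H$, as desired.

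I do not anticipate a real obstacle: the two ingredients (Lemmas~\ref{lem:suboddo} and~\ref{lem:oddcomp}) are already tailored for exactly this kind of glueing. The only thing to be careful about is verifying that the composition $(\psi_2|_{V(G')}) \circ (\psi_1|_{V(F'')})$ literally equals $(\psi_2 \circ \psi_1)|_{V(F'')}$ as set functions, which follows immediately from the fact that $\psi_1$ maps $V(F'')$ into $V(G')$ by construction of $F''$.
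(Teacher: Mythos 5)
Your proof is correct and follows essentially the same route as the paper: both define $F'' = (\psi_1|_{V(F')})^{-1}(G')$, invoke Lemma~\ref{lem:suboddo} to restrict $\psi_1$ to an oddomorphism $F'' \to G'$, and then apply Lemma~\ref{lem:oddcomp} to compose with $\psi_2|_{V(G')}$.
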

\proof
By definition of weak oddomorphism, there exist subgraphs $F'$ and $G'$ of $F$ and $G$ respectively such that $\varphi_1 := \psi_1|_{V(F')}$ and $\varphi_2 := \psi_2|_{V(G')}$ are oddomorphisms from $F'$ to $G$ and from $G'$ to $H$ respectively. Let $F'' = {\varphi_1}^{-1}(G')$. By Lemma~\ref{lem:suboddo}, we have that $\varphi_1|_{V(F'')}$ is an oddomorphism from $F''$ to $G'$ and thus by Lemma~\ref{lem:oddcomp} the map $\varphi_2 \circ \varphi_1|_{V(F'')}$ is an oddomorphism from $F''$ to $H$. Since $\varphi_2 \circ \varphi_1|_{V(F'')} = \left(\psi_2 \circ \psi_1\right)|_{V(F'')}$, we have that $\psi_2 \circ \psi_1$ is a weak oddomorphism from $F$ to $H$.\qeds

We will write $F \oddto G$ if $F$ admits an oddomorphism to $G$ and $F \woto G$ if $F$ admits a weak oddomorphism to $G$. Lemmas~\ref{lem:oddcomp} and~\ref{lem:woddcomp} show that $\oddto$ and $\woto$ are transitive relations. Moreover, Lemma~\ref{lem:idoddo} proves that $\oddto$ and $\woto$ are reflexive. We will now show that $G \woto H$ and $H \woto G$ implies that $G \cong H$, and thus both $\oddto$ and $\woto$ are antisymmetric and therefore both are partial orders on the set of isomorphism classes of graphs.

\begin{lemma}\label{lem:antisym}
Let $G$ and $H$ be graphs. If $G \woto H$ and $H \woto G$, then $G \cong H$.
\end{lemma}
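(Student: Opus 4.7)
The plan is to exploit two facts about weak oddomorphisms: (i) the oddomorphism on the chosen subgraph is forced to be vertex-surjective by Condition~(2) of Definition~\ref{def:oddomorphism}, and (ii) once that surjection is forced to be a bijection, the oddness condition collapses into ``$\psi^{-1}$ is also a homomorphism,'' which is enough to conclude $\psi$ is a graph isomorphism.

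More concretely, by definition of $G \woto H$ there is a subgraph $G'$ of $G$ and a map $\psi\colon V(G)\to V(H)$ such that $\psi$ is a homomorphism $G\to H$ and $\psi|_{V(G')}$ is an oddomorphism $G'\to H$. Surjectivity of this oddomorphism (every fibre contains an odd, hence nonzero, number of odd vertices) gives $|V(G)| \geq |V(G')| \geq |V(H)|$. The symmetric argument applied to $H \woto G$ yields $|V(H)| \geq |V(G)|$. Thus $|V(G)| = |V(G')| = |V(H)|$, which forces $V(G')=V(G)$ and makes $\psi|_{V(G')}=\psi$ a \emph{bijection} $V(G)\to V(H)$.

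Now each fibre $\psi^{-1}(v)$ is a singleton $\{a_v\}$. Since the fibre contains an odd number of $\psi|_{V(G')}$-odd vertices and has only one element, $a_v$ must itself be odd with respect to $\psi|_{V(G')}$. Unpacking the definition of odd vertex: for every $u \sim_H v$, the set $N_{G'}(a_v) \cap \psi^{-1}(u) = N_{G'}(a_v) \cap \{a_u\}$ has odd cardinality, hence equals $\{a_u\}$. In particular $a_v a_u \in E(G') \subseteq E(G)$. Translating back through the bijection $\psi$, this says $uv \in E(H) \Rightarrow \psi^{-1}(u)\psi^{-1}(v) \in E(G)$, i.e., $\psi^{-1}$ is a homomorphism $H \to G$.

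Combining this with the fact that $\psi$ itself is a homomorphism $G\to H$, we get $ab \in E(G) \iff \psi(a)\psi(b) \in E(H)$, so $\psi$ is a graph isomorphism and $G \cong H$. The proof is essentially an unwinding of definitions once one observes the two-sided surjectivity forces bijectivity; the only mildly non-obvious point is the step from ``bijective oddomorphism'' to ``local bijection on edges,'' which is exactly what the odd-vertex condition buys us in the singleton-fibre case. I do not anticipate a real obstacle.
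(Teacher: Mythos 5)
Your proof is correct, and it takes a genuinely different (and in one respect more informative) final step than the paper's. Both arguments begin the same way: surjectivity of oddomorphisms in both directions forces $|V(G)| = |V(H)|$ and hence forces $\psi$ to be a bijection. Where they diverge is in upgrading the bijective homomorphism to an isomorphism. The paper does this indirectly by edge counting: the bijective homomorphisms $\psi\colon G \to H$ and $\varphi\colon H \to G$ each inject edges into edges, so $|E(G)| = |E(H)|$, and a bijective homomorphism between graphs with the same number of edges must preserve non-adjacency. You instead exploit the oddomorphism structure: once every fibre is a singleton, each vertex is forced to be odd, and the odd-vertex condition at $\psi^{-1}(v)$ directly yields $uv \in E(H) \Rightarrow \psi^{-1}(u)\psi^{-1}(v) \in E(G') \subseteq E(G)$, i.e., $\psi^{-1}$ is a homomorphism. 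Your route buys an explicit structural fact (an oddomorphism with singleton fibres has a homomorphic inverse) rather than relying on the counting shortcut, while the paper's route is slightly shorter because it needs no further inspection of which vertices are odd. Both are sound. One small remark: you should keep in mind, as you implicitly do by writing $N_{G'}$, that even after $V(G') = V(G)$ the subgraph $G'$ may still have fewer edges than $G$, so the odd-vertex condition is being read in $G'$, not $G$; you handled this correctly, but it is exactly the pitfall flagged in Remark~\ref{rem:oddoissue}.
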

\proof
Suppose that $\psi$ is a weak oddomorphism from $G$ to $H$ and $G'$ is a subgraph of $G$ such that $\psi|_{V(G'})$ is an oddomorphism from $G'$ to $H$. Since $\psi^{-1}(v)$ contains an odd number of odd vertices of $G'$ for every $v \in V(H)$, we have that $\psi$ is a surjective homomorphism from $G$ to $H$. Similarly, there must be a surjective homomorphism $\varphi$ from $H$ to $G$. This implies that $|V(G)| = |V(H)|$, and moreover the adjacency preserving property of homomorphisms implies that $|E(G)| = |E(H)|$. Therefore, $\psi$ is a bijection from $V(G)$ to $V(H)$ that preserves adjacency and since $|E(G)| = |E(H)|$ this implies that $\psi$ also preserves non-adjacency, i.e., $\psi$ is an isomorphism.\qeds

Thus we have the following:

\begin{theorem}\label{thm:partialorder}
The relations $\oddto$ and $\woto$ are partial orders on the set of (isomorphism classes) of graphs.
\end{theorem}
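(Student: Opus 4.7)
The plan is simply to assemble the three properties defining a partial order (reflexivity, transitivity, antisymmetry) from the lemmas already established just above this theorem. Both relations $\oddto$ and $\woto$ are defined on isomorphism classes of graphs, so it is enough to check each property.

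First I would dispatch reflexivity by invoking \Lem{idoddo}, which says that the identity map $V(G) \to V(G)$ is an oddomorphism from $G$ to itself. Since every oddomorphism is, by definition, a weak oddomorphism (take $F' = F$), this gives both $G \oddto G$ and $G \woto G$ for every graph $G$.

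Next, transitivity for $\oddto$ is exactly the content of \Lem{oddcomp}, and transitivity for $\woto$ is exactly the content of \Lem{woddcomp}: composing (weak) oddomorphisms yields a (weak) oddomorphism. So no new work is required here.

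Finally, antisymmetry is handled by \Lem{antisym}: if $G \woto H$ and $H \woto G$, then $G \cong H$. Since $G \oddto H$ trivially implies $G \woto H$ (taking the subgraph $F' = F$ in the definition), antisymmetry for $\woto$ immediately yields antisymmetry for $\oddto$ as well. As graphs are identified up to isomorphism, this gives the antisymmetry required of a partial order. The main obstacle, antisymmetry, has already been overcome in \Lem{antisym}, so the proof is essentially a two-line bookkeeping of the preceding lemmas.
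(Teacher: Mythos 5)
Your proposal is correct and matches the paper's own argument exactly: reflexivity from \Lem{idoddo}, transitivity from Lemmas~\ref{lem:oddcomp} and~\ref{lem:woddcomp}, and antisymmetry from \Lem{antisym} (with the observation that $\oddto$ refines $\woto$). The paper simply records these facts in the text immediately preceding the theorem rather than in a separate proof environment.
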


In Lemma~\ref{lem:suboddo} we showed that if $F$ has a weak oddomorphism to $G$ and $G'$ is a subgraph of $G$, then there is a subgraph $F'$ of $F$ that has an oddomorphism to $G'$. We now extend this from subgraphs to minors.

\begin{lemma}\label{lem:minoroddo}
Let $F$ and $G$ be graphs such that $F$ has a weak oddomorphism to $G$. If $G'$ is a minor of $G$, then there is a minor $F'$ of $F$ such that $F'$ admits an oddomorphism to $G'$.
\end{lemma}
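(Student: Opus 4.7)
The argument reduces the general minor case to three atomic operations—vertex deletion, edge deletion, and edge contraction—by iterating through the sequence of operations that produces $G'$ from $G$. Vertex and edge deletions yield a subgraph $G'' \subseteq G$, which is handled directly by \Lem{suboddo}: the preimage $\psi^{-1}(G'')$ is a subgraph of $F$ (hence a minor) on which the restriction of the given weak oddomorphism is a weak oddomorphism onto $G''$; passing to the subgraph where it becomes an oddomorphism (which is again a minor of $F$) preserves the inductive hypothesis. The substantive case is therefore a single edge contraction.

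So suppose $G' = G/uv$ for some $uv \in E(G)$, with new vertex $w$. After restricting $F$ to the subgraph on which the weak oddomorphism is itself an oddomorphism, we may assume $\psi \colon F \to G$ is an oddomorphism. Set $W := \psi^{-1}(u) \cup \psi^{-1}(v)$ and let $B := F[W]$; since $G$ has no loops, $B$ is bipartite between $\psi^{-1}(u)$ and $\psi^{-1}(v)$. The minor $F'$ is then built by selecting as branch sets a carefully chosen collection of connected subgraphs of $B$ (each to be mapped to $w$) together with singleton branch sets $\{a\}$ for $a \in V(F) \setminus W$ we wish to keep (each mapping to $\psi(a)$), and contracting each branch set to a single vertex. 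This yields a minor of $F$ together with a candidate homomorphism $\psi' \colon V(F') \to V(G')$.

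The main obstacle is to choose the collection of connected subgraphs of $B$ so that $\psi'$ is an oddomorphism. The difficulty is that contraction in the simple-graph setting merges parallel edges, shifting parity counts and potentially destroying the oddomorphism property for the most natural choice (such as taking all connected components of $B$); small examples already show that this naive choice can fail. The key parity input for correcting this is that each of $\psi^{-1}(u)$ and $\psi^{-1}(v)$ contains an odd number of $\psi$-odd vertices. My plan is to exploit this flexibility via a direct combinatorial parity analysis of how simple-graph contraction interacts with the oddomorphism conditions, possibly combined with an application of \Thm{maindualG}: it suffices to produce some minor $F^*$ of $F$ with $\hom(F^*, G'_0) \neq \hom(F^*, G'_1)$, since such an $F^*$ then admits a weak oddomorphism to $G'$, which restricts to an oddomorphism on a subgraph (again a minor of $F$). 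Relating $\hom(F^*, G'_i)$ to the hypothesis $\hom(F, G_0) \neq \hom(F, G_1)$ through the construction of \Sec{construction} is the technical heart of this approach.
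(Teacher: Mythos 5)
Your reduction to the case of a single edge contraction (with subgraphs handled by \Lem{suboddo}) matches the paper, and you correctly diagnose the central obstacle: contracting all components of $B = F[\psi^{-1}(u)\cup\psi^{-1}(v)]$ naively can destroy the parity conditions. But the proposal stops there — you name the problem and sketch two possible routes without executing either, so there is a genuine gap.

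The fix you hint at (``a carefully chosen collection of connected subgraphs of $B$'' as branch sets) is not how the paper resolves it, and I don't see how to make a clever selection of branch sets work directly. The paper instead contracts \emph{all} components $C_1,\dots,C_k$ of $B$ and repairs the parities with two auxiliary edge-deletion steps. First, \emph{before} contracting, it passes from $G$ to an intermediate graph $G''$ by deleting, for every $x\in V(G)$ adjacent to both $u$ and $v$, the edge $xv$ (noting $G''/uv = G/uv$ since multi-edges collapse), and correspondingly takes $F'' = \psi^{-1}(G'')$; by \Lem{suboddo} the restriction is still an oddomorphism, and this pre-processing eliminates the double-counting for fibres over common neighbours of $u$ and $v$. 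Second, \emph{after} contracting $C_1,\dots,C_k$ to $c_1,\dots,c_k$, it deletes the edge $c_ia$ whenever $|N_{F''}(a)\cap C_i|$ is even, so that $a\sim c_i$ in $F'$ iff $a$ had an odd number of $F''$-neighbours in $C_i$. This post-contraction deletion is what makes the parity bookkeeping go through (the adjacency of $a$ to $\{c_1,\dots,c_k\}$ becomes a symmetric-difference computation), and one then checks directly that $\psi'$ is a homomorphism, that every vertex is $\psi'$-odd or $\psi'$-even, and that each fibre has an odd number of odd vertices. Your alternative route via \Thm{maindualG} (produce a minor $F^*$ with $\hom(F^*,G'_0)\ne\hom(F^*,G'_1)$) is also plausible in spirit but is left entirely unrealized; as you say yourself, relating the homomorphism counts across the contraction is precisely the technical content, and that work is not done here.
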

\proof
As with Lemma~\ref{lem:suboddo}, it suffices to consider the case where $F$ has an oddomorphism to $G$. Moreover, 
since we have already proven the lemma for subgraphs of $G$, it suffices to prove the case where $G'$ is obtained from $G$ by contracting a single edge. So let $uv \in E(G)$ and let $G' = G/uv$. We will refer the vertex formed by contracting $uv$ as $\hat{w}$ and thus $V(G') = \left(V(G) \setminus \{u,v\}\right) \cup \{\hat{w}\}$.

Let $\psi$ be an oddomorphism from $F$ to $G$. We will build a graph $F'$ and map $\psi'$ such that $\psi'$ is an oddomorphism from $F'$ to $G'$. The construction of $F'$ will be in several steps. First, we will define $G''$ and $F''$. For each $x \in V(G)$ that is adjacent to both $u$ and $v$, remove the edge between $x$ and $v$ (we could have chosen $u$ instead). We refer to the graph obtained after performing this for all relevant $x$ as $G''$. Note that $G''/uv = G/uv$, since we do not allow multiple edges. We define $F''$ by removing all edges between $\psi^{-1}(x)$ and $\psi^{-1}(v)$ for each $x \in V(G)$ adjacent to both $u$ and $v$. Note that $F'' = \psi^{-1}(G'')$ and thus $\psi$ is an oddomorphism from $F''$ to $G''$ by Lemma~\ref{lem:suboddo}. Now to obtain $F'$ we first contract all edges between $\psi^{-1}(u)$ and $\psi^{-1}(v)$. Let us refer to the vertex sets of the connected components of the subgraph of $F$ induced by $\psi^{-1}(u) \cup \psi^{-1}(v)$ as $C_1, \ldots, C_k$ and let $c_i$ be the vertex the elements of $C_i$ are contracted to when performing the above described contractions. Thus
\[V(F') = \left(V(F) \setminus \left(\bigcup_{i=1}^k C_i\right)\right) \cup \{c_1, \ldots, c_k\}.\]
Finally, for each $c_i$ and $a \in V(F) \setminus \left(\cup_{i=1}^k C_i\right)$ we remove the edge between $c_i$ and $a$ (if there is any) if $|N_{F''}(a) \cap C_i|$ was even. Thus a vertex $a \in V(F) \setminus \left(\cup_{i=1}^k C_i\right)$ is adjacent to $c_i$ in $F'$ if and only if $|N_{F''}(a) \cap C_i|$ was odd. Note that there are no edges among the $c_i$. By construction we see that $F'$ is a minor of $F$. Also note that the edges between $\psi^{-1}(x)$ and $\psi^{-1}(y)$ in $F'$ are the same as those in $F''$ as long as $x,y \in V(G') \setminus \{\hat{w}\}$.

Now define $\psi' : V(F') \to V(G')$ as 
\[\psi'(a) = \begin{cases} \psi(a) & \text{if } a \notin \{c_1, \ldots, c_k\} \\ \hat{w} &\text{o.w.} \end{cases}\]
We first must show that this is a homomorphism from $F'$ to $G'$. If $a,b \in V(F')$ are not among $c_1, \ldots, c_k$ and $a \sim_{F'} b$, then $a \sim_{F''} b$ and thus  $\psi'(a) \sim_{G'} \psi'(b)$ since $\psi$ is a homomorphism from $F''$ to $G''$ and in this case $\psi'(a) = \psi(a)$ and $\psi'(b) = \psi(b)$. If $a \in V(F')$ is adjacent to $c_i$ then $a \notin\{c_1, \ldots, c_k\}$ and $a$ was adjacent to some element of $C_i \subseteq \psi^{-1}(u) \cup \psi^{-1}(v)$. Therefore, $\psi'(a) = \psi(a)$ was adjacent to either $u$ or $v$ in $G$ and thus $\psi'(a)$ is adjacent to $\hat{w} = \psi'(c_i)$ in $G'$. These are all of the adjacencies of $F'$ and thus we have shown that $\psi'$ is a homomorphism from $F'$ to $G'$.

Now we must show that every vertex of $F'$ is either even or odd with respect to $\psi'$. We will show that if $a \in V(F') \setminus \{c_1, \ldots, c_k\}$, then the parity of $a$ w.r.t.~$\psi'$ is the same as its parity w.r.t~$\psi$. Consider $a \in V(F') \setminus \{c_1, \ldots, c_k\}$ and let $x = \psi'(a) = \psi(a)$. Suppose that $a$ is odd w.r.t.~$\psi$ (the even case is similar). If $y \in V(G') \setminus \{\hat{w}\}$ is such that $y \sim_{G'} x$, then $N_{F'}(a) \cap {\psi'}^{-1}(y) = N_{F''}(a) \cap {\psi}^{-1}(y)$ and thus this set has odd size as desired. Now we consider $N_{F'}(a) \cap {\psi'}^{-1}(\hat{w}) = N_{F'}(a) \cap \{c_1, \ldots, c_k\}$ in the case where $x \sim_{G'} \hat{w}$. We must show that this set as odd size. By construction this is equivalent to there being an odd number of $i \in [k]$ such that $|N_{F''}(a) \cap C_i|$ is odd. Since $x \sim_{G'} \hat{w}$, we have that $x$ is adjacent to precisely one of $u$ and $v$ in $G''$. Suppose without loss of generality that $x \sim_{G''} u$. Thus $N_{F''}(a) \cap C_i = N_{F''}(a) \cap \left(C_i \cap \psi^{-1}(u)\right)$ for each $i$. Since $a$ is odd w.r.t.~$\psi$, it is adjacent (in $F'')$ to an odd number of elements of $\psi^{-1}(u)$ and it follows that there are an odd number of $i \in [k]$ such that $|N_{F''}(a) \cap C_i|$ is odd. Thus $a$ is adjacent to an odd number of the $c_i$ as desired. The case where $a$ is even w.r.t.~$\psi$ is analogous.

We must now consider the $c_i$ themselves. First note that by an edge counting argument the number of $\psi$-odd vertices in $C_i \cap \psi^{-1}(u)$ has the same parity as the number of odd vertices in $C_i \cap \psi^{-1}(v)$. We will show that $c_i$ is odd w.r.t.~$\psi'$ if this number is odd and even otherwise. Suppose that $x \sim_{G'} \hat{w}$. By construction, a vertex $b \in {\psi'}^{-1}(x)$ is adjacent to $c_i$ in $F'$ if and only if it is adjacent to an odd number of elements of $C_i$ in $F''$. Therefore,
\[N_{F'}(c_i) \cap {\psi'}^{-1}(x) = \triangle_{c \in C_i} \left(N_{F''}(c) \cap \psi^{-1}(x)\right)\]
where $\triangle$ denotes symmetric difference. Recall that since $x \sim_{G'} \hat{w}$ we have that $x$ is adjacent to precisely one of $u$ and $v$ in $G''$. Suppose without loss of generality that $x \sim_{G''} u$. Then
\[N_{F'}(c_i) \cap {\psi'}^{-1}(x) = \triangle_{c \in C_i \cap \psi^{-1}(u)} \left(N_{F''}(c) \cap \psi^{-1}(x)\right).\]
As $\psi$ is an oddomorphism from $F''$ to $G''$, the size of this symmetric difference is odd if $C_i \cap \psi^{-1}(u)$ contains an odd number of $\psi$-odd vertices and is even otherwise. Since this does not depend on the particular $x \sim_{G'} \hat{w}$, we have proven our claim.

Now that we have shown that every vertex of $F'$ is either even or odd with respect to $\psi'$, we must show that ${\psi'}^{-1}(x)$ contains an odd number of odd vertices for all $x \in V(G')$. We have shown that if $x \ne \hat{w}$ then $a \in {\psi'}^{-1}(x)$ is odd w.r.t.~$\psi'$ if and only if it is odd w.r.t.~$\psi$. Therefore ${\psi'}^{-1}(x)$ contains an odd number of odd vertices for any $x \ne \hat{w}$. Now consider ${\psi'}^{-1}(\hat{w}) = \{c_1, \ldots, c_k\}$. We have shown that $c_i$ is odd w.r.t.~$\psi'$ if and only if $C_i \cap \psi^{-1}(u)$ contains an odd number of $\psi$-odd vertices. Since $\psi^{-1}(u)$ contains an odd number of $\psi$-odd vertices, it follows that there are an odd number of $i \in [k]$ such that $C_i \cap \psi^{-1}(u)$ contains an odd number of $\psi$-odd vertices. Therefore an odd number of the $c_i$ are odd w.r.t.~$\psi'$.\qeds

Using the above, we are able to prove the following:

\begin{cor}
Let $\mathcal{F}$ be a minor closed family of graphs and suppose that $G$ is a connected graph such that $G_0 \cong_{\mathcal{F}} G_1$. If $H$ is a connected graph that contains $G$ as a minor, then $H_0 \cong_{\mathcal{F}} H_1$.
\end{cor}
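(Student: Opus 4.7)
The plan is to reduce the corollary to a weak-oddomorphism statement via Theorem~\ref{thm:maindualG}, and then invoke Lemma~\ref{lem:minoroddo} to push oddomorphisms down to a minor of $G$.

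First I would unpack what needs to be shown. By definition, $H_0 \cong_{\mathcal{F}} H_1$ means $\hom(F, H_0) = \hom(F, H_1)$ for every $F \in \mathcal{F}$. By Theorem~\ref{thm:maindualG} applied to the connected graph $H$, this equality holds if and only if $F$ admits no weak oddomorphism to $H$. So the task reduces to proving: no $F \in \mathcal{F}$ admits a weak oddomorphism to $H$.

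I would argue by contradiction. Suppose some $F \in \mathcal{F}$ admits a weak oddomorphism $\psi$ to $H$. Since $G$ is a minor of $H$, Lemma~\ref{lem:minoroddo} yields a minor $F'$ of $F$ together with an oddomorphism from $F'$ to $G$. Because $\mathcal{F}$ is minor-closed and $F \in \mathcal{F}$, we have $F' \in \mathcal{F}$. An oddomorphism is in particular a weak oddomorphism, so applying Theorem~\ref{thm:maindualG} in the other direction (now to the connected graph $G$) gives $\hom(F', G_0) > \hom(F', G_1)$, so in particular $\hom(F', G_0) \ne \hom(F', G_1)$. This contradicts the hypothesis $G_0 \cong_{\mathcal{F}} G_1$ since $F' \in \mathcal{F}$. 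Hence no such $F$ exists, and $H_0 \cong_{\mathcal{F}} H_1$.

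I do not expect any real obstacle here: the argument is essentially a three-line composition of Theorem~\ref{thm:maindualG} with Lemma~\ref{lem:minoroddo}, with minor-closedness of $\mathcal{F}$ supplying the missing link that $F' \in \mathcal{F}$. The only small things to be careful about are that $G$ and $H$ are both connected (so that $G_0, G_1, H_0, H_1$ are all defined and Theorem~\ref{thm:maindualG} applies), and that Lemma~\ref{lem:minoroddo} produces an honest oddomorphism (not merely a weak one) from the minor $F'$, which is stronger than what we actually need.
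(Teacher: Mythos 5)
Your argument is correct and is essentially identical to the paper's proof: the paper writes it as a contrapositive while you phrase it as a contradiction, but both reduce to applying Theorem~\ref{thm:maindualG}, then Lemma~\ref{lem:minoroddo} to extract a minor $F'$ of $F$ with an oddomorphism to $G$, and finally minor-closedness of $\mathcal{F}$. No gaps.
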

\proof
We prove the contrapositive. Suppose that $H_0 \not\cong_{\mathcal{F}} H_1$, i.e., that there exists a graph $F \in \mathcal{F}$ such that $\hom(F,H_0) \ne \hom(F,H_1)$. By Theorem~\ref{thm:maindualG} this is equivalent to $F$ having an oddomorphism to $H$. Since $G$ is a minor of $H$, by Lemma~\ref{lem:minoroddo} there exists a minor $F'$ of $F$ that has an oddomorphism to $G$ and therefore $\hom(F',G_0) \ne \hom(F',G_1)$. Since $\mathcal{F}$ is a minor closed family, $F' \in \mathcal{F}$ and thus $G_0 \not\cong_{\mathcal{F}} G_1$.\qeds

In the proof of Corollary~\ref{cor:cycleoddos} (see Remark~\ref{rem:bipartite}), we showed that a connected bipartite graph cannot have a weak oddomorphism to an odd cycle. Here we generalize this to show that no bipartite graph admits a weak oddomorphism to any non-bipartite graph, and this allows us to show that if $G$ is a connected non-bipartite graph then $G_0$ and $G_1$ have isomorphic bipartite double covers\footnote{A direct proof of this latter fact was first found jointly with Eva Rotenberg.}. The bipartite double cover of a graph $H$ is the graph $K_2 \times H$.

\begin{lemma}
Let $G$ be any non-bipartite graph. If $F$ is bipartite, then $F$ does not admit a weak oddomorphism to $G$. As a consequence, if $G$ is a connected non-bipartite graph, then $K_2 \times G_0 \cong K_2 \times G_1$.
\end{lemma}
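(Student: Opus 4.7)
The plan is to reduce both halves of the statement to the already-established fact (\Rem{bipartite}, from Case~3 of \Cor{cycleoddos}) that no connected bipartite graph admits a weak oddomorphism to an odd cycle, and then to invoke Lov\'{a}sz's classical theorem to obtain the categorical-product isomorphism.

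For the first assertion, I would argue by contradiction: suppose $F$ is bipartite and $\psi$ is a weak oddomorphism from $F$ to a non-bipartite graph $G$. Since $G$ is non-bipartite, some connected component $G^*$ of $G$ is non-bipartite. Viewing $G^*$ as a subgraph of $G$ and applying \Lem{suboddo} shows that $\psi$ restricts to a weak oddomorphism from $\psi^{-1}(G^*) \subseteq F$ onto $G^*$, and $\psi^{-1}(G^*)$ remains bipartite as a subgraph of $F$. Hence we may assume $G$ itself is connected and non-bipartite, in which case $G$ contains a shortest (necessarily induced) odd cycle $C_k$ as a subgraph. A second application of \Lem{suboddo}, now with $C_k$ as the subgraph of $G$, yields a weak oddomorphism from the bipartite graph $\psi^{-1}(C_k)$ to $C_k$. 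Since $C_k$ is connected, \Lem{oddoconnected} then produces a \emph{connected} subgraph $\hat{F}$ of $\psi^{-1}(C_k)$ admitting an oddomorphism to $C_k$. But $\hat{F}$ is connected and bipartite, directly contradicting \Rem{bipartite}.

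For the consequence, I would combine the first part with the multiplicativity $\hom(F, H \times H') = \hom(F,H)\hom(F,H')$ and with Lov\'{a}sz's theorem. For any graph $F$: if $F$ is non-bipartite, then $\hom(F,K_2) = 0$ and so $\hom(F, K_2 \times G_0) = 0 = \hom(F, K_2 \times G_1)$; if $F$ is bipartite, the first part shows that $F$ does not admit a weak oddomorphism to $G$, whence \Thm{maindualG} gives $\hom(F,G_0) = \hom(F,G_1)$, and multiplying by $\hom(F,K_2)$ again yields $\hom(F, K_2 \times G_0) = \hom(F, K_2 \times G_1)$. Since this holds for every $F$, Lov\'{a}sz's theorem gives $K_2 \times G_0 \cong K_2 \times G_1$.

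The only subtle point is confirming that bipartiteness is preserved through the two preimage reductions, which is automatic since any subgraph of a bipartite graph is bipartite. Beyond that, the argument is a direct assembly of \Lem{suboddo}, \Lem{oddoconnected}, \Rem{bipartite}, and \Thm{maindualG}; there is no real technical obstacle once one notices that pulling the weak oddomorphism back along an induced odd cycle in $G$ reduces everything to the already-handled odd-cycle case of \Cor{cycleoddos}.
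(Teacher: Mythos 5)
Your proof is correct and follows essentially the same route as the paper's: pull the weak oddomorphism back through an odd-cycle subgraph of $G$ via \Lem{suboddo}, extract a connected bipartite piece with \Lem{oddoconnected}, contradict \Rem{bipartite}, and then deduce the product isomorphism from \Thm{maindualG}, multiplicativity of $\hom$ over categorical products, and Lov\'asz's theorem. The only departures are cosmetic: the intermediate reduction to a non-bipartite connected component of $G$ and the insistence that the odd cycle be induced are both unnecessary, since \Lem{suboddo} already applies to an arbitrary odd-cycle subgraph of $G$.
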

\proof
Since $G$ is non-bipartite, it contains a subgraph $G'$ isomorphic to an odd cycle. Suppose that $F$ is bipartite and has a weak oddomorphism to $G$. Then by Lemma~\ref{lem:suboddo}, $F$ has a subgraph $F'$ (which is also bipartite) that has a weak oddomorphism to the odd cycle $G'$. Furthermore, by Lemma~\ref{lem:oddoconnected} $F'$ must have a connected component $F''$ (which is also bipartite) that has a weak oddomorphism to the odd cycle $G'$. But this contradicts Remark~\ref{rem:bipartite}, and thus we have that $F$ could not have had a weak oddomorphism to $G$, as desired.

Now consider a connected non-bipartite graph $G$. By the above we have that any bipartite graph $F$ does not have a weak oddomorphism to $G$, and thus by Theorem~\ref{thm:maindualG} we have that $\hom(F,G_0) = \hom(F,G_1)$ for all bipartite graphs $G$. Using known properties of the categorical product of graphs, we have that for bipartite $F$:
\[\hom(F,K_2 \times G_0) = \hom(F,K_2)\hom(F,G_0) = \hom(F,K_2)\hom(F,G_1) = \hom(F,K_2 \times G_1).\]
On the other hand, if $F$ is not bipartite, then $\hom(F,K_2 \times G_0) = \hom(F,K_2 \times G_1) = 0$ since both $K_2 \times G_0$ and $K_2 \times G_1$ are bipartite. Therefore, $\hom(F,K_2 \times G_0) = \hom(F,K_2 \times G_1)$ for all graphs $F$ and thus by Lovasz' theorem we have that $K_2 \times G_0 \cong K_2 \times G_1$.\qeds

\subsection{Cycles, paths, and induced stars}\label{sec:cycles}

We have seen in Lemma~\ref{lem:degree} that if $G$ contains a vertex of degree $d$, then any graph $F$ that has a (weak) oddomorphism to $G$ must contain a vertex of degree at least $d$. Here we will prove an analogous result for (chordless) cycles.

To prove this result we first introduce the following notion:

\begin{definition}
Let $C_k$ be the $k$-cycle graph with vertex set $\{0,1, \ldots, k-1\}$ such that $i \sim i+1 \ \mathrm{mod} \ k$, and let $\psi$ be a homomorphism from a graph $F$ to $C_k$. If $ab \in E(F)$, then we define the $\psi$-length of the \emph{ordered} edge $ab$, denoted $\len_\psi(ab)$, as
\begin{equation}
\len_\psi(ab) = \begin{cases}+1 & \text{if } \psi(b) \equiv \psi(a) + 1 \ \mathrm{mod} \ k \\ -1 & \text{if } \psi(b) \equiv \psi(a) - 1 \ \mathrm{mod} \ k\end{cases}
\end{equation}
Note that as $\psi$ is a homomorphism, one of the above cases always holds. More generally, for a walk $W = a_0, \ldots, a_\ell$ in $F$, we define the $\psi$-length of $W$, denoted $\len_\psi(W)$, as
\begin{equation}
    \len_\psi(W) = \sum_{i=1}^\ell \len_\psi(a_{i-1}a_i).
\end{equation}
Note that the values of $\len_\psi$ and the sum above are not taken modulo $k$. Thus the $\psi$-length of a walk can be any integer (even negative).
\end{definition}

We now make some simple observations:
\begin{enumerate}
    \item If $W_1$ and $W_2$ are walks in $F$ such that the last vertex of $W_1$ is the first vertex of $W_2$, then we denote the concatenation of these walks as $W_1W_2$. It is clear from the definition that
    \begin{equation}\label{eq:psilen1}
        \len_\psi(W_1W_2) = \len_\psi(W_1)+\len_\psi(W_2).
    \end{equation}
    \item If $W = a_0, \ldots, a_\ell$ is a walk in $F$ and $W' = a_\ell, \ldots, a_0$ is the \emph{reversal} of $W$, then
    \begin{equation}\label{eq:psilen2}
        \len_\psi(W') = -\len_\psi(W).
    \end{equation}
    \item If $W = a_0, \ldots, a_\ell$ is a walk in $F$, then 
    \begin{equation}\label{eq:psilen3}
        \len_\psi(W) \equiv r \ \mathrm{mod} \ k \ \Leftrightarrow \ \psi(a_\ell) \equiv \psi(a_0) + r \ \mathrm{mod} \ k.
    \end{equation}
    In particular, $\len_\psi(W)$ is a multiple of $k$ if and only if $\psi(a_0) = \psi(a_\ell)$.
\end{enumerate}

From the latter observation we see that if $W$ is a \emph{closed} walk (i.e., the first and last vertices of $W$ are the same), then $\len_\psi(W) = mk$ for some integer $m$. We refer to this number $m$ as the \emph{$\psi$-winding number} (or simply winding number) of the closed walk $W$. Note that this number can be any integer. A cycle is a special type of closed walk and we will use our notion of winding number to prove that if $F$ has a weak oddomorphism to $C_k$ then it must contain a chordless cycle of length at least and of the same parity as $k$. We do this by proving that there is a chordless cycle with odd winding number. For this we will consider \emph{tours}: closed walks that do not repeat edges. Given a tour $W = a_0, a_1, \ldots, a_\ell = a_0$ in a graph $F$ with homomorphism $\psi$ to $C_k$, we denote by $\vec{E}(W)$ the set $\{(a_j,a_{j+1}) : j = 0, \ldots, \ell-1\}$. We also define for each $i \in V(C_k)$
\begin{align*}
    W^+_i &= \{(a,b) \in \vec{E}(W) : \psi(a) = i, \ \psi(b) = i+1\} \\
    W^-_i &= \{(a,b) \in \vec{E}(W) : \psi(a) = i+1, \ \psi(b) = i\},
\end{align*}
where addition is modulo $k$. We first prove the following lemma:

\begin{lemma}\label{lem:windingnum}
Let $F$ be a graph with a homomorphism $\psi$ to $C_k$. If $W$ is a tour in $F$ then the $\psi$-winding number of $W$ is equal to $|W^+_i|-|W^-_i|$ for all $i \in \{0, \ldots, k-1\}$.
\end{lemma}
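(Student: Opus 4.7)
The plan is to prove the lemma in three steps: first expressing $\len_\psi(W)$ as a sum indexed by $i$, then showing that each summand $|W_i^+|-|W_i^-|$ is independent of $i$ via a flow-balance argument, and finally combining these to identify the common value with the winding number.

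First I would observe that every ordered edge $(a,b) \in \vec{E}(W)$ projects under $\psi$ to some oriented edge of $C_k$, so it lies in exactly one of the sets $W_i^+$ or $W_i^-$. By the definition of $\len_\psi$, edges in $W_i^+$ contribute $+1$ and edges in $W_i^-$ contribute $-1$. Partitioning the edge sum accordingly gives
\[
\len_\psi(W) \;=\; \sum_{(a,b)\in\vec{E}(W)} \len_\psi(a,b) \;=\; \sum_{i=0}^{k-1} \bigl(|W_i^+| - |W_i^-|\bigr).
\]

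Next I would show that $|W_i^+|-|W_i^-|$ does not depend on $i$. Fix $i$ and consider any vertex $v \in \psi^{-1}(i+1)$. Since $\psi$ is a homomorphism into $C_k$, every tour-edge incident to $v$ has its other endpoint in $\psi^{-1}(i) \cup \psi^{-1}(i+2)$. Let $\alpha_v$, $\beta_v$, $\gamma_v$, $\delta_v$ denote, respectively, the number of ordered edges of $\vec{E}(W)$ of the forms $(a,v)$ with $\psi(a)=i$, $(v,b)$ with $\psi(b)=i$, $(a,v)$ with $\psi(a)=i+2$, and $(v,b)$ with $\psi(b)=i+2$. Because $W$ is a closed walk, the in-degree and out-degree at $v$ induced by $\vec{E}(W)$ agree, so $\alpha_v+\gamma_v = \beta_v+\delta_v$, i.e.\ $\alpha_v - \beta_v = \delta_v - \gamma_v$. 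Summing this identity over $v \in \psi^{-1}(i+1)$ and noting that $\sum_v \alpha_v = |W_i^+|$, $\sum_v \beta_v = |W_i^-|$, $\sum_v \delta_v = |W_{i+1}^+|$, and $\sum_v \gamma_v = |W_{i+1}^-|$, I obtain
\[
|W_i^+| - |W_i^-| \;=\; |W_{i+1}^+| - |W_{i+1}^-|.
\]

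Iterating around $C_k$, the quantity $|W_i^+|-|W_i^-|$ takes a common value $m$ for all $i \in \{0,\ldots,k-1\}$. Substituting into the first display gives $\len_\psi(W) = k\cdot m$, so $m$ is exactly the $\psi$-winding number of $W$, completing the proof. There is no serious obstacle: the argument is purely bookkeeping, and the only subtle point is keeping the orientations straight when matching the four quantities $\alpha_v,\beta_v,\gamma_v,\delta_v$ at a vertex in $\psi^{-1}(i+1)$ with the appropriate $W_i^{\pm}$ and $W_{i+1}^{\pm}$.
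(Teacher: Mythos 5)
Your proof is correct and takes essentially the same approach as the paper's: both rely on the Eulerian degree-balance property of the tour (in-degree equals out-degree at each vertex), summed over a fibre $\psi^{-1}(\cdot)$, to get $|W_i^+|-|W_i^-| = |W_{i+1}^+|-|W_{i+1}^-|$, and both then combine with the decomposition $\len_\psi(W) = \sum_i (|W_i^+|-|W_i^-|)$. The only cosmetic difference is that the paper balances degrees over the fibre $\psi^{-1}(i)$ rather than $\psi^{-1}(i+1)$, and states the independence claim before the sum identity rather than after.
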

\proof
We first show that $|W^+_i|-|W^-_i|$ is independent of $i$ by showing that $|W^+_i|-|W^-_i| = |W^+_{i-1}|-|W^-_{i-1}|$ for arbitrary $i$. Consider the digraph $D$ with $V(D) = V(F)$ whose arcs are the elements of $\vec{E}(W)$. Since $W$ is a tour, the in-degree of any vertex of $D$ is equal to its out-degree. The sum of the out-degrees of the vertices in $\psi^{-1}(i)$ is equal to $|W^+_i|+|W^-_{i-1}|$ whereas the sum of the in-degrees is equal to $|W^-_i|+|W^+_{i-1}|$. Therefore $|W^+_i|+|W^-_{i-1}| = |W^-_i|+|W^+_{i-1}|$ which is equivalent to the desired equality. Thus $|W^+_i|-|W^-_i|$ is some constant $m$ for all $i \in V(C_k)$.

By definition, the $\psi$-length of $W$ is equal to
\[\sum_{i \in V(C_k)} |W^+_i|-|W^-_i|\]
which is equal to $mk$ by the above. Therefore the winding number of $W$ is $m = |W^+_i|-|W^-_i|$.\qeds

We remark that the above lemma holds more generally for closed walks via the same proof, only one needs to take into account the repetition of edges (so $W^\pm_i$ will be a multiset and the digraph $D$ will have multiple edges).

For a tour $W = a_0, \ldots, a_\ell = a_0$ in a graph $F$, let $E(W)$ be the edges of $F$ of the form $a_ja_{j+1}$ for some $j = 0, \ldots, \ell-1$, i.e., $E(W)$ is an undirected version of $\vec{E}(W)$. We have the following:

\begin{lemma}\label{lem:eventour}
Let $F$ be a graph with an oddomorphism $\psi$ to $C_k$, and let $W$ be a tour in $F$ with even winding number. Let $F'$ be the graph with $V(F') = V(F)$ and $E(F') = E(F) \setminus E(W)$. Then $\psi|_{F'}$ is an oddomorphism from $F'$ to $C_k$.
\end{lemma}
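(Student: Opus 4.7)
My plan is to show that the parity data of $\psi$, viewed through the edge set of $F'$, is preserved on the nose for each vertex, and that the residual parity changes across $\psi$-fibres cancel out because the winding number is even.

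First I would observe that $\psi|_{V(F')}$ is automatically a homomorphism from $F'$ to $C_k$, since $F'$ is a subgraph of $F$. For each $a \in V(F)$ with $\psi(a)=i$, define
\[r^+(a) = |N_F(a)\cap\psi^{-1}(i+1)\cap \{b : ab \in E(W)\}|, \qquad r^-(a) = |N_F(a)\cap\psi^{-1}(i-1)\cap \{b : ab \in E(W)\}|.\]
Then $|N_{F'}(a) \cap \psi^{-1}(i\pm 1)| = |N_F(a)\cap \psi^{-1}(i\pm 1)| - r^\pm(a)$. Since $N_F(a)\subseteq\psi^{-1}(i-1)\cup\psi^{-1}(i+1)$, the sum $r^+(a)+r^-(a)$ equals the total number of edges of $E(W)$ incident to $a$. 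Because $W$ is a tour (no repeated edges), the digraph on $V(F)$ with arc set $\vec{E}(W)$ has in-degree equal to out-degree at every vertex, so this total is twice the common value; in particular, $r^+(a)\equiv r^-(a)\pmod 2$.

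Consequently, the two quantities $|N_{F'}(a)\cap\psi^{-1}(i\pm 1)|$ change from their values in $F$ by the same parity, so $a$ is again either $\psi|_{F'}$-odd or $\psi|_{F'}$-even. Moreover, the parity of $a$ with respect to $\psi|_{F'}$ differs from its parity with respect to $\psi$ precisely when $r^+(a)$ is odd. Thus Condition~(1) of Definition~\ref{def:oddomorphism} holds for $\psi|_{F'}$.

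For Condition~(2), fix $v \in V(C_k)$. The number of $\psi^{-1}(v)$-vertices that switched parity equals the number of $a\in\psi^{-1}(v)$ with $r^+(a)$ odd, which has the same parity as
\[\sum_{a\in\psi^{-1}(v)} r^+(a) = |W^+_v|+|W^-_v|.\]
By Lemma~\ref{lem:windingnum}, $|W^+_v|-|W^-_v|$ equals the winding number $m$, which is even by hypothesis; hence $|W^+_v|+|W^-_v|\equiv m\equiv 0\pmod 2$. So an even number of vertices in $\psi^{-1}(v)$ flipped parity, and since $\psi$ was an oddomorphism, $\psi^{-1}(v)$ still contains an odd number of odd vertices with respect to $\psi|_{F'}$. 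The only step that requires real care is keeping the bookkeeping of $r^\pm(a)$ straight; once Lemma~\ref{lem:windingnum} is invoked, the conclusion is immediate.
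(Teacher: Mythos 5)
Your proof is correct and follows essentially the same approach as the paper's: Condition (1) via the observation that a tour uses an even number of edges at each vertex so $r^+(a)\equiv r^-(a)\pmod 2$, and Condition (2) via Lemma~\ref{lem:windingnum} and the even winding number to conclude $|W^+_i|+|W^-_i|$ is even. The only cosmetic difference is that you track which vertices flip parity, whereas the paper equivalently tracks the parity of the edge count between adjacent fibres.
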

\proof
We first show that every vertex of $F'$ is even or odd with respect to $\psi|_{F'}$. Let $a \in \psi^{-1}(i)$. Since $W$ is a tour, the number of edges in $E(W)$ incident to $a$ is even. Thus the number of edges of $E(W)$ incident to $a$ and going to a vertex of $\psi^{-1}(i+1)$ has the same parity as the number of edges of $E(W)$ incident to $a$ and going to $\psi^{-1}(i-1)$. Therefore, after removing the edges of $E(W)$ from $F$ the vertex $a$ has the same parity of incident edges going to $\psi^{-1}(i+1)$ as those going to $\psi^{-1}(i-1)$. This proves the above claim.

Now we must show that there are an odd number of odd vertices with respect to $\psi|_{F'}$ in each fibre. Since every vertex is either odd or even by the above, it suffices to show that there is an odd number of edges of $F'$ between $\psi^{-1}(i)$ and $\psi^{-1}(i+1)$ for a given $i \in V(C_k)$. By Lemma~\ref{lem:windingnum} and the assumption that $W$ has even winding number, the value of $|W^+_i|-|W^-_i|$ is even and therefore $|W^+_i|+|W^-_i|$ is even. The latter is just the number of edges of $E(W)$ between $\psi^{-1}(i)$ and $\psi^{-1}(i+1)$. Since $E(F') = E(F) \setminus E(W)$, we have that the parity of the number of edges between $\psi^{-1}(i)$ and $\psi^{-1}(i+1)$ is the same for $F'$ and $F$. The parity is odd for the latter since $\psi$ is an oddomorphism and so we are done.\qeds

The following lemma is due to Carsten Thomassen.

\begin{lemma}\label{lem:tour}
Let $F$ be a graph with an oddomorphism $\psi$ to $C_k$. Then $F$ contains a tour of nonzero length.
\end{lemma}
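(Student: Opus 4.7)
The plan is to argue by contradiction using Lemma~\ref{lem:eventour} iteratively. Suppose that every tour in $F$ has $\psi$-length $0$ (equivalently, winding number $0$). Since winding number $0$ is even, Lemma~\ref{lem:eventour} tells us that if we delete the edges of any tour in the current graph, the restriction of $\psi$ remains an oddomorphism to $C_k$. In particular, any cycle in our current graph is a tour whose winding number is $0$ by hypothesis, so cycles are legal to remove.

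Starting from $F$, repeat the following step: while the current graph contains a cycle, pick one and delete its edges. Each step strictly decreases the edge count (every cycle has at least three edges), so the process terminates. Let $F^\ast$ be the graph produced at termination; it has vertex set $V(F)$, no cycles (so it is a forest), and by the iterated application of Lemma~\ref{lem:eventour}, the restriction $\psi|_{F^\ast}$ is still an oddomorphism from $F^\ast$ to $C_k$. The goal is now to rule out a forest carrying an oddomorphism to $C_k$ for $k \geq 3$, which yields the contradiction.

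To rule out this possibility: a finite forest with at least one edge has a leaf $a$. Let $i = \psi(a)$; because $k \geq 3$, the two $C_k$-neighbors $i-1$ and $i+1$ of $i$ are distinct, and $a$'s single neighbor in $F^\ast$ lies in exactly one of the fibres $\psi^{-1}(i-1)$, $\psi^{-1}(i+1)$. Hence $|N_{F^\ast}(a)\cap\psi^{-1}(i-1)|$ and $|N_{F^\ast}(a)\cap\psi^{-1}(i+1)|$ have opposite parities, so $a$ is neither odd nor even with respect to $\psi|_{F^\ast}$, violating condition~(1) of Definition~\ref{def:oddomorphism}. Alternatively, if $F^\ast$ has no edges at all, then every vertex is trivially even with respect to $\psi|_{F^\ast}$, so no fibre contains an odd vertex, contradicting condition~(2). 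Either way we have a contradiction, so $F$ must contain a tour of nonzero $\psi$-length.

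The main obstacle I anticipate is conceptual rather than technical: identifying that Lemma~\ref{lem:eventour} can be iterated and that the terminal object of this process must be a forest, since Lemma~\ref{lem:eventour} is precisely tuned to preserve the oddomorphism under removal of any even-winding tour. Once this is seen, the only remaining work is the elementary leaf argument showing a forest admits no oddomorphism into $C_k$ for $k \geq 3$.
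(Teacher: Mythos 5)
Your proof is correct, but it is over-engineered relative to what the lemma actually asserts. As the paper itself makes explicit in the proof of Theorem~\ref{thm:chordlesscycles}, ``tour of nonzero length'' means $|E(W)| > 0$, i.e.\ a nontrivial tour; since any nonempty tour contains a cycle, the lemma is simply equivalent to the statement that $F$ is not a forest, or equivalently that a forest carries no oddomorphism to $C_k$ (for $k\ge 3$). Your third paragraph proves exactly that directly: a leaf $a$ has its single neighbor in exactly one of the two distinct fibres $\psi^{-1}(\psi(a)\pm 1)$, so $a$ is neither $\psi$-odd nor $\psi$-even, and an edgeless graph has no odd vertices in any fibre. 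Negating the lemma makes $F$ a forest and that argument finishes it. The entire iterative deletion via Lemma~\ref{lem:eventour} is superfluous: it establishes the stronger claim that $F$ contains a tour of nonzero $\psi$-\emph{length} (nonzero winding number), which is what you appear to have read ``nonzero length'' to mean. That stronger claim is true and does imply the lemma, so the argument is valid — it just does substantially more work than needed and drags in a dependency on Lemma~\ref{lem:eventour} that the lemma as stated doesn't require.

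Your route is also genuinely different from the paper's, which is constructive (and attributed to Thomassen): start at an odd vertex, maintain a running ``status'' on each vertex, walk by continuing in the same direction at odd-status vertices and turning around at even-status ones, and argue the walk must close up into a tour. Your contradiction-by-leaf argument is shorter and more elementary, and it makes transparent what the lemma really says (forests have no oddomorphism to a cycle). The paper's construction is self-contained and does not lean on Lemma~\ref{lem:eventour}, and it produces a concrete tour, which matches the inductive flavor of how the lemma is used afterward. Both are sound; for the lemma as stated, your leaf argument alone is the cleanest proof.
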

\proof
Pick any \emph{odd} vertex $a_0 \in \psi^{-1}(0)$. We will describe how to construct a tour $W$ starting at $a_0$. We will construct the walk one edge at a time and during the construction we may update the ``status" of some vertices. Initially the status of any vertex in $F$ is odd/even if it is odd/even with respect to $\psi$. We begin the walk by moving along an edge from $a_0$ to some vertex $a_1 \in \psi^{-1}(1)$ (this exists since $a_0$ is odd) and updating the status of $a_0$ to even. If $a_1$ is odd then we ``continue in the same direction" along an edge from $a_1$ to some $a_2 \in \psi^{-1}(2)$ and update the status of $a_1$ to even. If $a_1$ is even then we ``turn around" and move along an edge (that we have not used yet) to a vertex $a_2 \in \psi^{-1}(0)$ and the status of $a_1$ remains even. We continue in this way, continuing in the same direction when we reach a vertex whose status is odd and then updating its status to even, but turning around whenever we reach a vertex whose status is even and not changing its status. At each step we use an edge that we have not yet used in our walk. We do this until we reach $a_0$ along an edge between $\psi^{-1}(k-1)$ and $\psi^{-1}(0)$ and then we stop. This is possible because whenever we reach a vertex $a$ whose status is odd, then $a$ is odd with respect to $\psi$ and this is the first time we have visited $a$ in our walk (or else we would have updated its status to even). Thus there must be an edge incident to $a$ that allows us to continue in the same direction we were walking by the definition of oddomorphism. When we reach a vertex $a \in \psi^{-1}(i)$ from $\psi^{-1}(i \pm 1)$ whose status was even then either $a$ is even w.r.t.~$\psi$ and we have so far used an odd number of edges incident to $a$ between $\psi^{-1}(i)$ and $\psi^{-1}(i \pm 1)$ or $a$ is odd w.r.t.~$\psi$ and we have used an even number of such edges. In either case there must be another edge incident to $a$ that we can use to turn around. The only exception to this is when we reach $a_0$ along an edge between $\psi^{-1}(k-1)$ and $\psi^{-1}(0)$ which is when we stop.\qeds

Now we are able to prove our main result about oddomorphisms to cycles:

\begin{theorem}\label{thm:chordlesscycles}
Suppose that $F$ has a weak oddomorphism $\psi$ to the cycle $C_k$ of length $k$. Then $F$ contains a chordless (i.e., induced) cycle with odd $\psi$-winding number. It follows that this cycle has length at least and the same parity as $k$. Moreover, the cycle contains an edge between $\psi^{-1}(u)$ and $\psi^{-1}(v)$ for each edge $uv$ of $C_k$.
\end{theorem}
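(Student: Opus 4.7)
The plan is to reduce to constructing a single tour in $F$ of odd $\psi$-winding number, pare it down to a cycle of odd winding number, then delete chords one at a time until the cycle is induced in $F$, and finally read off the desired bounds.

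First I would obtain an odd-winding-number tour. By definition of weak oddomorphism there is a subgraph $F'$ of $F$ such that $\psi|_{V(F')}$ is an oddomorphism to $C_k$, and by \Lem{tour} this $F'$ contains a tour $W$ of nonzero winding number $m$. If $m$ is odd, keep $W$; otherwise \Lem{eventour} lets us delete $E(W)$ from $F'$ while keeping $\psi$ an oddomorphism, and then iterate. Each iteration strictly decreases the edge count while \Lem{tour} continues to apply, so the process terminates with an odd-winding-number tour $W$, which is still a tour in $F$.

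Next I would extract a chordless cycle of odd winding number. A tour decomposes into edge-disjoint cycles by repeatedly walking from the start to the first vertex repetition, carving off the cycle so formed and short-circuiting the tour. Equations~\EqRef{psilen1} and~\EqRef{psilen2} make $\len_\psi$ additive under this decomposition, so at least one cycle $C$ appearing in it has odd winding number. If $C$ has a chord $e=ab$ in $F$ (possibly not in $E(F')$), then $C$ splits at $a,b$ into two cycles $C_1,C_2$ of $F$ sharing only the edge $e$; each of $C_1,C_2$ traverses $e$ once, in opposite directions, so those contributions cancel and $\len_\psi(C_1)+\len_\psi(C_2)=\len_\psi(C)$. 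One of these summands is odd, and we replace $C$ by the corresponding cycle. Since the length of $C$ strictly decreases, this terminates in a cycle chordless in $F$ with odd winding number $m$.

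Finally, since $\len_\psi(C)=mk$ is a sum of $\ell := |E(C)|$ terms each equal to $\pm 1$, we get $\ell \geq |mk| \geq k$ and $\ell \equiv mk \equiv k \pmod 2$ using that $m$ is odd; and \Lem{windingnum} applied to $C$ viewed as a tour gives $|C^+_i|-|C^-_i| = m \neq 0$ for every $i$, forcing $|C^+_i|+|C^-_i|\geq 1$, which produces the required edge between $\psi^{-1}(i)$ and $\psi^{-1}(i+1)$ for each edge of $C_k$. The main obstacle I foresee is the third step: we only have oddomorphism structure on the subgraph $F'$, but the theorem asks for a cycle chordless in the larger graph $F$, so the chord-deletion must take place outside of $F'$ and we cannot invoke oddomorphism-based lemmas such as \Lem{eventour} to preserve anything. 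The saving point is that $\len_\psi$ is defined purely in terms of the homomorphism $\psi$ on all of $F$, so additivity of winding number under chord-splitting holds in $F$ with no reference to $F'$.
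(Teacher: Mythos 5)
Your proof is correct and follows essentially the same route as the paper: obtain an odd-winding-number tour by iterating \Lem{tour} and \Lem{eventour}, then reduce to a chordless cycle by splitting at repeated vertices and at chords, using additivity of $\len_\psi$, and finally read off the parity, length, and edge-coverage claims from \Lem{windingnum}. The paper phrases the reduction as a minimality argument (take a shortest closed walk of odd winding number and show by contradiction that it is a chordless cycle), whereas you iterate explicitly in two stages (tour $\to$ cycle $\to$ chordless cycle), but the key splitting steps and the observation that $\len_\psi$ is defined on all of $F$, not just $F'$, are the same.
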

\proof\hspace{-.18cm}\footnote{In the first part of this proof, it is shown that $F$ must contain a closed walk of odd winding number. This is proven by induction using Lemma~\ref{lem:tour}, and this approach is due to Carsten Thomassen. The original proof of the author was longer and more complicated. The part of the proof after showing that $F$ must contain a closed walk of odd winding number remains unchanged.} We first show that if $F$ has a weak oddomorphism to $C_k$ then it must contain a tour of odd winding number. Note that it suffices to prove this for $F$ having an oddomorphism to $C_k$, so let $\psi$ be such an oddomorphism.

The proof is by induction on the number of edges. Since there are an odd number of edges of $F$ between $\psi^{-1}(i)$ and $\psi^{-1}(i+1)$ (mod $k$) for all $i \in \{0, \ldots, k-1\}$, the graph $F$ must contain at least $k$ edges. It is easy to see that if $F$ contains precisely $k$ edges then each $\psi^{-1}(i)$ contains precisely one odd vertex and $F$ is a $k$-cycle formed from the odd vertices plus possibly some isolated even vertices. The $k$-cycle of odd vertices then clearly has odd winding number and we are done.

Now suppose that $F$ has more than $k$ edges. By Lemma~\ref{lem:tour}, the graph $F$ contains a tour $W$ of nonzero length (i.e., $|E(W)| > 0$). If $W$ has odd winding number then we are done. Otherwise $W$ has even winding number and by Lemma~\ref{lem:eventour} the graph $F'$ obtained from $F$ by removing the edges in $E(W)$ has an oddomorphism to $C_k$. By induction $F'$ contains a tour of odd winding number and thus so does $F$.

So we have shown that if $F$ has a weak oddomorphism to $C_k$, then $F$ contains a tour (and thus a closed walk) of odd winding number. Now let $W = a_0, \ldots, a_\ell = a_0$ be a closed walk in $F$ of odd winding number with shortest possible length (actual length, not $\psi$-length). We will show that $W$ must be a chordless cycle.

First we show that $W$ is a cycle. If not then there are two indices $i<j \in \{0, \ldots, \ell-1\}$ such that $a_i = a_j$. Let $W_1 = a_0, \ldots, a_i$, $W_2 = a_i, \ldots, a_j$, and $W_3 = a_j, \ldots, a_\ell$. Then
\[\len_\psi(W) = \len_\psi(W_1) + \len_\psi(W_2) + \len_\psi(W_3) = \len_\psi(W_1W_3) + \len_\psi(W_2).\]
Note that both $W_1W_3$ and $W_2$ are closed walks. Since $W$ has odd winding number, one of $W_1W_3$, $W_2$ has odd winding number and is shorter than $W$, a contradiction. Therefore $W$ is a cycle.

Now suppose that $W$ has a chord $a_ia_j$ for some $i<j \in \{0, \ldots, \ell-1\}$. Let $W_1 = a_0, \ldots, a_i$, $W_2 = a_i, \ldots, a_j$, $W_3 = {a_j, \ldots, a_\ell}$, $\hat{W} = a_i,a_j$, and $\hat{W}^{\text{rev}} = a_j,a_i$. Then both $W_1\hat{W}W_3$ and $W_2\hat{W}^{\text{rev}}$ are closed walks and
\begin{align*}
    \len_\psi(W_1\hat{W}W_3) + \len_\psi(W_2\hat{W}^{\text{rev}}) &= \len_\psi(W_1) + \len_\psi(W_2) + \len_\psi(W_3) + \len_\psi(\hat{W}) + \len_\psi(\hat{W}^{\text{rev}}) \\
    &= \len_\psi(W_1) + \len_\psi(W_2) + \len_\psi(W_3) \\
    &= \len_\psi(W)
\end{align*}
since $\len_\psi(\hat{W}^{\text{rev}}) = -\len_\psi(\hat{W})$. It follows that one of $W_1\hat{W}W_3$ and $W_2\hat{W}^{\text{rev}}$ has odd winding number. Moreover, both $W_2$ and $W_3W_1$ have (actual) length at least two since $a_ia_j$ is a chord, and thus they both have length at most the length of $W$ minus two (since the sum of their lengths is the length of $W$. Therefore both of $W_1\hat{W}W_3$ and $W_2\hat{W}^{\text{rev}}$ have length strictly less than $W$ and so we have found a closed walk of odd winding number with length strictly smaller than $W$, a contradiction. So $W$ must have been a chordless cycle of odd winding number.

Now we must show that a closed walk of odd winding number has the other properties stated in the lemma. First note that if $W$ is a closed walk that does not use any edges between $\psi^{-1}(i)$ and $\psi^{-1}(i+1)$ for some $i \in V(C_k)$, then its winding number must be zero by Lemma~\ref{lem:windingnum}. Thus a closed walk of odd winding number must contain an edge between $\psi^{-1}(u)$ and $\psi^{-1}(v)$ for every edge $uv$ of $C_k$. From this it follows that a closed walk of odd winding number must have length at least $k$. Finally, note that every edge used in a walk contributes either $1$ or $-1$ to its $\psi$-length. Thus the parity of the $\psi$-length of a walk is equal to the parity of its actual length. Since a closed walk of odd winding number has $\psi$-length $mk$ for some odd $m$, we have that its length has the same parity as $k$.\qeds

Using the above theorem we can show that if $F$ has a weak oddomorphism to a graph $G$ which contains a (chordless) cycle of length $k$, then $F$ must also have a (chordless) cycle of length at least $k$. In fact we can prove something stronger, but first we need some terminology.

Let $R$ be a graph and let $c : V(R) \to \{0,1\}$ and $\ell : V(R) \to \{i \in \mathbb{N} : i \ge 3\}$. We say that a graph $F$ contains an \emph{$(R,c,\ell)$ cycle structure} if $F$ contains disctinct cycles $C_u$ for $u \in V(R)$ such that $C_u$ has length at least and of the same parity as $\ell(u)$, $C_u$ is chordless if $c(u) = 1$, and cycles $C_u$ and $C_v$ are vertex disjoint if $u \sim_R v$.

\begin{lemma}\label{lem:cyclestructures}
Let $R$ be a graph and let $c : V(R) \to \{0,1\}$ and $\ell : V(R) \to \{i \in \mathbb{N} : i \ge 3\}$. Let $G$ be a graph that contains an $(R,c,\ell)$ cycle structure. If $F$ has a weak oddomorphism to $G$, then $F$ contains an $(R,c, \ell)$ cycle structure.
\end{lemma}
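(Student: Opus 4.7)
The plan is to construct the required cycles $D_u$ in $F$ one at a time by pulling back each $C_u$ through $\psi$ and applying Theorem~\ref{thm:chordlesscycles}, then to verify the four properties needed for an $(R,c,\ell)$ cycle structure: correct length and parity, chordlessness when $c(u)=1$, vertex-disjointness for $R$-adjacent indices, and distinctness.

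First I would fix a weak oddomorphism $\psi$ from $F$ to $G$ and, for each $u \in V(R)$, set $F_u := \psi^{-1}(C_u)$ (in the sense of the notation introduced just before Lemma~\ref{lem:suboddo}). By Lemma~\ref{lem:suboddo}, $\psi|_{V(F_u)}$ is a weak oddomorphism from $F_u$ to $C_u$, a cycle of some length $k_u$ with $k_u \ge \ell(u)$ and $k_u \equiv \ell(u) \pmod 2$. Applying Theorem~\ref{thm:chordlesscycles} to this weak oddomorphism, I obtain a cycle $D_u \subseteq F_u$ which is chordless in $F_u$ and whose length is at least $k_u$ and of the same parity as $k_u$; hence $D_u$ has length at least $\ell(u)$ and the same parity as $\ell(u)$, as required. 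The theorem further guarantees that $D_u$ contains an edge projecting via $\psi$ to each edge of $C_u$, so $\psi(D_u) = C_u$ as a subgraph of $G$, a fact I will use for distinctness.

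Next I would verify the remaining conditions. For vertex-disjointness, if $u \sim_R v$ then $V(C_u) \cap V(C_v) = \varnothing$ by hypothesis, so $V(F_u) \cap V(F_v) = \varnothing$, and therefore $V(D_u) \cap V(D_v) = \varnothing$. For distinctness, if $D_u = D_v$ as subgraphs of $F$, then $C_u = \psi(D_u) = \psi(D_v) = C_v$, forcing $u = v$ since the $C_u$ are distinct. For chordlessness of $D_u$ in $F$ when $c(u) = 1$: any putative chord would be an edge $ab \in E(F) \setminus E(F_u)$ with $a, b \in V(D_u) \subseteq V(F_u)$ and $a,b$ non-adjacent in $D_u$; by the definition of $F_u$ this forces $\psi(a)\psi(b) \notin E(C_u)$, while $\psi$ being a homomorphism together with the absence of loops in $G$ gives $\psi(a) \ne \psi(b)$ and $\psi(a) \sim_G \psi(b)$, so $\psi(a)\psi(b)$ is a chord of $C_u$ in $G$, contradicting $c(u)=1$.

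The main obstacle is really just bookkeeping around the chordlessness transfer: because $F_u$ is a subgraph of $F$ but not necessarily an induced one, chords of $D_u$ might a priori appear in $F$ without appearing in $F_u$. The observation that any such chord projects under $\psi$ to a chord of $C_u$ in $G$ disposes of this cleanly, and everything else in the proof is a direct application of Lemma~\ref{lem:suboddo} and Theorem~\ref{thm:chordlesscycles}.
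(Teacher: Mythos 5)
Your proof is correct and follows essentially the same route as the paper's: pull back each $C_u$ to $F_u = \psi^{-1}(C_u)$, apply Lemma~\ref{lem:suboddo} and Theorem~\ref{thm:chordlesscycles}, and then verify length/parity, chordlessness, disjointness, and distinctness. The only cosmetic differences are in two sub-arguments, which the paper phrases slightly differently but equivalently: for chordlessness the paper shows that any chord of $D_u$ in $F$ must already lie in $E(F_u)$ (since $C_u$ is chordless, the projected edge must be in $E(C_u)$), contradicting chordlessness of $D_u$ in $F_u$, whereas you split by whether the chord is in $E(F_u)$ and derive the contradiction from chordlessness of $C_u$ in $G$ in the remaining case; and for distinctness the paper picks a specific edge of $C_u \setminus C_v$ and traces it via the edge-covering property, whereas you package the same edge-covering property into the tidy observation $\psi(D_u) = C_u$. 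Both variants are fine.
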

\proof
Let $C_u$ for $u \in V(R)$ be the cycles of $G$ forming an $(R,c,\ell)$ cycle structure. Let $\psi$ be a weak oddomorphism from $F$ to $G$. For $u \in V(R)$ let $F_u = \psi^{-1}(C_u)$. By Lemma~\ref{lem:suboddo}, $\psi|_{V(C_u)}$ is a weak oddomorphism from $F_u$ to $C_u$ for all $u \in V(R)$. Thus by Theorem~\ref{thm:chordlesscycles}, $F_u$ contains a chordless cycle $D_u$ of length at least and the same parity as $\ell(u)$ for $u \in V(R)$. Obviously the cycle $D_u$ of $F_u$ will still be a cycle of length at least and the same parity as $\ell(u)$ in $F$, but we must show that it is chordless in $F$ when $c(u) = 1$.

Let $u \in V(R)$ be such that $c(u) = 1$ and let $x_1, \ldots, x_{k_u}$ be the vertices of $C_u$ such that $x_i \sim x_{i +1}$ with indices taken modulo $k_u$. Thus $V(F_u) = \cup_{i=1}^{k_u} \psi^{-1}(x_i)$. If $e \in E(F)$ is a chord of $D_u$, then each end of $e$ is contained in $V(D_u) \subseteq V(F_u)$. Therefore the ends $a$ and $b$ of $e$ must be contained in $\psi^{-1}(x_s)$ and $\psi^{-1}(x_t)$ for some $s,t \in [k_u]$. Since $\psi$ is a homomorphism, this implies that there is an edge between $x_s$ and $x_t$. Since $C_u$ is chordless in $G$, this implies that $x_sx_t$ is an edge of $C_u$. But then $e$ is contained in $V(F_u)$ and this contradicts the fact that $D_u$ is chordless in $F_u$.

We must show that the cycles $D_u$ for $u \in V(R)$ are distinct. Let $u$ and $v$ be any two distinct vertices in $R$. Since the cycles $C_u$ and $C_v$ are distinct by assumption, there is an edge $e$ of $C_u$ that is not contained in $C_v$. Let $e = xy \in V(G)$. By Theorem~\ref{thm:chordlesscycles} the cycle $D_u$ must contain an edge between $\psi^{-1}(x)$ and $\psi^{-1}(y)$. But $D_v$ cannot contain any such edge since $e = xy$ is not contained in $C_v$. Thus the cycles $D_u$ for $u \in V(R)$ are distinct.

Finally, suppose that $u \sim_R v$. By definition of $(R,c,\ell)$ cycle structures this means that $C_u$ and $C_v$ are vertex disjoint. This further implies the sets $\psi^{-1}(V(C_u))$ and $\psi^{-1}(V(C_v))$ are disjoint and these two sets respectively contain $V(D_u)$ and $V(D_v)$ and thus $D_u$ and $D_v$ are vertex disjoint.\qeds

The length of a longest cycle in $G$ is known as the circumference of $G$. We have several corollaries to the above lemma:

\begin{cor}\label{cor:circumference}
If $G$ has circumference $k$ and $F$ has a weak oddomorphism to $G$, then $F$ has circumference at least $k$.
\end{cor}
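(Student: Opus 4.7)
The plan is to derive the corollary immediately from Lemma~\ref{lem:cyclestructures} by choosing the simplest possible cycle structure: a single cycle of length $k$. Since $G$ has circumference $k$, by definition $G$ contains a cycle of length exactly $k$, call it $C$.

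First I would set up the trivial $(R,c,\ell)$ cycle structure. Let $R$ be the graph on a single vertex $u$ with no edges, set $c(u) = 0$ (so no chordlessness is required) and $\ell(u) = k$. Then the single cycle $C$ in $G$ witnesses an $(R,c,\ell)$ cycle structure in $G$: it has length $k$, which is at least and of the same parity as $\ell(u) = k$, and the vertex-disjointness condition is vacuous because $R$ has no edges. Chordlessness is not required since $c(u) = 0$.

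Next I would invoke Lemma~\ref{lem:cyclestructures}: since $F$ has a weak oddomorphism to $G$ and $G$ contains an $(R,c,\ell)$ cycle structure, so does $F$. This means $F$ contains a cycle $D_u$ of length at least $\ell(u) = k$ (we don't even need to use the parity or chordlessness clauses). Therefore the circumference of $F$ is at least $k$, which is exactly the conclusion.

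There is no obstacle here: the corollary is simply the one-vertex-and-no-structure specialization of the previous lemma. The only choice to be made is to take $c(u) = 0$ so that we do not waste effort demanding chordlessness, which is not needed for the circumference statement. The proof in the paper is presumably a one-line application of Lemma~\ref{lem:cyclestructures} with $R = K_1$.
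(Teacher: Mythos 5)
Your proof is correct and matches the paper's intent exactly: the paper states Corollary~\ref{cor:circumference} with no separate proof, precisely because it is the one-vertex specialization of Lemma~\ref{lem:cyclestructures} that you describe. Your choices $R = K_1$, $c(u) = 0$, $\ell(u) = k$ and the observation that a length-$k$ cycle in $G$ yields the required cycle structure, hence a cycle of length $\ge k$ in $F$, are the intended argument.
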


\begin{cor}\label{cor:forests}
If $G$ is not a forest and $F$ has a weak oddomorphism to $G$, then $F$ is not a forest.
\end{cor}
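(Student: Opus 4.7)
The plan is to reduce this directly to \Cor{circumference}, which has just been established. Observe that a graph is a forest precisely when it contains no cycle, equivalently when its circumference is undefined (or, by convention, less than $3$). If $G$ is not a forest, then $G$ contains some cycle, so its circumference $k$ is at least $3$.

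Next I would invoke \Cor{circumference}: since $F$ has a weak oddomorphism to $G$ and $G$ has circumference at least $3$, the graph $F$ must have circumference at least $3$. In particular, $F$ contains a cycle, so $F$ is not a forest.

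If one prefers not to invoke the circumference corollary, the same conclusion is available directly from \Thm{chordlesscycles}. Pick any chordless cycle $C_k$ in $G$ (with $k\ge 3$); by \Lem{suboddo} applied to the subgraph $C_k \subseteq G$, the restriction of the weak oddomorphism to $\psi^{-1}(C_k)$ is a weak oddomorphism to $C_k$, and \Thm{chordlesscycles} then produces a chordless cycle in that subgraph (hence in $F$) of length at least $k\ge 3$.

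There is no real obstacle here; the work has already been done in \Thm{chordlesscycles} and packaged in \Cor{circumference}. The only thing to be careful about is the convention for the circumference of a forest, but this is sidestepped by phrasing the conclusion in terms of ``contains a cycle'' rather than ``has circumference at least $k$.''
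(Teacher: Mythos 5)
Your proposal is correct and follows the same route the paper intends: $\Cor{forests}$ is stated as an immediate consequence of $\Lem{cyclestructures}$ (equivalently of $\Thm{chordlesscycles}$ together with $\Lem{suboddo}$), and your reduction through $\Cor{circumference}$ is exactly this chain specialized to a single cycle. Both your primary argument and your alternative direct route are sound.
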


\begin{cor}\label{cor:oddholes}
If $G$ contains an odd hole (induced odd cycle of length at least 5) and $F$ has a weak oddomorphism to $G$, then $F$ contains an odd hole.
\end{cor}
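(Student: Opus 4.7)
The plan is to reduce \Cor{oddholes} directly to \Lem{cyclestructures}, which already does all the heavy lifting. The setup is almost immediate: an odd hole in $G$ is precisely a chordless cycle of odd length at least $5$, which is exactly the kind of data that a one-vertex $(R,c,\ell)$ cycle structure encodes.

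First, I would let $C$ be an odd hole of $G$, and set $k := |V(C)|$, so $k \ge 5$ and $k$ is odd. Take $R$ to be the graph with a single vertex $u$ and no edges, set $c(u) = 1$, and set $\ell(u) = k$. Then $C$ witnesses an $(R,c,\ell)$ cycle structure in $G$: it is a single chordless cycle of length $k \ge 3$, and the disjointness condition on cycles indexed by adjacent vertices of $R$ is vacuous since $R$ has no edges.

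Next, since $F$ has a weak oddomorphism to $G$, I apply \Lem{cyclestructures} to conclude that $F$ also contains an $(R,c,\ell)$ cycle structure. Unpacking the definition with this choice of $R$, $c$, $\ell$, this means $F$ contains a chordless cycle $D$ of length at least $k$ and of the same parity as $k$. Since $k$ is odd and at least $5$, the cycle $D$ is a chordless cycle of odd length at least $5$, i.e., an odd hole in $F$.

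There is really no main obstacle here, since \Lem{cyclestructures} already bundles together all the nontrivial work: the production of the underlying cycle via \Thm{chordlesscycles} applied to the preimage $\psi^{-1}(C)$ under the weak oddomorphism, and the argument using the inducedness of $C$ in $G$ to lift chordlessness in the preimage to chordlessness in $F$. The only thing to check is that the one-vertex instance of an $(R,c,\ell)$ cycle structure is the right framing, which it is because odd holes are exactly chordless odd cycles of length at least $5$.
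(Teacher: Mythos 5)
Your proof is correct and follows the same route the paper intends: the paper presents \Cor{oddholes} as an immediate consequence of \Lem{cyclestructures}, and your instantiation with a single-vertex $R$, $c(u)=1$, and $\ell(u)=k$ is exactly the right way to unpack that.
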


We remark that the analog of Theorem~\ref{thm:chordlesscycles} for induced paths can be proven easily in comparison and from this one can prove a path version of Lemma~\ref{lem:cyclestructures}. Next we consider induced stars. Lemma~\ref{lem:degree} can be viewed as saying that if $F$ has a weak oddomorphism to $G$ and $G$ contains $K_{1,d}$ as a subgraph, then $F$ must also contain $K_{1,d}$ as a subgraph. We now prove the same thing but for induced subgraphs.

\begin{lemma}\label{lem:inducedstar}
Suppose that $G$ contains $K_{1,d}$ as an induced subgraph. If $F$ has a weak oddomorphism to $G$, then $F$ also contains $K_{1,d}$ as an induced subgraph.
\end{lemma}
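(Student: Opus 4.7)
The plan is to directly construct an induced $K_{1,d}$ in $F$ using a suitable odd vertex lying in the fibre over the center of the induced $K_{1,d}$ in $G$.

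Let $v \in V(G)$ be the center of the induced $K_{1,d}$ in $G$, and let $v_1, \ldots, v_d$ be the $d$ distinct neighbors of $v$ in $G$ forming the star; by the induced hypothesis, the $v_i$ are pairwise non-adjacent in $G$ (note $d$ need not be the full degree of $v$, so we only need $d$ such pairwise non-adjacent neighbors). Let $\psi$ be a weak oddomorphism from $F$ to $G$, so there is a subgraph $F'$ of $F$ with $\psi|_{V(F')}$ an oddomorphism from $F'$ to $G$. By Condition~(2) of Definition~\ref{def:oddomorphism}, the fibre $\psi^{-1}(v) \cap V(F')$ contains an odd (in particular, nonzero) number of $\psi|_{F'}$-odd vertices, so we may pick one such odd vertex $a$.

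Because $a$ is odd with respect to $\psi|_{F'}$, for every neighbor $u$ of $\psi(a) = v$ in $G$ the set $N_{F'}(a) \cap \psi^{-1}(u)$ has odd, hence strictly positive, cardinality. In particular, for each $i \in \{1,\ldots,d\}$ we can select a vertex $a_i \in N_{F'}(a) \cap \psi^{-1}(v_i)$. Since the $v_i$ are distinct, so are the $a_i$, and each $a_i$ is adjacent to $a$ in $F'$ and hence in $F$.

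It remains to verify that $\{a,a_1,\ldots,a_d\}$ induces $K_{1,d}$ in $F$, i.e., that there is no edge between $a_i$ and $a_j$ for $i \ne j$. This is where we use that $\psi$ (not just $\psi|_{F'}$) is a homomorphism from all of $F$ to $G$: if $a_i \sim_F a_j$, then $v_i = \psi(a_i) \sim_G \psi(a_j) = v_j$, contradicting the induced structure of the $K_{1,d}$ in $G$. Hence $a$ together with $a_1,\ldots,a_d$ induces a $K_{1,d}$ in $F$, completing the proof. No step presents a real obstacle; the only point that requires care is noting that $\psi$ preserves non-adjacency at the level of fibres precisely when the star in $G$ is induced, which is exactly how the induced hypothesis on $G$ gets transferred to $F$.
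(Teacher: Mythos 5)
Your proof is correct and takes essentially the same route as the paper: pick a $\psi|_{F'}$-odd vertex $a$ in the fibre over the star's center, use oddness to find a neighbor of $a$ in each fibre $\psi^{-1}(v_i)$, and use that $\psi$ is a homomorphism on all of $F$ together with the induced hypothesis to rule out edges among the $a_i$. The only cosmetic difference is that you spell out a couple of small points (distinctness of the $a_i$, and the role of $\psi$ vs.\ $\psi|_{F'}$) that the paper leaves implicit.
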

\proof
Let $\psi$ be a weak oddomorphism from $F$ to $G$. Suppose that $G'$ is an induced subgraph of $G$ isomorphic to $K_{1,d}$ and let $u_0, u_1, \ldots, u_d$ be the vertices of $G'$ such that $u_0 \sim u_i$ for $i = 1, \ldots, d$. Let $\hat{F}$ be a subgraph of $F$ such that $\psi|_{V(\hat{F})}$ is an oddomorphism from $\hat{F}$ to $G$. Let $a_0 \in V(\hat{F})$ be an odd vertex w.r.t.~$\psi|_{\hat{F}}$ contained $\psi^{-1}(u_0)$. Since $a_0$ is odd, it must have a neighbor $a_i \in \psi^{-1}(u_i)$ for all $i = 1, \ldots, d$. Furthermore, since $G'$ is induced there are no edges between any two distinct $u_i$ with $i \ge 1$, and since $\psi$ is a homomorphism this implies there are no edges between the sets $\psi^{-1}(u_i)$ and $\psi^{-1}(u_j)$ for $1 \le i < j \le d$. Therefore there is no edge between $a_i$ and $a_j$ for $1 \le i < j \le d$ and thus the set $\{a_0, \ldots, a_d\}$ induces a $K_{1,d}$ in $F$.\qeds

We remark that we can also prove an analog of Lemma~\ref{lem:cyclestructures} showing that if $F$ has a weak oddomorphism to a graph with several distinct/vertex-disjoint stars of various orders (which may be induced or not) then $F$ must contain a corresponding collection of stars. But the proof would not be significantly different from that of Lemma~\ref{lem:cyclestructures}, so we will not bother.

Another structure to consider would be cuts, and it is straightforward to see that if $G$ has a cut of size $k$ and $F$ has a weak oddomorphism to $G$, then $F$ must have a cut of size at least $k$.

\section{Homomorphism distinguishing closed families}\label{sec:hdclosed}

Here we provide some machinery for producing homomorphism distinguishing closed families of graphs. We begin by showing the the intersection of an arbitrary collection of closed families is closed.

\begin{lemma}\label{lem:intersection}
Let $\mathbb{A}$ be some index set and let $\F_\alpha$ be a homomorphism distinguishing closed family for all $\alpha \in \mathbb{A}$. Then
\[\mathcal{F} = \bigcap_{\alpha \in \mathbb{A}} \F_\alpha\]
is homomorphism distinguishing closed.
\end{lemma}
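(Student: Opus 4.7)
The plan is to show directly that $\mathcal{F} = \bigcap_{\alpha \in \mathbb{A}} \F_\alpha$ equals its homomorphism distinguishing closure $\mathrm{cl}(\mathcal{F})$. One inclusion, $\mathcal{F} \subseteq \mathrm{cl}(\mathcal{F})$, is immediate from the definition of $\mathrm{cl}$: if $F \in \mathcal{F}$, then by the very definition of $\cong_\mathcal{F}$ we have $H \cong_\mathcal{F} H' \Rightarrow \hom(F,H) = \hom(F,H')$. So the content is in proving the reverse inclusion.

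For the reverse inclusion $\mathrm{cl}(\mathcal{F}) \subseteq \mathcal{F}$, I argue by contrapositive. Suppose $F \notin \mathcal{F}$; then $F \notin \F_{\alpha}$ for some particular index $\alpha \in \mathbb{A}$. Since $\F_\alpha$ is by hypothesis homomorphism distinguishing closed, there exist graphs $H$ and $H'$ such that $H \cong_{\F_\alpha} H'$ and yet $\hom(F,H) \ne \hom(F,H')$. The key observation to connect this back to $\mathcal{F}$ is that $\mathcal{F} \subseteq \F_\alpha$, and therefore $H \cong_{\F_\alpha} H'$ automatically implies $H \cong_{\mathcal{F}} H'$ (every homomorphism-counting equation for graphs in $\mathcal{F}$ is a special case of one for graphs in $\F_\alpha$). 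Thus $H$ and $H'$ witness that $F \notin \mathrm{cl}(\mathcal{F})$, completing the contrapositive.

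There is no real obstacle here; the proof is a one-line unfolding of definitions once one notes the monotonicity property ``$\mathcal{F}_1 \subseteq \mathcal{F}_2 \Rightarrow\; \cong_{\mathcal{F}_2}\;\Rightarrow\;\cong_{\mathcal{F}_1}$.'' The only point that requires a moment's care is remembering which direction of implication one gets from an inclusion of families: a \emph{smaller} family gives a \emph{coarser} equivalence relation, so the witnesses $H, H'$ produced for the larger family $\F_\alpha$ automatically serve as witnesses for the smaller intersection $\mathcal{F}$. Since $F \notin \mathcal{F}$ was arbitrary, $\mathcal{F}$ is itself h.d.-closed.
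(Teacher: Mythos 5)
Your proof is correct and is essentially identical to the paper's: both pick an index $\alpha$ with $F \notin \F_\alpha$, invoke h.d.-closedness of $\F_\alpha$ to produce witnesses $H, H'$, and then use $\mathcal{F} \subseteq \F_\alpha$ (hence $\cong_{\F_\alpha} \Rightarrow \cong_{\mathcal{F}}$) to transfer the witnesses to $\mathcal{F}$. Your restatement via $\mathrm{cl}(\mathcal{F}) = \mathcal{F}$ is just a slightly more explicit unfolding of the same argument.
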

\proof
Suppose that $F \notin \F$. Then there exists $\alpha \in \mathbb{A}$ such that $F \notin \F_\alpha$. Since $\F_\alpha$ is closed, there exist graphs $G$ and $H$ such that $G \cong_{\F_\alpha} H$ but $\hom(F,G) \ne \hom(F,H)$. However, $\F \subseteq \F_\alpha$ and therefore we also have $G \cong_\F H$.\qeds.

We remark that the same does not hold for unions, since the union of two homomorphism distinguishing closed families need not even be closed under disjoint unions. Though one can ask whether the disjoint union closure of the union of homomorphism distinguishing closed families is homomorphism distinguishing closed.

In the following theorem we show that families of graphs that are closed under images of weak oddomorphisms, restriction to connected components, and disjoint unions are homomorphism distinguishing closed. This result can be applied to prove that many families of graphs $\mathcal{F}$ are homomorphism distinguishing closed, and thus in particular the relation $\cong_{\mathcal{F}}$ is not isomorphism. 

\begin{theorem}\label{thm:homclosed1}
Let $\mathcal{F}$ be a family of graphs such that
\begin{enumerate}
    \item if $F \in \mathcal{F}$ and $F \woto G$, then $G \in \mathcal{F}$, and
    \item $F \in \mathcal{F}$ if and only if every connected component of $F$ is an element of $\F$, i.e., $\mathcal{F}$ is closed under disjoint unions and restriction to connected components.
\end{enumerate}
Then the family $\mathcal{F}$ is homomorphism distinguishing closed.
\end{theorem}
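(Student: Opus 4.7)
The plan is to generalize the proof of \Thm{closed} in a straightforward manner, since the only property of $\mathcal{G}_d$ actually used in that proof was closure under weak oddomorphisms plus the fact that it is closed under disjoint unions and restriction to connected components. So let $F \notin \mathcal{F}$ be given; we need to produce graphs $H, H'$ with $H \cong_{\mathcal{F}} H'$ but $\hom(F,H) \ne \hom(F,H')$. By condition~(2), since $F \notin \mathcal{F}$, at least one connected component $G$ of $F$ fails to be in $\mathcal{F}$. Pick such a $G$, and let $F_1, \dots, F_k$ denote the connected components of $F$, with $F_1 = G$. Choose $r$ large enough that $\hom(F_i, K_r) > 0$ for every $i$ (e.g., $r = \max_i \chi(F_i)$), and set $H = G_0 \cup K_r$ and $H' = G_1 \cup K_r$, using the construction of \Sec{construction}.

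To show $H \cong_{\mathcal{F}} H'$, take any $F' \in \mathcal{F}$ and let $F_1', \ldots, F_m'$ be its connected components. By condition~(2), each $F_j' \in \mathcal{F}$. Since $G \notin \mathcal{F}$, condition~(1) forces $F_j' \not\woto G$ for every $j$: otherwise $G \in \mathcal{F}$. \Thm{maindualG} then gives $\hom(F_j', G_0) = \hom(F_j', G_1)$, and by connectedness of $F_j'$,
\[\hom(F_j', H) = \hom(F_j', G_0) + \hom(F_j', K_r) = \hom(F_j', G_1) + \hom(F_j', K_r) = \hom(F_j', H').\]
Multiplying across components yields $\hom(F', H) = \hom(F', H')$, establishing $H \cong_{\mathcal{F}} H'$.

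For the strict inequality $\hom(F, H) \ne \hom(F, H')$, I would write $\hom(F, H) = \prod_{i=1}^{k} \hom(F_i, H)$ and similarly for $H'$. \Thm{main} gives $\hom(F_i, G_0) \ge \hom(F_i, G_1)$, hence $\hom(F_i, H) \ge \hom(F_i, H')$ for every $i$; meanwhile both quantities are strictly positive thanks to the $K_r$ summand. For the distinguished component $F_1 = G$, \Lem{idoddo} says the identity is an oddomorphism from $G$ to itself, so $\hom(G, G_0) > \hom(G, G_1)$ and thus $\hom(F_1, H) > \hom(F_1, H')$. Multiplying the componentwise inequalities then gives $\hom(F, H) > \hom(F, H')$.

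The only subtle point is the role of the $K_r$ padding: without it, some factor $\hom(F_i, H')$ could vanish (if $F_i \not\to G_1$) and the strict inequality in the $F_1$ factor would not propagate through the product. The two hypotheses on $\mathcal{F}$ are each used exactly once, condition~(1) to rule out weak oddomorphisms from members of $\mathcal{F}$ to $G$, and condition~(2) to pass between $F' \in \mathcal{F}$ and its connected components on both sides of the biconditional. I do not anticipate further obstacles beyond assembling these ingredients cleanly.
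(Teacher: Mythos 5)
Your proof is correct and follows essentially the same route as the paper's: pick a connected component $G$ of the given $F \notin \mathcal{F}$ with $G \notin \mathcal{F}$, set $H = G_0 \cup K_r$ and $H' = G_1 \cup K_r$, use condition~(1) together with \Thm{maindualG} to get $H \cong_{\mathcal{F}} H'$, and then invoke \Lem{idoddo} and the product formula to get $\hom(F,H) > \hom(F,H')$. The paper simply omits the last step by referring to the (identical) argument in \Thm{closed}, which you spell out.
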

\proof
Let $G \notin \mathcal{F}$. We must show that there are graphs $H$ and $H'$ such that $H \cong_{\mathcal{F}} H'$ but $\hom(G,H) \ne \hom(G,H')$. Note that Condition (2) is equivalent to saying that a graph is not in $\F$ if and only if it contains a connected component that is not in $\F$. Therefore there is a connected component $G'$ of $G$ such that $G' \notin \F$. Let $H$ be the disjoint union of $G'_0$ and a clique $K_r$ such that $\hom(G,K_r) > 0$, and let $H'$ be the disjoint union of $G'_1$ and $K_r$.

Suppose that $F \in \F$ is connected. By Condition (1), we have that $F \not\woto G'$ and therefore $\hom(F,G'_0) = \hom(F,G'_1)$ by Theorem~\ref{thm:maindualG}. Since $F$ is connected,
\[\hom(F,H) = \hom(F,G'_0) + \hom(F,K_r) = \hom(F,G'_1) + \hom(F,K_r) = \hom(F,H').\]
Since the equality holds for all connected $F \in \F$, by Condition (2) it holds for all graphs in $\F$ and thus $H \cong_\F H'$.

Now we must show that $\hom(G,G'_0) \ne \hom(G, G'_1)$. However this part of the proof is identical to how we showed that $\hom(\hat{F},H) \ne \hom(\hat{F},H')$ in the proof of Theorem~\ref{thm:closed}, and thus we omit it.\qeds.

\begin{remark}
We note that if $\P$ is the property of satisfying conditions (1) and (2) from Theorem~\ref{thm:homclosed1}, then $\P$ satisfies the conditions of Theorem~\ref{thm:absconjectures} and thus the four statements in that theorem are equivalent for union-closed $\P$-families. However we do not need this for any of our results so we omit the (easy) proof.
\end{remark}

The above allows us to produce many homomorphism distinguishing closed classes of graphs via the following theorem.

\begin{theorem}\label{thm:homclosed2}
Let $\mathcal{G}$ be any family of connected graphs. If
\[\mathcal{F} = \{F : \forall \text{ connected components } F' \text{ of } F, \ \forall \ G \in \mathcal{G},  F' \not\woto G\},\]
then $\mathcal{F}$ is homomorphism distinguishing closed.
\end{theorem}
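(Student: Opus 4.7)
The plan is to apply \Thm{homclosed1} to $\mathcal{F}$, for which I must verify its two hypotheses: closure under weak oddomorphic images, and the characterization via connected components. The second hypothesis is immediate from the definition of $\mathcal{F}$: membership of $F$ in $\mathcal{F}$ is literally the statement that every connected component of $F$ fails to have a weak oddomorphism to any graph in $\mathcal{G}$, and applied to a connected graph $F'$ this reduces to the defining condition for $F' \in \mathcal{F}$.

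For the closure condition I argue the contrapositive. Suppose $F \woto H$, witnessed by $\psi\colon V(F) \to V(H)$, but $H \notin \mathcal{F}$; I will derive $F \notin \mathcal{F}$. The latter assumption yields a connected component $H'$ of $H$, a graph $G \in \mathcal{G}$, and a weak oddomorphism $\phi$ from $H'$ to $G$. Setting $F' := \psi^{-1}(H')$, \Lem{suboddo} says that $\psi|_{V(F')}$ is a weak oddomorphism from $F'$ to $H'$. Composing via \Lem{woddcomp}, the map $\phi \circ \psi|_{V(F')}$ is then a weak oddomorphism from $F'$ to $G$.

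Since $\mathcal{G}$ consists of connected graphs, $G$ is connected, so \Lem{oddoconnected} produces a connected subgraph $\hat{F}$ of $F'$ such that $\phi \circ \psi|_{V(\hat{F})}$ is an honest oddomorphism from $\hat{F}$ to $G$. The connected subgraph $\hat{F}$ of $F$ lies in a unique connected component $F^*$ of $F$. Because $F^*$ is connected and $\psi$ is a homomorphism, $\psi(V(F^*))$ is contained in a single connected component of $H$; since this image meets $V(H')$ (via $\hat{F}$), it must sit inside $V(H')$. Consequently $\phi \circ \psi$ is well-defined as a homomorphism from $F^*$ to $G$, and its restriction to $V(\hat{F})$ is the oddomorphism already obtained. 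Hence $F^* \woto G$ with $F^*$ a connected component of $F$ and $G \in \mathcal{G}$, contradicting $F \in \mathcal{F}$.

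I expect the main subtlety to be exactly the bookkeeping of the last paragraph: the oddomorphism furnished by \Lem{oddoconnected} lives on a subgraph $\hat{F}$ that need not be a connected component of $F$, whereas the definition of $\mathcal{F}$ is framed around components. The remedy is to use connectedness of $\hat{F}$ to locate the unique component $F^*$ containing it, and then to observe that $\psi(V(F^*))$ is forced into the single component $H'$ so that post-composition with $\phi$ remains well-defined on all of $F^*$. Once this is in place, the theorem reduces to a routine chaining of \Lem{suboddo}, \Lem{woddcomp}, and \Lem{oddoconnected}.
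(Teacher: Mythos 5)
Your proof is correct and follows the paper's strategy: check the two hypotheses of Theorem~\ref{thm:homclosed1}, with Condition~(2) immediate from the definition of $\mathcal{F}$ and Condition~(1) established by a composition argument. Indeed your treatment of Condition~(1) is slightly more careful than the paper's, whose one-line claim that ``by composition'' one obtains a weak oddomorphism $F \woto G$ glosses over the fact that $H' \woto G$ does not yield $H \woto G$ (since $\psi$ need not map all of $F$ into $H'$); your detour through $F' = \psi^{-1}(H')$ via Lemma~\ref{lem:suboddo}, composition via Lemma~\ref{lem:woddcomp}, Lemma~\ref{lem:oddoconnected}, and the final bookkeeping showing $\psi(V(F^*)) \subseteq V(H')$ makes this step fully rigorous.
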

\proof
Note that $\mathcal{F}$ satisfies Condition (2) of Theorem~\ref{thm:homclosed1} by definition. Thus it is only left to check Condition (1).

Suppose that $F \woto H \notin \F$. By definition of $\F$, we have that $H$ contains a connected component $H'$ that admits a weak oddomorphism to some $G \in \mathcal{G}$. By composition, we have that there is a weak oddomorphism $\psi$ from $F$ to $G$. Since $G$ is connected, by Theorem~\ref{thm:maindualG} we have that there is a connected subgraph $F'$ of $F$ such that $\psi|_{V(F')}$ is an oddomorphism from $F'$ to $G$. If $F''$ is the connected component of $F$ that contains $F'$, then $\psi|_{V(F'')}$ is a weak oddomorphism from $F''$ to $G$, and therefore $F \notin \F$. Thus Condition~(1) of Theorem~\ref{thm:homclosed1} holds and we are done.\qeds

\begin{remark}
The above proof of Theorem~\ref{thm:homclosed2} shows that any family $\F$ that can be described as in the theorem statement meets the conditions of Theorem~\ref{thm:homclosed1}. We remark that this can be reversed, i.e., any family $\F$ that satisfies the conditions of Thereom~\ref{thm:homclosed1} can be described as in Theorem~\ref{thm:homclosed2} by taking $\mathcal{G}$ to be the family of connected graphs not in $\F$. A natural question to ask is whether it is always possible to use a finite family $\mathcal{G}$ to describe families $\F$ meeting the conditions of Theorem~\ref{thm:homclosed1}, similar to how minor-closed families can always be described by a finite family of forbidden minors. However, we will show in Section~\ref{sec:uncountability} that there are uncountably many such families and therefore one cannot always use a finite $\mathcal{G}$.
\end{remark}
Applying the above theorem along with our results from Section~\ref{sec:cycles}, we obtain several homomorphism distiguishing closed families. 

\begin{cor}\label{cor:cyclestructure}
Let $R$ be a graph and let $c : V(R) \to \{0,1\}$ and $\ell : V(R) \to \{i \in \mathbb{N} : i \ge 3\}$. Let $\F$ be the family of graphs $F$ such that no connected component of $F$ contains an $(R,c,\ell)$ cycle structure. Then $\F$ is homomorphism distinguishing closed.
\end{cor}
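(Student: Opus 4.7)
The plan is to apply Theorem~\ref{thm:homclosed2} directly by taking $\mathcal{G}$ to be the family of all connected graphs that contain an $(R,c,\ell)$ cycle structure. I then need to verify that the family $\F$ described in the statement coincides with the family obtained from Theorem~\ref{thm:homclosed2} for this choice of $\mathcal{G}$.

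First, I would check the forward inclusion: if $F \in \F$ (i.e.~no connected component of $F$ contains an $(R,c,\ell)$ cycle structure), then for any connected component $F'$ of $F$ and any $G \in \mathcal{G}$, we have $F' \not\woto G$. Suppose for contradiction that $F' \woto G$ for some $G \in \mathcal{G}$. Since $G$ contains an $(R,c,\ell)$ cycle structure by definition of $\mathcal{G}$, Lemma~\ref{lem:cyclestructures} implies that $F'$ also contains an $(R,c,\ell)$ cycle structure, contradicting $F \in \F$.

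For the reverse inclusion, I argue contrapositively: suppose $F \notin \F$, so some connected component $F'$ of $F$ contains an $(R,c,\ell)$ cycle structure. Then $F' \in \mathcal{G}$, and by Lemma~\ref{lem:idoddo} the identity is an oddomorphism (in particular a weak oddomorphism) from $F'$ to itself, witnessing $F' \woto F'$ with $F' \in \mathcal{G}$. Thus $F$ is not in the family given by Theorem~\ref{thm:homclosed2}.

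Having shown the two families coincide, Theorem~\ref{thm:homclosed2} immediately yields that $\F$ is homomorphism distinguishing closed. There is no serious obstacle here; the entire argument is just unpacking definitions and combining Lemmas~\ref{lem:cyclestructures} and~\ref{lem:idoddo} with Theorem~\ref{thm:homclosed2}. The only mild subtlety is ensuring that for the reverse inclusion the component $F'$ lies in $\mathcal{G}$ (which requires $F'$ to be connected, which it is by construction) and that it admits a weak oddomorphism to itself (handled by Lemma~\ref{lem:idoddo}, which applies precisely because $F'$ is connected).
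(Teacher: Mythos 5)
Your proof is correct and matches the paper's own argument essentially verbatim: both take $\mathcal{G}$ to be the connected graphs containing an $(R,c,\ell)$ cycle structure, use Lemma~\ref{lem:cyclestructures} for one inclusion and the self weak oddomorphism (Lemma~\ref{lem:idoddo}) for the other, and then invoke Theorem~\ref{thm:homclosed2}.
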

\proof
Let $\mathcal{G}$ be the family of connected graphs that contain an $(R,c,\ell)$ cycle structure. We will show that $\F$ is the family of graphs $F$ such that no connected component $F'$ of $F$ has a weak oddomorphism to an element of $G$.

Suppose that $F$ has a connected component $F'$ such that $F' \woto G \in \mathcal{G}$. Since $G$ contains an $(R,c,\ell)$ cycle structure, so does $F'$ by Lemma~\ref{lem:cyclestructures}. Therefore $F \notin \F$.

On the other hand, if $F \notin \F$, then some connected component $F'$ of $F$ contains a $(R,c,\ell)$ cycle structure, i.e., $F' \in \mathcal{G}$. But $F'$ admits a weak oddomorphism to itself, and therefore $F$ contains a connected component that has a weak oddomorphism to some element of $\mathcal{G}$.\qeds

Recall that the \emph{circumference} of a graph $G$ is the maximum length of a cycle in $G$. Using Theorem~\ref{thm:homclosed2} and Corollary~\ref{cor:cyclestructure} we can show that the family of graphs of some bounded circumference is homomorphism distinguishing closed.

\begin{cor}
For any $k \in \mathbb{N}$, $k \ge 3$, the family of graphs with circumference at most $k$ is homomorphism distinguishing closed.
\end{cor}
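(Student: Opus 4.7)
The plan is to apply Theorem~\ref{thm:homclosed2} directly, using Corollary~\ref{cor:circumference} to identify the correct family $\mathcal{G}$ of connected ``forbidden'' graphs. Specifically, I would take
\[
\mathcal{G} = \{G : G \text{ is connected and has circumference at least } k+1\}
\]
and let $\mathcal{F}'$ denote the family produced by Theorem~\ref{thm:homclosed2} from this $\mathcal{G}$. My goal is then to show that $\mathcal{F}'$ coincides with $\mathcal{F}_k$, the family of graphs of circumference at most $k$; the theorem then gives that $\mathcal{F}_k$ is homomorphism distinguishing closed.

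For the inclusion $\mathcal{F}_k \subseteq \mathcal{F}'$, suppose $F \in \mathcal{F}_k$ and let $F'$ be any connected component of $F$. Then $F'$ also has circumference at most $k$, since any cycle in $F'$ is a cycle in $F$. If we had $F' \woto G$ for some $G \in \mathcal{G}$, then Corollary~\ref{cor:circumference} would force $F'$ to have circumference at least $k+1$, a contradiction. Hence $F' \not\woto G$ for every $G \in \mathcal{G}$, so $F \in \mathcal{F}'$.

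For the reverse inclusion, suppose $F \notin \mathcal{F}_k$, so $F$ contains a cycle of length $\ell \ge k+1$. Since cycles are connected, this cycle lies entirely in some connected component $F'$ of $F$. Then $F'$ is connected with circumference at least $k+1$, so $F' \in \mathcal{G}$. By Lemma~\ref{lem:idoddo}, the identity map on $V(F')$ is an oddomorphism from $F'$ to itself, so in particular $F' \woto F'$ with $F' \in \mathcal{G}$. Therefore $F \notin \mathcal{F}'$.

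Combining the two inclusions gives $\mathcal{F}_k = \mathcal{F}'$, and Theorem~\ref{thm:homclosed2} yields the desired conclusion. There is no genuine obstacle here: the hard content is already packaged in Corollary~\ref{cor:circumference} (which encapsulates Theorem~\ref{thm:chordlesscycles}), and the argument is just an unpacking of definitions together with the self-oddomorphism provided by Lemma~\ref{lem:idoddo}.
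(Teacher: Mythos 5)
Your proof is correct and follows essentially the same route as the paper: apply Theorem~\ref{thm:homclosed2} with $\mathcal{G}$ the connected graphs of circumference $> k$, using Corollary~\ref{cor:circumference} for one inclusion and the reflexivity of $\woto$ (Lemma~\ref{lem:idoddo}) for the other. The paper leaves the reverse inclusion implicit, so your unpacking of both directions is merely more explicit, not a different argument.
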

\proof
Let $\mathcal{G}$ be the family of all \emph{connected} graphs with circumference strictly greater than $k$. It follows from Corollary~\ref{cor:circumference} that the family of graphs with circumference at most $k$ is equal to
\[\mathcal{F} = \{F : \forall \text{ connected components } F' \text{ of } F, \ \forall \ G \in \mathcal{G},  F' \not\woto G\}.\]
Therefore by Theorem~\ref{thm:homclosed2} the family of graphs with circumference at most $k$ is homomorphism distiguishing closed.\qeds

We also have the following:

\begin{cor}
The family of all forests is homomorphism distinguishing closed.
\end{cor}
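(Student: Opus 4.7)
The plan is to apply Theorem~\ref{thm:homclosed2} with $\mathcal{G}$ chosen to be the family of all connected graphs that are not forests (equivalently, all connected graphs containing at least one cycle). Since a graph is a forest if and only if each of its connected components is a tree, the family of forests is automatically closed under disjoint unions and restriction to connected components, so the only thing to check is that this family fits the template of Theorem~\ref{thm:homclosed2} for this particular $\mathcal{G}$.

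The main step is to verify that the family of all forests coincides with
\[\mathcal{F} = \{F : \forall \text{ connected components } F' \text{ of } F, \ \forall \ G \in \mathcal{G},  F' \not\woto G\}.\]
For the forward direction, suppose $F$ is a forest and let $F'$ be a connected component of $F$, so $F'$ is itself a tree. If there existed $G \in \mathcal{G}$ with $F' \woto G$, then since $G$ is not a forest, the contrapositive of Corollary~\ref{cor:forests} would force $F'$ to contain a cycle, contradicting that $F'$ is a tree; hence no such $G$ exists and $F \in \mathcal{F}$. For the reverse direction, if $F$ is not a forest, then some connected component $F'$ of $F$ contains a cycle, so $F' \in \mathcal{G}$; by Lemma~\ref{lem:idoddo} the identity map is an oddomorphism from $F'$ to $F'$, giving $F' \woto F'$ and witnessing $F \notin \mathcal{F}$.

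Once this identification is in place, Theorem~\ref{thm:homclosed2} yields the desired conclusion immediately. There is no real obstacle here: the two ingredients needed---Corollary~\ref{cor:forests} (which says that the property of not being a forest is preserved under weak oddomorphisms) and Lemma~\ref{lem:idoddo} (which supplies the identity oddomorphism used to witness the reverse direction)---are already in hand, so the proof reduces to matching the definition of ``forest'' against the format required by Theorem~\ref{thm:homclosed2}, exactly as was done for graphs of bounded circumference in the preceding corollary.
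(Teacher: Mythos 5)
Your proof is correct. The paper's own proof is more economical: it directly verifies that the family of forests satisfies Conditions (1) and (2) of Theorem~\ref{thm:homclosed1} (Condition (2) being obvious and Condition (1) being exactly the contrapositive of Corollary~\ref{cor:forests}), whereas you go through Theorem~\ref{thm:homclosed2} by constructing the appropriate $\mathcal{G}$ and checking the set identity in both directions. Since Theorem~\ref{thm:homclosed2} is, as the paper's remark notes, just a repackaging of Theorem~\ref{thm:homclosed1} via the choice $\mathcal{G} = \{\text{connected graphs not in }\F\}$, the two routes are substantively the same; your version spells out a couple of steps (the reverse inclusion via Lemma~\ref{lem:idoddo}) that the paper leaves implicit, at the cost of a slightly longer argument.
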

\proof
This family clearly satisfies Condition~(2) of Theorem~\ref{thm:homclosed1}, and Corollary~\ref{cor:forests} shows that it also satisfies Condition~(1).\qeds

Notably, graphs $G$ and $H$ are homomorphism indistiguishable over the family of all trees if and only if they are indistinguishable by the 1-dimensional Weisfeiler-Leman algorithm, also known as color refinement~\cite{DGR,dvorak}. Thus the above says that indistinguishability of $G$ and $H$ by color refinement does not imply that $\hom(F,G) = \hom(F,H)$ for any graph $F$ that is not a forest. We can obtain the same result for 2-dimensional Weisfeiler-Leman, which is equivalent to homomorphism indistiguishability over the family of graphs of treewidth at most 2~\cite{dvorak,DGR}.

\begin{cor}
The family of graphs with treewidth at most 2 is homomorphism distinguishing closed.
\end{cor}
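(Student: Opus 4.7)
The plan is to apply Theorem~\ref{thm:homclosed1} to the family $\F$ of graphs of treewidth at most 2. Closure under disjoint unions and restriction to connected components is immediate from $\mathrm{tw}(G_1 \cup G_2) = \max\{\mathrm{tw}(G_1), \mathrm{tw}(G_2)\}$, so the only nontrivial requirement is condition~(1), closure under weak oddomorphisms: $F \in \F$ and $F \woto G$ must imply $G \in \F$.

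Using Wagner's characterization ($\mathrm{tw}(G) \le 2$ iff $G$ is $K_4$-minor-free), I argue the contrapositive. Suppose $G$ contains $K_4$ as a minor. By Lemma~\ref{lem:minoroddo} applied to the $K_4$-minor of $G$, some minor $F^{\ast}$ of $F$ admits an oddomorphism to $K_4$. Because treewidth is minor-monotone, it is enough to show that $F^{\ast}$ itself contains $K_4$ as a minor. The whole argument thus reduces to the following key claim: \emph{any graph that admits an oddomorphism to $K_4$ contains $K_4$ as a minor.}

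To prove the key claim, I would let $\psi : H \to K_4$ be an oddomorphism with fibres $V_i = \psi^{-1}(i)$ for $i \in \{1,2,3,4\}$. Each fibre contains an odd (and hence nonzero) number of $\psi$-odd vertices, and each $\psi$-odd vertex has at least one neighbour in every other fibre. Fixing an odd representative $a_i \in V_i$, I would aim to construct four pairwise vertex-disjoint connected subgraphs $B_1, B_2, B_3, B_4$ of $H$ with $a_i \in B_i$ and at least one edge of $H$ between every pair, thereby witnessing $K_4$ as a minor of $H$.

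The main obstacle is carrying out this construction of branch sets rigorously. The naive choice $B_i = \{a_i\}$ fails because $a_i$ need not be adjacent to $a_j$ --- only to some vertex of $V_j$ --- while the induced subgraph $H[V_i]$ need not be connected, so one cannot simply enlarge $B_i$ to $V_i$. I expect the cleanest route is an induction on $|V(H)|$: the base case $|V(H)| = 4$ forces $H \cong K_4$ directly (each fibre has size one, and each vertex has exactly one neighbour in each other fibre, so every pair is adjacent), and for the inductive step one identifies a removable vertex or contractible edge whose deletion or contraction preserves the oddomorphism (for instance, an isolated even vertex can always be removed, and more generally one can attempt to merge odd vertices that live in the same fibre and are joined by a short path) in order to apply the inductive hypothesis to a strictly smaller instance.
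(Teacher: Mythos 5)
Your reduction is exactly the paper's: verify Condition~(2) of Theorem~\ref{thm:homclosed1} trivially, take the contrapositive of Condition~(1), use Wagner's theorem to translate $G \notin \F$ into ``$G$ has a $K_4$-minor,'' and invoke Lemma~\ref{lem:minoroddo} to extract a minor $F^\ast$ of $F$ with an oddomorphism to $K_4$, so that everything hinges on the single claim that a graph admitting an oddomorphism to $K_4$ must contain $K_4$ as a minor. Up to this point you and the paper agree.

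The gap is that you never actually prove the key claim, and the route you sketch is precisely the one the paper flags as difficult. Your proposed induction leans on (i) deleting isolated even vertices, which is fine, and (ii) ``merging'' two vertices in the same fibre so as to preserve the oddomorphism. That merge takes the \emph{symmetric difference} of the two neighbourhoods (otherwise the parity conditions of Definition~\ref{def:oddomorphism} break), and that is not a minor operation --- edge contraction gives the union of neighbourhoods, not the symmetric difference. So applying the inductive hypothesis to the merged graph gives you a $K_4$-minor there, but you have no way to pull it back to a $K_4$-minor of the original graph. The paper explicitly discusses this exact operation in Section~\ref{sec:discussion} and remarks that ``it is not at all clear how this operation affects the structure of a potential $G$-minor,'' leaving the general version of your key claim (Question~\ref{q:oddo2minor}) open even for $K_5$. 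Your base case ($|V(H)| = 4$ forces $H \cong K_4$) is correct, but the inductive step is exactly the unresolved hard part.

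The paper sidesteps the combinatorial difficulty entirely by an indirect argument: for $G = K_4$ the graphs $G_0$ and $G_1$ of the construction are, respectively, the $4\times 4$ rook's graph and the Shrikhande graph, a classical pair of cospectral strongly regular graphs not distinguished by the 2-dimensional Weisfeiler--Leman algorithm. Since 2-WL indistinguishability coincides with homomorphism indistinguishability over graphs of treewidth at most two (Dvo\v{r}\'{a}k; Dell--Grohe--Rattan), the fact that $\hom(F^\ast, G_0) \ne \hom(F^\ast, G_1)$ forces $\mathrm{tw}(F^\ast) > 2$, i.e.\ $F^\ast$ has a $K_4$-minor. If you want a self-contained argument along your lines, you would need to find a genuine branch-set construction or a different induction that stays within minor operations; as written, the inductive step does not go through.
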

\proof
Let $\F$ be the family of graphs of treewidth 2. These are precisely the graphs with no $K_4$ minor. It is easy to see that Condition~(2) of Theorem~\ref{thm:homclosed1} is satisfied by $\F$. Suppose that $F \in \F$ is such that $F \woto G \notin \F$. This means that $G$ has a $K_4$ minor, and therefore by Lemma~\ref{lem:minoroddo} the graph $F$ has a minor $F'$ that admits an oddomorphism to $K_4$. This implies that $F'$ has a different number of homomorphisms to $G_0$ and $G_1$ for $G = K_4$. However, in this case $G_0$ and $G_1$ are the $4 \times 4$ rook graph and the Shrikhande graph respectively. These are two strongly regular graphs with the same parameters and it is known that such pairs are not distinguished by the 2-dimensional Weisfeiler-Leman method. Since $\hom(F',G_0) \ne \hom(F',G_1)$, we see that $F'$ must have treewidth greater than 2 and thus must contain a $K_4$ minor. Since $F'$ is a minor of $F$, we obtain that $F$ has a $K_4$ minor, i.e., $F \notin \F$. So we have shown that $\F$ satisfies Condition~(1) of Theorem~\ref{thm:homclosed1} and we are done.\qeds

Using Corollary~\ref{cor:oddholes} we obtain the following:

\begin{cor}
The family of graphs containing no odd holes is homomorphism distinguishing closed.
\end{cor}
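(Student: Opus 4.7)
The plan is to invoke \Thm{homclosed1} with $\F$ taken to be the family of all graphs containing no odd hole, where an odd hole means a chordless cycle of odd length at least $5$. It suffices to verify that $\F$ satisfies the two closure conditions demanded by that theorem.

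For Condition~(2), I observe that an odd hole is a connected induced subgraph, so it lies entirely inside a single connected component of whatever graph contains it. Consequently, a graph $F$ contains no odd hole if and only if each of its connected components contains no odd hole; this is exactly closure under disjoint unions and under restriction to connected components.

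For Condition~(1), I need to show that if $F \in \F$ and $F \woto G$, then $G \in \F$, which I prove in contrapositive form. Suppose $G \notin \F$, so $G$ contains an odd hole. Then \Cor{oddholes} (itself an immediate consequence of \Lem{cyclestructures} applied with $R$ a single vertex, $c \equiv 1$, and $\ell$ an odd integer at least $5$) gives that $F$ also contains an odd hole, whence $F \notin \F$. Combining the two conditions, \Thm{homclosed1} yields that $\F$ is homomorphism distinguishing closed.

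There is essentially no new obstacle here: the real work has already been done upstream, in \Thm{chordlesscycles}, whose nontrivial content is that a weak oddomorphism $\psi\colon F\to C_k$ forces $F$ to contain a chordless cycle of length at least $k$ and of the same parity as $k$ (established via the tour/winding-number argument). Once that theorem and the general machinery of \Thm{homclosed1} are available, the present corollary is just a one-paragraph verification of the two closure hypotheses.
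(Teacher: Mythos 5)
Your proof is correct and matches the paper's intended approach exactly: the paper gives no explicit proof beyond the pointer to Corollary~\ref{cor:oddholes}, and your verification of the two hypotheses of Theorem~\ref{thm:homclosed1} — Condition~(2) from the connectedness of odd holes, and Condition~(1) by taking the contrapositive and invoking Corollary~\ref{cor:oddholes} — is precisely the argument the paper leaves implicit.
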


Since perfect graphs do not contain odd holes, we have the following:

\begin{cor}
Homomorphism indistinguishability over the family of perfect graphs is not equivalent to isomorphism.
\end{cor}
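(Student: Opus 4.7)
The strategy is to leverage the previous corollary (that the family of graphs with no odd holes is homomorphism distinguishing closed) together with the Strong Perfect Graph Theorem, which in particular implies that every perfect graph has no odd hole. Hence if $\mathcal{P}$ denotes the family of perfect graphs and $\mathcal{F}$ denotes the family of graphs with no odd holes, we have the containment $\mathcal{P} \subseteq \mathcal{F}$. By the general monotonicity of homomorphism indistinguishability in the family, this containment gives $\cong_{\mathcal{F}} \,\Rightarrow\, \cong_{\mathcal{P}}$, i.e., $\cong_{\mathcal{P}}$ is at least as coarse as $\cong_{\mathcal{F}}$.

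The next step is to note that the family $\mathcal{F}$ is a proper subfamily of the family of all graphs (for instance, $C_5 \notin \mathcal{F}$). Since $\mathcal{F}$ is h.d.-closed by the previous corollary, the definition of h.d.-closed directly produces graphs $H$ and $H'$ with $H \cong_{\mathcal{F}} H'$ but $\hom(C_5, H) \neq \hom(C_5, H')$; in particular $H \not\cong H'$ (since counting homomorphisms from any fixed graph is an isomorphism invariant). These two witnesses $H$ and $H'$ are then homomorphism indistinguishable over $\mathcal{P}$ by the implication established in the first paragraph, yet they are not isomorphic. This shows that $\cong_{\mathcal{P}}$ is strictly coarser than isomorphism, proving the corollary.

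There is really no main obstacle here, since all the work has been done in the preceding corollary and in the Strong Perfect Graph Theorem; the argument is a short containment-plus-closure deduction. One minor point worth stating explicitly in the write-up is the direction of the implication $\cong_{\mathcal{F}} \,\Rightarrow\, \cong_{\mathcal{P}}$: because the perfect-graph family sits \emph{inside} the no-odd-hole family, homomorphism-count equality over the larger family forces equality over the smaller one, not the other way around. With this clarified, the proof reduces to one sentence invoking the previous corollary together with the fact that $\mathcal{P} \subseteq \mathcal{F}$.
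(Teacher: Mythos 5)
Your proof is correct and takes the same route as the paper, which justifies the corollary in a single line by observing that perfect graphs contain no odd holes and invoking the preceding corollary. One small remark: citing the Strong Perfect Graph Theorem is unnecessary overkill here, since the needed direction (perfect $\Rightarrow$ no odd holes) is elementary --- an odd hole $C_{2k+1}$ with $k\geq 2$ has clique number $2$ and chromatic number $3$, hence is not perfect, and perfection is a hereditary property; the SPGT is the hard converse direction, which you do not use.
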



Lastly we can apply Lemma~\ref{lem:inducedstar} to obtain the following:

\begin{cor}
Let $\F$ be the family of graphs not containing an induced copy of $K_{1,d}$. Then $\F$ is homomorphism distinguishing closed and therefore the relation $\cong_\F$ is not isomorphism.
\end{cor}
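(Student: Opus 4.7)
The plan is to apply \Thm{homclosed1} directly to the family $\F$ of graphs containing no induced copy of $K_{1,d}$, since all the substantive work has already been done in \Lem{inducedstar}. I need to verify the two closure conditions of that theorem and then extract the conclusion about $\cong_\F$ from the h.d.-closed property together with the fact that $K_{1,d} \notin \F$.

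For condition~(2) of \Thm{homclosed1}, I would observe that $K_{1,d}$ is connected, so any induced copy of it in a graph $F$ lies entirely within a single connected component of $F$. Hence $F$ contains an induced $K_{1,d}$ if and only if some connected component of $F$ does, which immediately gives that $F \in \F$ iff every connected component of $F$ is in $\F$. This handles closure both under disjoint unions and under restriction to connected components.

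For condition~(1), I would simply invoke the contrapositive of \Lem{inducedstar}: if $F \woto G$ and $G \notin \F$, then $G$ contains an induced $K_{1,d}$, whence \Lem{inducedstar} guarantees that $F$ also contains an induced $K_{1,d}$, so $F \notin \F$. Equivalently, if $F \in \F$ and $F \woto G$, then $G \in \F$. With both conditions established, \Thm{homclosed1} yields that $\F$ is homomorphism distinguishing closed.

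Finally, to conclude that $\cong_\F$ is not isomorphism, I would note that $K_{1,d} \notin \F$ trivially (it contains itself as an induced subgraph), so $\F$ is a proper subfamily of all graphs. Since $\F$ is h.d.-closed, there must exist graphs $H$ and $H'$ with $H \cong_\F H'$ but $\hom(K_{1,d}, H) \ne \hom(K_{1,d}, H')$; in particular $H \not\cong H'$, so $\cong_\F$ is strictly coarser than isomorphism. There is no real obstacle in this argument: the entire proof is an assembly of \Lem{inducedstar} and \Thm{homclosed1}, which is precisely the point of having developed those general tools.
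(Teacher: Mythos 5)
Your proof is correct and follows exactly the route the paper intends: verify conditions (1) and (2) of Theorem~\ref{thm:homclosed1} (with Lemma~\ref{lem:inducedstar} supplying condition~(1) and the connectedness of $K_{1,d}$ supplying condition~(2)), then note $K_{1,d}\notin\F$ to conclude $\cong_\F$ is not isomorphism. The paper merely says ``we can apply Lemma~\ref{lem:inducedstar}'' without spelling out the details, and your write-up fills them in faithfully.
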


Notably, the class of graphs not containing an induced $K_{1,d}$ is much larger than the class of graphs with maximum degree less than $d$.




It is well known that the corresponding homomorphism indistinguishability relation $\cong_\F$ is cospectrality of adjacency matrices.

\begin{lemma}
The family of all forests is homomorphism distinguishing closed.
\end{lemma}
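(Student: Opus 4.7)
The plan is to apply \Thm{homclosed1} to the family $\F$ of all forests, which reduces the task to verifying the two hypotheses of that theorem: (1) if $F \in \F$ and $F \woto G$, then $G \in \F$; and (2) $\F$ is closed under disjoint unions and under restriction to connected components.

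Condition (2) is immediate from the definition of a forest: having no cycle is a property which both passes to connected components and is preserved under disjoint unions, since a cycle in a disjoint union must lie entirely in one component.

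For condition (1), I would argue the contrapositive: if $G$ is not a forest and $F \woto G$, then $F$ is not a forest. This is exactly the content of \Cor{forests}, which is in turn a specialization of \Thm{chordlesscycles} (or of the cycle-structure \Lem{suboddo} style argument): if $G$ contains a cycle of length $k$, then $\psi$ restricted to the preimage of that cycle in $F$ is a weak oddomorphism from a subgraph of $F$ to $C_k$, and \Thm{chordlesscycles} then forces that subgraph, and hence $F$, to contain a chordless cycle. So $F$ is not a forest.

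There is no real obstacle beyond invoking the machinery already established. All the hard work is packaged inside \Thm{homclosed1} (which shows that any family closed under connected components, disjoint unions, and weak oddomorphisms is homomorphism distinguishing closed) and \Thm{chordlesscycles} (which is what ensures that forests are closed under weak oddomorphisms). The proof amounts to checking the two bookkeeping conditions and is only a couple of lines long.
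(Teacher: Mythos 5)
Your proof matches the paper's own argument exactly: the paper cites Theorem~\ref{thm:homclosed1} and notes that Condition~(2) is clear for forests and Condition~(1) follows from Corollary~\ref{cor:forests}. You simply spell out slightly more of the detail (unpacking Corollary~\ref{cor:forests} back to Theorem~\ref{thm:chordlesscycles}), but the route is the same.
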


\subsection{Uncountability}\label{sec:uncountability}

Since any homomorphism indistinguishability relation corresponds to some family of finite graphs, the number of such relations can be at most the cardinality of the continuum. It is clear from previous results that there are an infinite number of homomorphism indistinguishability relations. It thus remains to decide whether or not there are countably many homomorphism indistinguishability relations, i.e., whether there are countably many homomorphism distinguishing closed families of graphs. Here we answer this question in the negative by showing that there are uncountably many families meeting the conditions of Theorem~\ref{thm:homclosed1}.

First we prove the following two lemmas.

\begin{lemma}\label{lem:cliques}
If $n \in \mathbb{N}$ and $K_n \woto G$, then $G \cong K_n$.
\end{lemma}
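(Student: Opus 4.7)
The plan is to show that any weak oddomorphism from $K_n$ to $G$ is forced to be a vertex bijection whose image contains all $\binom{n}{2}$ possible edges, so that $G \cong K_n$. I will extract two basic observations and combine them.

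First, any homomorphism $\psi \in \Hom(K_n, G)$ is automatically injective on vertices: if $a \ne b$ in $V(K_n)$ then $ab \in E(K_n)$, so $\psi(a)\psi(b)$ must be an edge of $G$; since $G$ is simple and so has no loops, this forces $\psi(a) \ne \psi(b)$. In particular, $|V(G)| \ge n$ whenever $K_n$ admits any homomorphism to $G$.

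Second, the definition of weak oddomorphism gives a subgraph $F' \subseteq K_n$ such that $\psi|_{V(F')}$ is an oddomorphism from $F'$ to $G$. Condition~(2) of Definition~\ref{def:oddomorphism} then forces $(\psi|_{V(F')})^{-1}(v)$ to contain an odd (hence strictly positive) number of odd vertices for every $v \in V(G)$, so $\psi|_{V(F')}$ is surjective onto $V(G)$. Therefore $|V(G)| \le |V(F')| \le n$.

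Combining the two bounds, $|V(G)| = n$ and $\psi$ itself is a bijection from $V(K_n)$ to $V(G)$. Since the $\binom{n}{2}$ edges of $K_n$ map under $\psi$ to $\binom{n}{2}$ distinct edges of $G$ (by injectivity of $\psi$ together with the homomorphism property), and the simple graph $G$ on $n$ vertices contains at most $\binom{n}{2}$ edges, $G$ must have exactly $\binom{n}{2}$ edges, i.e. $G \cong K_n$. I do not anticipate any real obstacle here: the argument just combines the two standard facts above, and the only minor subtlety is recalling that an ``odd number of odd vertices'' is implicitly positive, which is what makes the restricted oddomorphism surjective.
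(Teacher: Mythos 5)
Your proof is correct and follows essentially the same route as the paper's: surjectivity of the (weak) oddomorphism gives $|V(G)| \le n$, and injectivity of any homomorphism out of $K_n$ (because $G$ is simple) gives a complete subgraph on $n$ vertices, forcing $G \cong K_n$. Your version is just slightly more explicit about the injectivity argument and about passing through the subgraph $F'$; no gaps.
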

\proof
Since a weak oddomorphism is surjective, we have that $|V(G)| \le n$. On the other hand, since an oddomorphism is a homomorphism we must have that $G$ contains a complete subgraph of size $n$. It follows that $G \cong K_n$.\qeds

\begin{lemma}\label{lem:connected2connected}
Let $F$ and $G$ be graphs. If $F \woto G$, then for every connected component $G'$ of $G$, there is a connected component $F'$ of $F$ such that $F' \woto G'$.
\end{lemma}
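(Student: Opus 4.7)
The plan is to chase the weak oddomorphism down to the relevant connected component via two already-established results: Lemma~\ref{lem:suboddo} (restriction of a weak oddomorphism to the preimage of a subgraph is again a weak oddomorphism) and Lemma~\ref{lem:oddoconnected} (a weak oddomorphism to a connected graph restricts to an oddomorphism on some \emph{connected} subgraph of the source). The connected component $F'$ we want will then be the component of $F$ that contains this connected subgraph.

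More concretely, let $\psi$ be a weak oddomorphism from $F$ to $G$, and fix a connected component $G'$ of $G$. First I would apply Lemma~\ref{lem:suboddo} with the subgraph $G'$ to conclude that $\psi|_{V(\psi^{-1}(G'))}$ is a weak oddomorphism from $\psi^{-1}(G')$ to $G'$. Note that $\psi^{-1}(G')$ is nonempty: any oddomorphism is surjective (every fibre contains an odd, hence positive, number of odd vertices), so the subgraph witnessing that $\psi$ is a weak oddomorphism already hits every vertex of $G$. Next, because $G'$ is connected, Lemma~\ref{lem:oddoconnected} applied to this weak oddomorphism produces a \emph{connected} subgraph $\hat{F}$ of $\psi^{-1}(G')$ such that $\psi|_{V(\hat{F})}$ is an oddomorphism from $\hat{F}$ to $G'$.

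Now let $F'$ be the (unique) connected component of $F$ containing the connected subgraph $\hat{F}$. I would argue that $\psi|_{V(F')}$ is a homomorphism from $F'$ to $G'$: it is certainly a homomorphism from $F'$ to $G$, and since $F'$ is connected its image $\psi(V(F'))$ is contained in a single connected component of $G$; that component must be $G'$ because $\hat{F} \subseteq F'$ has all its vertices in $\psi^{-1}(G')$. Since $\hat{F}$ is a subgraph of $F'$ and $\psi|_{V(\hat{F})}$ is an oddomorphism from $\hat{F}$ to $G'$, the map $\psi|_{V(F')}$ is a weak oddomorphism from $F'$ to $G'$ by definition, giving the desired $F' \woto G'$. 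There is no real obstacle here; the only thing to be careful about is the nonemptyness/surjectivity point above, which is why the step of invoking surjectivity of oddomorphisms is worth spelling out.
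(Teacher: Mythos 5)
Your proof is correct and follows essentially the same route as the paper's: apply Lemma~\ref{lem:suboddo} to $\psi^{-1}(G')$, then Lemma~\ref{lem:oddoconnected} to extract a connected $\hat{F}$, then take the component of $F$ containing $\hat{F}$. The only cosmetic difference is that the paper observes directly that $\psi^{-1}(G')$ is a disjoint union of connected components of $F$ (since $G'$ is a connected component of $G$) and then takes a component of $\psi^{-1}(G')$, whereas you take a component of $F$ and verify after the fact that it lands inside $G'$; these are the same component.
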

\proof
Let $\psi$ be a weak oddomorphism from $F$ to $G$ and let $G'$ be a connected component of $G$. For $F' = \psi^{-1}(G')$, the map $\psi|_{V(F')}$ is a weak oddomorphism from $F'$ to $G'$ by Lemma~\ref{lem:suboddo}. Since $\psi$ is a homomorphism, the subgraph $F'$ must be a disjoint union of connected components of $F$. By Lemma~\ref{lem:oddoconnected} there is a connected subgraph $\hat{F}$ of $F'$ such that $\psi|_{V(\hat{F})}$ is an oddomorphism from $\hat{F}$ to $G'$. If $F''$ is the connected component of $F'$ containing $\hat{F}$, then $\psi|_{V(F'')}$ is a weak oddomorphism from $F''$ to $G'$ and we are done.\qeds

\begin{theorem}\label{thm:uncountable}
There are uncountably many homomorphism indistinguishability relations.
\end{theorem}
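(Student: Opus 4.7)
The plan is to construct, for each subset $S$ of the positive integers, a homomorphism distinguishing closed family $\F_S$ built out of complete graphs in such a way that $\F_S \ne \F_{S'}$ whenever $S \ne S'$. Since there are uncountably many subsets of $\N$, and since any two distinct h.d.-closed families necessarily correspond to distinct homomorphism indistinguishability relations (as noted in Section~\ref{sec:contributions}), this will produce uncountably many such relations.

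For each $S \subseteq \{n \in \N : n \ge 2\}$, I set $\mathcal{G}_S = \{K_n : n \ge 2,\ n \notin S\}$, a family of connected graphs, and define
\[\F_S = \{F : \forall \text{ connected components } F' \text{ of } F,\ \forall\, G \in \mathcal{G}_S,\ F' \not\woto G\},\]
exactly as in the hypothesis of \Thm{homclosed2}. That theorem then immediately gives that each $\F_S$ is homomorphism distinguishing closed, so no further work is needed to verify closure.

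The key remaining step is to show that the assignment $S \mapsto \F_S$ is injective, and this is precisely where \Lem{cliques} carries the weight: by that lemma, $K_n \woto K_m$ forces $K_m \cong K_n$, i.e.\ $m = n$. Consequently, for any $n \ge 2$, the graph $K_n$ (whose only connected component is itself) belongs to $\F_S$ if and only if $K_n \not\woto K_m$ for every $m \ge 2$ with $m \notin S$, which happens if and only if $n \in S$. Therefore, if $S \ne S'$, any $n$ in the symmetric difference yields a complete graph $K_n$ lying in exactly one of $\F_S, \F_{S'}$, so $\F_S \ne \F_{S'}$.

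Putting the pieces together: the map $S \mapsto \cong_{\F_S}$ produces uncountably many pairwise distinct homomorphism indistinguishability relations, since each $\F_S$ is h.d.-closed, distinct h.d.-closed families give rise to distinct such relations, and the power set of $\{n \in \N : n \ge 2\}$ is uncountable. I do not foresee any real obstacle, as \Thm{homclosed2} and \Lem{cliques} have been tailored for precisely this application; the only thing to verify is the small membership computation $K_n \in \F_S \iff n \in S$, which is a direct reading of the definition combined with the rigidity of $K_n$ under weak oddomorphisms.
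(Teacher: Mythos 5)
Your proof is correct, and the underlying idea is the same as the paper's: complete graphs are rigid under weak oddomorphisms (\Lem{cliques}), and this rigidity is leveraged to produce a distinct homomorphism distinguishing closed family for each subset of $\N$. The packaging differs slightly. The paper defines $\F_S$ directly as the class of disjoint unions of $K_n$ with $n\in S$ and verifies the two conditions of \Thm{homclosed1} by hand, which requires invoking \Lem{connected2connected} to pass from a weak oddomorphism $F\woto G$ to one between connected components. You instead define $\F_S$ in the ``forbidden-target'' form demanded by \Thm{homclosed2}, taking $\mathcal{G}_S = \{K_m : m\ge 2,\ m\notin S\}$, so that h.d.-closedness comes for free; the only thing left to check is that $K_n \in \F_S \iff n\in S$, which is exactly \Lem{cliques} together with the reflexivity of $\woto$ (\Lem{idoddo}). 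Your $\F_S$ is genuinely a different (larger) family than the paper's, since it contains every graph whose components avoid weak oddomorphisms to the forbidden cliques, not just disjoint unions of cliques of sizes in $S$, but this has no bearing on the argument. If anything, your version is a little cleaner because it delegates the closure verification to \Thm{homclosed2} and avoids \Lem{connected2connected} entirely.
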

\proof
Let $S \subseteq \mathbb{N}$, and define $\F_S$ to be the family graphs which are disjoint unions of complete graphs of size some element of $S$, i.e.,
\[\F_S = \{\cup_{i = 1}^k K_{n_i}: k \in \mathbb{N}, \ n_i \in S \ \forall i \in [k]\}.\]
It is clear that each choice of $S \subseteq \mathbb{N}$ results in a distinct familiy $\F_S$ and thus there are uncountably many such families. We now show that $\F_S$ satisfies the conditions of Theorem~\ref{thm:homclosed1}.

It is immediate that $\F_S$ satisfies Condition (2) of Theorem~\ref{thm:homclosed1}, i.e., it is closed under disjoint unions and restrictions to connected components. Thus it is only left to check Condition (1). Suppose that $F \in \F_S$ and $F \woto G$. Let $G'$ be a connected component of $G$. By Lemma~\ref{lem:connected2connected} there is a connected component $F'$ of $F$ such that $F'$ has a weak oddomorphism to $G'$. By definition of $\F_S$, the graph $F'$ is isomorphic to $K_n$ for some $n \in S$ and thus by Lemma~\ref{lem:cliques} we have that $G' \cong K_n$. Therefore $G$ is a disjoint union of complete graphs of size some element of $S$, i.e., $G \in \F_S$. Thus Condition (2) of Theorem~\ref{thm:homclosed1} holds for $\F_S$ and therefore it is homomorphism distinguishing closed.

Since the families $\F_S$ are all homomorphism distinguishing closed and are distinct for distinct choices of $S \subseteq \mathbb{N}$, we have that the corresponding relations $\cong_{\F_S}$ are all distinct. Clearly there are uncountably many choices for $S$ and thus we have proven the theorem.\qeds

\section{Allowing loops}\label{sec:loops}

When considering the \emph{existence} of homomorphisms between graphs, loops lead to trivialities since any graph has a homomorphism to any graph containing a loop and no graph with a loop has a homomorphism to any graph without one. However, when counting homomorphisms between graphs, it is not unusual to allow loops. So far we have only considered loopless graphs, as this seemed the more natural setting for our results. But in this section we will consider what happens when loops are allowed.

A natural question to consider here is how allowing loops affects homomorphism distinguishing closedness. For instance, if $\F$ is a family of loopless graphs that is h.d.~closed in the loopless setting, then does it remain h.d.~closed when loops are allowed? In other words, is there some graph $F \notin \F$ such that $G \cong_\F H$ implies that $\hom(F,G) = \hom(F,H)$? The graph $F$ must necessarily have loops (or else $\F$ could not have been h.d.~closed in the no loops allowed setting), and this means that $G \cong_\F H$ implies that either both or neither of $G$ and $H$ contain loops. An example of this happening is with cycles. If $\F$ is the family of (disjoint unions of) loopless cycles, then the relation $\cong_\F$ is cospectrality of adjacency matrices. This is due to the fact that $\hom(C_k,G) = \tr(A_G^k)$, and Newton's identities allow one to compute the eigenvalues of $A_G$ from these numbers\footnote{Strictly speaking we do not have direct access to $\tr(A_G)$ nor $\tr(A_G^2)$, but we can compute the eigenvalues of $A_G^3$ from the numbers $\tr(A^{3k})$ for $k \ge 1$ and the eigenvalues of $A_G$ are just the unique real cube roots of these.}. Let $\F'$ be the homomorphism distinguishing closure (in the setting of loopless graphs) of $\F$. Now for any two graphs (with loops allowed) $G$ and $H$, the relation $G \cong_{\F'} H$ is equivalent to $G$ and $H$ being cospectral, and therefore this implies that $\tr(A_G) = \tr(A_H)$. However, $\tr(A_G)$ is just the number of loops contained in $G$ which is the number of homomorphisms from the graph on a single vertex with a loop to $G$. Thus $\F'$ is not homomorphism distinguishing closed in the loops allowed setting. Note that this implies that if $\F$ is any family of loopless graphs that contains all cycles, then its closure in the loops allowed setting must include the graph on a single vertex with a loop, and thus $\F$ cannot be closed in the loops allowed setting.

On the other hand, there are examples of families that are h.d.~closed in the loopless setting that remain h.d.~closed in the loops allowed setting. For example take the family of forests which we showed is h.d.~closed in Corollary~\ref{cor:forests}. It is easy to see that if $G$ is the graph with two vertices both containing a loop and $H$ is $K_2$, then any forest (in fact any bipartite graph) with $c$ connected components has exactly $2^c$ homomorphisms to both $G$ and $H$. However, any graph with loops will have a positive number of homomorphisms to $G$ but no homomorphisms to $H$. Thus the family of forests is also h.d.~closed in the loops allowed setting.

Given a family of loopless graphs $\F$, let $\mathrm{cl}(\F)$ continue to denote the usual homomorphism distinguishing closure in the no loops allowed setting that we defined in Section~\ref{sec:degree}, and let $\mathrm{cl}^\circ(\F)$ denote the homomorphism distinguishing closure in the loops allowed setting. It is not a priori obvious that $\mathrm{cl}(\F) \subseteq \mathrm{cl}^\circ(\F)$ for every $\F$. This is because it might be the case that there is some $F \notin \F$ such that $G \cong_\F H$ implies $\hom(F,G) = \hom(F,H)$ for all \emph{loopless} graphs $G$ and $H$, but this implication does not hold for some pair $G$ and $H$ where one or both of them have loops. Thus we have the following question:

\begin{question}
If $\F$ is a family of loopless graphs, is $\mathrm{cl}(\F) \subseteq \mathrm{cl}^\circ(\F)$?
\end{question}

Of course we have already seen that it is possible that $\mathrm{cl}^\circ(\F) \not\subseteq \mathrm{cl}(\F)$, and thus it may be that for some families $\F$ neither containment holds. On the other hand we will always have that every loopless graph in $\mathrm{cl}^\circ(\F)$ is contained in $\mathrm{cl}(\F)$. Thus if $\F$ is a family of loopless graphs that is h.d.~closed in the no loops setting, then $\mathrm{cl}^\circ(\F) \setminus \F$ will only contain graphs with loops. A natural question is whether the loopless version of any graph in $\mathrm{cl}^\circ(\F) \setminus \F$ is necessarily contained in $\F$:

\begin{question}
Let $\F$ be a family of loopless graphs that is h.d.~closed in the no loops allowed setting. Can any graph in $\mathrm{cl}^\circ(\F) \setminus \F$ be obtained from a graph in $\F$ by adding loops to some of the vertices?
\end{question}

Another natural question is the following:

\begin{question}
Let $\F$ be a family of graphs possibly with loops that is homomorphism distinguishing closed in the loops allowed setting. If $\F'$ is the family of loopless graphs from $\F$, is $\F'$ homomorphism distinguishing closed in the loopless setting?
\end{question}

A negative answer to the above is possible since for a potential family $\F$ there might be a loopless graph $F$ such that $G \cong_\F H$ implies $\hom(F,G) = \hom(F,H)$ whenever $G$ and $H$ are loopless, but this implication does not hold for all $G$ and $H$ where loops are allowed.

Note that for any family $\F$ of loopless graphs, if we let $\F'$ be the family consisting of the graphs in $\F$ and all graphs containing loops, then the relation $\cong_{\F'}$ will coincide with the relation $\cong_\F$ when both graphs being compared are loopless. From this and Theorem~\ref{thm:uncountable} it follows that there are uncountably many homomorphism indistinguishability relations even in the setting with loops. However, it seems likely that for graphs with loops the relation $\cong_{\F'}$ will differ significantly from $\cong_{\mathrm{cl}^\circ(\F)}$, which is probably the more natural extension of $\cong_{\F}$ to the setting of graphs with loops. This issue arises when considering extending many of the families we considered to the loops allowed setting. For instance, most of the families we considered were of the form given in Theorem~\ref{thm:homclosed2}, i.e., the family $\F$ of graphs all of whose connected components do not admit a weak oddomorphism to any graph in some family $\mathcal{G}$. Since weak oddomorphisms are homomorphisms, no graph with loops can have a weak oddomorphism to a loopless graph. Moreover, even if we allow loops in graphs appearing in $\mathcal{G}$, we have no analog of Theorem~\ref{thm:maindualG} for graphs with loops, and so we would not be able to apply this theorem to e.g.~show that the resulting family $\F$ is h.d.~closed. To partially address this we present an alternative construction of $G_0$ and $G_1$ in the section below.

\subsection{An alternative to $G_0$ and $G_1$}\label{sec:construction2}

There are perhaps several possible ways to extend the construction of $G_0$ and $G_1$ to graphs with loops. Unfortunately there does not appear to be one that perfectly meets our desires. By this we mean that we would like to have an analog of Theorem~\ref{thm:maindualG} for connected graphs $G$ possibly having loops, and we would like the notion of (weak) oddomorphisms used in this analog to be defined precisely the same as in Definition~\ref{def:oddomorphism} but where $N_F(a)$ contains $a$ whenever $a$ has a loop and similarly for $N_G(v)$. Starting from this proposed notion of oddomorphism, one can work backward, first constructing the analog of the linear systems obtained from the Fredholm Alternative in Equation~\eqref{eq:cert}, and then ``unapplying" the Fredholm Alternative to find the linear systems whose solutions we want to be in bijection with the elements of $\Hom_\psi(F,G_0)$ and $\Hom_\psi(F,G_1)$. The problem is that the linear systems you end up with when doing this will impose that a vertex $a \in V(F)$ with a loop must be mapped to a vertex $(v,S)$ of $G_i$ where $v$ has a loop and $S$ does not contain this loop, but will only impose that distinct adjacent vertices $b,c \in V(F)$ with $\psi(b) = \psi(c) = v$ are mapped to $(v,S)$ and $(v,T)$ where $S \triangle T$ does not contain the loop on $v$. The latter condition requires us to put loops on all vertices $(v,S)$ where $v \in V(G)$ has a loop, but the former condition requires us to only put loops on such vertices where $S$ does not contain the loop on $v$. Currently we do not see how to resolve this conflict and derive an analog of Theorem~\ref{thm:maindualG} for graphs with loops meeting the requirements described above. Instead, we give the following definition of a graph $\tilde{G}_U$ for a graph $G$ and $U \subseteq V(G)$ that allows us to obtain analogs of some of our results for graphs with loops. Note that the construction itself only applies to graphs $G$ without loops. 

\begin{definition}\label{def:construction3}
Let $G$ be a loopless graph and $U \subseteq V(G)$. Define $\tilde{G}_U$ as the graph with vertex set $\{(v,S) : v \in V(G), \ S \subseteq E(v), \ |S| \equiv \delta_{v,U} \ \mathrm{mod} \ 2\}$, where $(v,S)$ is adjacent to $(u,T)$ if $uv \notin E(G)$, or if $uv \in E(G)$ and $uv \notin S \triangle T$.
\end{definition}

Note that by the definition above, all vertices of $\tilde{G}_U$ have a loop. Furthermore, for a fixed $v \in V(G)$, the vertices of the form $(v,S)$ form a complete subgraph in $\tilde{G}_U$ as opposed to an independent set as in $G_U$.

We can now proceed as in Section~\ref{sec:construction}. Since the analysis is so similar, we only provide an overview here. The main difference with this construction is that the projection $\rho: (v,S) \mapsto v$ is no longer a homomorphism from $\tilde{G}_U$ to $G$, it is only a map from $V(\tilde{G}_U)$ to $V(G)$. Thus we will partition $\Hom(F,\tilde{G}_U)$ into the sets $\Hom_\psi(F,\tilde{G}_U)$ where $\psi$ varies over all functions from $V(F)$ to $V(G)$. For a fixed $\psi : V(F) \to V(G)$, using the exact same analysis as in the proof of Lemma~\ref{lem:homs2sols} we again obtain that the elements of $\Hom_\psi(F,\tilde{G}_U)$ are in bijection with the solutions to the following system of equations over $\mathbb{Z}_2$:
\begin{align}
    \sum_{e \in E(\psi(a))} x_e^a &= \delta_{\psi(a),U} \text{ for all } a \in V(F);\label{eq:parity2}\\
    x_e^a + x_e^b &= 0 \text{ for all } ab \in E(F) \text{ s.t. } \psi(a)\psi(b) \in E(G), \text{ where } e = \psi(a)\psi(b).\label{eq:adjacency2}
\end{align}
Note that if $ab \in E(F)$ but $\psi(a)\psi(b) \notin E(G)$, then Equation~\eqref{eq:adjacency2} does not apply to the edge $ab$. This is because in such a case it is already guaranteed by the construction of $\tilde{G}_U$ that $(\psi(a),S)$ and $(\psi(b),T)$ will be adjacent regardless of $S$ and $T$.

For any $\psi: V(F) \to V(G)$ we can define coefficient matrices $A^\psi$ and $B^\psi$ as in Lemma~\ref{lem:homs2sols}:
\begin{align}
    A^\psi_{b, (a,e)} &= \begin{cases} 1 & \text{if } b = a \\ 0 & \text{o.w.}\end{cases}\\
    B^\psi_{f,(a,e)} &= \begin{cases}1 & \text{if } f \in E(a) \text{ and } e = \psi(f) \in E(G)\\ 0 &\text{o.w.}\end{cases}
\end{align}
where the pairs $(a,e)$ vary over $a \in V(G)$ and $e \in E(\psi(a))$, the row index of $A$ varies over $b \in V(F)$, and the row index of $B$ varies over edges $f$ of $F$ that are mapped to edges of $G$ by $\psi$. Then the linear system appearing in Equations~\eqref{eq:parity2} and~\eqref{eq:adjacency2} is precisely the system
\[\begin{pmatrix}
A^\psi \\ B^\psi
\end{pmatrix}x = \begin{pmatrix}
\chi_{\psi^{-1}(U)} \\ 0
\end{pmatrix}.\]
As in Section~\ref{sec:construction} one can show that the isomorphism class of $\tilde{G}_U$ does depend on and only depends on the parity of $|U|$, and thus we use $\tilde{G}_0$ and $\tilde{G}_1$ to refer to these two distinct graphs. Also as in Section~\ref{sec:construction}, we can apply the Fredholm Alternative and obtain an algebraic certificate for $\hom(F,\tilde{G}_0) \ne \hom(\tilde{G}_1)$ and interpret this combinatorially. This leads to a notion similar to that of (weak) oddomorphism. Before we define this notion recall that for loopless graphs, if $\psi$ is a homomorphism from $F$ to $G$, then $a \in V(F)$ is odd/even with respect to $\psi$ if $|N_F(a) \cap \psi^{-1}(v)|$ is odd/even for all $v \in N_G(\psi(a))$. For a graph $F$ with loops we will take $N_F(a)$ to include $a$ if $a$ has a loop in $F$. For a map $\psi : V(F) \to V(G)$ (with $F$ possibly having loops) we say a vertex $a \in V(F)$ is odd/even with respect to $\psi$ if $|N_F(a) \cap \psi^{-1}(v)|$ is odd/even for all $v \in N_G(\psi(a))$. Since the graph $G$ does not have loops, we will never have that $a \in \psi^{-1}(v)$ for $v \in N_G(\psi(a))$. Thus loops in $F$ do not contribute to the parity of a vertex. We can now define the following:
\begin{definition}
Let $F$ be a graph possibly with loops and let $G$ be a loopless graph. An \emph{oddism} from $F$ to $G$ is a function $\psi : V(F) \to V(G)$ such that
\begin{enumerate}
    \item each vertex of $F$ is either odd or even with respect to $\psi$;
    \item $\psi^{-1}$ contains an odd number of odd vertices for all $v \in V(G)$.
\end{enumerate}
A function $\psi : V(F) \to V(G)$ is a \emph{weak oddism} if there is a subgraph $F'$ of $F$ such that $\psi|_{V(F')}$ is an oddism from $F'$ to $G$.
\end{definition}

Note that the (weak) oddisms from a graph $F$ are precisely the same as the (weak) oddisms from the graph $F'$ obtained from $F$ by removing all of its loops, i.e., loops do not affect (weak) oddisms.

Following the same arguments as in the proof of Theorem~\ref{thm:maindualG} we obtain the following theorem:

\begin{theorem}\label{thm:maindualGloop}
Let $F$ be a graph possibly with loops and let $G$ be a loopless connected graph. Then $\hom(F,\tilde{G}_0) \ge \hom(F,\tilde{G}_1)$ with strict inequality if and only if $F$ has a weak oddism to $G$. Moreover, if such a weak oddism $\psi$ does exist, then there is a connected subgraph $F'$ of $F$ such that $\psi|_{V(F')}$ is an oddism from $F'$ to $G$.
\end{theorem}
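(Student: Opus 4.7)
The plan is to mirror the proof of \Thm{maindualG} step-for-step, since the entire linear-algebraic setup has already been transcribed to this construction in the paragraphs preceding the theorem. For each function $\psi : V(F) \to V(G)$, the set $\Hom_\psi(F,\tilde{G}_U)$ is in bijection with the solutions over $\mathbb{Z}_2$ of
\[\begin{pmatrix} A^\psi \\ B^\psi \end{pmatrix} x = \begin{pmatrix} \chi_{\psi^{-1}(U)} \\ 0 \end{pmatrix}.\]
Taking $U = \varnothing$ makes the right-hand side zero, so this homogeneous system has a strictly positive number of solutions. Hence for arbitrary $U$ the solution count is either zero or equal to this kernel size, giving $\hom_\psi(F,\tilde{G}_U) \le \hom_\psi(F,\tilde{G}_0)$ with equality iff the inhomogeneous system is solvable. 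Summing over $\psi$ yields $\hom(F,\tilde{G}_0) \ge \hom(F,\tilde{G}_1)$.

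For the characterization of strict inequality, fix $\hat{u} \in V(G)$ and set $\tilde{G}_1 = \tilde{G}_{\{\hat{u}\}}$. By the Fredholm Alternative, strict inequality is equivalent to the existence of some $\psi$ admitting a pair $(y,z)$ with $(A^\psi)^T y + (B^\psi)^T z = 0$ and $\chi_{\psi^{-1}(\hat{u})}^T y = 1$, where $y$ is indexed by $V(F)$ and $z$ is indexed by those edges $f = ab$ of $F$ with $\psi(a)\psi(b) \in E(G)$. Let $O = \{a \in V(F) : y_a = 1\}$ and let $F'$ be the subgraph of $F$ on vertex set $V(F)$ with the edges indicated by $z$. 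The entry of $(A^\psi)^T y = (B^\psi)^T z$ at index $(a,e)$ with $e = \psi(a)v$ then reads $y_a \equiv |N_{F'}(a) \cap \psi^{-1}(v)| \pmod{2}$ for every $v \in N_G(\psi(a))$, so every vertex of $F'$ is either odd or even with respect to $\psi|_{V(F')}$, with $O$ the odd set. The remaining equation gives that $\psi^{-1}(\hat{u})$ contains an odd number of odd vertices; the standard edge-counting argument together with the connectedness of $G$ then propagates this to every fiber, so $\psi|_{V(F')}$ is an oddism and $\psi$ is a weak oddism. The reverse implication is a direct reversal of this construction.

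For the final clause I would prove an oddism analogue of \Lem{oddoconnected}: any weak oddism $\psi : F \to G$ with $G$ connected admits a connected witnessing subgraph. The proof transfers verbatim --- pick a connected component $\hat{F}$ of the witnessing $F'$ meeting some fiber $\psi^{-1}(v)$ in an odd number of odd vertices and propagate along edges of $G$ using the fiber-to-fiber edge-parity identity; the argument only uses connectedness of $G$ and the parity structure of the fibers, never that $\psi$ preserves adjacency globally. The only mild obstacle throughout is bookkeeping: because $\psi$ need not be a homomorphism, edges of $F$ lying outside $\{ab \in E(F) : \psi(a)\psi(b) \in E(G)\}$ impose no constraints and contribute no rows to $B^\psi$, which is precisely compatible with the definition of $\tilde{G}_U$ in which vertices with non-adjacent heads are always joined, so those edges of $F$ are automatically accommodated.
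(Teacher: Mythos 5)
Your proposal is correct and follows the same route the paper indicates: the paper itself gives no detailed proof, simply stating ``Following the same arguments as in the proof of Theorem~\ref{thm:maindualG} we obtain the following theorem,'' and your argument is precisely that transfer, including the observation that edges of $F$ not mapping to edges of $G$ contribute no rows to $B^\psi$ and hence impose no constraints, matching the adjacency rule of $\tilde{G}_U$. Your handling of the final clause via an oddism analogue of Lemma~\ref{lem:oddoconnected} is also sound, since that lemma's argument uses only the connectedness of $G$ and the fibre-to-fibre parity identities, not the homomorphism property of $\psi$.
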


It turns out that a graph $F$ has weak oddism to a graph $G$ if and only if it contains a subgraph with a (weak) oddomorphism to $G$:

\begin{lemma}\label{lem:oddism2oddo}
Let $F$ be a graph possibly with loops and let $G$ be a connected graph. Then the following are equivalent:
\begin{enumerate}
    \item $F$ has a weak oddism to $G$;
    \item $F$ contains a subgraph that has an oddomorphism to $G$;
    \item $F$ contains a subgraph that has a weak oddomorphism to $G$.
\end{enumerate}
\end{lemma}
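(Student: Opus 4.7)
The plan is to prove the cycle of implications $(2) \Rightarrow (3) \Rightarrow (1) \Rightarrow (2)$. Two of these are essentially formal. For $(2) \Rightarrow (3)$, any oddomorphism from a subgraph $F'' \subseteq F$ to $G$ is a weak oddomorphism from that same subgraph, by taking the witness in the definition of weak oddomorphism to be $F''$ itself. For $(3) \Rightarrow (1)$, suppose a subgraph $F^* \subseteq F$ admits a weak oddomorphism $\psi$ to $G$; the definition supplies a subgraph $F' \subseteq F^*$ on which $\psi$ restricts to an oddomorphism, and this restriction is in particular an oddism since the parity conditions in the two definitions are literally identical and a homomorphism is a function. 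Extending $\psi$ arbitrarily to a function $\tilde\psi : V(F) \to V(G)$ (for instance by sending $V(F) \setminus V(F^*)$ to a fixed vertex $v_0 \in V(G)$) leaves $\tilde\psi|_{V(F')}$ unchanged, so $\tilde\psi$ is a weak oddism from $F$ to $G$ witnessed by $F'$.

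The substantive direction is $(1) \Rightarrow (2)$. Given a weak oddism $\psi : V(F) \to V(G)$ with witness $F' \subseteq F$, so that $\psi|_{V(F')}$ is an oddism from $F'$ to $G$, the plan is to surgically remove the edges of $F'$ that prevent $\psi|_{V(F')}$ from being a homomorphism, while leaving the parity bookkeeping untouched. Concretely, let $F''$ be the spanning subgraph of $F'$ obtained by deleting every edge $ab \in E(F')$ (loops included) for which $\psi(a)\psi(b) \notin E(G)$. Then $\psi|_{V(F'')}$ is a homomorphism from $F''$ to $G$ by construction, and $F''$ is a subgraph of $F$.

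The crux is verifying that each vertex of $F''$ has the same parity with respect to $\psi$ whether we regard it as a vertex of $F'$ or of $F''$. This is where looplessness of $G$ enters: for any $a \in V(F')$ and any $v \in N_G(\psi(a))$ we have $v \ne \psi(a)$, so every $b \in N_{F'}(a)$ with $\psi(b) = v$ automatically satisfies $\psi(a)\psi(b) \in E(G)$, meaning the edge $ab$ survives into $F''$. Therefore $N_{F'}(a) \cap \psi^{-1}(v) = N_{F''}(a) \cap \psi^{-1}(v)$ for every such $v$, so every vertex has the same parity status (odd, even, or neither) in $F''$ as in $F'$, and the condition that each fibre $\psi^{-1}(v)$ contains an odd number of odd vertices transfers verbatim. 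Hence $\psi|_{V(F'')}$ is an oddomorphism from $F''$ to $G$, as required. The only conceptual obstacle is to notice that the deleted edges cannot affect the parity counts, but this is immediate from $G$ being loopless, so the remaining verification is routine.
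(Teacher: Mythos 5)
Your proposal is correct and follows essentially the same route as the paper: the nontrivial direction $(1) \Rightarrow (2)$ uses the identical construction (delete from $F'$ exactly those edges $ab$ with $\psi(a)\psi(b) \notin E(G)$, which is the same as removing intra-fibre edges, loops, and edges between non-adjacent fibres), and your observation that looplessness of $G$ guarantees $N_{F'}(a) \cap \psi^{-1}(v) = N_{F''}(a) \cap \psi^{-1}(v)$ for all $v \in N_G(\psi(a))$ is the same parity-preservation fact the paper invokes. The only cosmetic difference is the choice of implication cycle (you go $(2)\Rightarrow(3)\Rightarrow(1)\Rightarrow(2)$, while the paper notes $(2)\Leftrightarrow(3)$ is routine and proves $(2)\Leftrightarrow(1)$), and you spell out the ``easy to see'' verification in more detail.
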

\proof
The equivalence of $(2)$ and $(3)$ is straighforward. Suppose that $F$ contains a subgraph $F'$ that has an oddomorphism $\psi'$ to $G$. Note that $\psi'$ is also an oddism from $F'$ to $G$. Extend $\psi'$ to a function $\psi : V(F) \to V(G)$ arbitrarily. Then $\psi|_{V(F')} = \psi'$ is an oddism from $F'$ to $G$ and thus $\psi$ is a weak oddism from $F$ to $G$.

Now suppose that $\psi$ is a weak oddism from $F$ to $G$. Then $F$ contains a subgraph $F'$ such that $\psi|_{V(F')}$ is an oddism from $F'$ to $G$. Construct $F''$ from $F'$ by removing all edges within fibres of $\psi|_{V(F')}$ and between fibres of $\psi|_{V(F')}$ corresponding to non-adjacent vertices of $G$. Then it is easy to see that $\psi|_{V(F'')}$ is an oddomorphism from $F''$ to $G$.\qeds

Note that the subgraph of $F$ having a (weak) oddomorphism to $G$ in the lemma above is necessarily loopless since a (weak) oddomorphism is a homomorphism and $G$ has no loops.

Using the above we can obtain analogs of some of our results from previous sections. For example, we have the following:

\begin{lemma}\label{lem:loopdegree}
Let $G = K_{1,d}$ be the star on $d+1$ vertices. If $F$ is a graph, then $F$ has a weak oddism to $G$ if and only if $F$ contains a vertex with at least $d$ neighbors distinct from itself.
\end{lemma}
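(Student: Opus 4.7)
The plan is to reduce the statement to \Lem{degree} using \Lem{oddism2oddo}. By that equivalence, $F$ admits a weak oddism to $K_{1,d}$ if and only if some subgraph of $F$ admits an oddomorphism to $K_{1,d}$, and any such subgraph must be loopless, because oddomorphisms are homomorphisms and $K_{1,d}$ has no loops (as also noted immediately after \Lem{oddism2oddo}).

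For the forward direction, I will apply \Lem{degree} to this loopless subgraph $F''$. Since $\Delta(K_{1,d}) = d$, we obtain $\Delta(F'') \ge d$, and because $F''$ is loopless, any maximum-degree vertex $a$ of $F''$ has at least $d$ neighbors in $F''$ all distinct from $a$; these are in particular $d$ neighbors of $a$ in $F$ distinct from itself.

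For the reverse direction, I will construct an oddism directly on a star-shaped subgraph. Given $a \in V(F)$ with distinct neighbors $a_1, \ldots, a_d$, none equal to $a$, label the vertices of $K_{1,d}$ as $0, 1, \ldots, d$ with $0$ the center. Let $F'$ be the subgraph of $F$ with $V(F') = \{a, a_1, \ldots, a_d\}$ and $E(F') = \{a a_i : i = 1, \ldots, d\}$, and define $\psi : V(F) \to V(K_{1,d})$ by $\psi(a) = 0$, $\psi(a_i) = i$, extending $\psi$ arbitrarily on the remaining vertices of $F$. A direct check shows that $\psi|_{V(F')}$ is an oddism from $F'$ to $K_{1,d}$: for each $i \in N_{K_{1,d}}(0) = \{1,\ldots,d\}$ we have $|N_{F'}(a) \cap \psi^{-1}(i)| = 1$, so $a$ is odd; for each $i \ge 1$ we have $|N_{F'}(a_i) \cap \psi^{-1}(0)| = 1$, so $a_i$ is odd; and each fibre $\psi^{-1}(j)$ contains exactly one vertex of $F'$, which is odd. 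Hence $\psi$ is a weak oddism.

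I do not anticipate any real obstacle: \Lem{oddism2oddo} does the heavy lifting by reducing the question to the already-resolved one about oddomorphisms via \Lem{degree}, and the converse amounts to exhibiting the natural star subgraph. The only mild subtlety is remembering that the subgraph produced by \Lem{oddism2oddo} is necessarily loopless, so that ``maximum degree at least $d$'' in that subgraph translates to ``$d$ neighbors distinct from itself'' in $F$.
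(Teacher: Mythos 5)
Your proof is correct and follows essentially the same route as the paper: both directions reduce to \Lem{oddism2oddo}, with the forward direction combining it with \Lem{degree} and the observation that the resulting subgraph is loopless, and the reverse direction exhibiting the star subgraph $F' \cong K_{1,d}$. The only cosmetic difference is that you verify the oddism on $F'$ directly from the definition, whereas the paper simply notes $F' \cong K_{1,d}$ and invokes \Lem{oddism2oddo} again; these are the same argument.
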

\proof
Let $F$ be a graph, possibly with loops, that has a weak oddism $\psi$ to $G = K_{1,d}$. By Lemma~\ref{lem:oddism2oddo}, $F$ contains a subgraph $F'$ that has an oddomorphism to $K_{1,d}$, and this subgraph must be loopless. Therefore $F'$ has a vertex with at least $d$ neighbors distinct from itself by Lemma~\ref{lem:degree}, and thus so does $F$.

Now suppose that $F$ contains a vertex $v$ with at least $d$ neighbors distinct from itself and let $v_1, \ldots, v_d$ be $d$ such neighbors. Let $F'$ be the subgraph of $F$ consisting of $v$ and $v_1, \ldots, v_d$, and the edges $vv_i$ for $i = 1, \ldots, d$. Then $F' \cong K_{1,d}$ and thus $F'$ has an oddomorphism to $K_{1,d}$. Therefore by Lemma~\ref{lem:oddism2oddo}, the graph $F$ has a weak oddism to $K_{1,d}$.\qeds

Note that for loopless graphs $F$ the above says that $F$ has a weak oddism to $K_{1,d}$ if and only if the maximum degree of $F$ is at least $d$. From this we obtain the following:

\begin{theorem}
If $G = K_{1,d}$, then $\hom(F,\tilde{G}_0) \ne \hom(F,\tilde{G}_1)$ if and only if $F$ contains a vertex with at least $d$ neighbors distinct from itself. Furthermore, this implies that the class of graphs that do not contain any vertex with at least $d$ neighbors distinct from itself is homomorphism distinguishing closed.
\end{theorem}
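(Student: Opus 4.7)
The first equivalence in the theorem follows immediately by combining \Thm{maindualGloop} with \Lem{loopdegree}: the former gives $\hom(F,\tilde{G}_0) \ne \hom(F,\tilde{G}_1)$ iff $F$ admits a weak oddism to $G = K_{1,d}$, and the latter identifies this condition with $F$ containing a vertex having at least $d$ neighbors distinct from itself. So the two halves of the biconditional just compose.

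For the homomorphism distinguishing closure claim, my plan is to mimic the scheme of \Thm{closed} in the loops-allowed setting. Let $\F$ denote the family of graphs (possibly with loops) in which no vertex has at least $d$ neighbors distinct from itself. Since the defining property is local, $\F$ is trivially closed under disjoint unions and under restriction to connected components. Given any $\hat{F} \notin \F$, I would decompose $\hat{F} = F_1 \cup \cdots \cup F_k$ into its connected components with $F_1$ chosen so as to contain the offending vertex, and then set $H := \tilde{G}_0 \cup L$ and $H' := \tilde{G}_1 \cup L$, where $L$ is the one-vertex graph with a loop.

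The verification splits into two halves. For $H \cong_\F H'$: given a connected $F \in \F$, the first part of the theorem gives $\hom(F,\tilde{G}_0) = \hom(F,\tilde{G}_1)$; combining this with connectedness of $F$ and the identity $\hom(F,L) = 1$ (the unique constant map is always a homomorphism into $L$) yields $\hom(F,H) = \hom(F,\tilde{G}_0) + 1 = \hom(F,\tilde{G}_1) + 1 = \hom(F,H')$. Multiplicativity of $\hom(\cdot,H)$ over connected components, together with closure of $\F$ under restriction to components, then extends this equality to all of $\F$. For $\hom(\hat{F},H) \ne \hom(\hat{F},H')$: \Thm{maindualGloop} yields $\hom(F_i,\tilde{G}_0) \ge \hom(F_i,\tilde{G}_1)$ for every $i$; the $+1$ contribution from $L$ keeps every factor $\hom(F_i,\tilde{G}_0)+1$ and $\hom(F_i,\tilde{G}_1)+1$ strictly positive; and for $i = 1$ the first part of the theorem promotes the inequality to strict, so the product strictly exceeds its counterpart.

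The main subtlety I foresee, and really the only ingredient that departs from the proof of \Thm{closed}, is maintaining strict positivity of every factor $\hom(F_i,H)$ and $\hom(F_i,H')$ in the loops-allowed regime. The usual trick of appending a large clique $K_r$ fails here because a graph with a loop admits no homomorphism into any loopless graph, so some $\hom(F_i,K_r)$ could collapse to $0$ and destroy the strictness-propagating multiplication. Replacing $K_r$ by the one-vertex-with-loop gadget $L$ uniformly resolves this, since $\hom(F',L) = 1$ for every graph $F'$, loops or no loops.
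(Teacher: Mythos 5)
Your treatment of the first equivalence is exactly the paper's: cite Theorem~\ref{thm:maindualGloop} and Lemma~\ref{lem:loopdegree} and compose. That part is fine.

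For the closure claim you take a genuinely different and heavier route than the paper intends, essentially transplanting the scheme of Theorem~\ref{thm:closed} into the loops-allowed setting and replacing the clique gadget $K_r$ by the single loop vertex $L$. Your argument is correct: $\hom(\cdot, L) = 1$ uniformly, so positivity of each factor is automatic and the product comparison goes through. But it is unnecessary. The biconditional you just proved in the first part applies to \emph{all} graphs $F$, disconnected ones included, not merely to connected graphs (both Theorem~\ref{thm:maindualGloop} and Lemma~\ref{lem:loopdegree} are stated for arbitrary $F$). So one can simply take $H = \tilde{G}_0$ and $H' = \tilde{G}_1$ with no gadget at all: for every $F \in \F$ the first part gives $\hom(F,\tilde{G}_0) = \hom(F,\tilde{G}_1)$, hence $\tilde{G}_0 \cong_\F \tilde{G}_1$; and for every $F \notin \F$ it gives $\hom(F,\tilde{G}_0) \ne \hom(F,\tilde{G}_1)$. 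That is precisely the definition of h.d.-closed. This is what the paper means by ``follows immediately from the first.'' Note this is also the conceptual point the paper highlights immediately after the theorem: in the loops-allowed setting a \emph{single} pair $(\tilde{G}_0,\tilde{G}_1)$ witnesses closedness for every $F \notin \F$, whereas in the loopless setting of Theorem~\ref{thm:closed} the witness pair depends on $F$ (which is why the clique $K_r$ was needed there, since $G_0$ and $G_1$ for $G=K_{1,d}$ are bipartite and cannot distinguish non-bipartite $F$ of high degree). By re-introducing a gadget and decomposing into components, you reproduce the very machinery that the passage to loops was designed to make superfluous, and you obscure the observation the paper draws attention to afterward.
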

\proof
The first claim is immediate from Lemma~\ref{lem:loopdegree} and Theorem~\ref{thm:maindualGloop}. The second claim follows immediately from the first.\qeds

Let $\F$ be the family of graphs with no vertex having at least $d$ neighbors distinct from itself. Then the above shows that there is a single pair of graphs, namely $\tilde{G}_0$ and $\tilde{G}_1$ for $G = K_{1,d}$, such that $\tilde{G}_0 \cong_\F \tilde{G}_1$ but $\hom(F,\tilde{G}_0) \ne \hom(F,\tilde{G}_1)$ for all graphs $F \notin \F$. This is in contrast to the loopless setting where we needed a different such pair of graphs for each $F \notin \F$.

By Theorem~\ref{thm:chordlesscycles} and Lemma~\ref{lem:oddism2oddo}, if $F$ has a weak oddism to the cycle $C_k$, then $F$ must contain a cycle of length at least and of the same parity as $k$. In fact we can also prove the converse:

\begin{lemma}
A graph $F$ has a weak oddism to the cycle $C_k$ if and only if it contains a cycle of length at least and of the same parity as $k$.
\end{lemma}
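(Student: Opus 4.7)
The plan is to prove the two directions separately, reducing everything to results already in hand.

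For the forward direction, suppose $F$ has a weak oddism to $C_k$. By Lemma~\ref{lem:oddism2oddo}, $F$ contains a (loopless) subgraph $F'$ that has an oddomorphism (equivalently, a weak oddomorphism) to $C_k$. Applying Theorem~\ref{thm:chordlesscycles} to $F'$, we obtain that $F'$ contains a chordless cycle of length at least and of the same parity as $k$. Since $F' \subseteq F$, this cycle is also a cycle in $F$, so we are done with this direction.

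For the converse, suppose $F$ contains a cycle $C$ of length $\ell$ with $\ell \ge k$ and $\ell \equiv k \pmod 2$. Viewed on its own, $C$ is isomorphic to $C_\ell$. By Corollary~\ref{cor:cycleoddos} (or directly by Lemma~\ref{lem:subdivisions}, since $C_\ell$ is an odd subdivision of $C_k$ obtained by replacing one edge of $C_k$ by a path of length $\ell - k + 1$, which is odd), there is an oddomorphism from $C_\ell$ to $C_k$. Identifying $C_\ell$ with the subgraph $C$ of $F$, we conclude that $F$ contains a subgraph which admits an oddomorphism to $C_k$. Then Lemma~\ref{lem:oddism2oddo} (the implication from (2) to (1)) gives that $F$ has a weak oddism to $C_k$, completing the proof.

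I do not expect any serious obstacle: both directions are immediate from results already established. The only mildly technical point is making sure that a cycle of length $\ell$ with $\ell \ge k$ and $\ell \equiv k \pmod 2$ is genuinely an odd subdivision of $C_k$, which is the content of the first half of the proof of Corollary~\ref{cor:cycleoddos} and needs no additional argument here.
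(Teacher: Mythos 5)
Your proof is correct and follows essentially the same route as the paper: both directions reduce to Lemma~\ref{lem:oddism2oddo} together with Theorem~\ref{thm:chordlesscycles} (forward) and Corollary~\ref{cor:cycleoddos} (converse). You are also right to drop "chordless" when passing the cycle from $F'$ up to $F$, matching the lemma statement.
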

\proof
We have already proven one direction in the paragraph above. Conversely, suppose that $F$ contains a cycle $C$ of length at least and of the same parity as $k$. Then by Corollary~\ref{cor:cycleoddos}, the cycle $C$ has an oddomorphism to $C_k$ and thus by Lemma~\ref{lem:oddism2oddo} $F$ has a weak oddism to $C_k$.\qeds

From the above we can extend some of our results about cycles to the setting with loops, though we can no longer require chordlessness. In particular we have the following two theorems whose proofs are analogous to those in Section~\ref{sec:hdclosed}:

\begin{theorem}
Let $k \ge 3$. The family of graphs not containing cycles of length at least and of the same parity as $k$ is homomorphism distinguishing closed in the loops allowed setting.
\end{theorem}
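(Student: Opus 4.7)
The plan is to use, as a single universal witness pair, the graphs $\tilde{G}_0$ and $\tilde{G}_1$ with $G := C_k$. More precisely, writing $\F$ for the family in the statement, I will show that $\tilde{G}_0 \cong_{\F} \tilde{G}_1$ and that $\hom(F, \tilde{G}_0) \ne \hom(F, \tilde{G}_1)$ for every $F \notin \F$; this pair then witnesses that $\F$ cannot be enlarged without changing $\cong_{\F}$. This is the loops-allowed analogue of the bounded-circumference result of \Sec{hdclosed}, but cleaner, because the lemma immediately preceding this theorem gives an exact characterization of which graphs (with or without loops) admit a weak oddism to $C_k$.

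For any $F \in \F$, the defining property says $F$ has no cycle of length at least $k$ with the same parity as $k$. By the preceding lemma $F$ admits no weak oddism to $C_k$, and by \Thm{maindualGloop} we obtain $\hom(F, \tilde{G}_0) = \hom(F, \tilde{G}_1)$. Hence $\tilde{G}_0 \cong_{\F} \tilde{G}_1$. Conversely, for any $F \notin \F$, $F$ contains some cycle of length at least $k$ with the correct parity; the preceding lemma yields a weak oddism from $F$ to $C_k$, and \Thm{maindualGloop} now forces the strict inequality $\hom(F, \tilde{G}_0) > \hom(F, \tilde{G}_1)$. Combining these two observations completes the proof.

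The only subtlety is that $F$ itself may have loops, since we are in the loops-allowed setting, but this causes no trouble: oddisms were set up to be insensitive to loops on the source graph, and both the preceding lemma and \Thm{maindualGloop} are stated for arbitrary source graphs $F$. Notice that, in contrast to the loopless version, we do not even have to adjoin a clique $K_r$ to the witness; a single pair $(\tilde{G}_0, \tilde{G}_1)$ separates every $F \notin \F$ simultaneously. This is why the proof is essentially immediate from the two preceding results, and there is no single step that presents real difficulty.
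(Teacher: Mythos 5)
Your proof is correct and is essentially the argument the paper has in mind: the lemma immediately preceding the theorem characterizes weak oddisms to $C_k$ exactly by the existence of a cycle of length at least and of the same parity as $k$, and Theorem~\ref{thm:maindualGloop} converts this into a dichotomy for $\hom(\,\cdot\,,\tilde{G}_0)$ versus $\hom(\,\cdot\,,\tilde{G}_1)$. You also correctly identify the simplification specific to the loops-allowed setting, namely that since every vertex of $\tilde{G}_U$ has a loop one never runs into vanishing homomorphism counts, so the single pair $(\tilde{G}_0,\tilde{G}_1)$ works universally without adjoining an auxiliary clique as in the loopless proof of Theorem~\ref{thm:homclosed1}; this matches the paper's own remark after the $K_{1,d}$ analogue.
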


\begin{theorem}
Let $k \ge 3$. The family of graphs of circumference at most $k$ is homomorphism distinguishing closed in the loops allowed setting.
\end{theorem}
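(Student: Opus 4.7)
The plan is to imitate the proof of the directly preceding theorem (on graphs avoiding sufficiently long cycles of a prescribed parity), using Theorem~\ref{thm:maindualGloop} together with the lemma, just proved above it, characterizing when a graph admits a weak oddism to a cycle. Let $\F$ denote the family of graphs of circumference at most $k$. I need to show that for any graph $\hat{F} \notin \F$ there exist $H, H'$ with $H \cong_\F H'$ and $\hom(\hat{F}, H) \ne \hom(\hat{F}, H')$.

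Fix such a $\hat{F}$; let $\ell$ denote the circumference of $\hat{F}$, so that $\ell > k$ and $\hat{F}$ contains a cycle of length exactly $\ell$. Take $G := C_\ell$ (a loopless connected graph) and set $H := \tilde{G}_0$, $H' := \tilde{G}_1$. To verify $H \cong_\F H'$, let $F \in \F$: then the circumference of $F$ is at most $k < \ell$, so $F$ has no cycle of length $\ge \ell$, and in particular none of length $\ge \ell$ of the same parity as $\ell$. By the preceding cycle-lemma, $F$ admits no weak oddism to $C_\ell$, and Theorem~\ref{thm:maindualGloop} gives $\hom(F, \tilde{G}_0) = \hom(F, \tilde{G}_1)$. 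To see $\hom(\hat{F}, \tilde{G}_0) \ne \hom(\hat{F}, \tilde{G}_1)$, note that the cycle of length $\ell$ sitting inside $\hat{F}$ trivially has length $\ge \ell$ and matches $\ell$ in parity, so the same lemma yields a weak oddism from $\hat{F}$ to $C_\ell$ and Theorem~\ref{thm:maindualGloop} delivers the strict inequality $\hom(\hat{F}, \tilde{G}_0) > \hom(\hat{F}, \tilde{G}_1)$.

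I do not expect any real obstacle. One pleasant feature of the loops-allowed setting is that the $K_r$-trick used in the loopless analog in Section~\ref{sec:hdclosed} is unnecessary: in the loopless case one had to disjoint-union a sufficiently large clique onto $G_0$ and $G_1$ so that no connected component of $\hat{F}$ would receive a zero homomorphism count, but here Theorem~\ref{thm:maindualGloop} already delivers strict inequality directly on $\hat{F}$ itself. As an alternative route, one may observe that $\F = \F_{k+1} \cap \F_{k+2}$, where $\F_m$ is the family of graphs containing no cycle of length at least $m$ and of the same parity as $m$; each $\F_m$ is homomorphism distinguishing closed in the loops-allowed setting by the directly preceding theorem, and Lemma~\ref{lem:intersection} (whose proof is setting-agnostic) then yields the result.
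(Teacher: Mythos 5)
Your proof is correct, and it is essentially the paper's intended argument: the paper simply asserts that the proof is "analogous to those in Section~\ref{sec:hdclosed}," and the natural adaptation is exactly what you do, namely feed the weak-oddism-to-cycle characterization into Theorem~\ref{thm:maindualGloop}, choosing $G = C_\ell$ with $\ell$ the circumference of the target graph $\hat F$ (so that the length-$\ell$ cycle in $\hat F$ witnesses a weak oddism while no graph of circumference at most $k < \ell$ can have one). Your observation that the $K_r$-padding from Theorems~\ref{thm:closed} and~\ref{thm:homclosed1} can be dropped is correct and worth noting: since every vertex of $\tilde G_U$ carries a loop, $\hom(F, \tilde G_U) > 0$ for all $F$, so the positivity issue that motivates disjoint-unioning a clique in the loopless setting never arises, and Theorem~\ref{thm:maindualGloop} already yields strict inequality on $\hat F$ directly. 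Your alternative route via $\F = \F_{k+1}\cap \F_{k+2}$ together with Lemma~\ref{lem:intersection} (whose proof is indeed indifferent to whether loops are allowed) is also a clean and valid proof, and arguably a slightly more modular one, as it reuses the immediately preceding theorem as a black box rather than reopening the weak-oddism machinery; either version is fine.
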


\section{Discussion}\label{sec:discussion}

The most important open question addressed in this work is whether Conjectures~(1)--(4) hold. A positive resolution of these conjectures would reveal a beautiful connection between minor-closed classes and homomorphism indistinguishability. Among these four equivalent conjectures, we believe that Conjecture 3 is the most directly approachable. Recall that this conjecture states that for any connected graph $G$, there exist graphs $H$ and $H'$ such that $\hom(G,H) \ne \hom(G,H')$ and $\hom(F,H) \ne \hom(F,H')$ implies that $F$ contains $G$ as a minor. Note that the latter condition is equivalent to saying that $H \cong_{\F_G} H'$ where $\F_G$ is the family of graphs not containing $G$ as a minor. Thus understanding the homomorphism indistinguishability relations of the form $\cong_{\F_G}$ may help resolve this conjecture. It may also be helpful to understand homomorphism indistinguishability over the finite family $\F^G$ of graphs that are minors of $G$, since the conjecture is false if and only if $\cong_{\F_G} \Rightarrow \cong_{\F^G}$ for some connected graph $G$. Thus we pose the following, somewhat vague question:

\begin{question}
Given a connected graph $G$, can we characterize the homomorphism indistinguishability relations $\cong_{\F_G}$ and $\cong_{\F^G}$?
\end{question}

By Corollary~\ref{cor:0iso}, we know that $\hom(G,G_0) \ne \hom(G,G_1)$ for any connected graph $G$. Thus to prove Conjecture 3, it would suffice to show that $\hom(F,G_0) \ne \hom(F,G_1)$ implies that $F$ contains $G$ as a minor. By Theorem~\ref{thm:maindualG} we know that $\hom(F,G_0) \ne \hom(F,G_1)$ is equivalent to $F$ admitting a weak oddomorphism to $G$. Thus we ask the following:

\begin{question}\label{q:oddo2minor}
Does the existence of a weak oddomorphism from $F$ to $G$ imply that $F$ contains $G$ as a minor for any connected graph $G$?
\end{question}

Note that it is both necessary and sufficient to answer the above question in the case where $F$ has an oddomorphism to $G$.

We know that the answer to Question~\ref{q:oddo2minor} is yes for a few connected graphs $G$. Specifically, it follows from our results that the answer is yes for paths, cycles, and stars, as well as for $K_4$. The question remains open for essentially all other graphs. 

We do not currently see a good approach to resolving this question. An optimistic but ultimately doomed approach is to try to prove the stronger statement that if $\psi$ is an oddomorphism from $F$ to $G$, then $F$ contains a subdivision of $G$ such that the vertex of this subdivision corresponding to a given vertex $u \in V(G)$ is contained in $\psi^{-1}(u)$. However, this turns out not to be the case since it is not even true that a graph with an oddomorphism to $G$ must contain a subdivision of $G$. An example of a graph $F$ with an oddomorphism to $K_5$ but not containing any subdivision of $K_5$ is shown in Figure~\ref{fig:no5subd}. This example was found using Sage~\cite{sage}. Unfortunately, though detecting minors can be done in polynomial time, in practice minor finding routines are often very slow and thus it is difficult to test Question~\ref{q:oddo2minor} for even small graphs $F$ and $G$.

\begin{center}
\begin{figure}[!ht]
\begin{center}
\begin{tikzpicture}[thick, scale=2]
\tikzstyle{redgraph}=[fill=red, circle, draw=black, inner sep=0.1cm]
\tikzstyle{greengraph}=[fill=green, circle, draw=black, inner sep=0.06cm]
\tikzstyle{yellowgraph}=[fill=yellow, circle, draw=black, inner sep=0.06cm]
\tikzstyle{bluegraph}=[fill=blue, circle, draw=black, inner sep=0.1cm]
\tikzstyle{magentagraph}=[fill=magenta, circle, draw=black, inner sep=0.06cm]

\node[bluegraph] (v0) at (90:1cm) {};
\node[bluegraph] (v1) at (162:1cm) {};
\node[redgraph] (v00) at (90:1.5cm) {};
\node[redgraph] (v11) at (162:1.5cm) {};
\node[bluegraph] (v2) at (234:1cm) {};
\node[bluegraph] (v3) at (306:1cm) {};
\node[bluegraph] (v4) at (18:1cm) {};
\node[redgraph] (v44) at (18:1.5cm) {};

\draw (v0)--(v11)--(v00)--(v1)--(v2)--(v3)--(v4)--(v00)--(v44)--(v0);
\draw (v0)--(v2)--(v4)--(v1)--(v3)--(v0);

\draw [thick, dashed, black]  (90:1.25cm) ellipse (0.3cm and 0.6cm);

\begin{scope}[rotate=72]
\draw [thick, dashed, black]  (90:1.25cm) ellipse (0.3cm and 0.6cm);
\end{scope}

\begin{scope}[rotate=-72]
\draw [thick, dashed, black]  (90:1.25cm) ellipse (0.3cm and 0.6cm);
\end{scope}

\begin{scope}[rotate=144]
\draw [thick, dashed, black]  (90:1cm) ellipse (0.3cm and 0.3cm);
\end{scope}

\begin{scope}[rotate=-144]
\draw [thick, dashed, black]  (90:1cm) ellipse (0.3cm and 0.3cm);
\end{scope}

\end{tikzpicture}
\end{center}
\caption{A graph with an oddomorphism to $K_5$ but that contains no subdivision of $K_5$. Fibres of the oddomorphism are indicated by the sets of vertices enclosed by dashed lines. Blue vertices are odd and red vertices are even.}
\label{fig:no5subd}
\end{figure}
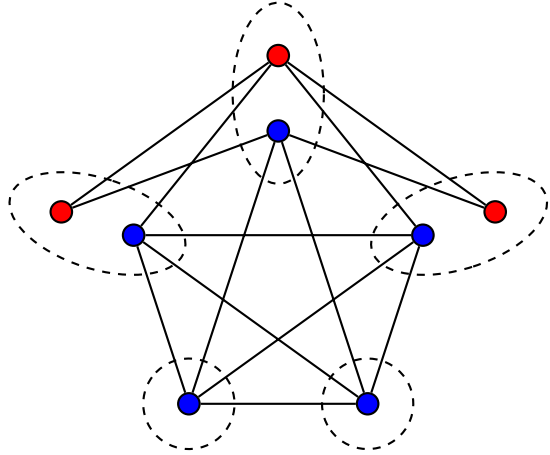
\end{center}

Perhaps a good first test case for Question~\ref{q:oddo2minor} is with $G = K_5$. The graph $K_5$ is the first complete graph for which we do not know the answer to this question, and other than star graphs we do not know the answer to the question for any graphs of maximum degree greater than 3. Thus one might expect that $K_5$ is a good place to look for a counterexample. However in an upcoming work with Richter and Thomassen, we prove that any graph having a weak oddomorphism to $K_5$ must contain $K_5$ as a minor.

One possible idea for answering Question~\ref{q:oddo2minor} is to use induction. For this we need some way of reducing the number of vertices in a graph $F$ that has an oddomorphism to a graph $G$ while still maintaining the property of having such an oddomorphism. Simply deleting vertices does not work, as this does not preserve the property of having an oddomorphism to a given graph. However, there is a simple operation we can perform that does preserve this property. Suppose that $F$ is a graph with an oddomorphism $\psi$ to a graph $G$. Select two vertices $a,b \in V(F)$ that are in the same fibre of $\psi$ and merge these into a single vertex $c$ but make the neighbors of $c$ the symmetric difference of the neighbor sets of $a$ and $b$. Then it is not difficult to see that this graph still admits an oddomorphism to $G$ with all of the same fibres except the fibre of $\psi$ containing $a$ and $b$ now contains $c$ instead. In fact, we use this operation for some inductive proofs in the aforementioned upcoming work with Richter and Thomassen. It is also easy to see that repeated applications of this operations eventually results in a graph $F'$ with an oddomorphism to $G$ whose fibres all have size one and therefore $F' \cong G$. Thus if one can show that the reverse of this operation preserves the property of having $G$ as a minor then Question~\ref{q:oddo2minor} will be answered in the affirmative. However, this seems difficult as it is not at all clear how this operation affects the structure of a potential $G$-minor.

\subsection{Covers and quantum isomorphism}\label{sec:quantum}

As we saw in Lemma~\ref{lem:covers}, if $F$ is an odd cover of a graph $G$, then the covering map from $F$ to $G$ is an oddomorphism. Thus a positive answer to Question~\ref{q:oddo2minor} would imply that any odd cover of a graph $G$ must contain $G$ as a minor. This would be a surprising result on its own, as it would provide an unexpected connection between covers and minors. However, there is at least one known result in this direction. In 1990, Archdeacon and Richter showed that an odd cover of a non-planar graph must be non-planar. In other words, any odd cover of a non-planar graph must contain either $K_{3,3}$ or $K_5$ as a minor. Our results here, along with the result of~\cite{qplanar}, allow us to generalize the result of Archdeacon and Richter: we can show that any graph with an oddomorphism to a non-planar graph must be non-planar. To do this we must make use of the notion of quantum isomorphism and the result of~\cite{qplanar} which showed that quantum isomorphism is equivalent to homomorphism indistinguishability over the class of planar graphs. First we prove the following which is closely related to Theorem 6.3 of~\cite{qiso1}:

\begin{lemma}\label{lem:q2nonplanar}
Let $G$ be a connected graph. Then $G_0$ and $G_1$ are quantum isomorphic if and only if $G$ is non-planar.
\end{lemma}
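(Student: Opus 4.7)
The plan has two directions. For the forward direction ($G_0 \cong_q G_1 \Rightarrow G$ non-planar), I will invoke the main result of \cite{qplanar}: quantum isomorphic graphs are homomorphism indistinguishable over the family of planar graphs. Combined with Corollary~\ref{cor:0iso}, which gives $\hom(G,G_0) \ne \hom(G,G_1)$ (since $G_1 = G_{\{\hat{u}\}}$ corresponds to $|U|$ odd), the assumption that $G$ is planar would yield $\hom(G,G_0) = \hom(G,G_1)$, a contradiction. Hence $G$ must be non-planar. This is the short direction and should be essentially immediate.

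For the reverse direction ($G$ non-planar $\Rightarrow G_0 \cong_q G_1$), I will follow the strategy used in the proof of Theorem~6.3 of \cite{qiso1}. Since $G$ is non-planar and $M$ is its incidence matrix, Arkhipov's theorem supplies a \emph{quantum solution} to the system in Equation~\eqref{eq:main} with $\psi = \mathrm{id}$: a collection of projections $\{P_e\}_{e \in E(G)}$ in some $C^*$-algebra that (i) pairwise commute whenever the edges share a vertex, and (ii) satisfy the appropriate $\mathbb{Z}_2$-parity relations $\sum_{e \in E(v)} P_e \equiv \delta_{v,\hat{u}} \pmod 2$ in the algebraic sense of~\cite{arkhipov,qiso1}. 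From these projections I will build a magic unitary $U \colon \mathbb{C}^{V(G_0)} \to \mathbb{C}^{V(G_1)}$ by setting $U_{(v,S),(v',T)} = 0$ when $v \ne v'$ and otherwise
\[
U_{(v,S),(v,T)} \;=\; \prod_{e \in E(v) \cap (S \triangle T)} P_e \;\prod_{e \in E(v) \setminus (S \triangle T)} (I - P_e).
\]
Because the $P_e$ for $e \in E(v)$ pairwise commute, this product is well defined; the parity constraint on $\sum_e P_e$ then forces the rows and columns of $U$ (as $(v,S)$ and $(v,T)$ range over the allowed tails) to sum to $I$, so $U$ is a magic unitary.

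The core verification is the intertwining identity $A_{G_0} U = U A_{G_1}$. Unpacking this at entries $(v,S)$ in $G_0$ and $(u,T)$ in $G_1$ with $uv \in E(G)$, one needs
\[
\sum_{S'} U_{(v,S),(v,S')}\, \mathbf{1}[uv \notin S' \triangle T] \;=\; \sum_{T'} \mathbf{1}[uv \notin S \triangle T']\, U_{(u,T'),(u,T)},
\]
and the analogous identity for non-adjacent pairs. Both reduce, via the multiplicative structure of the $P_e$, to the single relation that inserting $P_{uv}$ on the $v$-side commutes with inserting $P_{uv}$ on the $u$-side, which is precisely a consequence of the commutation axiom (ii) for the quantum solution. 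The fact that our $G_0$ and $G_1$ differ from the graphs in~\cite{qiso1} only in edge convention (consistency rather than inconsistency, and no within-head edges) affects only the indicator $\mathbf{1}[uv \notin S \triangle T]$; because $U$ is block-diagonal on heads, the within-head edges contribute $U^\ast U = I$ on both sides and cancel, while the consistency-vs-inconsistency flip is absorbed by the freedom in identifying the quantum solution $P_e \leftrightarrow I - P_e$.

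The main obstacle is executing the intertwining calculation in our modified edge convention without making a sign/complementation error, and confirming that Arkhipov's quantum solution from~\cite{arkhipov}, originally phrased for the~\cite{qiso1} construction, plugs into our magic unitary formula unchanged. I expect this to be a routine but careful recomputation; conceptually the content of the lemma is already contained in~\cite{qiso1} combined with the characterization of~\cite{qplanar}, and no new algebraic input beyond Arkhipov's theorem is required.
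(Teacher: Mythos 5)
Your proposal follows essentially the same route as the paper: the forward direction (quantum isomorphism implies non-planarity) is the same argument via the main result of~\cite{qplanar} together with $\hom(G,G_0)\ne\hom(G,G_1)$, and the reverse direction is, as in the paper, a transfer of the quantum isomorphism from the~\cite{qiso1} construction. The paper simply asserts that ``it is straightforward to check that the quantum isomorphism between $G'_0$ and $G'_1$ given in~\cite{qiso1} is also a quantum isomorphism between $G_0$ and $G_1$'' without unpacking the magic unitary; you attempt to fill this in.

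One of the justifications you offer for that transfer is wrong, though. You claim the consistency-versus-inconsistency flip is ``absorbed by the freedom in identifying the quantum solution $P_e \leftrightarrow I - P_e$.'' Replacing every $P_e$ by $I-P_e$ changes the parity constraint $\sum_{e\in E(v)} P_e \equiv \delta_{v,\hat u}$ to $\sum_{e\in E(v)}(I-P_e) = \deg(v)I - \sum_e P_e$, whose parity is $\deg(v)-\delta_{v,\hat u}$; this is not the original constraint unless every vertex of $G$ has even degree, so $\{I-P_e\}$ is generally not a quantum solution to the same system and this ``freedom'' does not exist. The correct and simpler reason the same magic unitary works is a linear-algebra observation, not a modification of the quantum solution: if $D$ is the same-head indicator matrix and $B$ the adjacent-heads indicator matrix (both independent of $i$), then $A_{G'_i} = (D-I) + B - A_{G_i}$. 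Because the~\cite{qiso1} magic unitary $U$ is block-diagonal over heads and its row/column blocks sum to $I$, both $DU=UD$ and $BU=UB$. Hence $A_{G'_0}U=UA_{G'_1}$ immediately yields $A_{G_0}U=UA_{G_1}$ with the unchanged $U$. Your surrounding framework (the explicit product formula for $U$, the cancellation of the within-head clique contributions) is on the right track, so this is a repairable error rather than a fatal one, but as written the step would not go through.
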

\proof
The results of~\cite{qiso1} provide a construction of graphs $G'_0$ and $G'_1$ that are quantum isomorphic if and only if $G$ is non-planar. The difference between these $G'_0$ and $G'_1$ and our $G_0$ and $G_1$ is that in the former an edge is put between vertices $(u,S)$ and $(v,T)$ when $uv \in E(G)$ and $uv \in S \triangle T$ rather than when $uv \in E(G)$ and $uv \notin S \triangle T$ (in the former construction one also puts edges between all vertices with the same first coordinate). However it is straightforward to check that the quantum isomorphism between $G'_0$ and $G'_1$ given in~\cite{qiso1} is also a quantum isomorphism between $G_0$ and $G_1$. Thus if $G$ is non-planar then $G_0$ and $G_1$ are quantum isomorphic.

Conversely, since $G$ is connected we have that $\hom(G,G_0) \ne \hom(G,G_1)$. By the titular result of~\cite{qplanar} two graphs are quantum isomorphic if and only if they admit the same number of homomorphisms from any planar graph. Thus if $G$ is planar then $G_0$ and $G_1$ cannot be quantum isomorphic.\qeds

It would be nice to be able to prove the above result without relying on the result of~\cite{qplanar}, however we were not able to adapt the other direction of the proof from~\cite{qiso1} for our graphs.

\begin{lemma}\label{lem:oddo2nonplanar}
Let $F$ be a planar graph. If $F$ has an (weak) oddomorphism to $G$ then $G$ is planar.
\end{lemma}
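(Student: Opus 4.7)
The plan is to prove the contrapositive: if $G$ is non-planar, then no planar graph can admit a weak oddomorphism to $G$. The argument is essentially a one-line combination of Theorem~\ref{thm:maindualG}, Lemma~\ref{lem:q2nonplanar}, and the main result of~\cite{qplanar}, but it requires a small reduction to the connected case since the construction $G_0,G_1$ was only defined for connected $G$.

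First I would reduce to the case of connected $G$. Suppose $G$ is non-planar; then some connected component $G'$ of $G$ is non-planar. If $\psi$ is a weak oddomorphism from a planar graph $F$ to $G$, then by Lemma~\ref{lem:suboddo} applied to the subgraph $G'$ of $G$, the restriction $\psi|_{V(F')}$ with $F' := \psi^{-1}(G')$ is a weak oddomorphism from $F'$ to $G'$, and $F'$ is still planar as a subgraph of $F$. Replacing $(F,G)$ by $(F',G')$, I may assume $G$ is connected.

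With $G$ connected and non-planar, Lemma~\ref{lem:q2nonplanar} gives that $G_0$ and $G_1$ are quantum isomorphic. The main result of~\cite{qplanar} states that quantum isomorphism is equivalent to homomorphism indistinguishability over the class of planar graphs, hence $\hom(F,G_0) = \hom(F,G_1)$ for every planar $F$. On the other hand, Theorem~\ref{thm:maindualG} says that the existence of a weak oddomorphism from $F$ to $G$ is equivalent to the strict inequality $\hom(F,G_0) > \hom(F,G_1)$. Combining these gives a contradiction, so no planar $F$ has a weak oddomorphism to $G$.

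The argument has essentially no obstacle — all heavy lifting is already packaged in Theorem~\ref{thm:maindualG}, Lemma~\ref{lem:q2nonplanar}, and~\cite{qplanar}; the only thing to be careful about is the reduction to connected $G$, which goes through cleanly via Lemma~\ref{lem:suboddo}. It is perhaps worth remarking that this proof is indirect in the sense mentioned in the section introduction: it detours through quantum isomorphism and~\cite{qplanar} rather than producing a planarity-preserving construction by hand, and it would be interesting to have a direct combinatorial proof that weak oddomorphisms preserve planarity.
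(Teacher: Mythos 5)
Your proof is correct and follows essentially the same route as the paper: reduce to a non-planar connected component of $G$, invoke Lemma~\ref{lem:q2nonplanar} to get $G_0$ quantum isomorphic to $G_1$, combine with the main theorem of~\cite{qplanar} and Theorem~\ref{thm:maindualG}. The only cosmetic difference is that the paper reduces to the connected case via Lemma~\ref{lem:connected2connected} while you invoke Lemma~\ref{lem:suboddo} directly (which is what Lemma~\ref{lem:connected2connected} itself relies on), and you cite Theorem~\ref{thm:maindualG} where the paper says Theorem~\ref{thm:main} — yours is actually the more precise citation.
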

\proof
We prove that contrapositive, that if $F$ has an (weak) oddomorphism to non-planar $G$, then $F$ is non-planar. Without loss of generality we may assume that $G$ is connected, since otherwise we may restrict to a non-planar connected component of $G$ via Lemma~\ref{lem:connected2connected}. By Lemma~\ref{lem:q2nonplanar} we know that $G_0$ and $G_1$ are quantum isomorphic. Now suppose that $F$ has a weak oddomorphism to $G$. By Theorem~\ref{thm:main} we have that $\hom(F,G_0) \ne \hom(F,G_1)$, and thus $F$ must be non-planar by the main result of~\cite{qplanar}.\qeds

Thus we have obtained our generalization of the result of Archdeacon and Richter. Notably, their proof is much shorter and more direct, whereas ours rests on the result of~\cite{qplanar} which requires 50+ pages and uses the theory of quantum groups. The fact that (weak) oddomorphisms preserve planarity seems a rather interesting property as in general homomorphisms do not preserve planarity and we are not aware of any well-studied type of homomorphism that does. Also note that by Theorem~\ref{thm:homclosed1} this implies that the family of planar graphs is homomorphism distinguishing closed:

\begin{theorem}\label{thm:planarhdclosed}
The family of planar graphs is homomorphism distinguishing closed.
\end{theorem}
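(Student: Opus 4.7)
The plan is to derive Theorem~\ref{thm:planarhdclosed} as an immediate application of Theorem~\ref{thm:homclosed1} to the family $\F$ of planar graphs. Recall that Theorem~\ref{thm:homclosed1} asks for two conditions on $\F$: (1) closure under images of weak oddomorphisms, i.e., $F \in \F$ and $F \woto G$ imply $G \in \F$; and (2) closure under disjoint unions and restriction to connected components.

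First I would verify condition (1). This is precisely the content of Lemma~\ref{lem:oddo2nonplanar}, which states that if $F$ is planar and $F$ admits a weak oddomorphism to $G$, then $G$ must be planar. So condition (1) is already done for us.

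Next I would verify condition (2), which is a well-known elementary fact about planarity: a graph is planar if and only if each of its connected components is planar (a planar embedding of a graph restricts to a planar embedding of any component, and conversely planar embeddings of components can be placed in disjoint regions of the plane to yield a planar embedding of their disjoint union). Thus the planar graphs are closed under disjoint unions and under restriction to connected components.

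With both conditions verified, Theorem~\ref{thm:homclosed1} applies and yields that the family of planar graphs is homomorphism distinguishing closed. There is no real obstacle here: the substantive content has already been absorbed into Lemma~\ref{lem:oddo2nonplanar} (which itself rests on Lemma~\ref{lem:q2nonplanar}, the result of~\cite{qplanar}, and Theorem~\ref{thm:maindualG}), so the present theorem is essentially a one-line corollary packaging those ingredients through the general machinery of Theorem~\ref{thm:homclosed1}.
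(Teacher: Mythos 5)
Your proof is correct and matches the paper's intended argument: the paper itself presents Theorem~\ref{thm:planarhdclosed} as an immediate consequence of Lemma~\ref{lem:oddo2nonplanar} via Theorem~\ref{thm:homclosed1}, and you have simply spelled out the verification of the two hypotheses of Theorem~\ref{thm:homclosed1} (closure under weak oddomorphism images via Lemma~\ref{lem:oddo2nonplanar}, and closure under disjoint unions and restriction to components, which is the standard fact that planarity is decided component-wise).
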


Finding interesting examples of non-isomorphic yet quantum isomorphic graphs is difficult as there are lamentably few known constructions of such graphs, and it is known that determining quantum isomorphism is undecidable. There are some interesting tractable necessary conditions for quantum isomorphism~\cite{qiso2} but no nontrivial sufficient conditions are known. If there were some ``natural" equivalence relation sitting strictly between quantum isomorphism and isomorphism and this relation were tractable, then this would provide a tractable sufficient condition and may lead to new examples/constructions of non-isomorphic yet quantum isomorphic graphs. Thus we pose the following question:

\begin{question}
Is there a tractable homomorphism indistinguishability relation strictly between quantum isomorphism and isomorphism, i.e., is there a homomorphism distinguishing closed family strictly between planar graphs and all graphs whose corresponding relation is tractable?
\end{question}

\subsection{Other questions}

Conjecture~\ref{conj:distinct} states that any two distinct minor- and union-closed families give rise to distinct homomorphism indistinguishability relations. However even the following relaxation would be an interesting result:

\begin{conjecture}\label{conj:distinctfromiso1}
Let $\mathcal{F}$ be any minor- and union-closed family of graphs that does not contain all graphs. Then $\cong_\F$ is not isomorphism.
\end{conjecture}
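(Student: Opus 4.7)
The plan is to produce, for any given proper minor- and union-closed family $\F$, an explicit pair of non-isomorphic graphs $H,H'$ with $H \cong_\F H'$. First I would locate a connected graph $G \notin \F$: since $\F$ is proper, some graph $K$ lies outside $\F$, and since $\F$ is closed under disjoint unions, at least one connected component $G$ of $K$ must also lie outside $\F$.

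With such a $G$ in hand, I would take $H = G_0$ and $H' = G_1$ from Definition~\ref{def:construction}. By Lemma~\ref{lem:idoddo}, the identity map on $V(G)$ is an oddomorphism $G \to G$, so $\hom(G, G_0) > \hom(G, G_1)$, and in particular $H$ and $H'$ are non-isomorphic. It then suffices to prove $G_0 \cong_\F G_1$, and by Theorem~\ref{thm:maindualG} this reduces to the single combinatorial statement $(\ast)$: \emph{no $F \in \F$ admits a weak oddomorphism to $G$}.

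The hard part is establishing $(\ast)$, and this is exactly Question~\ref{q:oddo2minor} in disguise. If one could show that every $F$ admitting a weak oddomorphism to a connected graph $G$ must contain $G$ as a minor, then together with minor-closure of $\F$ and $G \notin \F$ this would immediately rule out weak oddomorphisms from any $F \in \F$ to $G$, completing the argument. A more flexible route, approachable one $G$ at a time rather than uniformly, would be to supply for the chosen $G$ some structural invariant that is backwards-preserved under weak oddomorphisms (in the style of Corollary~\ref{cor:forests}, Corollary~\ref{cor:circumference}, Corollary~\ref{cor:oddholes}, Lemma~\ref{lem:inducedstar}, and Lemma~\ref{lem:oddo2nonplanar}) and that also forces $F \notin \F$. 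However, because the forbidden-minor lists of general proper minor-closed families are only controlled through Robertson--Seymour, producing such an invariant uniformly over all $\F$ appears to demand essentially a positive answer to Question~\ref{q:oddo2minor}. My working plan would therefore be to attack that question directly, for instance inductively via the fibre-merging reduction sketched in Section~\ref{sec:discussion}, which preserves the existence of an oddomorphism to $G$ and, iterated, collapses $F$ down to $G$ itself.
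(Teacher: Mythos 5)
You are attempting to prove something the paper itself states only as a conjecture; there is no proof of Conjecture~\ref{conj:distinctfromiso1} in the paper to compare against. The paper's own handling is to prove that Conjecture~\ref{conj:distinctfromiso1} is \emph{equivalent} to Conjecture~\ref{conj:distinctfromiso2} (in the short unnamed theorem that follows it) and then to note that both would follow from an affirmative answer to the question stated immediately afterward (for every $n$ some $N$ such that a weak oddomorphism to $K_N$ forces a $K_n$ minor). Your reduction is a correct and essentially parallel route: pick a connected $G \notin \F$ (your argument that one exists uses only that $\F$ is union-closed, which is fine), take $H=G_0$, $H'=G_1$, get non-isomorphism from Lemma~\ref{lem:idoddo}, and by Theorem~\ref{thm:maindualG} reduce $G_0 \cong_\F G_1$ to ``no $F \in \F$ has a weak oddomorphism to $G$,'' which would follow from minor-closure of $\F$ together with a positive answer to Question~\ref{q:oddo2minor}. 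This matches the paper's own remark at the end of Section~\ref{sec:contributions} and its discussion around Question~\ref{q:oddo2minor}.

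The genuine gap is exactly the one you already flag: Question~\ref{q:oddo2minor} is open, and the fibre-merging reduction you propose as a last step is noted in the paper to be of unclear use precisely because it is not known whether the reverse of that merge preserves $G$-minors. So the proposal is an honest reduction, not a proof; and since there is no paper proof either, that is the best that can currently be said. One small improvement you could make: you reduce to the strong, uniform form of Question~\ref{q:oddo2minor} (weak oddomorphism to $G$ forces a $G$-minor for the very $G$ you chose), whereas the paper observes that the weaker statement about complete graphs $K_N$ forcing $K_n$ minors for some $N\ge n$ already suffices; since any graph outside $\F$ lifts to $K_n \notin \F$ for some $n$ and minor-closedness then finishes the argument, that is a strictly easier target.
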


We remark that in the above conjecture the property of being union-closed is not needed since if $\F$ is any proper minor closed class, then closing it under disjoint unions would not yield all graphs.

Notably, the class of 2-degenerate graphs whose homomorphism indistinguishability relation is isomorphism~\cite{dvorak} is not minor-closed. In order to prove the above conjecture it suffices to prove the following:

\begin{conjecture}\label{conj:distinctfromiso2}
For any $n \in \mathbb{N}$, there exist non-isomorphic graphs $H$ and $H'$ such that $\hom(F,H) \ne \hom(F,H')$ implies that $F$ contains $K_n$ as a minor.
\end{conjecture}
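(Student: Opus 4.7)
The plan is to take $H = (K_n)_0$ and $H' = (K_n)_1$, using the construction of Section~\ref{sec:construction} applied to the connected graph $G = K_n$. By Corollary~\ref{cor:0iso} these two graphs are non-isomorphic, and by Theorem~\ref{thm:maindualG} the inequality $\hom(F, H) \ne \hom(F, H')$ is equivalent to $F$ admitting a weak oddomorphism to $K_n$. So the conjecture reduces to the following special case of Question~\ref{q:oddo2minor}: \emph{every graph $F$ admitting a weak oddomorphism to $K_n$ contains $K_n$ as a minor.} By Lemma~\ref{lem:oddoconnected} it further suffices to handle a connected $F$ with an oddomorphism $\psi : F \to K_n$, whose fibres I will denote $V_1, \ldots, V_n$.

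For the oddomorphism-to-minor implication I would proceed by induction on $|V(F)|$, using the fibre-merging operation suggested in the discussion after Question~\ref{q:oddo2minor}. Pick a fibre $V_i$ of size at least two and two vertices $a, b \in V_i$, and replace them with a single vertex $c$ whose neighbor set is the symmetric difference $N_F(a) \triangle N_F(b)$. A parity check shows that the resulting graph $F'$ still admits an oddomorphism to $K_n$: merging two $\psi$-odd vertices produces an even vertex, merging one odd with one even produces an odd vertex, and merging two even vertices produces an even vertex, so the parity of the number of odd vertices in each fibre is preserved. The base case $|V(F)| = n$ forces $F \cong K_n$, since each fibre must then consist of its unique odd vertex and each pair of fibres must be joined by an odd, hence nonzero, number of edges. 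In the inductive step, a $K_n$-minor of $F'$ with branch sets $B_1, \ldots, B_n$ has to be lifted back to a $K_n$-minor of $F$.

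The main obstacle is precisely this lifting. If the merged vertex $c$ lies in some $B_k$, then each edge from $c$ to another branch set in $F'$ corresponds to an edge from exactly one of $a, b$ in $F$, since $N_{F'}(c) = N_F(a) \triangle N_F(b)$. Different cross-edges used by $B_k$ may be supplied by $a$ alone and by $b$ alone, which can force both $a$ and $b$ to be absorbed into $B_k$; but then $B_k$ must remain connected inside $F$, and this is where the naive argument breaks. The strategy is to choose the pair $(a, b)$ very carefully: preferentially pick vertices adjacent in $F$ so that absorbing $\{a, b\}$ preserves connectedness automatically; when no in-fibre edge exists, pick pairs joined by a short path through neighbouring fibres that can be rerouted into $B_k$ without disturbing the other branch sets; and fall back on auxiliary within-fibre connectivity lemmas implied by the oddomorphism conditions. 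This strategy is known to succeed for $n \le 5$ (the $K_5$ case being treated in the forthcoming joint work with Richter and Thomassen mentioned earlier), and I expect the difficulty for general $n$ to lie in controlling branch-set connectivity through many successive merges simultaneously, possibly requiring a more global argument --- perhaps a canonical choice of merge sequence that never creates a bad lift in the first place --- rather than a purely local lifting step.
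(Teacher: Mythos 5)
There is no paper proof to compare against: Conjecture~\ref{conj:distinctfromiso2} is stated in Section~\ref{sec:discussion} as an \emph{open} conjecture, not a theorem. All the paper does is show it equivalent to Conjecture~\ref{conj:distinctfromiso1} and observe that both would follow from an affirmative answer to the question of whether, for each $n$, there is \emph{some} $N$ such that a weak oddomorphism to $K_N$ forces a $K_n$-minor --- a question the paper explicitly leaves unresolved.

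Your reduction is correct and is exactly the one the paper has in mind: with $H = (K_n)_0$ and $H' = (K_n)_1$, Corollary~\ref{cor:0iso} gives $H \not\cong H'$, and Theorem~\ref{thm:maindualG} turns $\hom(F,H) \ne \hom(F,H')$ into ``$F$ admits a weak oddomorphism to $K_n$.'' That reduces the conjecture to the $G = K_n$ case of Question~\ref{q:oddo2minor}, which is precisely the open problem, not a lemma you can invoke. Your merge-and-lift induction is the same plan the paper floats in the discussion following Question~\ref{q:oddo2minor}, and the obstacle you name --- that a $K_n$-minor of the merged graph need not lift back through the operation $c \leftarrow N_F(a) \triangle N_F(b)$ because branch-set connectivity can break --- is exactly the obstruction the paper records (``it is not at all clear how this operation affects the structure of a potential $G$-minor''). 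No choice of $(a,b)$, local or global, is currently known to make this lift work. Two smaller points: your plan targets the stronger $N = n$ rather than the $N$ as a function of $n$ that the paper's version of the question requests (either would suffice for the conjecture, but neither is settled); and the $K_4$ case is resolved in the paper via the $4 \times 4$ rook graph, the Shrikhande graph, and $2$-dimensional Weisfeiler--Leman, not via the merge operation, so ``this strategy is known to succeed for $n \le 5$'' slightly overstates the track record of the merge approach itself. In short: your reduction is sound and matches the paper's intent, but the combinatorial core you identify is a genuine gap, and it is a gap the paper itself leaves open.
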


In fact, the above two conjectures are equivalent:

\begin{theorem}
Conjectures~\ref{conj:distinctfromiso1} and~\ref{conj:distinctfromiso2} are equivalent.
\end{theorem}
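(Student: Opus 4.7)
The plan is to prove both directions of the equivalence directly, using the family $\F_n$ of graphs with no $K_n$ minor as the bridge between the two statements.

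First I would show Conjecture~\ref{conj:distinctfromiso2} $\Rightarrow$ Conjecture~\ref{conj:distinctfromiso1}. Let $\F$ be a minor- and union-closed family not containing all graphs. The key observation is that if $K_n \in \F$ for every $n \in \mathbb{N}$, then since every finite graph on at most $n$ vertices is a minor of $K_n$ and $\F$ is minor-closed, $\F$ would contain all graphs, a contradiction. Hence there exists some $n$ with $K_n \notin \F$, and since $\F$ is minor-closed no graph of $\F$ contains $K_n$ as a minor. Applying Conjecture~\ref{conj:distinctfromiso2} for this $n$ yields non-isomorphic $H$ and $H'$ such that $\hom(F,H)\ne\hom(F,H')$ forces $F$ to contain $K_n$ as a minor. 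For every $F \in \F$ this latter condition fails, so $\hom(F,H)=\hom(F,H')$ for all $F \in \F$, i.e.\ $H \cong_\F H'$ with $H \not\cong H'$, witnessing that $\cong_\F$ is not isomorphism.

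For the reverse direction, fix $n \in \mathbb{N}$ and let $\F_n$ be the family of graphs not containing $K_n$ as a minor. This family is clearly minor-closed, and it does not contain all graphs since $K_n \notin \F_n$. To check that it is union-closed, note that $K_n$ is connected, so a disjoint union $G_1 \cup G_2$ has $K_n$ as a minor if and only if one of $G_1, G_2$ does; hence if $G_1, G_2 \in \F_n$ then $G_1 \cup G_2 \in \F_n$. Applying Conjecture~\ref{conj:distinctfromiso1} to $\F_n$ produces non-isomorphic graphs $H$ and $H'$ with $H \cong_{\F_n} H'$. The contrapositive of this indistinguishability is precisely the statement that $\hom(F,H)\ne\hom(F,H')$ implies $F \notin \F_n$, i.e.\ $F$ contains $K_n$ as a minor, which is exactly Conjecture~\ref{conj:distinctfromiso2}.

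There is essentially no obstacle here; the only substantive inputs are the two easy structural observations (that $K_n \in \F$ for all $n$ together with minor-closedness forces $\F$ to be everything, and that the connectedness of $K_n$ makes $\F_n$ union-closed). Everything else is a direct unfolding of the definition of $\cong_\F$, so the proof should be short and entirely formal.
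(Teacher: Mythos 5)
Your proof is correct and takes essentially the same approach as the paper's: in both directions the bridge is the family of graphs with no $K_n$ minor, with the two implications following by direct unwinding of the definitions. The only difference is that you spell out two small details the paper leaves implicit (why some $K_n$ must be missing from $\F$, and why connectedness of $K_n$ makes $\F_n$ union-closed).
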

\proof
Suppose that Conjecture~\ref{conj:distinctfromiso2} holds. Let $\mathcal{F}$ be any minor- and union-closed class which is not all graphs. There must be some $n \in \mathbb{N}$ such that $K_n \notin \F$, since any graph is the minor of some complete graph. Let $H$ and $H'$ be the graphs guaranteed by Conjecture~\ref{conj:distinctfromiso2}. This means that if $\hom(F,H) \ne \hom(F,H')$ then $F$ contains $K_n$ as a minor and therefore $F \notin \F$. Thus $H \cong_\F H'$, but $H \not\cong H'$ by assumption.

Conversely, suppose that Conjecture~\ref{conj:distinctfromiso1} holds. Let $n \in \mathbb{N}$ and let $\F$ be the family of graphs not containing $K_n$ as a minor. Note that $\F$ is minor- and union-closed. Therefore, by assumption we have that there exist graphs $H$ and $H'$ such that $H \cong_\F H'$ but $H \not\cong H'$. Therefore, if $\hom(F,H) \ne \hom(F,H')$ then $F \notin \F$ and this is equivalent to $F$ having $K_n$ as a minor.\qeds

Conjectures~\ref{conj:distinctfromiso1} and~\ref{conj:distinctfromiso2} would be implied by an affirmative answer to the following question:

\begin{question}
Given $n \in \mathbb{N}$, does there exist $N \in \mathbb{N}$ such that $F$ admitting a weak oddomorphism to $K_N$ implies that $F$ contains $K_n$ as a minor.
\end{question}

We noted in the introduction that if $\F$ is a minor-closed family, then the relation $\cong_\F$ is preserved under taking complements. We can then ask if the converse is true:

\begin{question}
If $\F$ is a family of graphs such that $\cong_\F$ is preserved under taking complements, then is there a \emph{minor-closed} family $\F'$ such that $\cong_{\F'}$ is equivalent to $\cong_\F$?
\end{question}

If Conjecture~\ref{conj:distinct} holds, then the above question is equivalent to asking whether an h.d.-closed family $\F$ is minor-closed if and only if $\cong_\F$ is preserved under taking complements.

Another interesting question concerns the tractability of homomorphism indistinguishability relations. There are known examples of families $\F$ where $\cong_\F$ can be decided in polynomial time, e.g., $\F$ being the family of cycles, or graphs of bounded treewidth. On the other end of the spectrum, homomorphism indistinguishability over planar graphs is undecidable. Planar graphs contain graphs of unbounded treewidth and so one might wonder if this is related to the undecidability. But of course homomorphism indistinguishability over all graphs is at worst quasi-polynomial time. So it appears that the relationship between a given family $\F$ and the complexity of deciding the relation $\cong_\F$ is not completely straightforward. Thus we pose the following imprecise question:

\begin{question}
What is the relationship between a family $\F$ and the complexity of the relation $\cong_\F$?
\end{question}

In Theorem~\ref{thm:partialorder}, we showed that oddomorphisms and weak oddomorphisms are partial orders. We can thus study the structure of these partial orders. We have seen in Section~\ref{sec:uncountability} that the complete graphs form an infinite antichain in these partial orders, but there are other properties that could be investigated. Actually one can think of Question~\ref{q:oddo2minor} in terms of partial orders. Specifically, it is asking whether the partial order of weak oddomorphisms is a suborder of the (reverse of) the partial order induced by the minor relation. Since weak oddomorphisms are always homomorphisms, an affirmative answer to this question is equivalent to the statement that the weak oddomorphism order is a common suborder of the homomorphism order (which is only a preorder) and the (reverse of) the minor order.

\bibliographystyle{plainurl}
\bibliography{Oddo}
%

\end{document}